\documentclass[reqno,11pt]{amsart}

\usepackage[top=2.0cm,bottom=2.0cm,left=3cm,right=3cm]{geometry}
\usepackage{amsthm,amsmath,amssymb,dsfont}
\usepackage{mathrsfs,amsfonts,functan,extarrows,mathtools}
\usepackage[colorlinks]{hyperref}

\usepackage{marginnote}
\usepackage{xcolor}
\usepackage{stmaryrd}
\usepackage{esint}
\usepackage{graphicx}
\usepackage{bbm}
\usepackage{wasysym}
\usepackage{textcomp}
\usepackage{float}
\usepackage{wasysym}
\usepackage{fmtcount}
\usepackage{tikz}
\usetikzlibrary{calc,decorations.pathreplacing}

\newtheorem{theorem}{Theorem}[section]

\newtheorem{definition}{Definition}[section]
\newtheorem{corollary}{Corollary}[section]
\newtheorem{remark}{Remark}[section]
\newtheorem{lemma}{Lemma}[section]
\newtheorem{example}{Example}[section]
\numberwithin{equation}{section}
\allowdisplaybreaks

\def\R{\mathbb{R}}
\def\S{\mathbb{S}}

\def\exp{\mathrm{exp}}

\DeclareMathOperator{\supp}{supp}

\DeclareMathOperator{\loc}{loc}

\newcounter{wronumber}

\begin{document}
\title[Renormalized Solutions]
			{Global renormalized solutions for hard potential non-cutoff Boltzmann equation without defect measure}

\author{Yi-Long Luo}
\address[Yi-Long Luo]
{\newline School of Mathematics, Hunan University, Changsha, 410082, P. R. China}
\email{luoylmath@hnu.edu.cn}

\author{Jing-Xin Nie}
\address[Jing-Xin Nie]{
	School of Mathematics, Hunan University, Changsha, 410082, P. R. China} \email{2359093513@qq.com}

\thanks{\today}

\maketitle

\begin{abstract}
  The existence of global renormalized solutions to the Boltzmann equation with long-range interactions without angular cutoff was first established by Alexandre and Villani [Comm. Pure Appl. Math., 55(1), 30-70, 2002]. Their result relies on a definition of renormalized solutions involving a non-negative defect measure.  In this paper, we address this issue for the inverse power law model in the case of hard potentials ($0 \leq \gamma \leq 1$). By exploiting the stronger coercivity estimates provided by hard potentials, we prove that the defect measure actually vanishes. Consequently, we establish the global existence of renormalized solutions for the non-cutoff Boltzmann equation with hard potentials in the standard sense, without any defect measure. Finally, we construct a counterexample showing that the approach developed for the hard potential case fails for soft potential model ($-3 < \gamma < 0$).  \\

   \noindent\textsc{Keywords.} Renormalized solutions, Existence, Boltzmann equation, Defect measure \\

  \noindent\textsc{MSC2020.} 76P05, 82C40, 35Q20, 45K05, 35A01 
\end{abstract}

\section{Introduction and main results}

\subsection{Motivations}

In 2002, Alexandre and Villani \cite{Alex-Vill-2002-CPAM} established the existence of renormalized solutions with a non-negative defect measure for the Boltzmann equation without angular cutoff in the whole space. {\em The aim of this paper is to prove that, for hard potentials, this defect measure vanishes.} The model considered in present work is given by the following  Boltzmann equation 
\begin{equation}\label{Boltz}
	\frac{ \partial  f }{ \partial  t } + v \cdot \nabla_x f = Q (f , f ) ,
\end{equation}
where the distribution function $f(t, x, v)$ is non-negative. For any time $t \geq 0$, $f(t, \cdot, \cdot)$ represents the particle density in phase space, with position $x \in \R^3_x $ and velocity $v \in \R^3_v $. The term $Q(f,f)$ denotes the Boltzmann collision operator, which acts only on the velocity dependence of the distribution function $f$ (reflecting the physical assumption that collisions are localized in time and space),
\begin{equation}\label{Q}
	Q(f,f) = \int_{\R ^3 _{v_*} } d v_* \int_{\S^2} B ( v- v_* ,\sigma ) ( f ' f'_* - f f_*) d \sigma .
\end{equation}
Here $ f' = f( v'), f'_* = f( v'_* ) , f_* = f( v_*)$ (with $t$ and $x$ being parameters only), and  
\begin{equation}\label{v} 
	\begin{cases}
		v'   &= \dfrac{v + v_*}{2} + \dfrac{|v - v_*|}{2}\sigma \\[10pt]
		v'_* &= \dfrac{v + v_*}{2} - \dfrac{|v - v_*|}{2}\sigma
	\end{cases} ,
\end{equation}
where $ \sigma \in \S^2 $, and $ (v , v_*)  , \ ( v', v_*') \in \R^3 \times \R^3$ denote the pre-collision and post-collision velocities of two monatomic molecules; see Grad's work \cite{Grad-cut-off} and the references therein. Moreover, the above formulae express the conservation laws of momentum and kinetic energy for elastic collisions
\begin{equation}\label{v-law} 
	\begin{cases}
		&v' + v'_* =  v + v_*  \\[3pt]
		&|v'|^2+ |v'_*|^2 =  |v|^2 + |v_*|^2 
	\end{cases} .
\end{equation}

We shall use the notation
\begin{equation}
	k = \frac{v-v_*}{|v-v_*|} , \quad k \cdot \sigma = \cos \theta , \ \ 0 \leq \theta \leq \pi.
\end{equation}
Without loss of generality, we assume that the support of $B(v - v_*, \sigma)$ lies in the set ${0 \leq \theta \leq \pi/2}$, i.e., $(v - v_*) \cdot  \sigma  \geq 0$. Otherwise, this case can be recovered by replacing $B$ with its symmetrized  version
\begin{equation*}
	\bar{B} ( v - v_* , \sigma ) = \left[ B ( v - v_* , \sigma ) + B ( v - v_* , - \sigma )\right] \mathbf{1}_{ (v - v_*) \cdot  \sigma  >0} .
\end{equation*}

In this paper, we are primarily concerned with the case where 
$B$ exhibits an angular singularity such that for a generic relative velocity $z$, the integral satisfies
\begin{align}\label{B-infty}
\int_{\S^{2}} B(z,\sigma ) d \sigma = +\infty.
\end{align}
 A typical example is the inverse $s$-power law in dimension $3$, where the collision kernel is given by
\begin{equation}\label{B-gamma}
	B(z, \sigma ) = |z|^\gamma b(k \cdot \sigma ) , \quad \text{with } \gamma = 1- \frac{4}{s-1}
\end{equation}
and the angular function satisfies the asymptotic behavior
\begin{equation}\label{nu}
	\sin  \theta\,  b(\cos \theta) \sim K\theta^{-1-\nu} \text{ as } \theta \rightarrow 0, \quad \nu = \frac{2}{s-1}, \quad K>0.
\end{equation}
Here the range $0 \leq \gamma \leq 1$ corresponds to hard potentials, while the range  $-3 < \gamma < 0$ refers to soft potentials, and $\gamma = -3$ represents the Coulomb potential. Specifically, within the hard potential regime, $\gamma = 0$ denotes Maxwell molecules and $\gamma = 1$ represents hard spheres, i.e., the case $s= \infty$ with $\sin  \theta\,  b(\cos \theta) \sim K\theta^{-1 } \text{ as } \theta \rightarrow 0$. This paper focuses on the case where $0 \leq \gamma \leq 1$ which necessitates the condition $5 \leq s \leq \infty$. Moreover, one has $\nu \in [0, 1/2]$ for $0 \leq \gamma \leq 1$.

For a given collision kernel $B$, Alexandre-Villani \cite{Alex-Vill-2002-CPAM} introduced the notion of momentum transfer $M$ as
 \begin{equation}\label{M-1}
	\int_{\S^2 } B ( v- v_* , \sigma ) \left(  v - v'  \right)  d \sigma = \frac{1}{2} ( v- v_*) M ( |v - v_*|).
\end{equation}
Noting that  $ v - v ' =  \frac{1}{2} ( v - v_*  )  - \frac{1}{2} |v_* - v | \sigma$, we take the dot product of both sides of \eqref{M-1} with $v-v_* $ to obtain
\begin{align*}
	   \frac{1}{2} | v - v_* |^2  \int_{ \S ^2 } B ( v-v_* , \sigma ) (1- k\cdot \sigma ) d \sigma  = \frac{1}{2}| v - v_* |^2 M ( |v - v_*|).
\end{align*}
Consequently, the momentum transfer $M$ admits the equivalent representation
 \begin{equation}\label{M-2}
 	M ( |z|)= \int_{\S ^2 } B ( z, \sigma ) ( 1-k \cdot \sigma ) d \sigma .
 \end{equation}
Similarly, we define
 \begin{equation}\label{M-pri}
	M'(|z|) = \int_{\S ^2} B'(z, \sigma)(1 - k \cdot \sigma) \, d\sigma,
\end{equation}
where
\begin{equation}\label{B-pri}
	B'(z, \sigma) = \sup_{1 < \lambda \leq \sqrt{2}} \frac{|B(\lambda z, \sigma) - B(z, \sigma)|}{(\lambda - 1)|z|}.
\end{equation}
The magnitude of the non-negative function $M'$ provides a very mild control on the regularity of $B $ with respect to the relative velocity variable. Furthermore, it was shown in \cite{Alex-Vill-2002-CPAM} that appropriate assumptions on $M$ and $M'$ give a precise meaning to the renormalized formulation of the inhomogeneous Boltzmann equation.

\subsection{Properties of momentum transfer} 

 To study the general case of long-range interactions, Alexandre and Villani  \cite{Alex-Vill-2002-CPAM} introduced several assumptions on the collision kernel. For the inverse $s$-power law potentials with hard and soft interactions, the collision kernel naturally satisfies these assumptions, which correspond to Properties  \uppercase\expandafter{\romannumeral 1} and  \uppercase\expandafter{\romannumeral 2} detailed below.

 {\bf Property \uppercase\expandafter{\romannumeral 1}: Local Integrability.}\label{Prop-1}  
 We verify that  the functions $M(|z|)$ and $|z| M'(|z|)$  corresponding to the model case \eqref{B-gamma} are locally integrable. Applying the spherical coordinate transformation yields  
\begin{equation}\label{Mz}
	\begin{aligned}
		M ( |z| )=& \int_{\S^2} B ( z , \sigma ) ( 1-k \cdot \sigma ) d \sigma \\
		= & 2\pi  \int_{0}^{\frac{\pi }{2} } B ( |z| , \cos \theta )( 1- \cos \theta ) \sin   \theta d \theta .
	\end{aligned}
\end{equation}
Recalling the expression \eqref{B-gamma} for the collision kernel $B$ and \eqref{nu}, we conclude that
\begin{equation}\label{M2}
  \begin{aligned}
    M (|z|)  =& \int_{ \S^2 } B ( z , \sigma ) ( 1-k \cdot \sigma ) d \sigma  \\
	= & 2\pi |z|^\gamma \int_{0}^{\frac{\pi }{2} } b ( \cos \theta )( 1- \cos \theta )  \sin   \theta  d \theta \\
	\leq & C K \int_{0}^{\frac{\pi }{2} } \frac{1}{\theta ^{\nu -1 }} d \theta |z|^\gamma = C_{\nu, K} |z|^\gamma .
  \end{aligned}
\end{equation}
 Similarly, we have
 \begin{equation} \label{zM2}
 \begin{aligned}
    |z|M'(|z|) &= |z| \int_{ \S^2 } \left[ \sup_{1 < \lambda \leq \sqrt{2} } \frac{B (\lambda z ,\sigma ) - B (z, \sigma )}{(\lambda -1 )|z|} \right] ( 1- k \cdot \sigma ) d \sigma \\ 
	&= 2\pi |z|  \int_{0}^{\frac{\pi}{2} } \left[ \sup_{1 < \lambda \leq \sqrt{2} } \frac{|\lambda z|^\gamma - |z|^\gamma}{(\lambda -1) |z| } \right] b ( \cos \theta ) \sin ^2 \left(\frac{\theta}{2} \right) \sin   \theta d \theta \\
	& \leq C'  K \int_{0}^{\frac{\pi}{2} }  \frac{1}{\theta ^{\nu -1 }}  d \theta |z|^\gamma = C'_{\nu, K} |z|^\gamma.
\end{aligned} 
\end{equation}
Due to the local integrability of $|z|^\gamma $ with $\gamma \in (-3,1]$,  we conclude that $M (|z|), |z| M' (|z|) \in L^1_{\loc} (\R^3_z)$.

{\bf Property \uppercase\expandafter{\romannumeral 2}:  $M  ( |z| ) $ and $ |z|M'(|z|) $  are both $ O (|z|^\gamma )$ as $|z|\rightarrow \infty $.}\label{Prop-2}
 Following the same reasoning as in \eqref{M2} and \eqref{zM2}, we derive that
\begin{equation}\label{M-gamma}
\begin{aligned}
   M (|z |)  &\sim   K  |z|^{\gamma   }   \int_{0}^{\frac{\pi}{2} }  \frac{1}{\theta ^{\nu -1 }}  d\theta, \\
    |z|  M'  (|z |)   &\sim    K |z|^{\gamma   }   \int_{0}^{\frac{\pi}{2} }  \frac{1}{\theta ^{\nu -1 }}  d\theta,
\end{aligned}
\end{equation}  
where $\gamma = 1 - 4/( s-1 )$ and $ \nu = \frac{2}{s-1} $. Consequently, both $M  ( |z| ) $ and $ |z|M'(|z|) $ are of order $O (|z|^\gamma )$ as $|z|\rightarrow \infty $.

\subsection{Notations and main results}

\subsubsection{Notations}

In this paper, the symbol $\R^3$ denotes the three-dimensional Euclidean space. To distinguish between variables in specific contexts, we attach subscript indices to $\R^3$ (e.g., $\R^3_x$, $\R^3_v$, etc.). Let $B_R$ denotes the closed balls of radius $R$ centered at the origin in $\R^3$. To indicate the variable under consideration, we employ the notation $B_R(x) \subseteq \R^3_x$, $B_R(v) \subseteq \R^3_v$, etc. The symbol $\R_+$ denotes the positive half-line defined by $\R_+ = \{ t \in \R^1; t > 0 \}$. Moreover, $\mathbb{S}^2 = \{ \sigma \in \R^3; |\sigma| =1 \}$ represents the unit sphere in $\R^3$. The set of non-negative integers is denoted by $\mathbb{N}$, and we define $\mathbb{N}^k = \{ \mathbf{n} = (n_1, \cdots, n_k) | n_i \in \mathbb{N}, 1 \leq i \leq k \}$ for any integer $k \geq 1$. For any $\mathbf{n} \in \mathbb{N}^k$, its $1$-norm is defined by $| \mathbf{n} | = \sum_{i=1}^{k} n_i$. The abbreviation ``a.e." stands for ``almost everywhere". The symbol $L^p (d \mu)$ represents the standard $L^p$ space associated with the measure $\mu$. For any functional space $F$, we write $F_+ = \{ f \in F; f \geq 0 \textrm{ a.e.} \}$.   The notation $f^n \rightrightarrows f$ indicates the uniform convergence of $f^n$ to $f$, while $f^n \rightharpoonup f$ denotes the weak convergence of  $f^n$ to $f$ in the weak $L^1$ topology. The Fourier transform operator is denoted by $\mathcal{F}$; the Fourier transform of $f$ is occasionally written as $\hat {f}$, so that $\mathcal{F} f = \hat{f}$. We write $a \wedge b = \min \{ a,b\}$.  Furthermore, we introduce the weighted norm $\|f\|_{L^1_\alpha( \R^3_v )} = \int_{ \R ^3_v } f ( v )( 1 + |v|^\alpha ) d v   $.  We also introduce the space $L \log L$ as
\begin{equation*}
  \begin{aligned}
    L \log L = \{ f : \R^3_v \to \R_+ ; \  f |\log f| \in L^1 (\R^3_v) \} \,.
  \end{aligned}
\end{equation*}

For any smooth function $f (t,x,v )$ on $ \R_+ \times \R^3_x \times \R^3_v $ and any multi-index $\boldsymbol{\zeta} = ( \zeta_0, \zeta_1, \cdots, \zeta_6 ) \in \mathbb{N}^7$, we define the partial derivative operator $D^{ \boldsymbol{ \zeta } }$ by
\begin{equation*}
	\begin{aligned}
		D^{ \boldsymbol{ \zeta } } f = \frac{ \partial^{ | \boldsymbol{\zeta} | } f }{  \partial t^{\zeta_0} \partial x_1^{\zeta_1} \partial x_2^{\zeta_2} \partial x_3^{\zeta_3} \partial v_1^{\zeta_4} \partial v_2^{\zeta_5} \partial v_3^{\zeta_6} } \,.
	\end{aligned}
\end{equation*}
Let $u : \mathbb{R}^3_x \rightarrow \mathbb{R}$ be a continuous bounded function. For any exponent $\alpha \in (0, 1)$, the supremum norm and the $\alpha$-Hölder semi-norm are defined, respectively, by
\begin{align*}
	\|u\|_{C(\mathbb{R}^3_x)}  := \sup_{x\in \mathbb{R}^3_x } |u(x)|, \quad 
	[u]_{C^{\alpha}(\mathbb{R}^3_x)}  := \sup_{\substack{x,y\in \mathbb{R}^3 \\ x\neq y}} \frac{|u(x)-u(y)|}{|x-y|^\alpha}.
\end{align*}
The Hölder space $C^{\alpha}(\mathbb{R}^3_x)$ consists of all functions with finite $\alpha$-Hölder norm
\begin{equation*}
	C^{\alpha}(\mathbb{R}^3_x) := \left\{ u : \mathbb{R}^3_x \to \mathbb{R} \ \middle|\ \|u\|_{C^{\alpha}(\mathbb{R}^3_x)} := \| u\|_{C(\mathbb{R}^3_x)} + [ u]_{C^{ \alpha}(\mathbb{R}^3_x)} < \infty \right\} .
\end{equation*} 
We denote by $C^\alpha_c (\mathbb{R}^3_x)$ the closure of $C^\infty_c (\mathbb{R}^3_x)$ under the $C^\alpha$ norm. Following Remark 2.2.2 in \cite{Grafakos} and Proposition 2.76 in \cite{Bahouri}, the dual space of $C^\alpha_c (\mathbb{R}^3_x)$ is identified with the Besov space $B^{-\alpha}_{1,1} (\mathbb{R}^3_x)$. 
To define this space properly, we introduce the Littlewood-Paley decomposition. Let $\mathcal{C}$ be the annulus $\{\xi \in \mathbb{R}^3_\xi \mid 3/4 \le |\xi| \le 8/3\}$. There exist radial smooth functions $\chi \in \mathcal{D}(B(0,4/3))$ and $\varphi \in \mathcal{D}(\mathcal{C})$ taking values in $[0, 1]$, such that
\begin{align*}
	\forall \xi \in \mathbb{R}^3_\xi, \quad \chi(\xi) + \sum_{j \ge 0} \varphi(2^{-j}\xi)  = 1, & \quad 
	\forall \xi \in \mathbb{R}^3_\xi \setminus \{0\}, \quad \sum_{j \in \mathbb{Z}} \varphi(2^{-j}\xi)  = 1, \\
	|j - j'| \ge 2 \implies   \operatorname{supp} & \varphi(2^{-j}\cdot) \cap \operatorname{supp} \varphi(2^{-j'}\cdot)  = \emptyset, \\
	j \ge 1 \implies \operatorname{supp} & \chi \cap \operatorname{supp} \varphi(2^{-j}\cdot)  = \emptyset.
\end{align*}
Let $h = \mathcal{F}^{-1}\varphi$ and $\tilde{h} = \mathcal{F}^{-1}\chi$. The nonhomogeneous dyadic blocks $\Delta_j$ are defined as follows
\begin{gather*}
	\Delta_j u = 0 \quad \text{if} \quad j \le -2, \\
	\Delta_{-1} u = \chi(D)u = \int_{\mathbb{R}^3_y} \tilde{h}(y)u(x - y) \, dy, \\
	\Delta_j u = \varphi(2^{-j}D)u = 2^{3j} \int_{\mathbb{R}^3_y} h(2^j y)u(x - y) \, dy \quad \text{if} \quad j \ge 0.
\end{gather*}
For $\alpha \in (0,1)$, the nonhomogeneous Besov space $B^{-\alpha}_{1,1}(\mathbb{R}^3_x)$ consists of all tempered distributions $u \in \mathcal{S}'(\mathbb{R}^3_x)$ such that
$$
\|u\|_{B^{-\alpha}_{1,1}(\mathbb{R}^3_x)} := \sum_{j \ge -1} 2^{-j\alpha} \|\Delta_j u\|_{L^1(\mathbb{R}^3_x)} < \infty.
$$

Furthermore, a function $f$ belongs to the space $C ( \mathbb{R}_+ ; w\text{-}L^1(\mathbb{R}^3_x; B^{-\alpha}_{1,1} (\mathbb{R}^3_v)) )$ provided that for every test function $\varphi \in L^\infty (\mathbb{R}^3_x ; C^\alpha_c (\mathbb{R}^3_v))$, the mapping
$$
t \mapsto \iint_{ \mathbb{R}^3_x \times \mathbb{R}^3_v } f(t, x,v) \, \varphi (x,v)\, dx \, dv
$$
is continuous on $\mathbb{R}_+$.
\subsubsection{Initial data}
In this paper, we focus on the Cauchy problem of the equation  \eqref{Boltz}, whose initial data is 
\begin{equation}\label{f0}
	f(t,x,v) |_{t=0} = f_0 ( x, v ) \geq 0, \quad \text{a.e. on } \  \mathbb{R}^3_x \times \mathbb{R}^3_v
\end{equation}  
with the following assumptions:
\begin{equation}\label{f0-p}
\iint_{\R^3 _x \times \R^3_v } f_{ 0} (1+|x|^2 +| v |^2 +|\log f_0 | ) d x d v  <\infty \,.
\end{equation}

\subsubsection{Definition of renormalized solutions}
 \begin{definition}[Renormalized solution]\label{def-re}
	Let $\alpha \in (\nu , 1)$ with $\nu $ defined in \eqref{nu}.  A function
	\begin{align}
	 f (t,x,v) \in C \big( \R_+ ; w\text{-} L^1(\mathbb{R}^3_x; B^{-\alpha}_{1,1} (\mathbb{R}^3_v))  \big) \cap L^\infty ( \R_+; L^1 ( 1+|x|^2 +|v|^2 ) d x d v )  
	\end{align}
	is a renormalized solution of \eqref{Boltz} if 
\begin{itemize}
  \item $f (t, x, v) \geq 0$ $a.e.$ on $ \R_+ \times \R^3_x \times \R^3_v$;
  \item for every $\beta \in C^2 (\R_+ , \R _+)$ such that
	\begin{align}\label{beta}
	\beta (0)= 0 , \quad 0 < \beta ^\prime (t ) \leq C/ (1+ t ), \quad \beta^{ \prime \prime } (t) \leq 0
	\end{align}
	 for some $C > 0$, the function $\beta (f) $ solves 
	\begin{equation}\label{beta-Boltz}
		\frac{ \partial  \beta(f) }{ \partial  t } + v\cdot  \nabla_x  \beta(f) =\beta ' (f) Q (f , f ) 
	\end{equation}
	in the sense of distributions.
\end{itemize}
\end{definition}

\begin{remark}
  Alexandre and Villani \cite{Alex-Vill-2002-CPAM} introduced a different definition of renormalized solutions to the Boltzmann equation \eqref{Boltz} with defect measure, in which equation \eqref{beta-Boltz} is replaced by
  \begin{equation*}
    \begin{aligned}
      \frac{ \partial  \beta(f) }{ \partial  t } + v\cdot  \nabla_x  \beta(f) \geq \beta ' (f) Q (f , f ) \,,
    \end{aligned}
  \end{equation*}
  which holds in the sense of distributions.
\end{remark}

Next, we define the Boltzmann-type H-functional by
\begin{equation}\label{Hf} 
		H(f)(t)  
	 =   \iint_{\R^3 _x \times \R^3_v  } f  \log f  d x d v   . 
\end{equation}

We are now in a position to state the main result of this paper.
\begin{theorem}\label{MainThm}
Let $0 \leq \gamma \leq 1$ and $\nu < \alpha < 1$ with $\nu \in [0, 1/2]$ defined in \eqref{nu}. Suppose that the initial data  $f_0$ satisfies \eqref{f0} and \eqref{f0-p}. Then the Cauchy problem \eqref{Boltz}-\eqref{f0} admits a renormalized solution  $$f  \in C \big( \R_+ ; w\text{-} L^1( \mathbb{R}^3 _x; B^{-\alpha}_{1,1} ( \mathbb{R} ^3 _v)) \big) \cap L^\infty ( \R_+; L^1 ( 1 + | x |^2 + | v |^2 ) d x d v ) $$ 
in the sense of Definition \ref{def-re}, satisfying the following statements:
\begin{enumerate}
\item { For any $T>0$ and any $t \in [0,T]$, there exists a constant $C_T > 0$ such that 
\begin{equation}\label{bdd-f}
	\iint_{ \R ^3_x \times \R ^3_v  } f(t, \cdot ) ( 1 + |x|^2 + |v|^2 + |\log f |) d x d v \leq C_T \,.
	\end{equation}}
\item The following entropy inequality holds:
\begin{equation}\label{entropy-th}
H( f ) ( t ) + \frac{1}{4} \int_{0}^{t} ds \iiiint_{ \R ^3_x \times \R ^3_v  \times \R ^3_{v_* }\times \S^2 }  B    
( { f }' {f'_* }  - f  f _* ) \log \left( \frac{ {f }'  {f'_* }  }{f  f _*} \right) d x d v d v_* d \sigma \leq H ( f_0 ) \,.
	\end{equation}
\end{enumerate}
\end{theorem}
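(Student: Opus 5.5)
We follow the Alexandre--Villani compactness scheme and then add one new step: showing that the defect measure vanishes. The approximating family is built by truncating the angular singularity, $B_n = B\,\mathbf{1}_{\theta\ge 1/n}$, or equivalently $B_n=\min(B,n)$ on a cutoff set. The initial data are regularized as $f_{0,n}$, with $f_{0,n}\to f_0$ in $L^1(1+|x|^2+|v|^2)$ and $H(f_{0,n})\to H(f_0)$. For each $n$, DiPerna--Lions theory for cutoff kernels produces a renormalized solution $f_n$ that conserves mass, momentum and energy and satisfies the entropy inequality \eqref{entropy-th} with $B_n$. The standard argument (entropy bound together with moments in $x$ and $v$) then gives the uniform estimate \eqref{bdd-f}. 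Dunford--Pettis yields weak $L^1$ compactness of $f_n$, and the entropy dissipation bound $D_n(f_n)\in L^1_{t,x}$ holds uniformly in $n$.

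The first key step is to give meaning to $\beta'(f)Q(f,f)$ and pass to the limit. Following \cite{Alex-Vill-2002-CPAM}, we split
\[
\beta'(f)Q(f,f)=\Big[\int_{\R^3_{v_*}\times\S^2} B\,f_*\big(\beta(f')-\beta(f)\big)\Big]+\big[\text{remainder } \Gamma(f)\big].
\]
In weak form, the first bracket is handled through the cancellation lemma and the momentum transfer. Properties I and II ($M,\ |z|M'\lesssim |z|^\gamma$) guarantee that the bracket is well defined against $C^\alpha_c(\R^3_v)$ test functions with $\alpha>\nu$; this is where the $B^{-\alpha}_{1,1}$ time continuity enters. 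The remainder $\Gamma(f)=\int B f_*\big[\beta'(f)(f'-f)-(\beta(f')-\beta(f))\big]\ge 0$ is the term that produces the defect. Averaging lemmas in $x$ together with the entropy dissipation give strong $L^1$ compactness of the velocity averages of $\beta(f_n)$. Combining this with weak convergence, we pass to the limit in every term except $\Gamma(f_n)$. Fatou's lemma only yields $\liminf\Gamma(f_n)\ge\Gamma(f)$, which is exactly the defect inequality.

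The main obstacle, and the new part, is to prove $\Gamma(f_n)\to\Gamma(f)$, i.e.\ that no mass concentrates in $\Gamma$. Using concavity and $\beta''\le0$, $\beta'\le C/(1+t)$, we would bound
\[
0\le \Gamma(f_n)\lesssim \int B\,f_{n*}\frac{(\sqrt{f_n'}-\sqrt{f_n})^2}{1+f_n}\,dv_*d\sigma ,
\]
which is controlled by the entropy dissipation via $(\sqrt a-\sqrt b)^2\le \frac14(a-b)\log(a/b)$. This alone gives only boundedness in $L^1$, not equi-integrability, so the strategy is to exploit hard-potential coercivity. The entropy dissipation bounds $\|\sqrt{f_n}\|_{L^2_x H^{\nu}_{v,\gamma/2}}$ locally: it bounds $\sqrt{f_n}$ in $H^\nu_v$ with weight $\langle v\rangle^{\gamma/2}$, and because $\gamma\ge0$ the weight is $\ge1$, so the bound is available uniformly on all of $\R^3_v$ with no loss at large velocities. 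We would use this, uniformly in $n$, to show that the contribution of small angles $\theta<\varepsilon$ to $\Gamma(f_n)$ is $o(1)$ as $\varepsilon\to0$ uniformly in $n$. Roughly, $(\sqrt{f'}-\sqrt f)^2$ integrated against $b\,\theta^{2}$-type weights is controlled by $\varepsilon^{2-2\nu}\cdot$(coercive norm) after a Taylor/Fourier argument in the spirit of the Alexandre--Desvillettes--Villani--Wennberg estimate. For $\theta\ge\varepsilon$ the kernel is cutoff, and we pass to the limit by the DiPerna--Lions arguments together with strong velocity-averaged compactness. Together these show $\Gamma(f_n)\to\Gamma(f)$ in $\mathcal D'$, so equality holds in \eqref{beta-Boltz}.

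Finally, \eqref{entropy-th} follows by weak lower semicontinuity of the convex dissipation functional and of $H$. The bound \eqref{bdd-f} passes to the limit by Fatou's lemma and convexity, and the time continuity in $w$-$L^1(\R^3_x;B^{-\alpha}_{1,1}(\R^3_v))$ follows from uniform equicontinuity of $t\mapsto\iint f_n\varphi$ obtained from the equation with $\alpha>\nu$. The delicate point we expect is the uniform small-angle estimate: we would first try to obtain it via the Fourier representation of $\int b(\cos\theta)\,|\hat g(\xi)-\hat g(\xi^+)|^2$ applied to $g=\sqrt{f_n}\,\langle v\rangle^{\gamma/2}$ cut off in $v$.
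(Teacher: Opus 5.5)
Your overall architecture matches the paper's. Both use cut-off approximations, the Alexandre--Villani splitting, strong $L^1$ compactness from averaging plus the ADVW smoothing estimate, a grazing/non-grazing split of the nonnegative remainder, and lower semicontinuity for \eqref{entropy-th}. The gap is in the one new step: the uniform-in-$n$ smallness of the grazing part of $\Gamma(f_n)$.

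The coercive norm furnished by the entropy dissipation has exactly the order of the angular singularity. In the paper's normalization $\sin\theta\, b(\cos\theta)\sim K\theta^{-1-\nu}$ it is $H^{\nu/2}_v$, not $H^{\nu}_v$. So it \emph{bounds} $\int_{\theta<\varepsilon}Bf_*(\sqrt{f'}-\sqrt f)^2$ but cannot make it small. A factor $\theta^2$ would need $(\sqrt{f'}-\sqrt f)^2\lesssim|\nabla_v\sqrt f|^2|v'-v|^2$ with $\nabla_v\sqrt{f_n}$ bounded in $L^2$, and that is not available.

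Freezing $u=v-v_*$, the grazing part of $\int B(g'-g)^2$ is comparable to $\int|\hat g(\xi)|^2\min\{(|u||\xi|)^{\nu},\varepsilon^{2-\nu}|u|^2|\xi|^2\}\,d\xi$. Frequencies $|\xi|\le A$ give $O(\varepsilon^{2-\nu}A^2)$, but frequencies $|\xi|\ge A$ contribute the whole tail $\int_{|\xi|\ge A}|\xi|^{\nu}|\hat g|^2$. Concretely, take $g=\chi(v)\cos(Nv\cdot e)$ with $\chi$ supported where $|v-v_*|\sim 1$, and $N\gg 1/\varepsilon$. Every angle $C/N\le\theta\le\varepsilon$ displaces $v$ by $\gg 1/N$, hence
\[
\int_{B_R}\int_{\theta\le\varepsilon}B\,(g(v')-g(v))^2\,d\sigma\,dv\gtrsim\|g\|_{L^2}^2\int_{C/N}^{\varepsilon}\theta^{-1-\nu}\,d\theta\approx N^{\nu}\|g\|_{L^2}^2\approx\|g\|_{H^{\nu/2}}^2 ,
\]
with constants independent of $\varepsilon$. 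This is for $\nu>0$; for $\nu=0$ one gets $\log(N\varepsilon)$, which is worse.

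What your argument actually needs is that $\int_{|\xi|\ge A}|\xi|^{\nu}|\mathcal F(\sqrt{f_n}\chi_R)|^2\,d\xi$, weighted by the local density and integrated in $(t,x)$, tends to $0$ as $A\to\infty$ uniformly in $n$. That is compactness of $\sqrt{f_n}$ in the critical space. A bound on the entropy dissipation gives only boundedness there. Velocity oscillations of vanishing $L^1$ size but non-vanishing $\dot H^{\nu/2}$ energy are exactly what produces the defect measure.

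Following the paper does not close this gap. Lemma \ref{LBep} asserts exactly such a smallness, namely \eqref{Hnu} with a factor $1/k$ in front of $\|g\|^2_{H^{\nu/2}(B_R(v))}$, and the example above contradicts it too. In its proof the factor $|h|^{3+\nu}/|h_2|^{2+\nu}\lesssim|u|/k$ is gained only because the two-dimensional integral in $h_2$ is then replaced by a three-dimensional integral in $h$ with the same kernel $|h|^{-3-\nu}$. By homogeneity these differ by a factor $\rho^{-1}$ at scale $|h|\sim\rho$, which exactly cancels the gained $\rho$.

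Two further points in your sketch also need repair.
\begin{itemize}
\item The ADVW coercivity is not uniform in $(t,x)$. As \eqref{F-Zn} shows, its constant carries $\|f^n\|^{-1}_{L^1(B_R(v))}$ and a non-concentration radius $r_0$ depending on the local density. So the $H^{\nu/2}_v$ control degenerates where the density is small, and you have no uniform-in-$x$ bound there. On that set the only available estimate, \eqref{R3n-tv}, contains $\frac12\int_0^T\!\int e^n\,dv$, which carries no smallness. The paper's treatment of $([0,T]\times B_R(x))\setminus W_\epsilon$ meets the same obstruction.
\item Your two-term splitting of $\beta'(f)Q(f,f)$ does not add up: it omits $\beta'(f)\int Bf'(f'_*-f_*)\,dv_*\,d\sigma$. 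You need the three-term splitting \eqref{R123}, with $f'_*$ in the remainder. Then the middle term becomes the dual expression $\iint f_*\beta(f)\,\mathcal T\varphi$ controlled by Lemma \ref{Lmm-T}.
\end{itemize}
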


\begin{remark}[Soft potential model $-3 < \gamma < 0$]
  The smoothing estimate in Lemma \ref{L-FZn} and the average time-velocity estimate in Lemma \ref{Lmm-ATV} are necessitated in proving the existence of renormalized solutions. To establish these two estimates, one needs to control the collision integral $\iint_{\R_v^3 \times \R^3_{v_*}} f^n (v) f^n (v_*) |v - v_*|^\gamma d v d v_*$. However, in Example \ref{soft-f}, we construct a counterexample showing that there exist a positive function $f (v)$ with $\int_{\R^3_v} f (1 + |v|^2 + |\log f|) d v < \infty$ such that $\iint_{\R_v^3 \times \R^3_{v_*}} f (v) f (v_*) |v - v_*|^\gamma d v d v_* = + \infty$. Consequently, the approach developed for the hard potential case fails to solve the soft potential model in the sense of Definition \ref{def-re}.
  
  The collision integral $\iint_{\R_v^3 \times \R^3_{v_*}} f^n (v) f^n (v_*) |v - v_*|^\gamma d v d v_*$ can be bounded in terms of $\| f^n \|^2_{L^p (\R^3_v)}$ with $p = \frac{6}{6 + \gamma}$ for $- 3 < \gamma < 0$. Noting that $H^{\nu /2}_{loc} (\R^3_v) \hookrightarrow L^p_{loc} (\R^3_v)$, we thus obtain a bound on this collision integral provided that the $H^{\nu /2}_{loc} (\R^3_v)$-norm of $f^n$ is uniformly bounded. For the hard potential case $0 \leq \gamma \leq 1$, such a bound was established in \cite{ADVW-bound-2000}. However, the corresponding estimate for soft potential fails, as the $H^{\nu /2}_{loc} (\R^3_v)$-norm of $f^n$ would need to be controlled by the collision integral itself, resulting in a circular dependence.
\end{remark}

\subsection{Comparison with the work \cite{Alex-Vill-2002-CPAM} of Alexandre and Villani}

In this subsection, we discuss the differences between the present work and that of Alexandre and Villani's work \cite{Alex-Vill-2002-CPAM}.

First, the model \eqref{Boltz} considered in this paper differs from that studied by Alexandre and Villani in \cite{Alex-Vill-2002-CPAM}. Indeed, Alexandre and Villani imposed decay assumptions on the momentum transfer $M (|z|)$ and the mild regular momentum transfer $M' (|v|)$ at infinity,
\begin{equation}\label{AV-Ass}
  \begin{aligned}
    M (|z|) = o (1) \,, \quad |z| M' (|z|) = o (|z|^2) \quad \textrm{as } \ |z| \to \infty \,,
  \end{aligned}
\end{equation}
see (2.9) on Page 42 of \cite{Alex-Vill-2002-CPAM}\footnote{In fact, they introduced a weighted momentum transfer $M^\alpha (|z|)$ for $0 \leq \alpha \leq 2$ defined by
\begin{equation*}
  \begin{aligned}
    M^\alpha (|z|) = \int_{\mathbb{S}^2} B (z, \sigma) ( 1 - k \cdot \sigma )^\frac{\alpha}{2} d \sigma \,, \quad k = \frac{z}{|z|} \,,
  \end{aligned}
\end{equation*}
which satisfies $M^2 (|z|) = M (|z|)$, see \eqref{Mz}. They then assumed that for $\alpha \in [0, 2]$, as $|z| \to \infty$,
\begin{equation*}
  \begin{aligned}
    M^\alpha (|z|) = o (|z|^{2 - \alpha}) \,.
  \end{aligned}
\end{equation*}}. As shown in \eqref{M-gamma}, the momentum transfer $M (|z|)$ and the mild regular momentum transfer $M' (|v|)$ in the present work satisfy
\begin{equation*}
  \begin{aligned}
    M (|z|) = O (|z|^\gamma) \,, \quad |z| M' (|z|) = O (|z|^\gamma) \quad \textrm{as } \  |z| \to + \infty
  \end{aligned}
\end{equation*}
for $0 \leq \gamma \leq 1$. Namely, the model considered here does not satisfy the assumptions \eqref{AV-Ass} of Alexandre and Villani, which require faster decay at infinity. 

In \cite{Alex-Vill-2002-CPAM}, assumptions \eqref{AV-Ass} at infinity are utilized to establish the integrability of the $(\mathcal{R}_2)$-part of the renormalized collision operator $\beta' (f) Q (f, f)$; see \eqref{R123} below. In this procedure, for given $v_*$, the linear operator \eqref{T}, namely
\begin{equation*}
  \begin{aligned}
    \mathcal{T} : \varphi \mapsto \int_{\mathbb{S}^2} B (v - v_*, \sigma) (\varphi' - \varphi) d \sigma
  \end{aligned}
\end{equation*}
plays a key role in proving the integrability of $(\mathcal{R}_2)$. Here $\mathcal{T}$ is the adjoint operator of $f \mapsto Q ( \delta_{v_*}, f )$. Alexandre and Villani proved that $\mathcal{T}$ is bounded from $W^{2, \infty}$ to $L^\infty$. Namely, for all $\varphi \in W^{2, \infty} (\R_v^3)$,
\begin{equation*}
  \begin{aligned}
    | \mathcal{T} \varphi (v)| \leq & \tfrac{1}{2} \| \varphi \|_{W^{2, \infty}} |v - v_*| ( 1 + \tfrac{|v - v_*|}{2} ) M (|v - v_*|) \,, \\
    | \mathcal{T} \varphi (v)| \leq & 2 \| \varphi \|_{W^{2, \infty}} ( 1 + |v - v_*| )^\alpha M^\alpha (|v - v_*|) \quad \forall \ \alpha \in [0,2] \,,
  \end{aligned}
\end{equation*}
see Proposition 3.3 on Page 52 of \cite{Alex-Vill-2002-CPAM}. Owing to the polynomial factors $|v - v_*| ( 1 + \tfrac{|v - v_*|}{2} )$ and $( 1 + |v - v_*| )^\alpha$ in the above bounds, the assumptions \eqref{AV-Ass} at infinity are requisite for establishing the integrability of $(\mathcal{R}_2)$ by means of the a priori bound \eqref{bdd-f}.

In our work, for the hard potential model ($0 \leq \gamma \leq 1$), $M (|z|)$ and $|z| M' (|z|)$ do not exhibit decay properties at infinity (see \eqref{M-gamma}). To overcome this difficulty, we establish the boundedness of the operator $\mathcal{T}$ form $C_c^\alpha (\R_v^3)$ ($\alpha \in (\nu, 1)$) to $L^\infty_{loc} (\R_v^3)$, i.e., for all $\varphi \in C_c^\alpha (\R_v^3)$,
\begin{equation*}
  \begin{aligned}
    | \mathcal{T} \varphi (v) | \leq C_{\nu, \alpha} \| \varphi \|_{C_c^\alpha} |v - v_*|^{\alpha + \gamma} \leq C' _{\nu, \alpha} \| \varphi \|_{C_c^\alpha} |v - v_*|^{\alpha} M ( |v - v_*| ) \,,
  \end{aligned}
\end{equation*}
see Lemma \ref{Lmm-T} below. The integrability of $(\mathcal{R}_2)$ is then established by means of the a priori bound \eqref{bdd-f}.

Second, we establish a so-called {\em average time-velocity estimates for $(\mathcal{R}_3)^n$} 
\begin{equation*}
  \begin{aligned}
    \int_{0}^{T} dt \int_{   \R ^3_v } ( \mathcal{R}_3)^n d v 
    \leq & C_{\nu ,R} \sup_{ t \in [0, T]} \int_{  {\R}^3_v } f^n dv  \int_{   \R ^3_{v_*}  } f^n ( 1+ |v_*|^2) dv_* + \frac{1}{2} \int_{0}^{T} dt \int_{\mathbb{R}^{ 3} _v } e^n dv   \text{ a.e.} \  x \in \R_x^3
  \end{aligned}
\end{equation*} 
for $0 \leq \gamma \leq 1$ (see \eqref{R3n-tv} in Lemma \ref{Lmm-ATV} below), where $e^n$ denotes the entropy dissipative density defined in \eqref{d-en} below. We also establish a so-called {\em uniform smallness estimate for small angular deflections}
\begin{equation*}
  \begin{aligned}
    \iint_{B_R(v)\times \S ^2} B_k ( v-v_*, \sigma )  \left( g(v') - g(v) \right)^2   \, d\sigma dv \le  \frac{C}{k} (1+|v_*|)^{1+\gamma+\nu} \|g\|^2_{{H^{\nu/2}}{(B_R (v) )}}
  \end{aligned}
\end{equation*}
for $0 \leq \gamma \leq 1$ (see \eqref{Hnu} in Lemma \ref{LBep} below). These two estimates constitute the most essential observation for removing the defect measure appearing in the work of Alexandre-Villani \cite{Alex-Vill-2002-CPAM}. The defect measure arises from Fatou lemma argument
\begin{equation*}
  \begin{aligned}
    \int_0^T d t \iint_{\R_x^3 \times \R_v^3} ( \mathcal{R}_3 ) d x d v \leq \liminf_{n \to \infty} \int_0^T \iint_{\R_x^3 \times \R_v^3} \left( 1 + \frac{1}{n} \int_{\R_v^3} f^n d v \right)^{-1} (\mathcal{R}_3)^n d x d v
  \end{aligned}
\end{equation*}
under the strong convergence $f^n \to f$ in $L^1 ( [0,T] \times \R_x^3 \times \R_v^3 )$, corresponding to the arguments on Page 61 of \cite{Alex-Vill-2002-CPAM}.

In our approach, we observe that
\begin{equation*}
  \begin{aligned}
    f^n_* \frac{  ({f ^n}'-f^n )^2   }{( 1+   f^n ) ^2 ( 1+  {f ^n}')} \leq   \frac{1}{2}  \left(\sqrt{ {f ^n}' }  - \sqrt{f^n} \right) ^2 \left( \sqrt{ {f_*^n}' } + \sqrt{ f^n_*}  \right) ^2 \,,
  \end{aligned}
\end{equation*}
which implies
\begin{equation*}
  \begin{aligned}
    \int_{0}^{T} dt \int_{   \R ^3 _v } ( \mathcal{R}_3)^n d v \leq  &\frac{1}{2} \int_{0}^{T} dt \iiint_{ \mathbb{R}^{ 3} _v \times \mathbb{R}^{ 3} _{v_* } \times  \S^{2} } B_n    \left(\sqrt{ {f ^n}' }  - \sqrt{f^n} \right) ^2 \left( \sqrt{ {f_*^n}' } + \sqrt{ f^n_*}  \right) ^2   d v dv_* d \sigma \,.
  \end{aligned}
\end{equation*}
Moreover, combining the cancellation lemma (see Lemma \ref{prop-S} below) with the lower bound \eqref{sq-e1} on the entropy dissipative density, i.e.,
\begin{equation*}
  \begin{aligned}
    & \iiint_{ \mathbb{R}^{ 3}_v  \times \mathbb{R}^{ 3} _{v_* } \times  \S^{2} } B_n   \left(\sqrt{ {f ^n}' }  - \sqrt{f^n} \right) ^2 \left( \sqrt{ {f_*^n}' } + \sqrt{ f^n_*}  \right) ^2  d v dv_* d \sigma \\
		& +  \iiint_{ \mathbb{R}^{ 3}_v  \times \mathbb{R}^{ 3}_{v_* } \times  \S^{2} } B_n   ( {f ^n}' -f^n   ) ( {f_*^n}'  - f^n_*  )    d v dv_* d \sigma \leq \int_{\mathbb{R}^{ 3} _v } e^n dv,
  \end{aligned}
\end{equation*}
we successfully establish the average time-velocity estimate. By using the decomposition $v' - v = h = h_1 + h_2$, where $h_1 = - u \sin^2 ( \frac{\theta}{2} )$, $h_2 = |u| \sin (\frac{\theta}{2} ) \cos ( \frac{\theta}{2} ) \omega$, $u = v - v_*$, and $\omega \perp u$, we obtain the change of variables formula
$$d \sigma = \frac{4}{|u|^2} \frac{1}{ \sqrt{1 - \frac{4}{|u|^2} |h_2|^2} } d h_2 \,.$$ 
Together with the asymptotic behavior $\theta \thicksim \frac{2}{|u|} |h_2|$ and $b (\cos \theta) \thicksim K \theta^{-2 - \nu}$ as $\theta \to 0^+$, we deduce that
\begin{equation*}
  \begin{aligned}
    & \iint_{ B_R (v) \times \mathbb{S}^2 } B_k (v - v_*, \sigma) ( g (v') - g (v) )^2 d \sigma d v \\
    \lesssim & \frac{1}{k} (1 + |v_*|^{1 + \gamma + \nu}) \int_{B_R (v)} d v \int_{|h| \leq \sqrt{2} r_k} \frac{ ( g (v + h) - g (v) )^2 }{ | h |^{3 + \nu} } d h \,.
  \end{aligned}
\end{equation*}
Combined with  the equivalent relation
\begin{equation*}
  \begin{aligned}
    \| f \|^2_{H^s (\R^3_v)} \approx \| f \|^2_{L^2 (\R^3_v)} + \iint_{ \R^3_v \times \R^3_{v'} } \frac{ ( f (v') - f (v) )^2 }{| v' - v |^{3 + 2 s}} \mathbf{1}_{|v - v'| \leq 1} d v d v'
  \end{aligned}
\end{equation*}
given in Section 7.3 of \cite{GS-classical-2009}, we establish the uniform smallness estimate for small angular deflections. Upon obtaining these estimates, we derive the equi-integrability  of the sequence $\{ \int_0^T d t \int_{R_v^3} ( 1 + \frac{1}{n} \int_{\R_v^3} f^n d v )^{-1} ( \mathcal{R}_3 )^n \varphi d v \}_{n \geq 1}$ via the renormalized form \eqref{eq6.75} of the approximated equation, where the test function $\varphi (t,x,v) \in L^\infty ( [0,T] \times \R_x^3; C_c^\alpha (\R_v^3) )$ has compact support in $[0, T] \times B_R (x) \times B_R (v)$; see Lemma \ref{equi-R3} below. 

In order to remove the defect measure, we must derive the convergence \eqref{cnv-R3} in Lemma \ref{l-cnvR3}, i.e.,
\begin{equation*}
  \begin{aligned}
    \lim_{n \to \infty}  \int_{0}^{T} dt \iint_{ \mathbb{R}^3_x \times \mathbb{R}^3_v } \left( 1+ \frac{1}{n} \int_{\R^3_v} f^n d v \right)^{-1}  ( \mathcal{R}_3 )^n \varphi d x d v =  \int_{0}^{T} dt \iint_{ \mathbb{R}^3_x \times \mathbb{R}^3_v } ( \mathcal{R}_3 )  \varphi d x d v \,.
  \end{aligned}
\end{equation*}
The idea is to decompose $(\mathcal{R}_3)^n$ and $( \mathcal{R}_3 )$ into the angular singularity part $(\mathcal{R}_3)^{n}_k$, $( \mathcal{R}_3 )_k$ and far-field angular singularity part $(\mathcal{R}_3)^{n,k}$, $( \mathcal{R}_3 )^k$, respectively. We introduce the quantities
\begin{equation*}
  \begin{aligned}
    M^n (x) = \sup_{t \in [0,T]} \int_{\R^3_v} f^n (1 + |v|^2) d v + \int_0^T d t \int_{\R^3_v} e^n d v \,, 
  \end{aligned}
\end{equation*}
and
\begin{equation*}
  \begin{aligned}
    M (x) = \sup_{t \in [0,T]} \int_{\R^3_v} f (1 + |v|^2) d v + \int_0^T d t \int_{\R^3_v} e d v \,,
  \end{aligned}
\end{equation*}
and define the  sets
\begin{equation*}
  \begin{aligned}
    N_L^n = \{ x \in B_R (x) : M^n (x) > L \} \,, \quad N_L = \{ x \in B_R (x) : M (x) > L \} \,.
  \end{aligned}
\end{equation*}
By the uniform bound \eqref{FWC2} in Lemma \ref{Lemma 4.17}, the measures of these sets satisfy $\textrm{meas} (N_L^n) + \textrm{meas} (N_L) \leq \frac{C}{L}$ uniformly in $n \geq 1$. On the other hand, the average time-velocity estimate \eqref{R3n-tv} shows that the sequence $\{ \int_0^T d t \int_{R_v^3} ( 1 + \frac{1}{n} \int_{\R_v^3} f^n d v )^{-1} ( \mathcal{R}_3 )^{n,k} \varphi \mathbf{1}_{(N_L^n)^c} d v \}_{n \geq 1}$ is uniformly bounded in $n \geq 1$ and $x \in (N_L^n)^c = \R_x^3 \setminus N_L^n$. By uniform boundednes and equi-integrability of this sequence, we obtain the convergence \eqref{R3k} for the far-field angular singularity part $(\mathcal{R}_3)^{n,k}$  since
\begin{equation*}
  \begin{aligned}
    (\mathcal{R}_3) ^{n, k}  = -  \iint_{\mathbb{R}^3_{v_*} \times \S^2} B_n^k ( v-v_*, \sigma )f_*^n \Gamma(f^n, {f ^n}') \, dv_* d\sigma  
  \leq    C(f^n + {f^n}') \int_{\mathbb{R}^3_{v_*}} f_*^n \, dv_*
  \end{aligned}
\end{equation*}
and the Lebesgue's dominated convergence theorem; see the proof of procedure of Lemma \ref{L-R3nc} below. Then, using the average time-velocity estimate and uniform smallness estimate for small angular deflections, we prove that the angular singularity parts $(\mathcal{R}_3)_{n,k}$ and $(\mathcal{R}_3)_k$ vanish in the limit $k \to \infty$, namely,
\begin{equation*}
  \begin{aligned}
    \lim_{k \to \infty } \sup_{n} \int_0^T dt 	\iint_{\R^3_x \times \mathbb{R}^3_v}  \left( \left(1 + \frac{1}{n} \int_{ {\R}^3_v} f^n \, dv \right)^{-1}( \mathcal{R}_3 )_k^n   + ( \mathcal{R}_3 )_k \right)\varphi dxdv =0;
  \end{aligned}
\end{equation*}
see \eqref{R3-0} below. Then we establish the convergence 
\begin{equation*}
  \begin{aligned}
    \lim\limits_{n\to \infty} \int_0^T dt \iint_{\mathbb{R}^3_x \times \mathbb{R}^3_v }  \left(1 + \frac{1}{n} \int_{\mathbb{R}^3_v} f^n \, dv \right)^{-1}  (\mathcal{R}_3) ^{n } \varphi   dx dv  
    = \int_0^T dt \iint_{\mathbb{R}^3_x \times \mathbb{R}^3_v}  (\mathcal{R}_3)  \varphi  \, dx dv;
  \end{aligned}
\end{equation*}
see the proof of procedure of Lemma \ref{l-cnvR3} below. Therefore, the convergence \eqref{cnv-R3} is established and the defect measure is removed.

Although the hard potential model ($0 \leq \gamma \leq 1$) considered in this paper differs from that of Alexandre and Villani \cite{Alex-Vill-2002-CPAM}, our approach can also eliminate the defect measure appearing in \cite{Alex-Vill-2002-CPAM} when $0 \leq \gamma \leq 1$.

Third, the existence of renormalized solutions to the soft potential model $(- 3 < \gamma < 0)$ fails because the collision integral $\iint_{ \R_v^3 \times \R_{v_*}^3 } f^n f_*^n |v - v_*|^\gamma d v d v_* $ is uncontrollable. When deriving the smoothing estimate for $\mathcal{F} ( \sqrt{f^n \chi_R} )$ in Lemma \ref{L-FZn}, i.e.,
\begin{equation*}
  \begin{aligned}
    &  	\int_{|\xi |\geq 1 } | \mathcal{F} (\sqrt{ f^n } \chi_R) |^2    Z_n\left( \frac{1}{|\xi | } \right)  d \xi   \\
		\leq & \frac{  C_0 C_2 J^2   (2\pi )^3   }{  \|f ^n \|_{ L^1( B_R(v ) }  }   \left\{   \int_{  \R^{3}_v } e^n  ( t,x,v)   dv   + \frac{C_1 }{r_0 ^2 }  \|f ^n\|^2_{L^1_2 ( \R^3_v ) } +  \iiint_{ \mathbb{R}^{ 3}_v \times \R^{3}_{v_*} \times  \S^2 } B_n f^n ( {f^n_*}' -f^n_*)   d vdv_* d \sigma \right\} ,
  \end{aligned}
\end{equation*}
the quantity $\iiint_{ \mathbb{R}^{ 3}_v \times \R^{3}_{v_*} \times  \S^2 } B_n f^n ( {f^n_*}' -f^n_*)   d vdv_* d \sigma$ in the right-hand side of the above inequality is bounded by the collision integral $\iint_{ \R_v^3 \times \R_{v_*}^3 } f^n f_*^n |v - v_*|^\gamma d v d v_*$ via the cancellation lemma (Lemma \ref{prop-S}). In deriving the average time-velocity estimate, the cancellation lemma is also employed. Then the quantity $\int_0^T d t \int_{\R_v^3} ( \mathcal{R}_3 )^n d v$ is bounded by
$$
 \int_{0}^{T}   dt \iint_{\R^3_v \times \R^3_{v_*}} f^n f^n_* |v-v_*|^\gamma \, dv \, dv_* + \frac{1}{2} \int_{0}^{T} \! dt \int_{\R^3_v} e^n \, dv 
 $$
for both hard and soft potentials by means of the cancellation lemma; see \eqref{R3n-1} and Remark \ref{Rmk-6.2} below. Consequently, we must control the collision integral $\iint_{ \R_v^3 \times \R_{v_*}^3 } f^n f_*^n |v - v_*|^\gamma d v d v_*$ for the soft potential model $- 3 < \gamma < 0$. Unfortunately, we construct a counterexample showing that there exist a non-negative function $f(v)$ with finite mass, energy, and entropy 
\begin{equation*}
  \begin{aligned}
    \int_{\R_v^3} f ( 1 + |v|^2 + |\log f| ) d v < \infty \,,
  \end{aligned}
\end{equation*}
such the collision integral
\begin{equation*}
  \begin{aligned}
    \iint_{\R_v^3 \times \R_{v_*}^3} f (v) f (v_*) |v - v_*|^\gamma d v d v_* = + \infty \,,
  \end{aligned}
\end{equation*}
see Example \ref{soft-f} below. Therefore, the approach developed for the hard potential model in this paper does not apply to the soft potential model.

In fact, the soft potential case $- 3 < \gamma < 0$ in the work of Alexandre and Villani \cite{Alex-Vill-2002-CPAM} encounters the same obstacle. When proving the strong convergence of $f^n$ in the $L^1$ sense, they employed the following estimate (corresponding to the estimate in Lemma \ref{L-FZn} of the present work)
\begin{equation*}
  \begin{aligned}
    \int_{|\xi| \geq 1} |\mathcal{F} \sqrt{f^n_R}|^2 Z_n ( \tfrac{1}{|\xi|} ) d \xi \leq C (f^n, R, \Phi_0) [ D (f^n) + \| f^n \|^2_{L^1_2} ] \,,
  \end{aligned}
\end{equation*}
see (4.4) on Page 58 of \cite{Alex-Vill-2002-CPAM}. This estimate is cited from the work of Alexandre, Desvillettes, Villani and Wennberg \cite{ADVW-bound-2000} (Theorem 1 on Page 331-332). Note that in \cite{ADVW-bound-2000}, the assumption $M (|z|) + |z| M' (|z|) \leq C_0 (1 + |z|)^2$ is required\footnote{They used the notation $\Lambda (|z|) + |z| \Lambda' (|z|) \leq C_0 (1 + |z|)^2$, where $\Lambda (|z|)$ and $\Lambda' (|z|)$ were defined in (14) and (15) on Page 331 of \cite{ADVW-bound-2000}, repectively, which coincide with the same definitions of $M (|z|)$ and $M' (|z|)$ in the present work.}. In \cite{Alex-Vill-2002-CPAM}, the authors only assumed $M (|z|), |z| M' (|z|) \in L^1_{loc} (\R^N)$ (see Page 42 of \cite{Alex-Vill-2002-CPAM}), which includes the case $M (|z|) + |z| M' (|z|) \leq C_1 |z|^\gamma \in L^1_{loc} (\R^N)$ for $- N < \gamma < 0$. In this case, the collision integral $\iint_{ \R_v^3 \times \R_{v_*}^3 } f^n f_*^n |v - v_*|^\gamma d v d v_*$ remains uncontrollable. For the collision kernel $B (z, \sigma) = |z|^\gamma b (k \cdot \sigma)$ considered in this paper, the condition $M (|z|) + |z| M' (|z|) \leq C_0 (1 + |z|)^2$ corresponds to the hard potential $0 \leq \gamma \leq 1$, while $M (|z|) + |z| M' (|z|) \leq C_1 |z|^\gamma $ corresponds to soft potential $- 3 < \gamma < 0$.

\subsection{Historical remarks.}
Research on the Boltzmann equation and related models can generally be classified into two main categories: the regime of classical solutions and the regime of renormalized solutions. In the framework of classical solutions, works typically rely on the assumption of small initial data (see, for instance, \cite{Guo-classical-2006,Guo-classical-2010,GJJ-classical-2010,JL-classical-2022,JLT-classical-2024,JLZ-classical-2023,JXZ-classical-2018,SG-classical-2008}). The following review focuses primarily on the regime of renormalized solutions.

In the early stage of studying the Boltzmann equation, the lack of global a priori estimates for solutions and their derivatives limited the analysis. As a result, most works focused on the spatially homogeneous case, such as \cite{Arkeryd-part1-1972,Arkeryd-part2-1972,Arkeryd-existence-1981, MORGENSTERN-Maxwellian-1954}. Later, Kaniel and Shinbrot \cite{KS-locexi-1978} extended the study to the spatially inhomogeneous setting in bounded domains. They constructed the approximated solutions controlled by a local Maxwellian distribution and proved the existence and uniqueness of solutions to the initial boundary value problem for a short time interval. 

A milestone breakthrough in this field began in 1989, when DiPerna and Lions \cite{DL- renormalized-1989} introduced the renormalized solution to the transported equations. Subsequently, in \cite{Diperna-Lions} they established the existence theory of global renormalized solutions for the classical Boltzmann equation satisfying Grad's angular cutoff assumption with large initial data in the whole space. In \cite{DL-enpropy-1991}, they proved that the renormalized solutions to the cutoff Boltzmann equation satisfy the entropy inequality. However, for long-range interactions, such as those described by the hard and soft potential models, the collision cross section exhibits non-integrable singularities primarily in the angular variable. In this case, the collision kernel $B(v - v_*, \sigma)$  is not integrable with respect to the angular variable. Such singular behavior lies beyond the scope of the DiPerna-Lions theory.

Focusing on cutoff models, Lions studied the convergence behavior of solution sequences in the series works \cite{Lions-Compactness-1,Lions-Compactness-2}. He showed that the sequence of solutions $ f^n $ is strongly compact in $ L^1([0,T] \times \mathbb{R}^3_x \times \mathbb{R}^3_v)$ if and only if the initial data sequence $ f_0^n $ is strongly compact in $ L^1(\mathbb{R}^3 _x\times \mathbb{R}^3_v ) $. Based on this result, Lions proposed a conjecture that this reversible propagation of convergence comes from the angular cutoff assumption in the collision operator. In contrast, for non-cutoff models such as the Landau equation, the evolution process itself may create a regularizing effect. This means that the solution sequence can become strongly compact even when the initial data do not have this property. Subsequently, in \cite{Lions-Landau-1994}, Lions studied the Cauchy problem for the Landau equation and confirmed this idea. He proved that the solution sequence of the Landau equation enjoys strong compactness, while showing that such a property does not hold for the Boltzmann equation under the angular cutoff assumption. Inspired by these results, Villani \cite{Villani-Landau-1996} proved in 1996 the global existence of renormalized solutions for the Landau equation with defect measure.

Regarding the regularity of the non-cutoff Boltzmann equation, Desvillettes \cite{Desvillettes-Kac-1995,Desvillettes-Boltzmann-1996,Desvillettes-Maxwellian-1997} carried out a systematic study of its regularizing effects. He first established the theory for the two-dimensional equation with radially symmetric Maxwellian molecules. He then extended these results to the case of radially symmetric, velocity-dependent cross sections, and further to the case of non-radially symmetric Maxwellian molecules. 

In terms of entropy dissipation estimates, Villani \cite{Villani-regular-1999} employed a decomposition of the entropy dissipation together with the Carleman representation to show, for the first time, that under the assumption of a local lower bound on $f$, the Sobolev norm of $\sqrt{f}$ can be controlled by the entropy dissipation. Subsequently, Alexandre, Desvillettes, Villani, and Wennberg \cite{ADVW-bound-2000} refined these conclusions using tools such as Fourier analysis, successfully providing a complete proof without the lower bound assumption. Building on this, Alexandre and Villani \cite{Alex-Vill-2002-CPAM} introduced renormalized solutions with defect measures for the Boltzmann equation with long-range interactions. They established the global existence of such renormalized solutions and proved their strong compactness, thus confirming Lions’ conjecture.

The works mentioned above on renormalized solutions were all carried out in the whole space $\R_x^3$. For bounded domains with various boundary conditions, several results have been obtained so far. The first breakthrough was achieved by Mischler \cite{Mischler-ASENS-2010} in 2010, who proved the existence of renormalized solutions to the cutoff Boltzmann equation, Vlasov-Poisson and Fokker-Planck type models, in bounded domains with Maxwell reflection boundary conditions. In 2019, Jiang-Zhang investigated renormalized solutions to the
Boltzmann equation with angular cutoff model \cite{JZ-2019-SIMA}, and renormalized solutions with a non-negative defect measure to the Boltzmann equation without angular cutoff model \cite{JZ-2019-JDE}, both in bounded domains with incoming boundary conditions.

In the past three decades, the hydrodynamic limit of the Boltzmann equation has attracted considerable attention. Research in the framework of renormalized solutions began with the work of Bardos, Golse, and Levermore in the 1990s \cite{BGL-limit1-1991,BGL-limit2-1993}, who established a unified framework linking the DiPerna-Lions renormalized solutions of the Boltzmann equation with the Leray weak solutions of the incompressible Navier–Stokes equations. Building on this foundation, Golse and Saint-Raymond \cite{GS-limit1-2004,GS-limit2-2009} achieved a major breakthrough by rigorously deriving macroscopic fluid equations from the microscopic kinetic description without relying on any nonlinear weak compactness assumptions. For problems on bounded spatial domains, serval related works have also been developed within the renormalized solutions framework. In \cite{MS-liminbdd-2003}, Masmoudi and Saint-Raymond proved the linear Stokes limit from fluctuations of renormalized solutions to the Boltzmann equation in a bounded domain. In \cite{Saint-limit-2009}, Saint-Raymond rigorously justified the Navier-Stokes-Fourier limit in the renormalized solutions regime over a bounded domain. In \cite{MS-liminbdd-2003,Saint-limit-2009}, the fluid boundary conditions were either the Dirichlet or the Navier slip boundary condition, depending on the relative sizes of the accommodation coefficient and the Knudsen number $Kn$. In \cite{JM-liminbdd-2017}, Jiang and Masmoudi rigorously proved the Navier-Stokes-Fourier limit with Neumann boundary condition from renormalized solutions to the Boltzmann equation in a bounded domain with Maxwell reflection boundary condition for small accommodation coefficient $\sqrt{Kn}$. Moreover, Jiang and Zhang \cite{JZ-2019-SIMA, JZ-2019-JDE} rigorously verified the Navier-Stokes-Fourier limit from renormalized solutions to the Boltzmann equation (including both cutoff and non-cutoff cases) in a bounded domain with incoming boundary condition.

\subsection{Sketch of proofs}
In this subsection, we mainly sketch the ideas of proving the main
theorem. In 2002, Alexandre and Villani \cite{Alex-Vill-2002-CPAM} established the existence of renormalized solutions to the Boltzmann equation without angular cutoff in the whole space, introducing a non-negative defect measure. 
Due to the singularity of the collision kernel in the angular variable, the classical approach of splitting the collision operator into “gain” and “loss” terms is no longer valid. To overcome this, the renormalized collision operator $\beta'_\delta(f) Q(f,f)$ is decomposed into three components: $(\mathcal{R}_1)$, $(\mathcal{R}_2)$, and $(\mathcal{R}_3)$. Consequently, the existence of renormalized solutions reduces to establishing the convergence of the approximated operators $(\mathcal{R}_1)^n$, $(\mathcal{R}_2)^n$, and $(\mathcal{R}_3)^n$. Among these, the convergence of $(\mathcal{R}_3)^n$ presents the primary difficulty. In earlier works, due to the absence of sufficient a priori estimates for this term, one could only apply Fatou’s lemma (Lemma \ref{fatou}) to obtain lower semicontinuity, which naturally leads to the appearance of a non-negative defect measure in the solution.
However, for hard potential models, we observe that by expanding the squared term and utilizing the entropy inequality, one can derive the key average time-velocity estimate for $(\mathcal{R}_3)^n$. The equi-integrability of $\int_{\mathbb{R}^3_v} \big(1 + \frac{1}{n} \int_{\R^3 _v } f^n dv \big)^{-1} (\mathcal{R}_1)^nd v $ and $\int_{\mathbb{R}^3_v} \big(1 + \frac{1}{n} \int_{\R^3 _v } f^n dv \big)^{-1} (\mathcal{R}_2)^nd v$, together with the strong convergence of $f^n$, then implies the equi-integrability of $\int_0^T dt \int_{\mathbb{R}^3_v} \big(1 + \frac{1}{n} \int_{\R^3 _v } f^n dv \big)^{-1}(\mathcal{R}_3)^n \varphi \, dv$ on $B_R(x)$. To address the singularity at $\theta = 0$, we decompose the spherical integral of $(\mathcal{R}_3)^n$ into singular and non-singular regions. We establish the convergence of the non-singular part and show that the singular part converges uniformly to zero. Combining these results, we obtain the convergence of $(\mathcal{R}_3)^n$. This advancement enables us to prove the global existence of renormalized solutions for the non-cutoff Boltzmann equation with hard potentials in the standard sense, without requiring any defect measure.

{\bf Step 1. Construction of smooth approximation problem \eqref{approximation}.}
 We begin by constructing a smooth approximation problem  \eqref{approximation} for the non-cutoff Boltzmann equation.  
 The approximate initial data $ f_0^n $ are taken in the Schwartz space. They satisfy a Gaussian lower bound and converge to the original initial data $ f_0 $ in entropy and in weighted norms, see Lemma \ref{Lemma 4.8}. This choice provides sufficient regularity and ensures consistency with the original problem. 
 To handle the angular singularity associated with long range interactions, we introduce a cutoff collision kernel $ B_n(z,\sigma) $. The angular part $ b_n(\cos \theta)$ is modified by a smooth function depending on $ n \sin(\theta/2) $. This modification removes the contribution of grazing collisions when $ \theta$ is close to zero.
 
 Based on this modified kernel $B_n(z, \sigma)$, one can construct the approximated collision operator $\tilde{Q}_n$ in \eqref{Q L}.   Note that the factor $\left(1 + \frac{1}{n} \int_{\R^3 _v } f dv \right)^{-1}$  is such that the nonlinear approximated collision operator $\tilde{Q}_n (f^n, f^n)$ admits linear upper bounds in $L^1 \cap L^\infty(\R^3_v )$ as in Lemma \ref{1-inf} and ensures Lipschitz continuity in the $ L^1(\R^3_v ) $ space.  These properties imply the existence, uniqueness, and positivity of a global smooth solution $ f^n $ to the approximation problem \eqref{approximation}. 
 Finally, the approximate solutions preserve the fundamental physical properties of the Boltzmann equation, including the conservation of mass, momentum, and energy. Besides, an entropy identity is also satisfied and provides uniform in $ n $ bounds on entropy and velocity moments, see Lemma \ref{Lemma 4.17}. These uniform estimates supply the compactness required for the limit procedure.

{\bf Step 2. Well-definition of the renormalized collision operator.} 
To establish the validity of the collision operator for the non-cutoff Boltzmann equation, we adopt the renormalization method. We first decompose the renormalized collision term $\beta _\delta '(f) Q(f,f) $ into three distinct parts, denoted by $(\mathcal{R}_1), \ (\mathcal{R}_2)$  and $(\mathcal{R}_3)$. This decomposition allows us to estimate each term separately. 
First, we analyze the term $(\mathcal{R}_1)$. This term involves a singular integral operator defined as $	\mathscr{S} f$. By applying the Cancellation lemma (Lemma \ref{prop-S}), we effectively handles the singularity and proves  $ ( \mathcal{R}_1 ) \in L^\infty \left( [0,T ] ; L^1 \left( \R ^3_x \times B_R ( v) \right)\right), \ \forall R >0$. 
Subsequently,  to address the critical case,   we employ test functions in the Hölder space $C^\alpha_c(\mathbb{R}^3_v)$ with $\nu < \alpha < 1$. Utilizing the $\alpha$-Hölder regularity, we control the singularity of the collision kernel and estimate the bilinear term $(\mathcal{R}_2)$ directly.  This leads to a more regular dual space compared to the $W^{1,\infty}_0$ framework. Specifically, we show that $  (\mathcal{R}_2) \in L^\infty \left( [0,T]; L^1 \left( \mathbb{R}^3_x; B^{-\alpha}_{1,1} ( B_R (v)) \right) \right).$
Finally, we handle the integrability of $(\mathcal{R}_3 )$. This term arises from the convexity of the renormalization function $\beta_\delta(f)$ and is strictly non-negative.
We multiply the renormalized Boltzmann equation by a test function $\varphi$ and integrate it over $t,x,v$. Together with the bounds for $(\mathcal{R}_1)$ and $(\mathcal{R}_2)$ obtained earlier,   we deduce that $(\mathcal{R}_3 )$ must also be finite in $  L^1 \left( [0,T ] \times \R ^3_x \times B_R ( v)  \right) $. Thus, the renormalized collision operator is well-defined as a distribution. 

{\bf Step 3. Strong compactness of approximated solution.} By the uniform bound \eqref{FB2} and \eqref{FWC2} of $f^n $ and the Dunford-Pettis theorem (Lemma \ref{theorem-dunford}), it follows directly that  $f^n$ converges weakly to $f$ in $L^1 ( [0,T] \times \R^3_x \times \R^3_v )$.
To establish that the approximated solution $f^n$ converges strongly to $f$ in $L^1([0,T] \times \R^3_x  \times \R^3_v )$,  we proceed in several steps. First, based on the preliminary lemmas,  we establish the functional inequality \eqref{F-Zn}, which shows that the 
high-frequency components of $\sqrt{f^n}$ in the velocity variable are controlled by the entropy dissipation.

 Next, applying the averaged velocity lemma (Lemma \ref{theo-6.2.1}), we derive that  $\int_{\mathbb{R}^3_v } f^n  dv$ is compact in the space $L^1([0,T] \times \mathbb{R}^3_x )$.
Subsequently, combining the reflexivity of $L^p (1<p<\infty)$ spaces, we consider the weak limit $g$ of $g^n = \sqrt{f^n}$ in $L^2([0,T] \times B_R (x)\times B_R(v))$. The discussion is divided into two cases. For almost every $t,x,v$ satisfying $g(t,x,v)=0$, the weak convergence property and the non-negativity of $  f^n $ imply that $\sqrt{f^n}$ converges strongly to zero. For almost all points satisfying $g(t,x,v)>0$, the smoothing estimate \eqref{F-Zn} implies that the high-frequency components of the solution vanish uniformly. Accordingly, combined with the properties of the Fourier transform and the M. Riesz-Fr\'{e}chet-Kolmogorov Theorem (Lemma \ref{MRFK}),  we prove that $\sqrt{f^n}$ converges strongly in $L^2([0,T] \times B_R (x)\times B_R(v))$. This implies that $f^n$ converges in measure. Combining the weak compactness of $f^n$, the uniform bound \eqref{FB2} and the Vitali Convergence Theorem (Lemma \ref{L-VC}), we prove that $f^n$ converges strongly to $f$ in $L^1([0,T] \times \mathbb{R}^3_x \times \mathbb{R}^3_v )$. Furthermore, we give a counterexample showing that the above method is not applicable to soft potentials with $ -3 < \gamma < 0 $. Even if finite entropy and the standard conservation laws hold, the distribution function may still develop local concentration. When this behavior interacts with the singular factor $ |v - v_*|^\gamma $, the collision frequency can diverge. Consequently, the corresponding upper bound becomes infinite, i.e., the argument fails in the soft potential case.

{\bf Step 4. The convergence  of renormalized collision operator.}   Since renormalized collision operator $\beta_\delta ' (f^n ) \tilde{Q}_n (f^n, f^n)$ can be decomposed into three sidtinct parts, denoted by $( \mathcal{R} _1) ^n, ( \mathcal{R} _2) ^n$ and $( \mathcal{R} _3 ) ^n$, verifying the convergence of $ \beta_\delta ' (f^n ) \tilde{Q} _n (f^n, f^n) $ reduces to verifying the convergence of the decomposed terms $( \mathcal{R} _1)^n, \ (\mathcal{R} _2) ^n$  and $(\mathcal{R} _3) ^n$. To study the convergence of the renormalized collision operator $(\mathcal{R}_1)^n$, we   truncate the velocity variable $v_*$. This allows us to split the estimates into two parts: one on compact sets and one at infinity.  
Next, by applying the M. Riesz–Fréchet–Kolmogorov theorem (Lemma \ref{MRFK}), we prove that the convolution term $f^n *_v S^n$ is relatively compact in $L^1([0,T]\times \mathbb{R}^3_x \times B_R(v))$, thereby deriving its strong convergence.  
Finally, using the Product Limit Lemma (Lemma \ref{product limit}), we successfully establish the weak convergence \eqref{cnv-R1} of the term  $(\mathcal{R}_1)^n$, i.e.,
   \begin{align*} 
  	\lim_{n \to \infty}  \int_{0}^{T} dt \iint_{ \mathbb{R}^3_x \times \mathbb{R}^3_v } \left( 1+ \frac{1}{n} \int_{\R^3_v } f^n d v \right)^{-1}  ( \mathcal{R}_1 )^n \varphi d x d v =  \int_{0}^{T} dt \iint_{ \mathbb{R}^3_x \times \mathbb{R}^3_v } ( \mathcal{R}_1 )  \varphi d x d v .
  \end{align*}

 For the limit of the term $(\mathcal{R}_2)^n$, we split the integral according to the size of the relative velocity $|v - v_*|$.
 On the truncated region, we use the strong convergence of $f^n$ and $\beta \in C^2(\R_+)$ to obtain weak compactness of  $ \left( 1+ \frac{1}{n} \int_{\R^3_v} f^n d v \right)^{-1}  f^n_* \beta_\delta ( f^n  ) $. Together with the distributional convergence of the operator $\mathcal{T}^n$, this allows us to identify the limit. 
 For large relative velocities, the contribution is controlled by uniform bound \eqref{FB2} and therefore goes to zero as the truncation radius goes to infinity. Combining the two cases above, the convergence \eqref{cnv-R2} holds, i.e., 
 \begin{align*} 
	\lim_{n \to \infty} \int_{0}^{T} dt \iint_{  \mathbb{R}^3_x \times \mathbb{R}^3 _v }  \left( 1+ \frac{1}{n} \int_{\R^3_v } f^n d v \right)^{-1}  ( \mathcal{R}_2 )^n \varphi d x d v =\int_{0}^{T} dt \iint_{  \mathbb{R}^3 _x \times \mathbb{R}^3_v  }    ( \mathcal{R}_2 )  \varphi d x d v .
\end{align*}

Finally, we address the convergence of $(\mathcal{R}_3)^n$. Specifically, we expand the squared term and combine it with the entropy inequality to derive the average time-velocity estimate \eqref{R3n-tv} for $  (\mathcal{R}_3)^n  $.  
Subsequently, by integrating the renormalized approximate Boltzmann equation \eqref{eq6.75} with respect to $t$, $x$, and $v$, and using the established weak compactness of $f^n$ along with that of the terms $\int_{\mathbb{R}^3} \big(1+ \frac{1}{n} \int_{\mathbb{R}^3_v } f^n dv \big)^{-1}(\mathcal{R}_1)^n \varphi dv$ and $\int_{\mathbb{R}^3_v } \big(1+ \frac{1}{n} \int_{\mathbb{R}^3_v } f^n dv \big)^{-1}(\mathcal{R}_2)^n \varphi dv$, we obtain the equi-integrability of  $
	\int_{0}^{T} dt \int_{\mathbb{R}^3_v } \big( 1+ \frac{1}{n} \int_{\mathbb{R}^3_v } f^n dv \big)^{-1} (\mathcal{R}_3)^n \varphi dv $
on $B_R(x)$. 

Based on these facts,  to address the singularity of the collision kernel at $ \theta = 0 $,  we decompose  $ (\mathcal{R}_3)^n$  into two parts, denoted by  
$ (\mathcal{R}_3)^{n,k} $ and  $ (\mathcal{R}_3)^{n}_k $. The term $ (\mathcal{R}_3)^{n,k} $ corresponds to the truncated collision kernel  $B_{n}^k (|z|,\cos \theta ) = B(|z|,\cos \theta )  \mathbf{1}_{ \frac{1}{k} \leq \theta \leq \frac{\pi}{2} }$, while  $ (\mathcal{R}_3)^{n}_k $ accounts for the singular part of the kernel, defined by  $B_{n,k} (|z|,\cos \theta ) = B_{n} (|z|,\cos \theta )  \mathbf{1}_{ 0 \leq \theta \leq \frac{1}{k}}$. Furthermore, we define the functionals $M^n(x)$ and $ M(x)$ and the   sets $N^n_L$ and $ N_L$, in \eqref{ML}.  Consequently,  the average time-velocity estimate \eqref{R3n-tv}  implies that the integral $ \int_0^T dt \int_{  {\R}^3_v }     (\mathcal{R}_3) ^{n }    dv$ is uniformly bounded on the set $(N_L^n)^c$. Combining this bound with the convergence $f^n \to f$ a.e. in $(t,x,v)$ and $B^k_n \to B^k$ a.e. in $(v_*,\sigma)$, we deduce the convergence of the non-singular part $ (\mathcal{R}_3)^{n,k} $, namely 
\begin{align*} 
	\lim\limits_{n\to \infty} \int_0^T dt \iint_{\mathbb{R}^3_x \times \mathbb{R}^3_v }  \left(1 + \frac{1}{n} \int_{\mathbb{R}^3_v} f^n \, dv \right)^{-1}  (\mathcal{R}_3) ^{n, k} \varphi \mathbf{1}_{ (N_L^n)^c}  dx dv = \int_0^T dt \iint_{\mathbb{R}^3_x \times \mathbb{R}^3_v}  (\mathcal{R}_3) ^k \varphi \mathbf{1}_{(N_L)^c} \, dx dv. 
\end{align*}

For the  singular part, we aim to prove that the corresponding integral converges uniformly to zero. To this end, we established an estimate \eqref{Hnu} in Lemma \ref{LBep}. This estimate indicates that the angular singularity of the collision kernel at $\theta \approx 0$ exhibits fractional derivative behavior. Furthermore, the corresponding integral contribution is controlled by the fractional Sobolev norm $\|g\|_{H^{\nu/2}(B_R(v))}$ of the distribution function $g$ and tends uniformly to zero as the truncation angle decreases (i.e., $k \to \infty$). Recall that the estimate \eqref{F-Zn} implies that for hard potentials, the $H^{\nu/2}(B_R(v))$-norm of $\sqrt{f^n} $ is bounded by 
$$
 \frac{  C_0  J^2   (2\pi )^3   }{  \|f ^n \|_{L^1( B_R(v)) } \left( r_0 /2 \right)^\gamma    }  \left\{   \int_{  \mathbb{R}^{3}_v } e^n  ( t,x,v) \,   dv   +   \left( C_{\nu }  + \frac{ C_1 }{ r ^2 _0 } \right)  \|f ^n\|^2_{L^1_2 ( \R^3_v ) }   \right\}.
$$
Furthermore, utilizing the sets $W_\epsilon$ and $U_\delta$ defined in Lemma  \ref{v-regular}, we decompose the domain  $[0,T] \times B_R (x)$ into three parts: $W_\epsilon  \setminus (U_\delta \cup U_L^n)$, $U_\delta \cup U_L^n$ and $([0,T] \times B_R (x))  \setminus W_\epsilon  $. By virtue of the boundedness of $\|\sqrt{f^n} \|_{L^2 ([0,T] \times B_R (x) ; H^{\nu/2} (B_R(v)))}$ on the set $W_\epsilon \setminus (U_\delta \cup U_L^n)$, combined with the uniform lower bound of $\int_{B_R(v)} f^n \, dv$ on this set for sufficiently large $n$, we deduce that
$$
\lim_{k \to \infty}	\sup_{n>N} \int_0^T dt \iint_{\mathbb{R}^3_x \times \mathbb{R}^3_v} ( \mathcal{R}_3 )_k^n \varphi \mathbf{1}_{W_\epsilon \setminus (U_\delta \cup U_L^n)} \, dx dv=0.
$$
The small measure of the set $U_\delta \cup U_L^n$ implies that
\begin{equation*}
	\sup_{n>N} \int_0^T dt \iint_{\mathbb{R}^3_x \times \mathbb{R}^3_v} \left(1 + \frac{1}{n} \int_{\mathbb{R}^3_v} f^n \, dv \right)^{-1} (\mathcal{R}_3)_k^n \varphi \mathbf{1}_{(U_\delta \cup U_L^n)} \, dx dv \to 0 \quad ( \delta \to 0 , L \to \infty ).
\end{equation*}
Finally, the estimate $  \|f ^n \|_{L^1( B_R(v))} \leq 2\epsilon \ $  almost everywhere on $ ([0,T] \times B_R(x)) \setminus W_\epsilon$, together with the average time-velocity estimate for  $  ( \mathcal{R}_3 ) ^n$, implies that
\begin{equation*}
\lim_{\epsilon \to 0}\sup_{n>N} \int_0^T dt \iint_{\R^3_x \times \mathbb{R}^3_v} ( \mathcal{R}_3 )_k^n  \varphi \mathbf{1}_{([0,T] \times B_R(x)) \setminus W_\epsilon} \, dx dv =0.
\end{equation*}
Moreover, for any $1 \leq n \leq N$, it is straightforward to verify that
\begin{align*}  
	\lim_{k \to \infty} \int_0^T dt \iint_{\mathbb{R}^3_x \times \mathbb{R}^3_v} \left(\left(1 + \frac{1}{n} \int_{ {\R}^3_v} f^n \, dv \right)^{-1}(\mathcal{R}_3)^n_k +( \mathcal{R}_3 )_k\right)    \varphi   dx dv = 0, \quad \forall   1 \leq n \leq N. 
\end{align*} 
These convergence results imply that
\begin{align*} 
	\lim_{k \to \infty } \sup_{n} \int_0^T dt 	\iint_{\R^3_x \times \mathbb{R}^3_v}  \left( \left(1 + \frac{1}{n} \int_{ {\R}^3_v} f^n \, dv \right)^{-1}( \mathcal{R}_3 )_k^n   + ( \mathcal{R}_3 )_k \right)\varphi dxdv =0.
\end{align*}
Consequently, combining this with the convergence of the non-singular part $ (\mathcal{R}_3)^{n,k} $, we obtain the convergence of $(\mathcal{R}_3) ^{n } $, i.e., 
\begin{equation*} 
	\lim \limits_{n\rightarrow \infty} \int_{0}^{T} dt \iint_{ \R ^3 _x \times \R ^3_v  } \left( 1+ \frac{1}{n} \int_{\R^3_v } f^n d v \right)^{-1}( \mathcal{R} _3)^n \varphi  d x d v = \int_{0}^{T} dt \iint_{ \R ^3 _x \times \R ^3_v  } ( \mathcal{R} _3) \varphi  d x d v.
\end{equation*}

{\bf Step 5. Existence and entropy inequality.}  
By combining the convergence properties \eqref{cnv-R1}, \eqref{cnv-R2}, and \eqref{cnv-R3}, for any test function $\varphi(t,x,v) \in L^\infty([0,T] \times \mathbb{R}^3_x; C^\alpha_c(\mathbb{R}^3_v))$ with compact support in $[0,T] \times B_R(x) \times B_R(v)$, taking the limit $n \to \infty$ in the approximating equation yields: 
\begin{equation*} 
	\iint_{\mathbb{R}^3_x \times \mathbb{R}^3_v} (\beta_\delta(f)(T,\cdot) - \beta_\delta(f)(0,\cdot))\varphi \, dxdv = \int_0^T dt \iint_{\mathbb{R}^3_x \times \mathbb{R}^3_v} \beta_\delta'(f) Q(f,f) \varphi \, dxdv. 
\end{equation*} 
This verifies that $f$ is indeed a renormalized solution of the Boltzmann equation \eqref{Boltz}. Furthermore, by invoking the absolute continuity of the integral and the convergence relationship between $\beta_\delta(f)$ and $f$ as $\delta \to 0$, it can be deduced that the limit function is continuous in time with respect to the weak topology, i.e., $f \in C\left(\mathbb{R}_+; w\text{-}L^1(\mathbb{R}^3_x; B^{-\alpha}_{1,1}(\mathbb{R}^3_v))\right)_+$. In addition, by applying the Monotone Convergence Theorem and the estimates for the $H$-functional $H(f)$, it is shown that $f$ satisfies the following uniform bound: 
\begin{equation*} 
	\sup_{t \in [0,T]} \iint_{\mathbb{R}^3_x \times \mathbb{R}^3_v} f(1 + |x|^2 + |v|^2 + |\log f|) \, dxdv \leq C_T. 
\end{equation*} 

Finally, we establish the entropy inequality \eqref{entropy-th}. To derive the desired entropy inequality from the approximated entropy identity \eqref{e na}, it suffices to verify the bound \eqref{ena}, i.e., $ \liminf_{n \to + \infty} \int_{   \R ^3 _v} \tilde{e} ^n  d v  \geq \int_{   \R ^3 _v} e d v  $. We first utilize the convergence \eqref{fn-f} and the bound $f^n \log f^n \in L^\infty ((0,T); L^1(\mathbb{R}^3_x \times \mathbb{R}^3_v ))$ to establish that the sequences ${f ^n}' {f_{*}^n}' B^R_n$ and $f^n f^n_{ * } B^R_n$ are relatively weakly compact in $L^1(E_R,d \Theta  )$ for almost all $(t,x) \in (0,R) \times B_R(x)$.  We then can verify that for all $\varphi \in L^\infty (E_R)$,
\begin{align*}
	\int_{E_R}   N(f^n)^{-1}   f^n  f^n_{ * }      B^R_n   \varphi d \Theta   \to&  \int_{E_R} N(f)^{-1}  f   f _{ * }       B^R     \varphi d \Theta, \\
		\int_{E_R}  N(f^n)^{-1} {f^n}' {f_{  *}^n}'   B^R_n   \varphi   d\Theta  \to& \int_{E_R}   N(f )^{-1} {f }' f_{*}'   B^R    \varphi  d\Theta
\end{align*}
strongly in $L^1((0,R)\times B_R(x))$, where $ N (f) = 1+\int_{   \R ^3_v } f dv $. Together with the fact that for any Borel subset $A$ of $E_R$,  there exists a sequence of measurable subsets $\{ A_k\}_{k=1}^\infty $ such that $\mathbf{1}_{A_k} \to \mathbf{1}_A $ in $L^1( E_R)$. We can  obtain that for almost all $(t,x)\in (0,R)\times B_R(x)$,
\begin{equation*} 
	\frac{ f^n  f^n_{ * }   }{1+ \frac{1}{n} \int_{\R^3_v } f^n d v}  B^R_{  n}  \rightarrow {f  }   {f_* }  B^R, \quad \frac{ {f ^n}'  {f_*^n}'   }{1+ \frac{1}{n} \int_{\R^3_v } f^n d v}	   B^R_n   \rightarrow \ f'   f'_*   B^R\quad  \text{weakly in } L^1(E_R, d \Theta).
\end{equation*} 
  At the end, since the function 
\begin{equation*}
	j(a,b)=\left\{\begin{array}{rl}
		(a-b) \log \frac{a}{b}, & \quad \text{for } a, b>0 \,  \\
		+\infty, & \quad \text{for } a \text { or } b \leq 0 \, 
	\end{array}\right.
\end{equation*}
is convex, the property of weak lower semicontinuity implies that  $\liminf\limits_{n\rightarrow \infty} \int_{\R^3 _v} \tilde{e}^n d v \geq \int_{\R^3_v } e d v$.
So the entropy inequality \eqref{entropy-th} in Theorem \ref{MainThm} is obtained.

\subsection{Organizations of current paper}

In the next section, we begin by providing several fundamental results from functional analysis that serve as the technical foundation for our subsequent proofs. In  Section \ref{Sec-GEAE}, we construct the  approximate equations, establish the existence and uniqueness of their smooth solutions, and demonstrate that the resulting sequence of solutions satisfies the  conservation laws and possesses certain  uniform bounds.  Section \ref{Sec-integrable}  is devoted to verifying that the renormalized collision terms $(\mathcal{R}_1)$, $(\mathcal{R}_2)$  and $(\mathcal{R}_3)$ are well-defined within the appropriate space of distribution functions.  Section \ref{Sec:Con} establishes the strong convergence of the solution sequence $f^n$ to $f$ in $L^1([0,T] \times \R^3_x \times \R^3_v)$. The convergence of the renormalized collision operators $(\mathcal{R}_1)^n$ and $(\mathcal{R}_2)^n$  is then addressed in Section \ref{Sec-cnv}, while Section \ref{Sec:Cnv-R3} is devoted to the analysis of  $(\mathcal{R}_3)^n$. In Section \ref{Sec-ee}, we prove that $f$ is a renormalized solution to the Boltzmann equation \eqref{Boltz} and show that it satisfies the entropy inequality. Finally, in Section \ref{sec-cnv-soft} we provide a counterexample that there exists a positive function $f(v)$ with finite mass, energy, and entropy such that the collision integral diverges for the soft potential case $-3 < \gamma < 0$.

\section{Toolbox}
First, we present some commonly used conclusions in functional analysis, which play a crucial role in verifying convergence later.
\begin{definition}\label{def-equi}
	Let $\mathscr{F} $ be a subset of $L^1(X, \mathcal{M},\mu )$. We say that $\mathscr{F}$ is equi-integrable if for every $\epsilon >0 $ there exists $\delta >0 $ such that for any measurable set $E \subset X $ with $\mu (E ) < \delta $ and for all $f\in \mathscr{F}$,
	\begin{equation*}
		\int_{E} |f|dx < \epsilon .
	\end{equation*}
\end{definition}

\begin{lemma}[Dunford-Pettis, \cite{Dunford-Schwartz-1958}] \label{theorem-dunford}
	Suppose that $\mathscr{F}$ is a Bounded subset of $ L^1(X, \mathcal{M},\mu )$. $\mathscr{F}$ is weakly relatively compact if and only if 
	\begin{enumerate}
		\item {$\mathscr{F} $ is equi-integrable};
		\item {for any $\epsilon>0, $ there exists compact set $K \subset X$ such that 
			\begin{equation*}
				\int_{K^c} |f(x) | dx <\epsilon , \quad \forall f \in \mathscr{F}.
		\end{equation*}}
	\end{enumerate}
\end{lemma}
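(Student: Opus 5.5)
This is the classical Dunford--Pettis criterion, and the paper cites it from \cite{Dunford-Schwartz-1958} without proof. My plan is therefore to follow the standard route. I assume $X$ is a $\sigma$-finite locally compact metric measure space such as $\R^d$ or its open subsets, which is the only setting used in this paper, and I treat the two implications separately.

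\emph{Sufficiency.} Assume $\mathscr{F}$ is bounded and satisfies (1) and (2).
\begin{itemize}
\item Embed $L^1$ isometrically into $(L^\infty)^* = ba(X,\mu)$, the finitely additive measures absolutely continuous with respect to $\mu$.
\item By Banach--Alaoglu, the image of $\mathscr{F}$ is relatively weak-$*$ compact there.
\item It remains to show that every weak-$*$ cluster point $\lambda$ is countably additive, hence, by Radon--Nikodym, given by an $L^1$ density. Then weak-$*$ convergence against $L^\infty$ is exactly weak $L^1$ convergence, so $\mathscr{F}$ is relatively weakly compact. By Eberlein--Šmulian, relative weak compactness and relative sequential weak compactness agree.
\item For countable additivity, take sets $E_j \downarrow \emptyset$.
\begin{itemize}
\item Tightness (2) gives a compact $K$ with $\sup_{\mathscr{F}}\int_{K^c}|f|<\epsilon$. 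Note $\mu(K)<\infty$ by local compactness.
\item Since $\mu(E_j\cap K)\to0$, equi-integrability (1) gives $\sup_{\mathscr{F}}\int_{E_j\cap K}|f|<\epsilon$ for $j$ large.
\item Hence $|\lambda(E_j)| \le 2\epsilon$ eventually.
\end{itemize}
This uniform control of $\int_{E_j}|f|$ over $\mathscr{F}$ is the key point.
\end{itemize}

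\emph{Necessity.} Assume $\mathscr{F}$ is relatively weakly compact. I argue by contradiction, using Eberlein--Šmulian to pass to sequences.
\begin{itemize}
\item \emph{Failure of (1).} There are $\epsilon>0$, sets $E_j$ with $\mu(E_j)<2^{-j}$, and $f_j\in\mathscr{F}$ with $\int_{E_j}|f_j|\ge\epsilon$.
\begin{itemize}
\item Pass to a subsequence $f_j\rightharpoonup f$ weakly in $L^1$.
\item For every measurable $A$, the set function $A\mapsto\int_A f_j$ converges. The Vitali--Hahn--Saks theorem then yields uniform absolute continuity of $\{\int_A f_j\,d\mu\}$.
\item Apply this to the sets $E_j\cap\{f_j>0\}$ and $E_j\cap\{f_j<0\}$ to control $|f_j|$ rather than $f_j$.
\item This contradicts $\int_{E_j}|f_j|\ge\epsilon$.
\end{itemize}
\item \emph{Failure of (2).} Exhaust $X$ by compacts $K_m\uparrow X$. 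The same Vitali--Hahn--Saks argument, applied to the decreasing sets $K_m^c$, contradicts the assumption $\int_{K_m^c}|f_m|\ge\epsilon$.
\end{itemize}

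\emph{Main obstacle.} The technical heart is the Vitali--Hahn--Saks step in the necessity direction, which is proved by a Baire category argument on the complete metric space of measurable sets modulo null sets, with metric $\mu(A\triangle B)$ restricted to finite-measure pieces. Since all of this is textbook material, in the paper I would simply cite \cite{Dunford-Schwartz-1958} rather than reproduce the argument.
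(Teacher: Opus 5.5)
The paper gives no proof of this lemma and only cites \cite{Dunford-Schwartz-1958}. Your outline is the standard argument behind that result: for sufficiency, Alaoglu in $(L^\infty)^*$ plus countable additivity of the weak-$*$ cluster points; for necessity, Eberlein--\v{S}mulian plus Vitali--Hahn--Saks with the splitting into $\{f_j>0\}$ and $\{f_j<0\}$. Two steps, however, are not correct as written.

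First, $\mu(K)<\infty$ does not follow from local compactness of $X$. It is a hypothesis on $\mu$ (finiteness on compact sets, e.g.\ $\mu$ Radon), and it cannot be dropped. Take the compact metric space $X=\mathbb{N}\cup\{\infty\}$, with $\mu$ the counting measure on $\mathbb{N}$ and $\mu(\{\infty\})=0$; this $\mu$ is $\sigma$-finite. Condition (2) holds trivially with $K=X$. Every family is equi-integrable in the sense of Definition~\ref{def-equi}, because sets with $\mu(E)<1$ are null. Yet $\{\mathbf{1}_{\{n\}}\}_{n\ge1}$ is bounded and, by Schur's property, not weakly relatively compact in $\ell^1$. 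Your sufficiency argument breaks exactly here: for $E_j=\{n\ge j\}$ one has $\mu(E_j\cap K)=\infty$, so equi-integrability cannot be invoked. You should therefore work on a $\sigma$-compact metric space with a $\sigma$-finite measure that is finite on compact sets. The $\sigma$-compactness is also what you need for the exhaustion $K_m\uparrow X$. This setting covers every use of the lemma in the paper.

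Second, in the necessity of (2) you cannot apply Vitali--Hahn--Saks to $K_m^c$ ``in the same way'' as for (1). In $\R^d$ one has $\mu(K_m^c)=\infty$, so uniform smallness on sets of small $\mu$-measure gives no information on $K_m^c$. The sets $K_m^c\cap\{f_m>0\}$ also depend on $m$, so they do not form a fixed sequence decreasing to $\emptyset$. A clean fix is the following.
\begin{itemize}
\item Choose $w>0$ with $w\in L^1(\mu)$ and put $\tilde\mu=w\,\mu$.
\item The set functions $\nu_j(A)=\int_A f_j\,d\mu$ along your weakly convergent subsequence are $\tilde\mu$-continuous and converge for every $A$.
\item Vitali--Hahn--Saks with respect to the finite measure $\tilde\mu$ therefore gives $\sup_j|\nu_j(A)|\to0$ as $\tilde\mu(A)\to0$.
\item By dominated convergence, $\tilde\mu(K_m^c)\to0$. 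Applying the previous point to $K_m^c\cap\{f_m>0\}$ and $K_m^c\cap\{f_m<0\}$ along the subsequence gives the contradiction exactly as in your proof of (1).
\end{itemize}
Alternatively, you can use Nikodym's uniform countable additivity together with the standard passage from $\nu_j$ to $|\nu_j|$.
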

\begin{lemma}[Vitali Convergence Theorem, \cite{Dunford-Schwartz-1958,Folland-Real}]\label{L-VC} 
Suppose $1\leq p< \infty $ and $\{ f^n\}_{n\geq 1 } \subset L^p(X)$, then $\lim_{n\to \infty}\| f^n - f\|_{L^p(X)}=0$ if and only if 	 
 \begin{enumerate}
 \item {$f^n \to f $ in measure;}\label{measure}
 \item { $ \{f^n\} $  is equi-integrable;}
	\item {for any $\epsilon>0, $ there exists compact set $K \subset X$ such that 
	\begin{equation*}
	\sup_{n \geq 1}	\int_{K^c} |f^n (x)| dx  <\epsilon .
	\end{equation*}}
 \end{enumerate}
\end{lemma}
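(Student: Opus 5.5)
We prove the two implications separately. Throughout, we read ``equi-integrable'' for an $L^p$ family in the sense of Definition \ref{def-equi} applied to $\{|f^n|^p\}$, which is the standard meaning for $p\geq 1$. We also assume $X$ is a $\sigma$-finite space in which compact sets exhaust the mass of any single $L^p$ function; this holds for all the Euclidean domains used later.

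\emph{Necessity.} Assume $\|f^n-f\|_{L^p}\to 0$.
\begin{itemize}
\item Convergence in measure follows from Chebyshev's inequality: $\mu(\{|f^n-f|>\eta\})\leq \eta^{-p}\|f^n-f\|_{L^p}^p$.
\item For equi-integrability we use
\begin{equation*}
|f^n|^p\leq 2^{p-1}\left(|f^n-f|^p+|f|^p\right).
\end{equation*}
Given $\epsilon>0$, choose $N$ with $2^{p-1}\|f^n-f\|^p_{L^p}<\epsilon/2$ for $n>N$. The finitely many functions $f,f^1,\dots,f^N$ each have absolutely continuous integrals, so a single $\delta$ works for all of them. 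This gives $\int_E|f^n|^p<\epsilon$ whenever $\mu(E)<\delta$, uniformly in $n$.
\item Tightness is handled the same way. For the single functions $f,f^1,\dots,f^N$, inner regularity (or exhaustion by compacts) gives a compact $K$ with small $L^p$ mass outside. The same splitting then controls $\sup_n\int_{K^c}|f^n|^p$.
\end{itemize}

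\emph{Sufficiency.} Assume (1)--(3) hold.
\begin{itemize}
\item First we place $f$ in $L^p$. Convergence in measure yields a subsequence converging a.e. to $f$. Fatou's lemma gives $\int_E|f|^p\leq\liminf\int_E|f^{n_j}|^p$ for every measurable $E$. Hence $f$ inherits the equi-integrability and tightness bounds. In particular $\int_{K}|f|^p<\infty$ for the compact sets $K$ of (3), and these have finite measure. Combined with the uniform small tail from (3), this gives $f\in L^p$.
\item Fix $\epsilon>0$. Take $K$ compact with $\sup_n\int_{K^c}|f^n|^p<\epsilon$ and $\int_{K^c}|f|^p<\epsilon$. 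Take $\delta$ from equi-integrability for both $\{|f^n|^p\}$ and $|f|^p$. Let $A_n=\{x\in K:|f^n-f|>\eta\}$. We then split
\begin{equation*}
\int_X|f^n-f|^p\leq 2^{p-1}\int_{K^c}\left(|f^n|^p+|f|^p\right)+2^{p-1}\int_{A_n}\left(|f^n|^p+|f|^p\right)+\eta^p\mu(K).
\end{equation*}
\item The first term is at most $2^{p}\epsilon$.
\item By convergence in measure, $\mu(A_n)<\delta$ for large $n$, so the second term is at most $2^{p}\epsilon$.
\item Choosing $\eta$ with $\eta^p\mu(K)<\epsilon$ makes the third term small, since $\mu(K)<\infty$.
\end{itemize}
This yields $\limsup_n\|f^n-f\|^p_{L^p}\lesssim\epsilon$, which proves the claim.

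\emph{Main obstacle.} The step needing most care is showing that the limit $f$ lies in $L^p$ and is itself tight and equi-integrable, since the hypotheses only concern $f^n$. The Fatou argument along an a.e.-convergent subsequence transfers these properties to $f$. It is also worth noting that $\mu(K)<\infty$ for compact $K$, i.e.\ $\mu$ is locally finite, which is needed for the $\eta^p\mu(K)$ term.
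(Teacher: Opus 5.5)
Your proof is correct. It is the standard textbook argument behind the references the paper cites. The paper itself gives no proof of this lemma, only the citation, so there is no competing route to compare.

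One thing should be said explicitly rather than silently. You read condition (3), not just condition (2), with $|f^n|^p$ in place of $|f^n|$. For $p>1$ this is forced, because the literal statement fails in both directions.
\begin{itemize}
\item \emph{Sufficiency.} Take $X=[0,1]$, $p=2$, $f^n=\sqrt{n}\,\mathbf{1}_{[0,1/n]}$ and $f=0$. Then $f^n\to 0$ in measure, and $\int_E|f^n|\le\min(\sqrt{n}\,\mu(E),n^{-1/2})\le\sqrt{\mu(E)}$, so literal (2) holds. Condition (3) holds with $K=X$. Yet $\|f^n\|_{L^2}=1$ for every $n$.
\item \emph{Necessity.} Take the constant sequence $f^n=f$ with $f(x)=(1+|x|)^{-1}\in L^2(\mathbb{R})\setminus L^1(\mathbb{R})$. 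It converges in $L^2$ trivially, but $\int_{K^c}|f^n|=\infty$ for every compact $K$, so literal (3) fails.
\end{itemize}
Your reading is also the one the paper actually needs. When it invokes the lemma with $p=2$ for $\sqrt{f^n}\ast\rho_\delta$, what it checks is smallness of $\int_E|\sqrt{f^n}\ast\rho_\delta|^2$.

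A minor point concerns your first sufficiency step. The assertion $\int_K|f|^p<\infty$ needs a word of justification; for Lebesgue measure, cut $K$ into finitely many pieces of measure $<\delta$. The step can simply be dropped, however. Your final splitting uses only the Fatou bounds $\int_{K^c}|f|^p\le\epsilon$ and $\int_A|f|^p\le\epsilon$ for $\mu(A)<\delta$. The resulting estimate itself shows $f^n-f\in L^p$, and hence $f\in L^p$.
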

In fact, if a sequence converges in measure, there exists a subsequence that converges almost everywhere. In a finite measure space, convergence almost everywhere implies convergence in measure. Consequently, in the case of a finite measure space (where $X$ is a compact set), condition \eqref{measure} can be strengthened to the requirement that $f^n $ converges to $f$ almost everywhere. The detailed proof can be found in \cite{Folland-Real}.
 \begin{lemma}[Monotone Convergence Theorem, \cite{Brezis-2010}]\label{MCT}
 Let $\{f^n\}_{n\geq 1}$ be a sequence of measurable functions on $X$ and suppose that 
 \begin{enumerate}
 \item  {$  f_1 (x)  \leq f_2 (x)\leq \cdots  \leq f_n (x)  \leq f_{n+1} (x) \leq \cdots$ a.e.  on  $  X$;}
 \item  { $\sup_{n} \int_{   \R ^3 _x} f^n (x) d x < \infty $.}
 \end{enumerate}
 Then $f^n$ converges a.e. on $X$ to a finite limit, which we denote by $f$; the function $f$ belongs to $L^1( \R ^3_x)$ and 
 \begin{align*}
 \lim\limits_{n \to \infty} \int_{   \R ^3_x }| f^n  (x) -f(x) |dx =0 .
 \end{align*}
 \end{lemma}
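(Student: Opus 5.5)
This is the standard Monotone Convergence Theorem (Beppo Levi form, as in Brezis), so I would simply reproduce the classical argument. The domain is written as $\R^3_x$ in the conclusion while the hypotheses refer to $X$; I treat $X$ as a general measure space, with $\R^3_x$ being the case of interest. I also assume, as in the source, that each $f_n$ is integrable; otherwise the supremum hypothesis is read for the positive parts together with integrability of $f_1$.

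\textbf{Step 1: construct the limit.} By monotonicity, outside a null set $N$ (the countable union of the exceptional sets of the inequalities $f_n\le f_{n+1}$) the sequence $f_n(x)$ is nondecreasing. Hence it has a limit $f(x)\in(-\infty,+\infty]$. Set $g_n=f_n-f_1\ge 0$ a.e.; this sequence is nondecreasing and satisfies
\[
\int_X g_n \le \sup_m\int_X f_m - \int_X f_1 =: C<\infty .
\]

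\textbf{Step 2: the limit is finite and integrable.} I apply Fatou's lemma to the nonnegative functions $g_n$:
\[
\int_X \liminf_{n\to\infty} g_n \le \liminf_{n\to\infty}\int_X g_n \le C .
\]
Since $\liminf_n g_n=f-f_1$ a.e., the function $f-f_1$ is integrable. In particular it is finite a.e., so $f$ is finite a.e. and $f=(f-f_1)+f_1\in L^1$.

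\textbf{Step 3: convergence in $L^1$.} We have $0\le f-f_n \le f-f_1$ a.e., the dominating function $f-f_1$ is integrable, and $f-f_n\to 0$ a.e. By dominated convergence,
\[
\|f_n-f\|_{L^1}=\int_X (f-f_n)\to 0 .
\]

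\textbf{Main obstacle.} The only delicate point is Step 2: ruling out $f=+\infty$ on a set of positive measure. This is exactly where the uniform bound $\sup_n\int f_n<\infty$ enters through Fatou's lemma. The remaining steps are routine.
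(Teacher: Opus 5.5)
Your proof is correct. The paper gives no proof of this lemma; it simply quotes it from Brezis (Theorem 4.1 there), where it is recalled as a standard fact of Lebesgue integration, so there is no competing argument to compare with.

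Your route is the textbook one: Fatou's lemma on $g_n=f_n-f_1\ge 0$ gives $f-f_1\in L^1$, hence $f$ is finite a.e. and integrable, and then dominated convergence applies with the majorant $f-f_1$. It is not circular, because Fatou and dominated convergence are themselves derived from the monotone convergence theorem for \emph{nonnegative} functions, not from this $L^1$ version. You could also merge Steps 2 and 3 by applying that nonnegative version directly to $g_n\uparrow f-f_1$. It gives $\int (f-f_1)=\lim_n\int g_n\le C$, and then $\int(f-f_n)=\int(f-f_1)-\int g_n\to 0$ in one stroke.

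Keep your integrability caveat. Brezis states the theorem for $f_n\in L^1$, whereas the paper only says ``measurable'', and in that generality the statement is false. For instance, $f_n\equiv -1$ on $\R^3_x$ satisfies both hypotheses (with $\sup_n\int f_n=-\infty$), but its limit is not in $L^1(\R^3_x)$. Assuming only $f_1\in L^1$ already suffices. Indeed, $f_n\ge f_1$ gives $f_n^-\le f_1^-$, and then $\sup_n\int f_n<\infty$ forces $\int f_n^+<\infty$, i.e.\ every $f_n\in L^1$.
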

 \begin{lemma}[Lebesgue's Dominated Convergence Theorem, \cite{Brezis-2010}]\label{LCT}
Suppose that $\{f^n\}_{n\geq 1}$ is a sequence of measurable functions on $X$ such that 
\begin{enumerate}
	\item  {$ 	f^n (x) \rightarrow f  (x)$, a.e. on  $X$;}
	\item  {there is a function $g \in L^1 (X)$ such that for all $n \in \mathbb{N}$, $|f^n(x) | \leq g(x) $ , a.e. on  $X$.}
 \end{enumerate}
Then $f \in L^1 (X)$ and 
\begin{align*}
\lim\limits_{n \to \infty}  \int_{   \R ^3_x}| f^n  (x) -f(x) |dx =0 .
\end{align*} 
\end{lemma}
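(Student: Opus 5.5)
This is the classical Lebesgue Dominated Convergence Theorem, stated in the toolbox as a cited standard result. The plan is the textbook argument via Fatou's lemma applied to nonnegative auxiliary sequences.

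\emph{Integrability of the limit.} From $f^n\to f$ a.e. and $|f^n|\le g$ a.e., passing to the limit pointwise gives $|f|\le g$ a.e. Hence $f$ is measurable, as an a.e. limit of measurable functions, and $f\in L^1(X)$ because $g\in L^1(X)$.

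\emph{Convergence of the integrals.} Consider $h_n := 2g - |f^n - f|$. Since $|f^n-f|\le |f^n|+|f|\le 2g$ a.e., each $h_n$ is nonnegative and measurable. Moreover $h_n\to 2g$ a.e., because $|f^n-f|\to 0$ a.e. Fatou's lemma then yields
\begin{equation*}
\int_X 2g\,dx \le \liminf_{n\to\infty}\int_X h_n\,dx = \int_X 2g\,dx - \limsup_{n\to\infty}\int_X |f^n-f|\,dx .
\end{equation*}
Because $\int_X g\,dx<\infty$, we may subtract it from both sides. This gives $\limsup_{n}\int_X|f^n-f|\,dx\le 0$, which is the claim. (The theorem is written with $\int_{\R^3_x}$, but $X$ is meant, and the argument is identical on any measure space.)

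\emph{Where care is needed.} The only delicate point is the subtraction of $\int_X 2g$, which requires $g$ to be finite in integral, and this is exactly the hypothesis $g\in L^1$. Null sets are handled by redefining all functions to be $0$ on the countable union of the exceptional sets. Fatou's lemma itself follows from the Monotone Convergence Theorem (Lemma \ref{MCT}) applied to $\inf_{k\ge n} h_k$.
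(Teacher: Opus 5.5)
Your proof is correct: it is the standard argument of applying Fatou's lemma to the nonnegative functions $2g-|f^n-f|$. The paper gives no proof of its own and simply cites the result from Brezis as a standard fact. The only fine print is also satisfied in your setting. First, $f$ is measurable as an a.e. limit, which is automatic for the complete Lebesgue measure used here. Second, if you derive Fatou from the paper's version of the Monotone Convergence Theorem, which assumes $\sup_n\int f^n<\infty$, you need a finite bound, and you have $\int_X \inf_{k\ge n}h_k\le 2\int_X g<\infty$.
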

\begin{lemma}[Fatou's lemma, \cite{Brezis-2010}]\label{fatou}
	If $ f^n : X \rightarrow [0,\infty ]$ is measurable, for each positive integer $n$, then  
	\begin{align*}
	\int_{   \R ^3 _x}\liminf_{  n \rightarrow \infty } f^n (x)dx \leq \liminf_{  n \rightarrow \infty } \int_{   \R ^3 _x} f^n (x) dx
	\end{align*}
\end{lemma}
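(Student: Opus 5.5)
The plan is to reduce Fatou's lemma to the Monotone Convergence Theorem (Lemma \ref{MCT}) by passing to the running infima of the sequence. For each $n\ge 1$ I will set
\begin{equation*}
g_n(x) = \inf_{k \geq n} f^k(x), \qquad x \in X .
\end{equation*}
Each $g_n$ is measurable as a countable infimum of measurable $[0,\infty]$-valued functions. Moreover $0 \le g_1 \le g_2 \le \cdots$ pointwise, and by definition of the lower limit $g_n(x) \uparrow \liminf_{k\to\infty} f^k(x)$ for every $x$. Finally, $g_n \le f^k$ for every $k \ge n$. Integrating this last inequality gives $\int g_n \le \int f^k$ for all $k\ge n$, hence
\begin{equation*}
\int g_n \, dx \leq \inf_{k \geq n} \int f^k \, dx .
\end{equation*}

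Next I will let $n\to\infty$. The right-hand side increases to $\liminf_{k\to\infty}\int f^k$. For the left-hand side I need $\int g_n \to \int \liminf_k f^k$, which is a monotone convergence statement.

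The only point needing care is that Lemma \ref{MCT}, as stated, assumes $\sup_n \int g_n < \infty$. I will split into two cases.
\begin{itemize}
\item If $\liminf_{k}\int f^k = +\infty$, the inequality to be proved is trivial.
\item Otherwise, the displayed bound gives $\sup_n \int g_n \le \liminf_k \int f^k < \infty$, so the hypotheses of Lemma \ref{MCT} hold for the increasing sequence $g_n$. It then yields that the limit $\liminf_k f^k$ is integrable and $\int g_n \to \int \liminf_k f^k$.
\end{itemize}
Combining the two limits gives $\int \liminf_k f^k \le \liminf_k \int f^k$.

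I expect no real obstacle here. The only step requiring attention is checking that the finiteness hypothesis of the paper's version of the Monotone Convergence Theorem is available, which the case split above handles. Alternatively, one may invoke the general Beppo Levi theorem for nonnegative measurable functions, which permits infinite integrals and avoids the case distinction altogether.
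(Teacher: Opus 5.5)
Your proof is correct. The paper gives no proof of its own and only cites Brezis, whose argument is the same running-infimum reduction to monotone convergence: set $g_n=\inf_{k\ge n} f^k$, use $\int g_n\le\inf_{k\ge n}\int f^k$, then let $n\to\infty$. Your case split on whether $\liminf_k\int f^k$ is finite correctly supplies the hypothesis $\sup_n\int g_n<\infty$ (and hence $g_n\in L^1$) that the paper's version of Lemma~\ref{MCT} requires.
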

\begin{lemma}[Egorov's Theorem, \cite{Brezis-2010}]\label{egorov}
If a sequence of measurable functions $f^n$ converges to a measurable function $f$ almost everywhere, then for any $\delta > 0$ there exists a measurable set $\Omega_\delta \subseteq \Omega$ such that $\mu(\Omega_\delta) \geq \mu(\Omega) - \delta$ and $f_n$ converges to $f$ uniformly on $\Omega_\delta$. 
\end{lemma}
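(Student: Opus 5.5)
The statement is Egorov's theorem (Lemma \ref{egorov}). It holds under the implicit assumption that $\mu(\Omega)<\infty$; this is how it is used later, on bounded sets such as $[0,T]\times B_R(x)\times B_R(v)$. I would give the classical proof: quantify non-uniform convergence by level sets, and use continuity of the measure from above.

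First, let $Z\subseteq\Omega$ be the null set where $f^n\not\to f$ or where some $f^n$ or $f$ is infinite. Removing $Z$ costs nothing in measure. For integers $m,k\ge 1$ define the measurable sets
\begin{equation*}
E_{m,k}=\bigcup_{n\ge k}\left\{x\in\Omega\setminus Z:\ |f^n(x)-f(x)|\ge \tfrac1m\right\}.
\end{equation*}
For fixed $m$ these sets decrease in $k$. Their intersection over $k$ is empty, because at every $x\notin Z$ we eventually have $|f^n(x)-f(x)|<1/m$.

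The key step, and the only delicate point, is to conclude that $\mu(E_{m,k})\to0$ as $k\to\infty$. This is continuity from above, which requires $\mu(E_{m,1})\le\mu(\Omega)<\infty$; without finite measure the statement fails (take $f^n=\mathbf 1_{[n,n+1]}$ on $\mathbb R$). Given this, for each $m$ I choose $k_m$ with $\mu(E_{m,k_m})<\delta\,2^{-m}$.

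Finally, set $\Omega_\delta=\Omega\setminus\bigl(Z\cup\bigcup_{m\ge1}E_{m,k_m}\bigr)$. Then
\begin{equation*}
\mu(\Omega\setminus\Omega_\delta)\le\sum_{m}\delta\,2^{-m}=\delta,
\end{equation*}
so $\mu(\Omega_\delta)\ge\mu(\Omega)-\delta$. On $\Omega_\delta$, for every $m$ and every $n\ge k_m$ we have $\sup_{\Omega_\delta}|f^n-f|<1/m$, which is uniform convergence.
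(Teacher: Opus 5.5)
Your proof is correct and is the standard textbook argument: continuity from above for the decreasing sets $E_{m,k}$, followed by a $\delta 2^{-m}$ union bound. The paper gives no proof of its own and only cites \cite{Brezis-2010}. You are right to make explicit the omitted hypothesis $\mu(\Omega)<\infty$, and it holds in every later application, which are on $W_\epsilon\subset[0,T]\times B_R(x)$ and on $(0,R)\times B_R(x)\times B_R(v)$.

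One small correction to your side remark. The example $f^n=\mathbf{1}_{[n,n+1]}$ on $\mathbb{R}$ refutes only the form $\mu(\Omega\setminus\Omega_\delta)<\delta$. The literal condition $\mu(\Omega_\delta)\ge\mu(\Omega)-\delta=\infty$ is already satisfied by $\Omega_\delta=(-\infty,0)$, where $f^n\equiv 0$. The sequence $f^n=\mathbf{1}_{\{|x|\ge n\}}$ refutes both versions, since any set of uniform convergence must lie in some bounded interval $(-N,N)$.
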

\begin{lemma}[Product Limit Theorem, \cite{BGL-limit2-1993,Diperna-Lions}]\label{product limit} 
	Let $\mu$ be a finite, positive Borel measure on a Borel subset $X$ of $\R^3$. Assume that $f^n,f\in L^1(d \mu)$ and $g^n,g\in L^\infty(d \mu)$ satisfy
	\begin{enumerate}
		\item {$w-\lim \limits_{n\rightarrow \infty} f^n = f$ in $L^1(d \mu)$;}
		\item {$\{g^n\}_{n\geq 1}$ is bounded in $L^\infty (d \mu)$ and $\lim \limits_{n\rightarrow \infty} g^n=g, \text{a.e.}$}
	\end{enumerate}
	Then $w-\lim \limits_{n \rightarrow \infty} f^n g^n= fg$ in $L^1(d \mu)$, where “$w-\lim$” means weak limit.
\end{lemma}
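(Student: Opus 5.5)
We prove the lemma by splitting the product into two pieces, one controlled by weak convergence alone and one controlled by Egorov's theorem together with equi-integrability. Fix a test function $\psi \in L^\infty(d\mu)$; since $L^\infty(d\mu)$ is the dual of $L^1(d\mu)$ for the finite measure $\mu$, it suffices to show $\int_X (f^n g^n - fg)\psi\, d\mu \to 0$. We write
\begin{equation*}
f^n g^n - fg = f^n (g^n - g) + (f^n - f) g ,
\end{equation*}
and treat the two terms separately.

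\emph{The second term.} Let $C_0 = \sup_n \|g^n\|_{L^\infty(d\mu)}$. Since $g^n \to g$ a.e., we have $|g| \leq C_0$ a.e., so $g\psi \in L^\infty(d\mu)$. Hypothesis (1) then gives $\int_X (f^n - f) g \psi \, d\mu \to 0$ directly.

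\emph{The first term.} This is where the real work lies. We first extract two facts from the weak convergence of $f^n$:
\begin{itemize}
\item[(a)] By the uniform boundedness principle, $C_1 := \sup_n \|f^n\|_{L^1(d\mu)} < \infty$.
\item[(b)] The set $\{f^n\}$ is weakly relatively compact, so the necessity direction of the Dunford--Pettis theorem (Lemma \ref{theorem-dunford}) shows that $\{f^n\}$ is equi-integrable in the sense of Definition \ref{def-equi}.
\end{itemize}
Fact (b) is the only nontrivial input. The tightness condition in Lemma \ref{theorem-dunford} is harmless here because $\mu$ is finite; if one prefers a self-contained argument, equi-integrability of a weakly convergent sequence can also be proved directly via a Vitali--Hahn--Saks type argument.

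Now fix $\epsilon > 0$ and choose $\delta > 0$ such that $\sup_n \int_E |f^n|\, d\mu < \epsilon$ whenever $\mu(E) < \delta$. Since $\mu$ is finite and $g^n \to g$ a.e., Egorov's theorem (Lemma \ref{egorov}) gives a measurable set $\Omega_\delta$ with $\mu(X \setminus \Omega_\delta) < \delta$ on which $g^n \to g$ uniformly. We split the integral accordingly:
\begin{equation*}
\Big| \int_X f^n (g^n - g)\psi \, d\mu \Big| \leq \|\psi\|_{L^\infty}\, C_1 \sup_{\Omega_\delta} |g^n - g| \;+\; 2 C_0 \|\psi\|_{L^\infty} \int_{X\setminus \Omega_\delta} |f^n|\, d\mu .
\end{equation*}
The first piece tends to $0$ as $n \to \infty$ by uniform convergence on $\Omega_\delta$. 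The second piece is at most $2C_0\|\psi\|_{L^\infty}\epsilon$ for every $n$ by the choice of $\delta$. Hence the $\limsup$ of the left-hand side is at most $2C_0\|\psi\|_{L^\infty}\epsilon$, and since $\epsilon$ is arbitrary, the first term tends to $0$.

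Combining the two terms, $\int_X (f^n g^n - fg)\psi\, d\mu \to 0$ for every $\psi \in L^\infty(d\mu)$, which is the claimed weak convergence $f^n g^n \rightharpoonup fg$ in $L^1(d\mu)$.
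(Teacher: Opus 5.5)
The paper gives no proof of this lemma; it is quoted as a tool from the cited works of Bardos--Golse--Levermore and DiPerna--Lions. Your argument is the standard proof of the Product Limit Theorem and it is correct: weak convergence alone handles $(f^n-f)g$, and the term $f^n(g^n-g)$ is handled by combining the uniform $L^1$ bound, the equi-integrability from the necessity half of Dunford--Pettis, and Egorov's theorem on the finite measure space.
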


\begin{lemma}[M. Riesz-Fr\'{e}chet-Kolmogorov Theorem, \cite{Brezis-2010}]\label{MRFK}
Suppose $\mathcal{F}$ is a bounded subset of $L^p ( \R^3_x) , \ 1 \leq p < +\infty $. Then $\mathcal{F}$ is relatively compact  if and only if the following statements hold:
\begin{enumerate}
\item { for any $\epsilon >0$, there exists $\delta >0$ such that for any function $f \in \mathcal{F} $ and any $ h\in \R^3$ with $ |h | < \delta$ one has 
\begin{equation*}
	\int_{ \R ^3 _x} | f( x+ h) - f( x ) |^p d x < \epsilon ;
	\end{equation*}}
\item { for any $\epsilon >0$, there exists a bounded measurable set $ K \subset \R^3$ such that for any $ f \in \mathcal{F} $,
\begin{equation*}
	\int_{\R^3 _x \setminus K } |f( x ) |^p d x < \epsilon.
	\end{equation*} }
\end{enumerate}
\end{lemma}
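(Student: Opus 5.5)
The statement just above is the M.~Riesz--Fr\'echet--Kolmogorov theorem (Lemma \ref{MRFK}). It is a classical result cited from \cite{Brezis-2010}, so in the paper I would simply invoke it. If a proof is wanted, I would give the standard one, which reduces the criterion to the Arzel\`a--Ascoli theorem by mollification.

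\emph{Necessity.} Assume $\mathcal{F}$ is relatively compact in $L^p(\R^3_x)$. Given $\epsilon>0$, cover $\mathcal{F}$ by finitely many $L^p$-balls of radius $\epsilon$ centred at $f_1,\dots,f_m$, and choose the centres in $C_c(\R^3_x)$ by density. Each $f_i$ is uniformly continuous with compact support, so $\|f_i(\cdot+h)-f_i\|_{L^p}\to 0$ as $h\to0$, and its support lies in a bounded set. Translation is an isometry of $L^p$, so by the triangle inequality
\[
\|f(\cdot+h)-f\|_{L^p}\le 2\|f-f_i\|_{L^p}+\|f_i(\cdot+h)-f_i\|_{L^p},
\]
which yields (1). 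Taking $K$ to be the union of the supports of the $f_i$ yields (2).

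\emph{Sufficiency.} Assume (1) and (2). Let $\rho_\delta$ be a standard mollifier supported in $B_\delta$. Minkowski's integral inequality gives
\[
\|\rho_\delta * f - f\|_{L^p}\le \sup_{|h|<\delta}\|f(\cdot-h)-f\|_{L^p},
\]
which is small uniformly in $f\in\mathcal{F}$ by (1). Fix $\delta$ so that this error is below $\epsilon$. By (2), choose a bounded set $K$ so that the tails outside $K$ are below $\epsilon$. By H\"older's inequality and the boundedness of $\mathcal{F}$, the restrictions to $\overline{K}$ of the family $\{\rho_\delta*f : f\in\mathcal{F}\}$ are uniformly bounded and equicontinuous. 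Indeed,
\[
|\rho_\delta*f(x)-\rho_\delta*f(y)|\le \|\rho_\delta(x-\cdot)-\rho_\delta(y-\cdot)\|_{L^{p'}}\,\|f\|_{L^p}.
\]
Arzel\`a--Ascoli then makes this family relatively compact in $C(\overline{K})$, and hence in $L^p(K)$ because $K$ has finite measure. So it can be covered by finitely many balls of radius $\epsilon$. Adding back the mollification error and the tail shows that $\mathcal{F}$ is totally bounded in $L^p$ up to $O(\epsilon)$. Since $L^p$ is complete, $\mathcal{F}$ is relatively compact.

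The only delicate step is the total-boundedness assembly. The net built on $K$ for $\rho_\delta*f$ must be converted into a net for $f$ on all of $\R^3_x$ by controlling three errors at once: the mollification error, the tail outside $K$, and the net radius. This is a routine $3\epsilon$ argument.
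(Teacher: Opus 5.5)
Your proof is correct. Necessity follows from a finite $\epsilon$-net of $C_c$ functions, and sufficiency from mollification, Arzel\`a--Ascoli on $\overline{K}$, and a $3\epsilon$ total-boundedness assembly; this is the standard argument of the cited source (Brezis, Theorem 4.26 and Corollary 4.27). The paper itself gives no proof and simply invokes that reference. The only cosmetic point is that replacing the net centres by $C_c$ approximants doubles the radius, which is harmless.
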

Next, we present the weak lower semicontinuity of entropy, See Lemma 2.6 in \cite{Elmroth-H}.
\begin{lemma}\label{Lemma 4.6}
	Assume that $f^n (x,v ) \in L^1(\R^3 _x \times \R^3_v )_+$ satisfies
	\begin{enumerate}
		\item {there exists a constant $M >0$ such that 
			$$\Vert f^n \Vert_{L^1_2( \R^3_x \times \R^3_v )} \leq M ,\ f^n   \log f^n  \in L^1(\R^3_x  \times \R^3_v  ); $$ }
		\item the sequence {$\{ f^n    \}_{n\geq 1}  $ converges to $f   $ weakly in $ L^1(\R^3_x  \times \R^3_v  ) $. }
	\end{enumerate} 
	Then 
	\begin{equation}\label{eq4.28}  
\iint_{\R^3_x \times \R^3_v } f  \log f  dxdv  	\leq   \liminf \limits_{n\rightarrow \infty }   \iint_{\R^3 _x\times \R^3 _v} f^n  \log f^n  dxdv  . 
	\end{equation} 
\end{lemma}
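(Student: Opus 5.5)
The statement is the weak lower semicontinuity of $H(f)=\iint f\log f$ under weak $L^1$ convergence with uniform second moments. The plan is to split $f\log f$ into a convex part, which is weakly lower semicontinuous, and a correction part, which the moment bound makes small uniformly in $n$. We write
\begin{equation*}
f\log f = \bigl(f\log f + f\,\langle x,v\rangle\bigr) - f\,\langle x,v\rangle ,
\end{equation*}
where $\langle x,v\rangle$ stands for a weight such as $\tfrac12(|x|^2+|v|^2)$. This is the classical trick of comparing $f$ with the Gaussian $\mu=e^{-(|x|^2+|v|^2)/2}$, i.e.\ passing to relative entropy. Concretely,
\begin{equation*}
\iint f\log f = \iint \bigl(f\log(f/\mu) - f + \mu\bigr) - \tfrac12\iint f(|x|^2+|v|^2) + \iint f - \iint\mu .
\end{equation*}
The integrand $\Phi(f)=f\log(f/\mu)-f+\mu\ge0$ is convex in $f$ pointwise.

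\textbf{Step 1: the convex part.} Lower semicontinuity of $f\mapsto\iint\Phi(f)\,dx\,dv$ under weak $L^1$ convergence follows from convexity and nonnegativity. We truncate the domain to $B_R(x)\times B_R(v)$ and write $\Phi$ as a supremum of affine functions $a(x,v)f+b(x,v)$ with bounded $a$ on that ball. By Mazur's lemma, convex combinations of the $f^n$ converge strongly, hence a.e. along a subsequence, and Fatou's lemma (Lemma \ref{fatou}) applies to the nonnegative integrand. Letting $R\to\infty$ by monotone convergence gives
\begin{equation*}
\iint\Phi(f)\le\liminf_{n\to\infty}\iint\Phi(f^n).
\end{equation*}

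\textbf{Step 2: the linear terms.} The mass term passes to the limit directly, since $1$ is an admissible $L^\infty$ test function. The moment term is the main obstacle: $|x|^2+|v|^2$ is not bounded, so weak convergence gives only
\begin{equation*}
\iint f(|x|^2+|v|^2)\le\liminf_{n\to\infty}\iint f^n(|x|^2+|v|^2),
\end{equation*}
obtained via truncation $\min(|x|^2+|v|^2,K)$ and monotone convergence. This inequality has the wrong sign, because the moment enters with a minus sign. To repair it, we replace the Gaussian weight by a subquadratic one, $\mu_\epsilon=e^{-\epsilon\langle x,v\rangle^{s}}$ with $s<2$, for example weight $\epsilon(1+|x|+|v|)$. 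The uniform $L^1_2$ bound $M$ makes $\{f^n(1+|x|+|v|)\}$ tight, hence $\iint f^n(1+|x|+|v|)\to\iint f(1+|x|+|v|)$ under weak convergence (truncate the weight, with tails $\le M/K$). Repeating Step 1 with $\mu_\epsilon$ then yields the inequality \eqref{eq4.28} exactly, with no error term; $\mu_\epsilon$ is integrable, so $\Phi$ remains well defined.

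\textbf{Step 3: consistency.} It remains to check that $\iint f\log f$ is well defined for the limit. Its negative part is controlled by the classical bound $f|\log f|\mathbf 1_{f<1}\le f(|x|+|v|)+Ce^{-(|x|+|v|)/2}$, and the positive part is controlled from Step 1. This makes all manipulations legitimate and gives \eqref{eq4.28}.
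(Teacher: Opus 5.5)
Your argument is correct, and it is not really comparable to the paper's, since the paper gives no proof of this lemma: it only quotes Lemma 2.6 of \cite{Elmroth-H}. What you supply is the standard self-contained proof of that cited result. You rewrite $\iint f\log f$ as a relative entropy against an integrable reference $\mu_\epsilon=e^{-w}$. The nonnegative convex part $\Phi$ is then weakly lower semicontinuous, and the linear term $\iint f w$ actually converges.

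You also located the one place where the hypotheses genuinely matter. With the Gaussian weight $w=\tfrac12(|x|^2+|v|^2)$, weak convergence only gives lower semicontinuity of $\iint f w$, and that term enters with a minus sign, so this is the wrong direction. With a strictly subquadratic weight such as $w=\epsilon(1+|x|+|v|)$, the uniform $L^1_2$ bound makes $\{f^n w\}$ uniformly small on $\{w>K\}$, with tails $O(M/K)$. For the limit $f$ the same tail bound holds, because lower semicontinuity with truncated weights gives $\|f\|_{L^1_2}\le M$. Hence $\iint f^n w\to\iint f w$, and \eqref{eq4.28} follows with no error term.

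Tighten two details in Step 1.
\begin{itemize}
\item The representation of $\Phi$ as a supremum of affine minorants and the Mazur--Fatou argument are two independent proofs of the same lower semicontinuity. Either one alone suffices, so pick one.
\item If you use Mazur, first extract a subsequence along which $\iint\Phi(f^n)$ converges to its $\liminf$, and take the convex combinations from the tails of that subsequence. Without this, convexity plus Fatou only bounds $\iint\Phi(f)$ by the $\limsup$.
\end{itemize}

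Step 3 is essentially automatic. If $\liminf_n\iint f^n\log f^n<\infty$, Step 1 gives $\Phi(f)\in L^1$. Together with $f w$, $f$ and $\mu_\epsilon$ being integrable, this yields $f\log f\in L^1$ and the decomposition holds for $f$ without any separate bound.
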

\begin{lemma}[Averaged velocity lemma, \cite{Glose-velocity,Glose-perthame-velocity}]\label{theo-6.2.1}
	Let $g^n (t,x, v )$ converge weakly to $g (t, x, v )$ in $L^1_{loc} ( (0,T) \times \R^3 _x\times \R^3_v )$, and $ f^n (t, x, v ) $ converge weakly to $f (t, x, v )$ in $L^1 ( (0,T) \times \R^3_x \times \R^3 _v)$. Moreover, they satisfy
	\begin{equation*}
		\tfrac{\partial f^n}{\partial t} + v \cdot \nabla_x f^n = g^n \quad \text{in}\ \mathscr{D}' ((0,T) \times \R^3 _x\times \R^3 _v) .
	\end{equation*}
	Set $\{\phi^n (t,x, v )\}_{n\geq 1} \subset L^\infty ((0,T)\times \R^3_x \times \R^3_v )$ converge to $ \phi (t, x, v ) $ a.e. $(0,T) \times \R^3_x \times \R^3_v$. Then the sequence $\int_{\R^3_v} f^n\phi^n dv  $ converges strongly to $ \int_{\R^3_v } f \phi dv $ in $L^1((0,T)\times \R^3_x)$.
\end{lemma}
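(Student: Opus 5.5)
We would prove Lemma \ref{theo-6.2.1} in two stages. First we treat a fixed velocity weight, i.e. $\phi^n=\phi$. Then we remove the $n$-dependence of $\phi^n$ by an Egorov-type argument. Throughout we use only that $\{f^n\}$ and $\{g^n\}$ are weakly relatively compact. By Lemma \ref{theorem-dunford}, $\{f^n\}$ is bounded, equi-integrable and tight in $L^1((0,T)\times\R^3_x\times\R^3_v)$, and the same holds for $\{g^n\}$ on every compact set.

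\textbf{Fixed weight, reduction to $L^2$.} By tightness and the density of $C_c^\infty$ in $L^1$ under bounded a.e. convergence, it suffices to take $\phi=\psi(v)\chi(t,x)$ with $\psi\in C_c^\infty(\R^3_v)$ and $\chi$ a smooth cutoff in $(t,x)$. Replacing $f^n$ by $\chi f^n$ changes the source only by $f^n(\partial_t+v\cdot\nabla_x)\chi$, which is still weakly compact. So we may assume everything is compactly supported and the equation holds on $\R\times\R^3_x\times\R^3_v$. Fix $\lambda>0$ and write
\[
f^n=(\lambda+\partial_t+v\cdot\nabla_x)^{-1}(\lambda f^n+g^n).
\]
Set $h^n=\lambda f^n+g^n$ and split $h^n=h^n\mathbf 1_{|h^n|\le M}+h^n\mathbf 1_{|h^n|>M}$. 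The resolvent is given explicitly by integrating along characteristics against $e^{-\lambda s}$, so it is a contraction on $L^1$ with norm at most $1/\lambda$. Hence the part of $f^n$ generated by $h^n\mathbf 1_{|h^n|>M}$ has $L^1$ norm at most
\[
\lambda^{-1}\sup_n\|h^n\mathbf 1_{|h^n|>M}\|_{L^1},
\]
which is small uniformly in $n$ once $M$ is large (for fixed $\lambda$), by equi-integrability.

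\textbf{Fixed weight, $L^2$ averaging.} The remaining part $F^n_M$ solves $(\lambda+\partial_t+v\cdot\nabla_x)F^n_M=H^n_M$, with $H^n_M$ bounded in $L^1\cap L^\infty$ and hence in $L^2$. We apply the classical $L^2$ averaging lemma: Fourier transform in $(t,x)$, then split the $v$-integral according to whether $|\tau+v\cdot\xi|$ is smaller or larger than $\varepsilon|\xi|$. This shows that $\int F^n_M\psi\,dv$ is bounded in $H^{1/2}_{t,x}$, uniformly in $n$. By Rellich and Lemma \ref{MRFK}, these averages are relatively compact in $L^2_{loc}$, hence in $L^1$ on the compact support. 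A diagonal argument in $M$ then shows that $\{\int f^n\psi\,dv\}$ is totally bounded in $L^1((0,T)\times\R^3_x)$. The weak convergence $f^n\rightharpoonup f$ identifies the only possible limit, so the whole sequence converges strongly to $\int f\psi\,dv$.

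\textbf{Variable weights.} We may assume $|\phi^n|\le C$, since this is implicit in the statement (uniform boundedness in $L^\infty$ is needed anyway). Write
\[
\int f^n\phi^n\,dv-\int f\phi\,dv=\int f^n(\phi^n-\phi)\,dv+\Big(\int f^n\phi\,dv-\int f\phi\,dv\Big).
\]
The second term tends to zero by the previous step, extended from smooth $\phi$ to $L^\infty$ weights by density, since the error $\int|f^n||\phi-\phi_\varepsilon|$ is uniformly small by equi-integrability plus tightness. For the first term, its $L^1_{t,x}$ norm is at most $\iiint|f^n||\phi^n-\phi|$. We restrict to a large compact set using tightness, and then use Lemma \ref{egorov} to obtain uniform convergence $\phi^n\to\phi$ off a set of small measure. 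On that small set, equi-integrability of $f^n$ together with $|\phi^n-\phi|\le 2C$ makes the contribution small.

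The main obstacle is the $L^1$ (rather than $L^2$) setting. The $H^{1/2}$ averaging estimate is only available in $L^2$, and it is the resolvent/truncation device above that transfers it to $L^1$. The point to check carefully is the order of limits: $\lambda$ is fixed first, then $M$ is taken large, and equi-integrability is used uniformly in $n$.
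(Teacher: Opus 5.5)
The paper gives no proof of this lemma; it imports it from the cited works of Golse, Lions, Perthame and Sentis, in the form used by DiPerna--Lions. Your argument is a correct reconstruction of that standard proof. Its three ingredients are the $H^{1/2}$ averaging estimate in $L^2$, the transfer to weakly compact $L^1$ data by truncating $\lambda f^n+g^n$ under the resolvent $(\lambda+\partial_t+v\cdot\nabla_x)^{-1}$ with $\lambda$ fixed before $M$, and Egorov plus equi-integrability and tightness for the $n$-dependent weights.

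You are right that $\sup_n\|\phi^n\|_{L^\infty}<\infty$ must be added as a hypothesis. Without it the statement is false. Take $f^n=f$ fixed and smooth, with $f,(\partial_t+v\cdot\nabla_x)f\in L^1$ and $f>0$ near an interior point $z_0$. Take $\phi^n=c_n\mathbf 1_{B_{1/n}(z_0)}$, normalized so that $\iiint f\phi^n=1$. Then $\phi^n\to 0$ a.e., but $\int f\phi^n\,dv$ has $L^1_{t,x}$-norm $1$ for every $n$. The paper's later applications do use bounded sequences, so adding this hypothesis costs nothing there.

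One small fix is needed in your localization. The lemma assumes no velocity moments of $f^n$, and $g^n$ is only locally weakly compact in $v$ as well. So with the cutoff $\chi(t,x)$ alone, neither $f^n\,v\cdot\nabla_x\chi$ nor $\chi g^n$ need be integrable. Cut off instead with $\chi(t,x)\zeta(v)$, where $\zeta\in C_c^\infty(\R^3_v)$ and $\zeta\psi=\psi$. This is harmless, since $v\cdot\nabla_x$ does not act on $\zeta$.
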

Based on Lemma \ref{theo-6.2.1}, one can generalize the previous result. Let $(E,\mu )$ denote the given measure space.
\begin{lemma}[Generalized averaged velocity lemma, \cite{Glose-velocity,Glose-perthame-velocity}]\label{theo-6.4.1}
Suppose that the bounded sequence $\{	\phi^n (t, x, v , e)\}_{n\geq 1}$ in $L^\infty ((0,T)\times \R^3_x \times \R^3 _v; L^1(E))$ converges to a function $\phi \in L^\infty ((0,T)\times \R^3_x \times \R^3 _v; L^1(E))$ such that the following limit holds 
\begin{equation*}  
	\lim \limits_{n\rightarrow \infty} \Vert \phi^n -\phi \Vert_{L^1(E)}(t,x,v )=0 \text{ a.e. } (0,T)\times \R^3_x \times \R^3_v. 
\end{equation*} 
Moreover, assume that $f^n (t, x, v )$ and $g^n (t, x, v )$ satisfy the conditions of Lemma \ref{theo-6.2.1}. It follows that the sequence $\int_{\R^3_v } f^n\phi^n dv  $ converges strongly to $ \int_{\R^3 _v} f \phi dv $ in $L^1((0,T)\times \R^3_x\times E)$.
\end{lemma}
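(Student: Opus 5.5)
The plan is to reduce the statement to Lemma \ref{theo-6.2.1} by two approximations. The first removes the $n$-dependence of $\phi^n$. The second reduces $\phi$ to finite sums of products $a(t,x,v)\chi(e)$, for which the $e$-variable simply factors out. Throughout, write $\Phi^n(t,x,e)=\int_{\R^3_v} f^n\phi^n\,dv$ and $\Phi(t,x,e)=\int_{\R^3_v} f\phi\,dv$; the goal is $\|\Phi^n-\Phi\|_{L^1((0,T)\times\R^3_x\times E)}\to 0$.

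\emph{Step 1 (removing $n$ from the test function).} By Fubini, and since $f^n\ge 0$ (or using $|f^n|$ in general),
\begin{equation*}
\int_E\Big|\int_{\R^3_v} f^n(\phi^n-\phi)\,dv\Big|\,d\mu(e)\le \int_{\R^3_v} |f^n|\,\psi^n\,dv,\qquad \psi^n:=\|\phi^n-\phi\|_{L^1(E)}(t,x,v).
\end{equation*}
By hypothesis $\psi^n$ is bounded in $L^\infty((0,T)\times\R^3_x\times\R^3_v)$ and $\psi^n\to 0$ a.e. Applying Lemma \ref{theo-6.2.1} with test functions $\psi^n$ and limit $0$ gives $\int f^n\psi^n\,dv\to 0$ in $L^1((0,T)\times\R^3_x)$. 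If $f^n$ is not signed, I would apply this to $f^n_\pm$, or simply assume $f^n\ge0$ as in all applications in the paper. It therefore suffices to treat $\int f^n\phi\,dv$ with $\phi$ independent of $n$.

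\emph{Step 2 (approximation of $\phi$).} Regard $\phi$ as a strongly measurable, bounded map into $L^1(E)$. Choose countably-valued, hence after truncation finite-sum, approximations $\phi_m=\sum_{i\le N_m} a_i^m(t,x,v)\chi_i^m(e)$ with $a_i^m$ indicator functions and $\chi_i^m\in L^1(E)$. They should satisfy $\|\phi_m-\phi\|_{L^1(E)}\to0$ a.e. in $(t,x,v)$ and $\|\phi_m\|_{L^1(E)}\le 2\|\phi\|_{L^\infty L^1(E)}$. Set $\rho_m=\|\phi_m-\phi\|_{L^1(E)}$ and split
\begin{equation*}
\|\Phi^n-\Phi\|_{L^1}\le \Big\|\int f^n\rho_m\,dv\Big\|_{L^1_{t,x}}+\Big\|\int (f^n\phi_m-f\phi_m)\,dv\Big\|_{L^1_{t,x,e}}+\Big\|\int f\rho_m\,dv\Big\|_{L^1_{t,x}}.
\end{equation*}
The third term tends to $0$ as $m\to\infty$ by dominated convergence, since $f\in L^1$ and $\rho_m$ is bounded with $\rho_m\to0$ a.e. For fixed $m$, Lemma \ref{theo-6.2.1} with the constant sequence $\rho_m$ shows that the first term converges to $\|\int f\rho_m\,dv\|_{L^1_{t,x}}$ as $n\to\infty$, so its $\limsup_n$ is also small for large $m$. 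The middle term is bounded by $\sum_i\|\chi_i^m\|_{L^1(E)}\,\|\int (f^n-f)a_i^m\,dv\|_{L^1_{t,x}}$. This is a finite sum, and each summand tends to $0$ by Lemma \ref{theo-6.2.1} with constant test function $a_i^m$.

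Taking first $\limsup_{n\to\infty}$ and then $m\to\infty$ concludes the argument. The main delicate point is Step 2. One needs approximations $\phi_m$ that are uniformly bounded in $L^1(E)$ and converge \emph{pointwise a.e.} in $(t,x,v)$ in the $L^1(E)$ norm, so that Lemma \ref{theo-6.2.1} applies to $\rho_m$. This requires strong (Bochner) measurability of $\phi$, i.e. essentially separability of the relevant part of $L^1(E)$. I would invoke Pettis' theorem, noting that in the applications $E$ is a $\sigma$-finite subset of Euclidean space, so $L^1(E)$ is separable.
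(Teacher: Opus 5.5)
Your proof is correct. The paper states this lemma without proof, quoting it from the averaging-lemma literature, and your argument is the standard DiPerna--Lions one: remove the $n$-dependence via the bound $\int_{\R^3_v}|f^n|\,\|\phi^n-\phi\|_{L^1(E)}\,dv$ together with Lemma \ref{theo-6.2.1}; then approximate $\phi$ a.e.\ in $L^1(E)$-norm by uniformly bounded finite sums $\sum_i a_i(t,x,v)\chi_i(e)$, apply Lemma \ref{theo-6.2.1} term by term, and control the error $\rho_m$ by Lemma \ref{theo-6.2.1} and dominated convergence. The measurability caveat is harmless, since in the paper's only application $E$ is a bounded subset of Euclidean space and $L^1(E)$ is separable. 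Likewise, your remark on signed $f^n$ is unnecessary because $f^n\ge 0$ throughout the paper.
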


\section{global existence of approximation equation}\label{Sec-GEAE}
In this section, we focus on constructing the following approximate problem, which is given by  
\begin{equation} \label{approximation} 
	\tfrac{\partial f^n}{\partial t} + v \cdot \nabla_x f^n =\tilde{Q}_n(f^n, f^n) \,, \ f^n|_{t=0}=f^n_{0} \,, 
\end{equation}  
where $f^n_{0} = f^n( 0 ,x , v )$ denotes the approximate initial data and $\tilde{Q}_n (f^n, f^n)$ represents the nonlinear collision operator which will be explicitly constructed later.    Subsequently, we establish the existence, positivity  and uniqueness of the approximate problem. In addition, we verify the regularity of the solution to equation \eqref{approximation}. Finally, we derive the conservation laws, the entropy inequality, and the uniform-in-$n$ bounds associated with the approximate problem \eqref{approximation}.     
 
\begin{lemma}\label{Lemma 4.8}
	Assume that $f_0  \in L^1_2 ( \R^3_x \times \R^3_v )_+ $   satisfies \eqref{f0-p}, i.e.,
	\begin{equation*}
		\iint_{\R^3 _x \times \R^3_v } f_{ 0 } (x , v  )   (1+|x|^2 +|v|^2 + |\log  f_{ 0 } |)  dxdv  <+ \infty .
	\end{equation*}
	Then, there exists a sequence $\{f^n_{ 0 }\}_{n\geq 1} \subset \mathscr{S} (\R^3_x \times \R^3_v )$ (where $\mathscr{S}$ denotes the Schwartz space over $\R^3_x \times \R^3_v $) and a constant $C > 0$ such that 
	\begin{equation}\label{Ap-fs0}
		\begin{aligned}
			& \lim \limits_{n\rightarrow \infty}  \iint_{\R^3_x \times \R^3_v } (1+|x|^2 +| v |^2) |f^n_{ 0 } - f_{ 0 }| d x d v =0 \,, \ f^n_{ 0 } \geq \tfrac{1}{n} \exp \left(- \tfrac{|x|^2+ |v|^2 }{2} \right) \,, \\
			& \iint_{\R^3_x \times \R^3_v } f^n_{ 0 }  (1+|x|^2 +|v|^2 + |\log f^n_{ 0 }|)d x d v   \leq C \,, \\
			& \lim \limits_{n\rightarrow \infty} \iint_{\R^3_x \times \R^3_v} f^n_{ 0 } \log f^n_{ 0 }  d x d v = \iint_{\R^3_x \times \R^3_v } f_{ 0 } \log  f_{ 0 } dxdv \,.
		\end{aligned}
	\end{equation}
\end{lemma}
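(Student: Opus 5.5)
We build $f_0^n$ in three stages: truncate $f_0$, mollify it, and add a small Gaussian. Write $G(x,v)=\exp(-\frac{|x|^2+|v|^2}{2})$ and $\langle x,v\rangle^2 = 1+|x|^2+|v|^2$.

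\textbf{Stage 1: truncation.} Let $f_0^{(n)} = \min(f_0, n)\,\mathbf{1}_{|x|^2+|v|^2\le n^2}$. By monotone convergence (Lemma \ref{MCT}) and $f_0\langle x,v\rangle^2\in L^1$, we have $f_0^{(n)}\to f_0$ in $L^1(\langle x,v\rangle^2 dxdv)$. Since $0\le f_0^{(n)}\le f_0$, and since on the set where $f_0^{(n)}\neq 0$ we have $f_0^{(n)}\log f_0^{(n)}\to f_0\log f_0$ pointwise with $|f_0^{(n)}\log f_0^{(n)}|\le |f_0\log f_0| + C f_0$, dominated convergence (Lemma \ref{LCT}) also gives convergence of the entropy.

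\textbf{Stage 2: mollification.} Set $g_n = f_0^{(n)} * \rho_{\epsilon_n}$ with a Gaussian mollifier in $(x,v)$, so that $g_n$ is smooth with Gaussian-type tails (convolution of a compactly supported bounded function with a Gaussian). Then:
\begin{itemize}
\item[(i)] $g_n$ is Schwartz, since every derivative falls on the Gaussian and is Schwartz.
\item[(ii)] Choosing $\epsilon_n\to0$ fast enough, $\|g_n - f_0^{(n)}\|_{L^1(\langle x,v\rangle^2)}\le 1/n$.
\item[(iii)] $g_n\le n$, and $g_n\log g_n \to$ the same limit as $f_0^{(n)}\log f_0^{(n)}$. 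For the positive part of $\log$, convexity of $s\log s$ with Jensen gives $\int g_n\log g_n \le \int f_0^{(n)}\log f_0^{(n)}$. Conversely, a.e.\ convergence along a subsequence plus lower semicontinuity (Lemma \ref{Lemma 4.6}) gives the liminf inequality. Together these give convergence of $\int g_n\log g_n$.
\end{itemize}

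\textbf{Stage 3: Gaussian lift.} Finally set $f_0^n = g_n + \frac1n G$, which is Schwartz and satisfies the required lower bound. The weighted $L^1$ convergence follows because $\frac1n\int G\langle x,v\rangle^2\to0$. For the entropy, $(g_n+\frac1n G)\log(g_n+\frac1n G)-g_n\log g_n$ is controlled by the mean value theorem and the elementary inequality $|s\log s - t\log t|\le |s-t|(1+|\log s|+|\log t|)$. The factor $|\log(\frac1n G)|\lesssim \log n + |x|^2+|v|^2$ is multiplied by $\frac1n G$, giving an error of order $\frac{\log n}{n}\to0$.

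\textbf{Main obstacle: the uniform bound on $\int f_0^n|\log f_0^n|$.} Since the positive part is handled by the entropy convergence, it suffices to bound the negative part $\int f_0^n \log^- f_0^n$. We use the standard splitting: where $f\ge e^{-|x|^2-|v|^2}$, we have $\log^-f\le |x|^2+|v|^2$. Where $f< e^{-|x|^2-|v|^2}$, we use $f\log^- f\le C\sqrt f\le Ce^{-(|x|^2+|v|^2)/2}$. This yields
\[
\int f\log^- f \le \int f\,(|x|^2+|v|^2) + C,
\]
which is uniformly bounded because the weighted moments of $f_0^n$ are. Combined with the convergence of $\int f_0^n\log f_0^n$, this gives the uniform constant $C$. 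Finally, a diagonal choice of $\epsilon_n$ ensures all three limits in \eqref{Ap-fs0} hold simultaneously.
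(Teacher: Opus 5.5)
Your truncate--mollify--lift scheme is the standard one (the paper omits this proof and defers to earlier work), and Stages 1 and 2 are sound. Jensen for $s\log s$ gives $\int g_n\log g_n\le\int f_0^{(n)}\log f_0^{(n)}$, and Lemma~\ref{Lemma 4.6} gives the matching $\liminf$. Your splitting at $f=e^{-|x|^2-|v|^2}$ correctly yields the uniform bound on $\int f_0^n\log^- f_0^n$.

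The step that fails as written is the entropy error in Stage 3. You apply $|s\log s-t\log t|\le|s-t|(1+|\log s|+|\log t|)$ with $s=g_n+\frac1nG$ and $t=g_n$. This produces $\frac1n\int G\,|\log g_n|$, not $\frac1n\int G\,|\log(\frac1nG)|$, and $|\log g_n|$ is not bounded by $\log n+|x|^2+|v|^2$. For instance, suppose $f_0$ vanishes on an open set. At distance $d$ from $\supp f_0$ one has $g_n\lesssim\epsilon_n^{-6}e^{-d^2/(2\epsilon_n^2)}$, so this term is at least of order $1/(n\epsilon_n^2)$. With $\epsilon_n\to0$ chosen ``fast enough'' as in Stage 2, it need not tend to zero at all.

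The fix is to avoid the mean value theorem at the small endpoint and use one-sided bounds instead. For $\Phi(s)=s\log s$, $a\ge0$ and $b>0$,
\[
b\log b\;\le\;\Phi(a+b)-\Phi(a)\;\le\;b\bigl(1+\log(a+b)\bigr).
\]
The right inequality is convexity (the tangent line at $a+b$). The left one follows from $\Phi(a+b)-\Phi(a)-\Phi(b)=a\log\frac{a+b}{a}+b\log\frac{a+b}{b}\ge0$. Taking $a=g_n\le n$ and $b=\frac1nG$ gives
\[
\bigl|\Phi(f_0^n)-\Phi(g_n)\bigr|\le\frac1n G\Bigl(1+\log(n+1)+\log n+\frac{|x|^2+|v|^2}{2}\Bigr).
\]
Its integral is $O(\frac{\log n}{n})$, which is exactly the estimate you claimed. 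With this replacement the rest of your argument goes through unchanged.
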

 \begin{proof}
	The proof is almost the same as Lemmas 3.1 and 3.3 of \cite{LN-BSMP}. We omit the details here for
	simplicity.
\end{proof}
\begin{lemma}\label{L-Bn}
Consider a smooth  cutoff  function $\phi \in C^\infty _c \left( B_{\frac{4}{3}} \right)$ satisfying $ 0 \leq \phi \leq 1 $ and $\phi \equiv 1 $ on $B_{ \frac{3}{4}}$. We introduce the truncated angular kernel
	\begin{align}\label{bn}
		b_n ( \cos \theta  ) = b ( \cos \theta ) \left[ 1- \phi \left( n \sin \left( \frac{\theta }{2 } \right) \right) \right],
	\end{align}
	and define the collision kernel as
	\begin{align}\label{d-Bn}
	B_n ( z, \sigma ) = |z |^\gamma b_n ( \cos \theta ), \quad \text{where } z \cdot \sigma = |z| \cos \theta .
	\end{align} 
 Consequently, the non-negative term $B_{n } (z,\sigma )$  fulfills the properties listed below.
\begin{enumerate}
{\item $ B_{ n }  (z, \sigma ) =0$ whenever   $ \frac{z}{|z|} \cdot \sigma  \geq  1- \frac{2}{ n},$  and the limit $ \lim \limits_{n\rightarrow \infty } B_{n }  (z,\sigma ) =B (z,\sigma ) $ holds for  all $(z,   \sigma  )\in \R^3 _z     \times \mathbb{S}^2 $  ;  }
{ \item The convergence results $ \lim \limits_{n\rightarrow \infty } M_{n }  ( |z | ) =M  ( |z | )$ and $ \lim \limits_{n\rightarrow \infty } M'_{n }  (|z| ) =M ' ( |z| ) $ are valid for all   $z\in \R^3_z$. Here $M_n ( |z| ) $ is defined as $M( |z|)$ with its collision kernel replaced by $B_n$, and likewise $ M'_n ( |z|)$ is defined as $M'( |z|)$ with its collision kernel replaced by $B_n.$  } 
\end{enumerate} 
\end{lemma}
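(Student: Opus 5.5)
The plan is to verify both items directly from the definition \eqref{bn}, using only $0\le\phi\le 1$, $\phi\equiv 1$ on $B_{3/4}$, and the compact support of $\phi$ in $B_{4/3}$.

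\textbf{Item (1).} First I identify where $B_n$ vanishes. Since $\frac{z}{|z|}\cdot\sigma=\cos\theta=1-2\sin^2(\theta/2)$, the condition $\frac{z}{|z|}\cdot\sigma\ge 1-\frac{2}{n}$ is equivalent to $\sin^2(\theta/2)\le \frac1n$. I then want to conclude that $n\sin(\theta/2)\le\frac34$, so that $\phi=1$ and $b_n=0$.

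\emph{This step needs care.} The inequality $\sin^2(\theta/2)\le 1/n$ only gives $n\sin(\theta/2)\le\sqrt n$, which is not $\le 3/4$. So the threshold stated in the lemma does not match \eqref{bn} literally. Two repairs are possible:
\begin{itemize}
\item read the threshold as $\frac{z}{|z|}\cdot\sigma\ge 1-\frac{2}{n^2}\cdot\frac{9}{16}$, which is exactly $n\sin(\theta/2)\le 3/4$;
\item or note that $b_n$ vanishes on a set $\{\theta\le c/n\}$, equivalently $\{\cos\theta\ge 1-O(n^{-2})\}$.
\end{itemize}
I will prove the vanishing on $\{n\sin(\theta/2)\le 3/4\}$, which is what \eqref{bn} actually gives, and remark that this contains a neighborhood of $\theta=0$ of size comparable to $1/n$. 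That is the property used later.

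For pointwise convergence: if $\theta>0$, then for $n>\frac{4}{3\sin(\theta/2)}$ we have $n\sin(\theta/2)\ge 4/3$, outside $\supp\phi$. Hence $b_n(\cos\theta)=b(\cos\theta)$ eventually, and $B_n(z,\sigma)=B(z,\sigma)$ eventually. If $\theta=0$ (a null set of $\sigma$; note $b(1)$ is $+\infty$ by the singularity \eqref{nu}), the convergence is understood in the extended sense or almost everywhere in $\sigma$.

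\textbf{Item (2).} By the same support argument, $b_n\uparrow b$ is not literally monotone, since $\phi$ need not be monotone. However, $0\le b_n\le b$ pointwise and $b_n\to b$ for $\theta\in(0,\pi/2]$. Writing $M_n(|z|)=2\pi|z|^\gamma\int_0^{\pi/2}b_n(\cos\theta)(1-\cos\theta)\sin\theta\,d\theta$ as in \eqref{Mz}, the integrand is dominated by $b(\cos\theta)(1-\cos\theta)\sin\theta\lesssim K\theta^{1-\nu}$. This bound is integrable by \eqref{M2}, since $\nu\le 1/2<2$. Lemma \ref{LCT} then gives $M_n(|z|)\to M(|z|)$ for every $z$.

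For $M'_n$, the kernel factorizes: $B_n(\lambda z,\sigma)-B_n(z,\sigma)=(|\lambda z|^\gamma-|z|^\gamma)\,b_n(\cos\theta)$, because $\theta$ depends only on the direction of $z$. Hence
\[
B_n'(z,\sigma)=\Big[\sup_{1<\lambda\le\sqrt2}\frac{|\lambda^\gamma-1|\,|z|^{\gamma}}{(\lambda-1)|z|}\Big]\,b_n(\cos\theta)=c_\gamma(z)\,b_n(\cos\theta),
\]
with $c_\gamma(z)$ finite for $z\neq0$, since $\lambda\mapsto(\lambda^\gamma-1)/(\lambda-1)$ is bounded on $(1,\sqrt2]$. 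So $M'_n(|z|)=c_\gamma(z)\cdot\frac{M_n(|z|)}{|z|^\gamma}$, and the convergence follows from the previous step. The case $z=0$ is trivial or excluded as a null set.

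The only real obstacle is the exact threshold in item (1), handled as described above. The rest is dominated convergence.
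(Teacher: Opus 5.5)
Your proof is correct and follows the paper's own route: vanishing of $b_n$ from $\phi\equiv 1$ on $B_{3/4}$, eventual equality $b_n=b$ for $\theta>0$ from the compact support of $\phi$, and dominated convergence with $b_n\le b$ for both $M_n$ and $M_n'$ (your exact factorization $B_n'=c_\gamma(z)\,b_n(\cos\theta)$ is just a sharper form of the paper's bound $B_n'\le B'$). Your objection to the threshold is also right: the paper's computation only gives $b_n(\cos\theta)=0$ for $\cos\theta\ge 1-\frac{9}{8n^2}$, and its chain $1-\frac{9}{8n^2}\ge 1-\frac{2}{n}$ runs in the wrong direction to yield the stated claim. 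For example, $\cos\theta=1-\frac1n$ gives $n\sin(\theta/2)=\sqrt{n/2}>\frac43$, so $b_n=b\neq 0$ there for large $n$; hence the corrected statement you prove (with the a.e.\ caveat at $\theta=0$) is the right one, and it suffices for making $\int_{\mathbb{S}^2} b_n\,d\sigma$ finite.
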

\begin{remark}
The angular cutoff    $b_n ( \cos \theta )$ can be found in  \cite{HYZ-collision-2021, HZ-collision-2022}.
\end{remark}
\begin{proof}
	 Based on the definition of the smooth function $\phi $,   we  conclude that if $\sin \left( \frac{\theta }{2 } \right) \leq \frac{3}{4n} $ or equivalently  $ \cos \left( \frac{\theta }{2 } \right)  \geq \frac{\sqrt{16n^2 - 9 }}{4n}$, then $ b_n ( \cos \theta )= 0. $ Subsequently, by applying the double-angle formula, we obtain that if 
\begin{equation*}
	\cos \theta \geq 1- \frac{9}{ 8 n^2 } \geq 1 - \frac{2}{n}, \quad b_n ( \cos \theta )= 0.
\end{equation*} 
Furthermore, for any fixed $\theta \in (0, \frac{\pi}{2})$, $ n \sin \left( \frac{\theta }{2 } \right)  \to +\infty$ as $n \to \infty$, which implies that 
\begin{align*}
\lim \limits_{  n \rightarrow \infty }	\phi \left( n \sin \left( \frac{\theta }{2 } \right)  \right) = 0  .
\end{align*} 
The definition of the approximate kernel $B_n$ in \eqref{d-Bn} implies that whenever $\frac{z}{|z|}  \cdot \sigma \geq 1 - \frac{2}{n}$, 
\begin{align*}
\lim\limits_{n\rightarrow \infty} B_{n}(z,\sigma) = B(z,\sigma),  \quad \  \text{ and }\ B_n ( z, \sigma ) \leq B( z, \sigma ) \text{ for all }  (z, \sigma) \in \R^3_z \times \mathbb{S}^2.
\end{align*}
Thus, by the Lebesgue's
Dominated Convergence Theorem (Lemma \ref{LCT}), we obtain that 
\begin{align*}
\lim\limits_{n\rightarrow \infty } M_n ( |z|) = \lim\limits_{n\rightarrow \infty } \int_{ \S ^{   2 } } B_n ( z, \sigma ) ( 1- k \cdot \sigma ) d \sigma = M(|z| ) , \quad \text{ for all } z \in \R^3_z.
\end{align*} 
Furthermore, this convergence implies that $M_n ( |z| ) \leq M ( |z| ) $. We can similarly analyze $$M'_n( |z|) = \int_{\S^2}  B'_n (|z| ) ( 1- k \cdot \sigma  ) d \sigma  ,$$ where 
\begin{align*}
	B_n'(z, \sigma) = \sup_{1 < \lambda \leq \sqrt{2}} \frac{|B_n(\lambda z, \sigma) - B_n(z, \sigma)|}{(\lambda - 1)|z|} = \sup _{1 < \lambda \leq \sqrt{2}} \frac{   | \lambda z |^\gamma  -|z |^\gamma      }{ ( \lambda -1 ) |z| } b_n ( \cos \theta ) \leq B' ( z, \sigma ) .
\end{align*} 
This implies that $M '_n ( |z| ) \leq M ' ( |z| )$. Applying the Lebesgue's Dominated Convergence Theorem (Lemma \ref{LCT}) again, we obtain 
\begin{align*}
\lim\limits_{n\rightarrow \infty } M'_n ( |z|) = \lim\limits_{n\rightarrow \infty } \int_{ \S ^{   2 } } B'_n ( z, \sigma ) ( 1- k \cdot \sigma ) d \sigma = M'(|z| ) , \quad \text{ for all } z \in \R^3_z.
\end{align*}
This completes the proof of Lemma \ref{L-Bn}.
\end{proof}

The above lemmas provide the basis for constructing the approximation equation. Let  
 \begin{align}\label{Q L}
 \tilde{Q}_n (f,f) = \left( 1+ \frac{1}{n } \int_{\R ^3_v } f d v \right)^{-1} Q_n (f,f)  ,
 \end{align}
 where
 \begin{align}
 	Q_n (f,f) = \iint_{\mathbb{R}^3 _{v_*}\times \S^2} B_n ( v-v_* , \sigma ) ( f' f'_* - f f_* ) d v_* d \sigma .
 \end{align}
 It is evident that the approximate collision operator $ \tilde{Q}_n (f,f)$ is a quasilinearization of $Q_n (f,f)$ and admits a linear upper bound in $L^1 \cap L^\infty$ as described in the following lemma.
 \begin{lemma}\label{1-inf}
There exists a positive constant $C_{ n}$ depending on $n$ such that for all $f , g  \in L^1 ( \R^3_v )$, one has 
 		\begin{equation}
 		\begin{aligned}
 			& \Vert \tilde{Q}_n (f,f)  \Vert_{L^1 ( \R^3 _v)}\leq C_n \Vert f \Vert_{L^1 ( \R^3_v ) }, \\
 			& \Vert \tilde{Q}_n (f,f)   -\tilde{Q}_n (g,g )  \Vert_{L^1( \R^3_v )} \leq C_n   \Vert f    -g    \Vert_{L^1( \R^3_v )}.
 		\end{aligned}
 	\end{equation} 
 Furthermore, if $f \in L^\infty \cap L^1 ( \R^3_v )$, one has 
 \begin{equation}
 	 \Vert \tilde{Q}_n (f,f)  \Vert_{L^\infty ( \R^3 _v)}\leq C_n \Vert f \Vert_{L^\infty ( \R^3_v ) }.
 \end{equation}
 \end{lemma}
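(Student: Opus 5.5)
The plan is to exploit the fact that the truncated kernel $B_n$ is integrable in the angular variable. By Lemma \ref{L-Bn}, $b_n(\cos\theta)$ vanishes when $\sin(\theta/2)\le \frac{3}{4n}$, so
\begin{equation*}
\ell_n := \int_{\S^2} b_n(k\cdot\sigma)\, d\sigma \le 2\pi\int_{\theta \ge c/n} b(\cos\theta)\sin\theta\, d\theta \le C_{\nu,K}\, n^{\nu} < \infty .
\end{equation*}
This lets us split $Q_n(f,f)=Q_n^+(f,f)-Q_n^-(f,f)$ into gain and loss parts, with
\begin{equation*}
Q_n^-(f,f)=\ell_n f\,(|\cdot|^\gamma * f), \qquad Q_n^+(f,f)=\iint B_n f'f'_*\,dv_*d\sigma .
\end{equation*}
The factor $|v-v_*|^\gamma$ with $0\le\gamma\le 1$ is not bounded. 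I would handle this in one of two ways. The first is to read the constant $C_n$ as permitted to absorb a velocity truncation. The second, which I prefer, is to note that in the construction one may additionally cap the relative velocity, so that effectively $|z|^\gamma\wedge n$ is used. If one insists on $B_n=|z|^\gamma b_n$ exactly as defined, the $L^1$ bounds need a moment. The key point, and the one I expect to be the main obstacle, is that the prefactor $(1+\frac1n\int f)^{-1}$ only compensates one power of $\|f\|_{L^1}$ and gives no control of $|v-v_*|^\gamma$. So I would argue with the bound $|v-v_*|^\gamma\le \min(n,|v-v_*|^\gamma)$ after inserting that truncation into \eqref{d-Bn}. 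In what follows write $\Phi_n(z)\le n$ for the truncated velocity factor.

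\textbf{$L^1$ bound.} Using the pre/post-collisional change of variables $(v,v_*,\sigma)\mapsto(v',v'_*,k)$, which has unit Jacobian and preserves $B_n$, the gain term satisfies $\|Q_n^+(f,f)\|_{L^1}=\|Q_n^-(f,f)\|_{L^1}$. The loss term satisfies
\begin{equation*}
\|Q_n^-(f,f)\|_{L^1}\le n\,\ell_n \|f\|_{L^1}^2 .
\end{equation*}
Multiplying by $(1+\frac1n\|f\|_{L^1})^{-1}\le n/\|f\|_{L^1}$ turns the quadratic bound into $2n^2\ell_n\|f\|_{L^1}$, which is the first estimate with $C_n=2n^2\ell_n$.

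\textbf{$L^\infty$ bound.} Here $Q_n^-\le n\ell_n\|f\|_{L^\infty}\|f\|_{L^1}$. For the gain term, $f'\le \|f\|_\infty$ gives
\begin{equation*}
Q_n^+\le n\|f\|_\infty\int f'_*\, b_n\, d v_* d\sigma .
\end{equation*}
The map $v_*\mapsto v'_*$ at fixed $(v,\sigma)$ has Jacobian bounded away from zero on $\theta\le\pi/2$ (it is $\cos^2(\theta/2)/4$-type, hence $\ge 1/8$). So this integral is at most $8\ell_n\|f\|_{L^1}$. The prefactor then cancels $\|f\|_{L^1}$ as before.

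\textbf{Lipschitz bound.} Write
\begin{equation*}
\tilde Q_n(f,f)-\tilde Q_n(g,g)=\big(N_f^{-1}-N_g^{-1}\big)Q_n(f,f)+N_g^{-1}\big(Q_n(f,f)-Q_n(g,g)\big),
\end{equation*}
where $N_f=1+\frac1n\int f$. For the first term, $|N_f^{-1}-N_g^{-1}|\le \frac1n\|f-g\|_{L^1}N_f^{-1}N_g^{-1}$, combined with $\|Q_n(f,f)\|_{L^1}\le C\|f\|_{L^1}^2$ and $\|f\|_{L^1}N_f^{-1}\le n$. For the second term, use the bilinearity identity $Q_n(f,f)-Q_n(g,g)=Q_n(f-g,f)+Q_n(g,f-g)$, which gives $\|\cdot\|_{L^1}\le C(\|f\|_{L^1}+\|g\|_{L^1})\|f-g\|_{L^1}$. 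Then $N_g^{-1}\|g\|\le n$, and $N_g^{-1}\|f\|\le n+\|f-g\|$ after adding and subtracting. This leaves a term quadratic in $\|f-g\|$, so the clean global Lipschitz statement as written would really be a local one. I would either accept the Lipschitz property on bounded sets, which suffices for the Picard/Cauchy–Lipschitz argument since $L^1$ norms are conserved, or symmetrize using $N_f^{-1}N_g^{-1}$-type bounds, splitting according to which of $\|f\|,\|g\|$ is larger.
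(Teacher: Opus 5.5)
Your argument is the standard one, and it is correct for the kernel you actually use: finite $\ell_n=\int_{\S^2}b_n\,d\sigma$ to split gain and loss, the unit-Jacobian pre/post-collisional change of variables for $\|Q_n^+\|_{L^1}$, the Jacobian $\frac{1+k\cdot\sigma}{8}=\frac14\cos^2\frac\theta2\ge\frac18$ of $v_*\mapsto v'_*$ for the $L^\infty$ gain bound, and the prefactor absorbing one power of the mass. The paper gives no argument here, only a pointer to Lemmas 3.7--3.8 of an earlier work.

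The obstacle you isolate is real, and it is a defect of the statement rather than a gap in your proof. With $B_n=|z|^\gamma b_n$ exactly as in \eqref{d-Bn} and $0<\gamma\le 1$, the lemma is false.
\begin{itemize}
\item If $\int f|v|^\gamma\,dv=\infty$, then $\int f_*|v-v_*|^\gamma\,dv_*=\infty$ for every $v$, so $Q_n(f,f)$ is not even defined.
\item Take $f_R=\mathbf 1_{B_1(0)}+\mathbf 1_{B_1(Re_1)}$ with $R\gg n$. The loss term on $B_1(0)$ is at least $\ell_n|B_1|(R-2)^\gamma$.
\item The gain term on $B_1(0)$ is bounded independently of $R$, because only collisions inside $B_1(0)$ contribute there. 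A cross collision gives $|v-v'|=|v'-v'_*|\sin\frac\theta2>\frac{3(R-2)}{4n}$ and $|v-v'_*|\ge\frac{R-2}{\sqrt2}$, so $v$ lies outside both balls.
\item Hence $\|\tilde Q_n(f_R,f_R)\|_{L^1}$ and $\|\tilde Q_n(f_R,f_R)\|_{L^\infty}$ grow like $R^\gamma$, while $\|f_R\|_{L^1}$ and $\|f_R\|_{L^\infty}$ stay fixed.
\end{itemize}
So a relative-velocity cutoff such as your $\Phi_n(|z|)=\min(|z|^\gamma,n)$ genuinely has to be built into $B_n$, as in DiPerna--Lions. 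It costs nothing later: $\min(\cdot,n)$ is monotone and $1$-Lipschitz, so $M_n\le M$, $M_n'\le M'$, \eqref{Sn-gamma} and $B_n\to B$ all survive. For $\gamma=0$ your proof applies verbatim with $\Phi_n\equiv 1$.

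Three corrections to the rest.
\begin{itemize}
\item \textbf{Lipschitz bound.} You do not need to settle for a local bound. The estimate is symmetric in $f,g$, so assume $\|f\|_{L^1}\le\|g\|_{L^1}$. Your own decomposition then gives $N_f^{-1}N_g^{-1}\|f\|_{L^1}^2\le (N_f^{-1}\|f\|_{L^1})(N_g^{-1}\|g\|_{L^1})\le n^2$ and $N_g^{-1}(\|f\|_{L^1}+\|g\|_{L^1})\le 2n$. This yields the global bound with $C_n\lesssim n^2\ell_n$; equivalently, the derivative of $f\mapsto\tilde Q_n(f,f)$ is bounded by $C_n$ uniformly on the convex cone $L^1_+$.
\item \textbf{Positivity.} Every step uses $f,g\ge 0$, so that $\int f=\|f\|_{L^1}$ and $N_f\ge 1$. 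For signed $f$ the prefactor $(1+\frac1n\int f)^{-1}$ can blow up. The lemma should therefore be stated on $L^1_+$, which is all the fixed-point construction uses.
\item \textbf{Details.} For $\nu=0$, $\ell_n$ grows like $\log n$, not like a constant. In the $L^\infty$ gain bound, after substituting $v_*\mapsto v'_*$ the angle between $v-v'_*$ and $\sigma$ is $\theta/2$. The $\sigma$-integral therefore becomes $2\pi\int_0^{\pi/2}b_n(\cos\theta)\sin\theta\cos^{-3}\frac\theta2\,d\theta\le 2\sqrt2\,\ell_n$, exactly as in Lemma \ref{prop-S}. This is the step that actually justifies your constant $8\ell_n$.
\end{itemize}
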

 \begin{proof}
  The proof is almost the same as Lemmas 3.7-3.8 of \cite{LN-BSMP}. We omit the details here for
 simplicity.
 \end{proof}
Furthermore, by employing arguments similar to those in the Lemma 3.9 of \cite{LN-BSMP}, 
we establish the existence and uniqueness of the non-negative smooth  solution $f^n$ to the approximate problem \eqref{approximation}. Specifically, we have the following lemma.
\begin{lemma}[Existence, Uniqueness and Positivity]\label{Lemma 4.14}
	Consider the approximated initial data $f_0^n$ constructed in Lemma \ref{Lemma 4.8} and the approximated collision operator $\tilde{Q}_{n}  (f , f )$ constructed in \eqref{Q L}. Then the approximated problem \eqref{approximation} admits a unique distributional solution $f^n$ satisfying 
	\begin{enumerate}
		\item{ for any fixed $T>0$, $  f^n \in C([0,+\infty ); L^1(\R^3_x \times \R^3 _v))$ and
			\begin{equation*}
				f^n (t,x, v ) \geq \tfrac{1}{n} \exp [ -c_0 (|x|^2+|v |^2 ) ], \quad \textrm{for all } \ (t,x,v ) \in [0, T] \times \R^3_x \times \R^3_v ;
		\end{equation*}}
	 \item { For   any multi-index $ \boldsymbol{\zeta} = ( \zeta_0, \zeta_1, \cdots, \zeta_6 ) \in \mathbb{N}^7$ with $| \boldsymbol{\zeta} | = \sum_{i=0}^{6} \zeta_i \leq m$, we have 
	 		$$D^{ \boldsymbol{\zeta} } f^n  \in L^\infty ((0,T) \times \R^3 _x\times \R^3_v ) \cap L^\infty ( (0, T); L^1 ( \R^3_x \times \R^3_v ; (1 + |x|^k + | v |^k) d x d v ) ) \,, $$
	 		and 
	 		\begin{equation*}
	 			\sup_{ (t , x , v ) \in (0,T) \times \R^3 _x\times \R^3_v } | D^{ \boldsymbol{\zeta} } f^n  | + \sup_{t \in (0, T)} \iint_{\R^3 _x\times \R^3_v } (1+| x |^k+| v |^k ) | D^{ \boldsymbol{\zeta} } f^n |d x d v \leq C \,.
	 	\end{equation*}   	
 		Here the constant $c_0 = c_0 (T)>0 $. }
	\end{enumerate} 
\end{lemma}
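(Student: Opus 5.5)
The proof rests on the fact, from Lemma \ref{1-inf}, that for fixed $n$ the operator $\tilde{Q}_n$ is globally Lipschitz in $L^1(\R^3_v)$ with linear bounds in $L^1\cap L^\infty$. Because of this, \eqref{approximation} can be handled as a semilinear transport equation with a globally Lipschitz right-hand side.

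\textbf{Existence and uniqueness.} Write the problem in Duhamel form along free transport,
\begin{equation*}
f^n(t,x,v) = f^n_0(x-tv,v) + \int_0^t \tilde{Q}_n(f^n,f^n)(s,x-(t-s)v,v)\,ds .
\end{equation*}
Free transport is an isometry of $L^1(\R^3_x\times\R^3_v)$, so the Lipschitz estimate (applied for a.e.\ $x$ and integrated in $x$) makes the Duhamel map a contraction on $C([0,T_0];L^1)$ for $T_0\sim 1/(2C_n)$. Since $T_0$ does not depend on the data, the local solutions can be iterated to give a unique global solution in $C([0,\infty);L^1)$. Any distributional solution in this class satisfies the Duhamel formula, so uniqueness holds among distributional solutions as well.

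\textbf{Positivity and the Gaussian lower bound.} Since $B_n$ is cut off (Lemma \ref{L-Bn}), the collision frequency
\begin{equation*}
L_n(f)(v) = \Big(1+\tfrac1n\int f\Big)^{-1}\int B_n f_*\,dv_*\,d\sigma \le C_n\int f_*(1+|v|+|v_*|)\,dv_*
\end{equation*}
is finite, and the operator splits as $\tilde{Q}_n(f,f)=Q_n^+(f,f)-fL_n(f)$. I would rerun the fixed point in the exponential form
\begin{equation*}
f(t)=e^{-\int_0^t L_n}\, f_0 + \int_0^t e^{-\int_s^t L_n}\, Q^+_n(f,f)\,ds
\end{equation*}
along characteristics. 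This form preserves the cone of nonnegative functions, so $f^n\ge0$. Mass and energy are conserved; this follows from the weak form, which is valid for the smooth solution. Hence $L_n(f^n)\le C_n(1+|v|)$. Dropping the nonnegative gain term gives
\begin{equation*}
f^n(t,x,v)\ge \tfrac1n\exp\big(-\tfrac{|x-tv|^2+|v|^2}{2}-C_nT(1+|v|)\big).
\end{equation*}
Using $|x-tv|^2\le 2|x|^2+2T^2|v|^2$ and absorbing the linear term, this yields $\tfrac1n e^{-c_0(|x|^2+|v|^2)}$ with $c_0=c_0(T)$.

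\textbf{Regularity and moments.} Differentiate the equation. A $t$-derivative of $f^n$ can be traded for $x$-derivatives using the equation, which is why only $x,v$ derivatives need separate treatment. Applying $\partial_v$ to the transport term produces $\partial_x$ terms, a lower-order source. The key point is that $\partial_v\tilde{Q}_n(f,f)$ is again a sum of bilinear cutoff collision operators, applied to $(\partial f,f)$, $(f,\partial f)$ and to the $|z|^{\gamma}$-derivative of the kernel, which is controlled through $M'_n$. So $\partial_v\tilde{Q}_n$ obeys linear $L^1$ and $L^\infty$ bounds of the type in Lemma \ref{1-inf}, with coefficients depending on lower-order norms and weights $(1+|v|)$. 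Weights $(1+|x|^k+|v|^k)$ are propagated in the same way, using $|x|\le|x-tv|+t|v|$ and $|v'|^k\lesssim |v|^k+|v_*|^k$. An induction on $|\boldsymbol\zeta|$, with a Gronwall estimate in the weighted $L^1$ and $L^\infty$ norms at each step, then gives the stated bounds. The base of the induction is $f^n_0\in\mathscr S$.

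\textbf{Main obstacle.} The hardest step is closing the $L^\infty$ and derivative estimates for $Q^+_n$ uniformly on $[0,T]$. The kernel $|v-v_*|^\gamma$ with $\gamma\ge0$ grows in $v$, so every derivative costs a velocity weight. I would handle this by proving weighted $L^\infty$ bounds of the form $\|(1+|v|)^{k}\,\cdot\,\|_\infty$ together with the weighted $L^1$ bounds in a single Gronwall argument. In that argument the weight $k$ needed at order $m$ is supplied by the moments propagated at order $m-1$. These ingredients are the same as those in Lemma 3.9 of \cite{LN-BSMP}, whose proof I would follow.
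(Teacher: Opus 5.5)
Your outline (Duhamel contraction from Lemma~\ref{1-inf}, gain--loss exponential form for positivity, dropping the gain for the lower bound, differentiating for regularity) is the DiPerna--Lions scheme the paper invokes by citing Lemma~3.9 of \cite{LN-BSMP}. However, the lower-bound step has a real gap. You infer $L_n(f^n)\le C_n(1+|v|)$ from conservation of mass and energy. Those laws control only $\iint f^n(1+|v|^2)\,dx\,dv$, whereas the exponential factor needs $\int_0^t L_n(f^n)(s,x-(t-s)v,v)\,ds$ bounded along each characteristic, i.e.\ pointwise in $x$. Since $L_n(f^n)(t,x,v)\le C_n(1+\tfrac1n\rho^n)^{-1}\int f^n(t,x,v_*)(1+|v|+|v_*|)\,dv_*$ with $\rho^n=\int f^n\,dv$, the normalization neutralizes the local density but not the local first moment $\int f^n(t,x,v_*)|v_*|\,dv_*$. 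That moment is only known to be integrable in $x$, not bounded. You therefore need $\sup_{t\le T,\,x}\int f^n(1+|v_*|)\,dv_*<\infty$ first. This would follow, for example, from a bound on $\|(1+|v|)^{4+\epsilon}f^n\|_{L^\infty}$, or from part~(2) with $x$-derivatives up to order three and $W^{3,1}(\R^3_x)\hookrightarrow L^\infty(\R^3_x)$. Also, the constant $e^{-C_nT}$ cannot be absorbed into $c_0$ at $x=v=0$: the lower bound necessarily carries an $(n,T)$-dependent prefactor.

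Those bounds are what your last paragraph aims at, and there the Gronwall argument does not close linearly. Because $B_n=|z|^\gamma b_n$ is unbounded in $|z|$ for $\gamma>0$, weighted estimates of $\tilde Q_n^{\pm}(f,f)$ either lose a factor $(1+|v|)^\gamma$ in the weight or produce products of two local quantities. Examples are $\|f\|_{L^\infty}\int f_*(1+|v_*|)^\gamma dv_*$ or $m_k(x)m_1(x)$ with $m_j=\int f(1+|v|)^j dv$, and $(1+\frac1n\int f\,dv)^{-1}$ absorbs only one factor of local mass. So the inequalities are superlinear. Moreover, the weight loss already occurs at order zero of your induction, where no lower-order step can supply it. Your own bound on $L_n$ even shows that Lemma~\ref{1-inf}, on which your contraction rests, cannot hold with $f$-independent constants when $\gamma>0$. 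Take $f=\mathbf 1_{B(0,1)}+\mathbf 1_{B(w,1)}$ with $|w|\gg n$. The loss term at $v=w$ is of order $|w|^\gamma$, whereas the gain term there is $O(1)$ (because of the grazing cutoff and $\theta\le\pi/2$), and $\|f\|_{L^1\cap L^\infty}$ stays fixed. The missing ingredient is an approximate kernel that is also bounded in the relative velocity, e.g.\ $B_n(z,\sigma)\chi(|z|/n)$ with a smooth cutoff $\chi$, as in \cite{Diperna-Lions}. With that kernel, Lemma~\ref{1-inf} is genuinely true and $L_n(f)\le(1+\frac1n\rho)^{-1}\Lambda_n\rho\le n\Lambda_n$, where $\Lambda_n=\sup_z\int_{\S^2}B_n\,d\sigma$. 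This gives $f^n(t,x,v)\ge e^{-n\Lambda_n t}f^n_0(x-tv,v)$ with no moment bound, and all derivative and weight estimates become linear Gronwall arguments. Note also that translation invariance gives $\partial_vQ_n(f,g)=Q_n(\partial_vf,g)+Q_n(f,\partial_vg)$, so no derivative of the kernel appears.
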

\begin{lemma}\label{Lemma 4.17}
	Assume that $f^n$ is the distributional solution to the approximated problem \eqref{approximation} constructed in Lemma \ref{Lemma 4.14}. Then it further enjoys the following properties:
	\begin{enumerate}
		\item {Mass, momentum, energy and angular momentum are all conserved. Hence, for all $t \geq 0$, we have
			\begin{equation*}
				\iint_{\R^3 _x\times \R^3_v } f^n ( t,\cdot )
				\left (\begin{array}{c}
					1 \\[2mm]
					v  \\ [2mm]
					|v |^2 \\ [2mm]
					|x-t v |^2 \\[2mm]
				\end{array}\right ) dxdv  
				= \iint_{\R^3 _x\times \R^3_v } f_0^n 
				\left (\begin{array}{c}
					1 \\[2mm]
					v \\ [2mm]
					|v |^2 \\ [2mm]
					|x|^2 \\[2mm]
				\end{array}\right )dxdv  ;
			\end{equation*} }\label{1 of Lemma 4.17}
		\item{The entropy identity holds.  Hence for all $t \geq 0$,
			\begin{equation}\label{entro}
				\begin{aligned}
					H(f^n_{0})   = H(f^n)(t) 
				  +\int_{0}^{t} d \tau \iint_{ \R^3 _x\times \R^3_v } \tilde{e} ^n(\tau,x, v ) dxdv   ,
				\end{aligned}
			\end{equation}
			where the density of entropy dissipation is given by
			\begin{equation}\label{e na} 
				\begin{aligned} 
		\tilde{e} ^n ( t ,x , v )=& \frac{1}{4} \left( 1+ \frac{1}{n } \int_{\R ^3_v } f^n d v \right)^{-1}    \iint_{ \R^3 _{v_*}\times \S^2 } B_n ( v-v_* , \sigma ) \\
		& \times  ( { f^n}' {f_*^n}' - f^n f^n_* ) \log \left( \frac{ {f^n}'  {f_*^n}' }{f^n f^n_*} \right) d v_* d \sigma,  
		\end{aligned}
	\end{equation}
and	the $H$-functional $H(f)$ is defined as in \eqref{Hf}; }\label{2 of Lemma 4.17}
		
		\item{For all   $t \geq 0$, we have
			\begin{equation}\label{FB2}{\small
					\begin{aligned}
						\iint_{ \R^3 _x\times \R^3_v } f^n  ( t , x , v )(1+|x|^2+| v |^2)d x d v \leq & \iint_{\R^3 _x\times \R^3_v } f^n_{ 0}(x,v)(1+2|x|^2+(2t^2+1)|v|^2)d x d v,
				\end{aligned}}
			\end{equation}
			and
			\begin{equation}\label{FWC2}
				\begin{aligned}
					\sup \limits_{t\geq 0}&\iint_{ \R^3 _x\times \R^3_v } f^n ( t , x , v ) |\log f^n | d  x d v + \frac{1}{4}\int_{0}^{+\infty} dt \iint_{ \R^3 _x\times \R^3_v } \tilde{e}^n  d x d v \\
					\leq & 2 \iint_{ \R^3 _x\times \R^3_v } f^n_{ 0}(x,v ) (|\log f^n_0|+ |x|^2+ | v | ^2 )d x d  v + C_1,
				\end{aligned}
			\end{equation}		 
			where the constant $C_1>0$ is independent of $n$.}\label{3 of Lemma 4.17}
	\end{enumerate}
\end{lemma}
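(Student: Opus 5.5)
The plan is to derive the three items of Lemma \ref{Lemma 4.17} from weak formulations of \eqref{approximation}. These are legitimate because, by Lemma \ref{Lemma 4.14}, $f^n$ is smooth, bounded below by a Gaussian and has all moments finite. We test \eqref{approximation} against $\varphi(t,x,v)$ and use the standard pre/post-collisional change of variables $(v,v_*,\sigma)\mapsto(v',v'_*,\pm k)$, which has unit Jacobian and under which $B_n$ is invariant, together with the exchange $v\leftrightarrow v_*$. This gives
\begin{equation*}
\int_{\R^3_v}\tilde Q_n(f^n,f^n)\varphi\,dv=\frac{1}{4}\Big(1+\frac1n\int_{\R^3_v} f^n dv\Big)^{-1}\iiint B_n\, (f^{n\prime}f^{n\prime}_*-f^nf^n_*)(\varphi+\varphi_*-\varphi'-\varphi'_*)\,dv\,dv_*\,d\sigma .
\end{equation*}
Since $B_n$ is integrable in $\sigma$ (cutoff of Lemma \ref{L-Bn}) and $f^n$ has fast decay, this manipulation is justified by Fubini.

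For item (1), take $\varphi=1,v,|v|^2$. By \eqref{v-law} the bracket vanishes, so $\int\tilde Q_n\varphi\,dv=0$. Integrating the transport part in $x$ kills $v\cdot\nabla_x$, which gives conservation of mass, momentum and energy. For $|x-tv|^2$, note that $(\partial_t+v\cdot\nabla_x)|x-tv|^2=0$. Hence
\begin{equation*}
\frac{d}{dt}\iint f^n|x-tv|^2=\iint \tilde Q_n\,(|x|^2-2t\,x\cdot v+t^2|v|^2).
\end{equation*}
This is zero, because for fixed $x$ the integrand is a combination of collision invariants in $v$. Item (2) follows by testing with $\varphi=\log f^n+1$. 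This is allowed since the lower bound makes $\log f^n$ grow at most quadratically. Transport again integrates to zero, and the symmetrized formula produces exactly $-\tilde e^n$ in \eqref{e na}, which yields \eqref{entro}.

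For \eqref{FB2}, use conservation of energy for the $|v|^2$ part. For the $|x|^2$ part use $|x|^2\le 2|x-tv|^2+2t^2|v|^2$, then conservation of $|x-tv|^2$ and of energy. The main obstacle is \eqref{FWC2}, because $H$ is not sign-definite; we use the classical splitting of DiPerna--Lions. We write
\begin{equation*}
\iint f^n|\log f^n| = H(f^n)+2\iint f^n\log^- f^n .
\end{equation*}
Then we split the region $\{f^n\le 1\}$ into $\{f^n\ge e^{-|x|^2-|v|^2}\}$, where $f^n\log^-f^n\le f^n(|x|^2+|v|^2)$, and $\{f^n<e^{-|x|^2-|v|^2}\}$, where $f^n\log^-f^n\le C\sqrt{f^n}\le Ce^{-(|x|^2+|v|^2)/2}$, whose integral is a universal constant. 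Combining this with \eqref{entro}, $H(f^n_0)\le\iint f^n_0|\log f^n_0|$, and \eqref{FB2} bounds the $|x|^2+|v|^2$ moment.

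One difficulty remains: the $t^2$ factor in \eqref{FB2} must not appear in \eqref{FWC2}. To avoid it, we first apply the estimate with the weight $|x-tv|^2$ instead of $|x|^2$, which is also conserved. That is, we split using $e^{-|x-tv|^2-|v|^2}$; the measure-preserving change $x\mapsto x-tv$ keeps the Gaussian integral a constant $C_1$ independent of $t$ and $n$. Since $\tilde e^n\ge0$, this gives \eqref{FWC2} uniformly in $t$.
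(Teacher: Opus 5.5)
Your proposal is correct and follows the standard DiPerna--Lions argument that the paper omits here (deferring to Lemma 3.11 of \cite{LN-BSMP}): the symmetrized weak form for items (1)--(2), the bound $|x|^2\le 2|x-tv|^2+2t^2|v|^2$ for \eqref{FB2}, and the Gaussian splitting with $s\log\frac1s\le C\sqrt{s}$ that the paper itself uses in Section \ref{Sec-ee}, where your choice of the weight $|x-tv|^2$ correctly removes the $t$-dependence. One bookkeeping point: your per-time bound $\iint f^n|\log f^n|(t)+\int_0^t\!\iint\tilde e^n\le K$, with $K=\iint f^n_0|\log f^n_0|+2\iint f^n_0(|x|^2+|v|^2)+C$, only yields $\sup_t\iint f^n|\log f^n|+\frac14\int_0^\infty\!\iint\tilde e^n\le\frac54K$, which puts a coefficient $\frac52$ on the moments. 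To get exactly the stated coefficient $2$, either split at $e^{-\frac12(|x-tv|^2+|v|^2)}$ instead, or absorb the excess into $C_1$ using the uniform moment bound of Lemma \ref{Lemma 4.8}.
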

 \begin{proof}
	The proof is almost the same as Lemma 3.11 of \cite{LN-BSMP}. We omit the details here for
	simplicity.
\end{proof}
\section{integrability of the renormalized collision operator}\label{Sec-integrable}
In this paper, we consider 
\begin{align}\label{beta-delta}
\beta_\delta ( s ) = \frac{s}{ 1+ \delta s },\  \text{ for any  }\ \delta >0.
\end{align}
    Obviously,  $\beta_\delta \in C^2 (\R_+, \R_+) $ satisfies condition \eqref{beta}. Moreover, for any fixed $\delta > 0$, $\beta_\delta$ is bounded. Consequently, for any non-negative functions $f$ and $f'$, we define
 \begin{align}\label{Gamma}
	\Gamma ( f, f' ) = \beta_\delta ( f' ) - \beta_\delta ( f ) - \beta_\delta ' ( f ) ( f' - f ).
\end{align}
Since $\beta_\delta$ is concave, it follows that $\Gamma(f, f') \leq 0$. Following the argument in Section 3 of \cite{Alex-Vill-2002-CPAM}, for almost all $( t,x , v)$, the renormalized collision operator $\beta_\delta'(f) Q(f, f)$ can be decomposed as
\begin{align*}
	\beta_\delta ' (f ) Q ( f,f ) =  (\mathcal{R}_1) + (\mathcal{R}_2) +  (\mathcal{R}_3), 
\end{align*}
where
\begin{equation}\label{R123}
\begin{aligned}
	(\mathcal{R}_1)  & = \left[ f \beta_\delta' ( f )  - \beta_\delta ( f ) \right] \iint_{ \mathbb{R}^3_{v_*}  \times \S^2 }  B(f'_* - f_*)  dv_* \, d\sigma \,, \\
	(\mathcal{R}_2) &= \iint_{\mathbb{R}^3_{v_*} \times \S^2 }  B \left[f'_*\beta_\delta(f') - f_*\beta_\delta(f)\right]dv_* \, d\sigma \,, \\
	(\mathcal{R}_3) &= -\iint_{\mathbb{R}^3_{v_*} \times \S^{2}}  B f'_* \Gamma(f, f')dv_* \, d\sigma \,.
\end{aligned}
\end{equation}
In particular, we define
\begin{equation}
	\mathscr{S}  f \equiv \iint_{\mathbb{R}^3_{v_*} \times \S^2 }  B(v - v_*, \sigma) (f'_* - f_*) dv_* \, d\sigma .
\end{equation}
Thus, we have 
\begin{align*}	(\mathcal{R}_1 )= \left[ f \beta_\delta ' ( f ) - \beta_\delta ( f  ) \right] \mathscr{S}  f .
\end{align*}

\begin{remark}
Since we consider the non-cutoff Boltzmann equation, the classical DiPerna Lions theory based on Grad's angular cutoff assumption cannot be applied. Specifically, we cannot follow the approach in \cite{Diperna-Lions} to obtain local integrability of the gain and loss terms of the collision operator by using Lemmas \ref{theo-6.2.1}-\ref{theo-6.4.1}(averaged velocity lemma) and the Arkeryd-type inequality.

To overcome this difficulty, we adopt the renormalization method introduced by Alexandre and Villani in Section 3 of \cite{Alex-Vill-2002-CPAM}. In this approach, the renormalized collision term $\beta'_\delta (f) Q(f,f)$ is decomposed into three parts, denoted by $(\mathcal{R}_1)$, $(\mathcal{R}_2)$ and $(\mathcal{R}_3)$. The main strategy is to first show that $(\mathcal{R}_1)$ and $(\mathcal{R}_2)$ are well defined as distributions in $\mathcal{D}'([0,T] \times \mathbb{R}^3_x \times \mathbb{R}^3_v )$. Then, by using the renormalized Boltzmann equation together with the non-negativity of these terms, we derive suitable bounds for $(\mathcal{R}_3)$, which allows us to give a rigorous meaning to the renormalized collision operator.
\end{remark}

\subsection{Cancellation lemma}

The main purpose of this section is to analyze the integral term $\iint_{\mathbb{R}^3_{v_*} \times \mathbb{S}^2} B(f'_* - f_*) \,dv_* \,d\sigma$.
Since the collision kernel possesses a singularity at the angle $ \theta = 0 $, we cannot directly show that this integral is finite. However, we observe that $v'_* \simeq v_*$ holds when the angle $\theta$ is sufficiently close to $0$.  This observation leads us to expect that the integral is still convergent. The purpose of the Cancellation lemma is to justify this fact.
The idea of the Cancellation lemma first appeared in the work of Villani \cite{Villani-regular-1999}. It was later further developed in \cite{ADVW-bound-2000, Alex-Vill-2002-CPAM}. In this work, we mainly follow the formulation given in \cite{Alex-Vill-2002-CPAM}.

\begin{lemma}[Cancellation lemma, \cite{Alex-Vill-2002-CPAM}]\label{prop-S}
	Let $-3 < \gamma \leq 1$. For  the collision kernel $ B $ and for almost everywhere $ v \in \mathbb{R}^3_v $, we have the following conclusion:
	\begin{align}\label{Sf}
		\mathscr{S}   f \equiv \iint_{\mathbb{R}^3_{v_*}  \times \S^2 } \, B (v - v_*, \sigma)(f'_* - f_*) dv_* d\sigma  =\int_{\mathbb{R}^3_{v_*}} f _*   S (|v-v_*|) dv_*,
	\end{align}
	where the kernel $S$ is given by
	\begin{align}\label{S}
		S  (|z|) = 2\pi  \int_0^{\frac{\pi}{2}}\sin   \theta \left[ \frac{1}{\cos^3 \left(\frac{\theta}{2}\right)} B  \left( \frac{|z|}{\cos \left(\frac{\theta}{2}\right)}, \cos \theta \right) - B  (|z|, \cos \theta) \right] d\theta .
	\end{align}	
	In particular, $ S $ is a locally integrable function satisfying the estimate 
	\begin{align*}
		|S (|z|)|  \leq \frac{1  }{\cos^2 \left( \frac{\pi}{8} \right)} \left[ 3 M (|z|) + |z| M '(|z|)\right].
	\end{align*}
\end{lemma}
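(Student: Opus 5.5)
We follow the classical argument of Alexandre--Villani. Fix $v$ and first work with a truncated kernel $B_\epsilon = B\,\mathbf{1}_{\theta \geq \epsilon}$, for which both pieces $\iint B_\epsilon f'_*$ and $\iint B_\epsilon f_*$ are separately finite (for $f$ with enough integrability, e.g. $f\in L^1_2$). The result for $B$ will then follow by letting $\epsilon\to 0$, once we have a bound on the limiting kernel that is uniform in $\epsilon$. The key step is the change of variables $v_* \mapsto v'_*$ at fixed $v$ and $\sigma$ in the gain-type term $\iint B(v-v_*,\sigma) f(v'_*)\,dv_*\,d\sigma$.

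For fixed $v,\sigma$, we have $v'_* = \frac{v+v_*}{2} - \frac{|v-v_*|}{2}\sigma$. This map is a bijection onto its image, and its Jacobian equals $\frac{1}{8}(1-k\cdot\sigma)$, so it behaves like $\cos^2(\theta/2)/4$ in the appropriate variables. Set $z=v-v_*$ and $z'=v-v'_*$. Then $|z'| = |z|\cos(\theta/2)$, and the angle between $z'$ and $\sigma$ is $\theta/2$. It is cleaner to parametrize by the new relative vector and the half-angle. Rewriting $\sigma$-integration in terms of $k'=z'/|z'|$ and $\theta$, and using the conventions of the Carleman-type representation, the term becomes
\[
\int f(v'_*)\Big[\int_{\S^2}\frac{1}{\cos^3(\theta/2)}B\Big(\frac{|z'|}{\cos(\theta/2)},\cos\theta\Big)\,d\sigma\Big]\,dv'_*.
\]
The factor $\cos^{-3}$ arises from $dv_* = \cos^{-3}(\theta/2)\,dv'_*$ up to the angular reparametrization $\sigma \leftrightarrow$ direction. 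Subtracting the loss-type term and writing the sphere integral in polar form $2\pi\int_0^{\pi/2}\sin\theta\,d\theta$ (valid since $B$ depends only on $|z|$ and $\cos\theta$, with support in $\theta\le\pi/2$) yields \eqref{Sf} with kernel \eqref{S}, first for $B_\epsilon$. Checking this Jacobian carefully is the main computational point. I would verify it in two steps: first at fixed $\sigma$, with $dv'_* = \frac{1}{8}(1+k\cdot\sigma)\cdots$, then swap the roles of $\sigma$ and $k'$.

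For the bound, write the bracket in \eqref{S} as a sum of two differences:
\[
\Big(\cos^{-3}(\tfrac\theta2)-1\Big)B\Big(\tfrac{|z|}{\cos(\theta/2)},\cos\theta\Big) + \Big[B\Big(\lambda|z|,\cos\theta\Big)-B(|z|,\cos\theta)\Big], \qquad \lambda=\cos^{-1}(\theta/2)\in(1,\sqrt2\,].
\]
\begin{itemize}
\item For the first piece, $\cos^{-3}(\theta/2)-1 \le C(1-\cos\theta)$. On $[0,\pi/2]$ this gives the constant $3/\cos^2(\pi/8)$ after bounding $B(\lambda|z|,\cdot)$ by $B(|z|,\cdot)$ plus the increment, which produces the $3M(|z|)$ term.
\item For the second piece, the definition \eqref{B-pri} gives $|B(\lambda z,\sigma)-B(z,\sigma)|\le(\lambda-1)|z|B'(z,\sigma)$. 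Moreover $\lambda-1\le (1-\cos\theta)/\cos^2(\pi/8)$-type bounds hold, which yields $|z|M'(|z|)/\cos^2(\pi/8)$.
\end{itemize}
Since both bounds involve only $1-\cos\theta$ times $B$ or $B'$, they are integrable in $\theta$ by Property I. The resulting estimate is uniform in $\epsilon$, so dominated convergence in $\theta$ passes $S_\epsilon\to S$. Local integrability of $S$ follows from $M,\ |z|M'\in L^1_{\loc}$. Finally, $\mathscr{S}_\epsilon f\to\mathscr{S}f$ a.e. $v$ by dominated convergence in $v_*$, using $|S_\epsilon|\lesssim |z|^\gamma$ together with the moments of $f$.

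The main obstacle is the Jacobian and angular reparametrization. The rest consists of elementary trigonometric inequalities on $[0,\pi/2]$.
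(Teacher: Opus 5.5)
Your proposal is correct and follows the same route as the Alexandre--Villani argument (their Proposition 3.1), which the paper cites in place of a proof: change variables $v_*\mapsto v'_*$ at fixed $(v,\sigma)$, re-parametrize the sphere around $k'=(v-v'_*)/|v-v'_*|$, pass to the limit in an angular truncation, and bound the bracket through $B'$ with $\lambda=1/\cos(\theta/2)\in(1,\sqrt2\,]$. There are two slips to fix when you write it out. First, the Jacobian is $\frac18(1+k\cdot\sigma)=\frac14\cos^2(\theta/2)$, so $dv_*=4\cos^{-2}(\theta/2)\,dv'_*$, and the remaining factor $\frac14\cos^{-1}(\theta/2)$ comes from halving the polar angle on $\mathbb{S}^2$. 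Second, the increment produced in your first piece belongs to the $|z|M'$ term, not the $3M$ term; this is cleanest if you split the bracket as $\cos^{-3}(\theta/2)\,[B(\lambda|z|,\cos\theta)-B(|z|,\cos\theta)]+(\cos^{-3}(\theta/2)-1)\,B(|z|,\cos\theta)$, since with $c=\cos(\theta/2)$ the two ratios to $1-\cos\theta$ are $\frac{1}{2c^4(1+c)}\le\frac{1}{\cos^2(\pi/8)}$ and $\frac{1+c+c^2}{2c^3(1+c)}\le\frac{3}{\cos^2(\pi/8)}$.
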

 \begin{proof}
  	The proof is almost the same as Proposition 3.1 of \cite{Alex-Vill-2002-CPAM}. We omit the details here for
  simplicity.
 \end{proof}
 
\begin{lemma}\label{R1}
Assume that  $f$ satisfies 
	\begin{align*}
		\sup_{ t \in [0, T]} \iint_{\mathbb{R}^3 _x \times \mathbb{R}^3_v } f ( t ,x , v) \left( 1+ |v|^2 \right) d x d v < + \infty .
	\end{align*}
If $-3 < \gamma \leq 1$,	then it follows that $ ( \mathcal{R}_1 ) \in L^\infty \left( [0,T ] ; L^1 \left( \R ^3_x \times B_R ( v) \right)\right), $ for any $R >0.$
\end{lemma}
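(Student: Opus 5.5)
The plan is to split $(\mathcal{R}_1)$ into a bounded prefactor times $\mathscr{S}f$, and then control $\mathscr{S}f$ through the Cancellation lemma (Lemma \ref{prop-S}).

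\textbf{Step 1: the prefactor.} For $\beta_\delta(s)=s/(1+\delta s)$ we compute
\[
s\beta_\delta'(s)-\beta_\delta(s)=\frac{s}{(1+\delta s)^2}-\frac{s}{1+\delta s}=-\frac{\delta s^2}{(1+\delta s)^2}.
\]
Hence $|f\beta_\delta'(f)-\beta_\delta(f)|\le \min\{\delta f^2,1/\delta\}\le 1/\delta$. The prefactor is therefore bounded uniformly, and it also satisfies $\le f/(1+\delta f)\le f$. It thus suffices to bound $\int_{B_R(v)}|\mathscr{S}f|\,dv$ uniformly in $(t,x)$ after integrating in $x$.

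\textbf{Step 2: the singular kernel.} By Lemma \ref{prop-S},
\[
|\mathscr{S}f(v)|\le \int_{\R^3_{v_*}} f_*\,|S(|v-v_*|)|\,dv_*,
\qquad
|S(|z|)|\le C\big[3M(|z|)+|z|M'(|z|)\big]\le C_{\nu,K}|z|^\gamma,
\]
where the last inequality uses \eqref{M2}--\eqref{zM2}. We then integrate over $v\in B_R(v)$ and apply Fubini:
\[
\int_{B_R(v)}|\mathscr{S}f|\,dv\le C\int_{\R^3_{v_*}} f_*\left(\int_{B_R(v)}|v-v_*|^\gamma\,dv\right)dv_*.
\]

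\textbf{Step 3: the inner integral.} The main point is to control $\int_{B_R(v)}|v-v_*|^\gamma\,dv$ in terms of a moment of $f$ in $v_*$.
\begin{itemize}
\item For $0\le\gamma\le1$, we have $|v-v_*|^\gamma\le (R+|v_*|)^\gamma\le C_R(1+|v_*|^2)$, so the inner integral is at most $C_R(1+|v_*|^2)$.
\item For $-3<\gamma<0$, $|z|^\gamma$ is locally integrable and radially decreasing. Since the integrand is a radially decreasing function of $v-v_*$, the integral over a ball is maximized when the ball is centered at $v_*$. This gives
\[
\int_{B_R(v)}|v-v_*|^\gamma\,dv\le \int_{|w|\le R}|w|^\gamma\,dw=C_{\gamma}R^{3+\gamma},
\]
uniformly in $v_*$.
\end{itemize}
In both cases we obtain $\int_{B_R(v)}|\mathscr{S}f|\,dv\le C_{R,\gamma}\int f_*(1+|v_*|^2)\,dv_*$.

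\textbf{Step 4: conclusion.} Combining with Step 1 gives
\[
\iint_{\R^3_x\times B_R(v)}|(\mathcal{R}_1)|\,dx\,dv\le \frac{C_{R,\gamma}}{\delta}\iint f(1+|v|^2)\,dx\,dv.
\]
The right-hand side is bounded uniformly for $t\in[0,T]$ by hypothesis, which proves the lemma.

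\textbf{Remaining issue.} Once the cancellation lemma is granted, there is no real obstacle. The only delicate point is measurability and justifying Fubini. This follows by applying the same estimates to $|S|$, so that every integrand is nonnegative and Tonelli's theorem applies.
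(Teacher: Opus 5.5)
Your proposal is correct and follows the paper's proof. You bound the prefactor $f\beta_\delta'(f)-\beta_\delta(f)$ in $L^\infty$, reduce to $f*_v S$ via the cancellation lemma with $|S(|z|)|\le C_\nu|z|^\gamma$, and then use $|v-v_*|^\gamma\le C_R(1+|v_*|^2)$ for $0\le\gamma\le 1$ and local integrability of $|z|^\gamma$ for $-3<\gamma<0$. The only cosmetic difference is in the soft-potential case: you bound $\int_{B_R(v)}|v-v_*|^\gamma\,dv$ uniformly in $v_*$ by the radially-decreasing rearrangement argument, whereas the paper splits into $|v_*|\le R+1$ (integrating $|z|^\gamma$ over $B_{2R+1}$) and $|v_*|>R+1$ (where $|v-v_*|^\gamma\le 1$).
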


\begin{proof}
	
	We first recall the definition of $\beta_\delta (s) $ in \eqref{beta-delta}, which  implies that both $f \beta'_\delta(f)$ and $\beta_\delta(f)$ belong to $L^\infty([0,T]\times \mathbb{R}^3_x \times \R^3_v ))$ for any fixed $\delta>0$. Thus, to complete the proof of the lemma, it suffices to show that
$$
	f *_v S \in L^\infty\big([0,T]; L^1(\mathbb{R}^3_x \times B_R(v))\big).
$$
	Next, using the expression for the collision kernel $B$ 	given in \eqref{B-gamma}, we observe that there exists a constant $C_\nu$ such that 
$$
	S(z) \le C\big(M(|z|) + |z| M'(|z|)\big) \le C_\nu |z|^\gamma .
$$
If $0 \le \gamma \leq 1$, we obtain
\begin{align*}
	\|f * S\|_{L^1(\mathbb{R}_x^3 \times B_R(v))} 
\leq &C_\nu \iiint_{ \R ^3 _x \times \R ^3_{v_*}  \times B_R(v) } f ( t ,x , v_* )   |v-v_*|^\gamma    dx dv dv_* \\
	\leq &C_\nu \iiint_{ \R ^3 _x \times \R ^3_{v_*}  \times B_R (v)} f ( t ,x , v_* ) (  |v-v_*|^2  +1                                                                                                                                                                                                                                           ) dx dv dv_* \\
	\leq & C_{\nu  ,R} \iint_{ \R ^3 _x\times \R ^3 _v  } f ( t ,x , v_* )( 1+ | v_*|^2  )   dx  dv_*  .
\end{align*}
If $-3 < \gamma < 0 $, one has
\begin{align*}
		\|f * S\|_{L^1(\mathbb{R}_x^3 \times B_R(v))}  
 \leq & C_\nu \iiint_{ \R ^3 _x\times \R ^3 _v \times B_R(v) } f ( t ,x , v_* )   |v-v_*|^\gamma    dx dv dv_* \\
  \leq &  C_\nu \iint_{ \R ^3_x \times \R ^3 _v  } f ( t ,x , v_* )     dx  dv_* \int_{B_{2R+1}(z)}  |z|^\gamma d z  +  C_{\nu, R} \iint_{ \R ^3_x \times \R ^3 _{v_*}  } f ( t ,x , v_* )     dx  dv_*  .
		\end{align*}  
Based on the local integrability of the term $|z|^{\gamma} $ with $\gamma \in (-3, 0)$, we conclude that
$f *_v S \in L^1 \left( \R ^3_x  \times B_R (v) \right)  $, which means $ ( \mathcal{R}_1 ) \in L^\infty \left( [0,T ] ; L^1 \left( \R ^3  _x \times B_R  (v) \right)\right)  $  for all  $  \gamma \in (-3,1] $.
\end{proof}

\subsection{The integrability of $(\mathcal{R}_2)$}

When dealing with the key bilinear term $(\mathcal{R}_2)$, our method differs from that presented in Section 3.2 of  \cite{Alex-Vill-2002-CPAM}. In their work, Alexandre and Villani used a second-order Taylor expansion to control the strong angular singularity and obtained boundedness of an operator from $W^{2,\infty}$ to $L^\infty$ in a dual sense. As a result, the upper bound  in inequality \eqref{T-var} reduces to $\frac{1}{2} |v-v_*| \left( 1+ \frac{|v-v_*| }{2} \right) M( |v-v_*|)$.  However, for hard potentials ($0\leq \gamma \leq 1$), the term $ |v-v_*|^2M( |v-v_*|)  $  cannot be controlled by  the available a priori bounds on mass and energy, i.e.,
$$
\iint_{\R^3_x  \times \R^3_v  } f (1 + |x|^2 + |v|^2) dx  dv .
$$
 
 In fact, we have verified that a first-order Taylor expansion is sufficient to control the singularity induced by the collision kernel. In this case, choosing test functions  
 \begin{equation*} 
 	\varphi \in L^\infty \left( [0,T] \times \mathbb{R}^3_x ; W^{1,\infty}_0(\mathbb{R}^3_v)\right) 
 \end{equation*} 
 ensures that the term $(\mathcal{R}_2)$ is well-defined. Here, the space $W^{1,\infty}_0(\mathbb{R}^3)$ is defined as the closure of $C_c^\infty(\mathbb{R}^3)$ in $W^{1,\infty}(\mathbb{R}^3)$. However, in the critical case $\gamma = 1$, the linear term $|v-v_*|$ arising from the Taylor expansion couples with the factor $|v-v_*|^\gamma$ in the collision kernel, leading to the breakdown of convergence for the renormalized operator $(\mathcal{R}_2)^n$. A detailed analysis of this issue is provided in Section \ref{Subsec:Cnv-R2}.

To solve this difficulty, we use test functions from the H\"older space $C_c^\alpha(\mathbb{R}^3_v)$ instead of $W^{1,\infty}_0(\mathbb{R}^3_v)$. We set the parameter $\alpha$ between $\nu$ and $1$, and the detailed derivation is given in Lemma~\ref{R2}. Although this change relaxes the regularity requirements for the test functions, it actually leads to a more regular dual space from the perspective of duality. Specifically, the dual space $B^{-\alpha}_{1,1}$ is more regular than $(W^{1,\infty}_0)^*$ because the index $-\alpha$ is strictly larger than $-1$ when $\alpha < 1$. This more refined estimate effectively compensates for the lack of Lipschitz continuity and ensures that the operator converges effectively even in the critical case.

 To this end, let  $\varphi ( v ) $ be a  test function depending only on the velocity variable. Then we have
\begin{align*} 
	\int_{\R ^3 _v } (\mathcal{R}_2) \varphi(v)   dv 
	&= \iiint_{ \mathbb{R}^{3} _v \times  \mathbb{R}^{3}_{v_*}   \times \S^{2} } 
	B \left[ f' _* \beta_\delta  ( f ' ) - f _* \beta_\delta (f) \right] \varphi(v) dv   dv_*  d\sigma \,  \\
	&= \iint_{ \mathbb{R}^{3} _v \times  \mathbb{R}^{3}_{v_*}  } f_* \beta_\delta (f)  dv  dv_*   \int_{\S^{2}} B ( v -  v _* , \sigma ) ( \varphi' - \varphi ) d\sigma . 
\end{align*}
For a fixed $v_* $, we define the operator $\mathcal{T}$ as 
\begin{align}\label{T}
	\mathcal{T} \colon \varphi \longmapsto 
	\int_{\S^{2}} B ( v  - v  _* , \sigma) (\varphi' - \varphi) \, d\sigma.
\end{align}

We shall show that $\mathcal{T}$ is essentially bounded from $C_c^\alpha$ to $L^\infty_{loc}$ for $\alpha \in (\nu, 1)$, which differs from Proposition 3.3 of \cite{Alex-Vill-2002-CPAM}.

\begin{lemma}[$C_c^\alpha \to L^\infty_{loc}$ bound for $\mathcal{T}$]\label{Lmm-T}
  Let $- 3 < \gamma \leq 1$ and $\alpha \in (\nu, 1)$ with $\nu$ defined in \eqref{nu}. Then for all $\varphi \in C_c^\alpha (\R_v^3)$,
  \begin{equation}\label{T-M}
    \begin{aligned}
      | \mathcal{T} \varphi (v) | \leq C_{\nu, \alpha} \| \varphi \|_{C_c^\alpha} |v - v_*|^{\alpha + \gamma} \leq C'_{\nu, \alpha} \| \varphi \|_{C_c^\alpha} |v - v_*|^{\alpha} M (|v - v_*|)
    \end{aligned}
  \end{equation}
  for some constant $C_{\nu, \alpha}, C'_{\nu, \alpha} > 0$.
\end{lemma}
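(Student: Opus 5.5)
The plan is a direct pointwise estimate. We use the $\alpha$-Hölder seminorm of $\varphi$ together with the size of the displacement $|v'-v|$. The angular integrability then follows from $\alpha>\nu$.

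Fix $v_*$ and set $u = v - v_*$. From \eqref{v}, $v' - v = \frac12(|u|\sigma - u)$, so
$$|v'-v| = \tfrac{|u|}{2}|\sigma - k| = |u|\sin(\theta/2).$$
For $\varphi \in C_c^\alpha(\R^3_v)$ this gives
$$|\varphi' - \varphi| \le [\varphi]_{C^\alpha}|u|^\alpha \sin^\alpha(\theta/2) \le \|\varphi\|_{C_c^\alpha}|u|^\alpha \sin^\alpha(\theta/2).$$
No cancellation or Taylor expansion is needed, because the first-order (Hölder) increment already suffices. Inserting this into \eqref{T}, passing to spherical coordinates with axis $k$ as in \eqref{Mz}, and using \eqref{B-gamma}, we get
$$|\mathcal{T}\varphi(v)| \le 2\pi \|\varphi\|_{C_c^\alpha}|u|^{\alpha+\gamma}\int_0^{\pi/2} b(\cos\theta)\sin\theta\,\sin^\alpha(\theta/2)\,d\theta .$$
Here the support of $B$ lies in $\theta\in[0,\pi/2]$.

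The key step is to show that this angular integral is finite. By \eqref{nu}, $\sin\theta\, b(\cos\theta) \sim K\theta^{-1-\nu}$ as $\theta\to0$, and $\sin^\alpha(\theta/2)\le (\theta/2)^\alpha$. Near $0$ the integrand is therefore bounded by $C K \theta^{\alpha-\nu-1}$, which is integrable exactly because $\alpha>\nu$. Away from $0$, $b$ is locally bounded on $(0,\pi/2]$; this is implicit in the asymptotic assumption plus the standard assumption that $b$ is bounded away from $\theta=0$, which I would state explicitly. Together these give a constant $C_{\nu,\alpha}$, depending also on $K$, and hence the first inequality in \eqref{T-M}. This is the only delicate point, and it is precisely where the hypothesis $\alpha\in(\nu,1)$ enters.

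For the second inequality, recall from \eqref{M2} and \eqref{M-gamma} that $M(|z|) = c_{\nu,K}|z|^\gamma$ exactly for the kernel \eqref{B-gamma}, with
$$c_{\nu,K} = 2\pi\int_0^{\pi/2} b(\cos\theta)(1-\cos\theta)\sin\theta\,d\theta,$$
which is finite and strictly positive since $b>0$ near $0$ by \eqref{nu}. Hence $|u|^{\alpha+\gamma} = c_{\nu,K}^{-1}|u|^\alpha M(|u|)$, and we may take $C'_{\nu,\alpha} = C_{\nu,\alpha}/c_{\nu,K}$. The argument is uniform in $\gamma\in(-3,1]$, since $\gamma$ enters only through the factor $|u|^\gamma$, which factors out of the angular integral. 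The same bound holds with $B_n$ in place of $B$, with the same constants, because $b_n\le b$; this will be used later for $\mathcal{T}^n$.
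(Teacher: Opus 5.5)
Your proposal is correct and follows essentially the same route as the paper: bound the Hölder increment using $|v'-v| = |v-v_*|\sin(\theta/2)$, reduce to the angular integral $\int_0^{\pi/2} b(\cos\theta)\sin^\alpha(\theta/2)\sin\theta\,d\theta$ (finite exactly because $\alpha>\nu$), and then compare with $M$. Two of your additions are useful clarifications: you use the exact identity $M(|z|)=c_{\nu,K}|z|^\gamma$ with $c_{\nu,K}>0$ instead of the paper's appeal to the asymptotics \eqref{M-gamma}, and you note that the bound transfers uniformly to $B_n$ because $b_n\le b$.
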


\begin{proof}
  In view of the definition \eqref{B-gamma} of the collision kernel $B$ and the range of parameters, we observe that for $\nu < \alpha < 1$, the integral satisfies 
 \begin{align*} 
 	\int_{0}^{\frac{\pi}{2} } b ( \cos \theta) \sin ^\alpha \left(\frac{\theta}{2}\right) \sin \theta d \theta < +\infty. 
 \end{align*}  
Moreover, we observe that
 \begin{align*}
 	|v -v'|^2 = \frac{1}{2} |v-v_*|^2 ( 1- k \cdot \sigma )= |v-v_*|^2 \sin ^2 \left(\frac{\theta}{2}\right).
 \end{align*}
 Consequently, we obtain the estimate 
 \begin{equation}\label{T-var} 
 	 \begin{aligned} 
 		\left| \int_{ \S ^{ 2 } } B ( v-v_* , \sigma ) ( \varphi ' - \varphi ) d \sigma \right| 
 		&\leq \| \varphi \|_{C_c^\alpha} \int_{ \S ^{ 2 } } B ( v-v_* , \sigma ) |v' -v |^\alpha d \sigma \\ 
 		&\leq 2 \pi \| \varphi \|_{C_c^\alpha} |v-v_*|^{\alpha} \int_{0}^{\frac{\pi}{2} } B ( |v-v_*|, \cos \theta) \sin ^\alpha \left(\frac{\theta}{2}\right) \sin \theta d \theta \\ 
 		&\leq C_{\nu ,\alpha} \| \varphi \|_{C_c^\alpha} |v-v_*|^{\alpha+\gamma} . 
 	\end{aligned} 
 \end{equation}
This implies
\begin{align*}
	| \mathcal{T} \varphi ( v) | =	\left| \int_{ \S ^{ 2 } } B ( v-v_* , \sigma ) ( \varphi ' - \varphi ) d \sigma \right|  \leq  C_{\nu ,\alpha} \| \varphi \|_{C_c^\alpha} |v-v_*|^{\alpha+\gamma} .
\end{align*}  
Together with \eqref{M-gamma}, the proof of Lemma \ref{Lmm-T} is completed.
\end{proof}
 
\begin{lemma}\label{R2}
If  $-3 < \gamma \leq 1$,	then it follows that $ (\mathcal{R}_2 ) \in L^\infty \left( [0,T ] ; L^1 \left( \R ^3_x ; B^{-\alpha}_{ 1,1} ( B_R ( v)) \right)\right) $, for any $R>0$ and any $\nu < \alpha <1$.
\end{lemma}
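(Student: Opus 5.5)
The plan is a duality argument. Since $B^{-\alpha}_{1,1}$ is identified with the dual of $C^\alpha_c$, it suffices to show that for a.e. $(t,x)$ the distribution $(\mathcal{R}_2)(t,x,\cdot)$ acts boundedly on test functions $\varphi\in C^\alpha_c(\R^3_v)$ supported in $B_R(v)$. The bound should be
\begin{equation*}
\Big|\int_{\R^3_v}(\mathcal{R}_2)\varphi\,dv\Big|\le C_{\nu,\alpha,R,\delta}\,\|\varphi\|_{C^\alpha_c}\int_{\R^3_v} f(t,x,v)(1+|v|^2)\,dv .
\end{equation*}
Integrating this in $x$ and taking the supremum over $t\in[0,T]$ then yields the claim, using the assumed bound $\sup_t\iint f(1+|v|^2)\,dx\,dv<\infty$, exactly as in Lemma \ref{R1}.

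\textbf{Step 1: weak formulation.} I use the formula derived just before Lemma \ref{Lmm-T}. The pre/post-collisional change of variables $(v,v_*)\to(v',v'_*)$ has unit Jacobian in the $\sigma$-representation, and $B$ is invariant under it. This gives
\begin{equation*}
\int(\mathcal{R}_2)\varphi\,dv=\iint f_*\beta_\delta(f)\,\mathcal{T}\varphi\,dv\,dv_* .
\end{equation*}
The substitution itself is not justified by a direct computation. Instead, I justify it by first replacing $B$ with the cutoff kernel $B_n$ of Lemma \ref{L-Bn}, for which the gain and loss parts are separately integrable. Then I pass to the limit $n\to\infty$ by dominated convergence, using the $n$-uniform domination coming from the bound in Step 2 (note $B_n\le B$).

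\textbf{Step 2: estimate with Lemma \ref{Lmm-T}.} Lemma \ref{Lmm-T} gives $|\mathcal{T}\varphi(v)|\le C_{\nu,\alpha}\|\varphi\|_{C^\alpha_c}|v-v_*|^{\alpha+\gamma}$. There is a subtlety here: $\mathcal{T}\varphi(v)$ need not vanish for $v\notin B_R(v)$, because $v'$ can enter the support of $\varphi$. So the integration over $v$ is over all of $\R^3$. I handle this with two bounds.
\begin{itemize}
\item For $|v|\le 2R$ I use the Hölder estimate above.
\item For $|v|>2R$, $\varphi(v)=0$, and $\varphi(v')\neq0$ forces $|v-v'|\ge|v|-R\ge |v|/2$. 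Since $|v-v'|\le|v-v_*|\sin(\theta/2)$, this gives $|\mathcal{T}\varphi|\le \|\varphi\|_\infty\int B\,\mathbf 1_{|v-v_*|\sin(\theta/2)\ge|v|/2}\,d\sigma$. Because $b(\cos\theta)\sin\theta\sim\theta^{-1-\nu}$, this is at most $C\|\varphi\|_\infty |v-v_*|^{\gamma}\big(|v-v_*|/|v|\big)^{\nu}\le C\|\varphi\|_\infty|v-v_*|^{\alpha+\gamma}$, using $|v-v_*|\ge|v|/2$ on the support and $\nu<\alpha$.
\end{itemize}
So in all cases $|\mathcal{T}\varphi(v)|\le C\|\varphi\|_{C^\alpha_c}|v-v_*|^{\alpha+\gamma}$. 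Combined with $\beta_\delta(f)\le \min(f,1/\delta)$, this gives
\begin{equation*}
\Big|\int(\mathcal{R}_2)\varphi\,dv\Big|\le C\|\varphi\|_{C^\alpha_c}\iint f_*\,\beta_\delta(f)\,|v-v_*|^{\alpha+\gamma}\,dv\,dv_* .
\end{equation*}

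\textbf{Step 3: bounding the double integral.} For $0\le\gamma\le1$ we have $0<\alpha+\gamma<2$, so $|v-v_*|^{\alpha+\gamma}\le C(1+|v|^2)(1+|v_*|^2)$. Using $\beta_\delta(f)\le f$, the double integral is bounded by a product of two $L^1_2$ norms in $v$. To get something linear in $f$, I instead split into $|v-v_*|$ small and large. On the small part I use $\beta_\delta\le 1/\delta$. On the large part I use $f\,f_*$, and after integrating in $x$ I bound it via $\sup_x$ control of $\int\beta_\delta(f)\,dv$ or simply by Cauchy–Schwarz in $x$; it is acceptable that the constant depends on $\delta$.

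For $\alpha+\gamma<0$ (possible only when $\gamma<0$), $|z|^{\alpha+\gamma}$ is locally integrable since $\alpha+\gamma>-3$. I then bound $\int\beta_\delta(f)|v-v_*|^{\alpha+\gamma}\,dv\le \delta^{-1}\int_{|z|\le1}|z|^{\alpha+\gamma}\,dz+\int f\,dv$, which is linear after integrating against $f_*$ in $x$.

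\textbf{Main obstacle.} I expect the main difficulty to be the large-$|v|$ region in Step 2 together with obtaining a bound linear in $f$ in Step 3. There, I would exploit that $\beta_\delta$ is bounded by $1/\delta$, which makes $\int\beta_\delta(f)(1+|v|^2)^{-2}\,dv$ finite.
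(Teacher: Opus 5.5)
Your Steps 1 and 2 are fine. In fact the bound of Lemma~\ref{Lmm-T} holds for every $v\in\R^3$, since its proof only uses $|\varphi'-\varphi|\le\|\varphi\|_{C^\alpha_c}|v'-v|^\alpha$, so the case split in Step 2 is not needed.

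Step 3, however, has a genuine gap: the bound you obtain is not linear in $f$.

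\textbf{The large-$|v-v_*|$ part.} There you use $\beta_\delta(f)\le f$ and get, at fixed $(t,x)$, the quantity $\rho_2(x)^2$ with $\rho_2(x)=\int f(1+|v|^2)\,dv$. Integrating in $x$ then requires $\rho_2\in L^2(\R^3_x)$, whereas the hypothesis only gives $\sup_t\|\rho_2\|_{L^1_x}<\infty$. Neither remedy you mention is available:
\begin{itemize}
\item there is no $\sup_x$ bound on $\int\beta_\delta(f)\,dv$; one only has $\int\beta_\delta(f)\,dv\le\rho(x)\in L^1_x$;
\item Cauchy--Schwarz in $x$ needs precisely the missing $L^2_x$ bound.
\end{itemize}

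\textbf{The case $\alpha+\gamma<0$.} The same error occurs: the term $\int f_*\,dv_*\cdot\int f\,dv=\rho(x)^2$ is quadratic, not linear.

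\textbf{Your proposed fix.} The idea in your last paragraph cannot work with the Step 2 bound. To exploit $\beta_\delta\le1/\delta$ you need a $v$-weight integrable on $\R^3_v$. But $|v-v_*|^{\alpha+\gamma}$ is not integrable at infinity, since its exponent exceeds $-3$.

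\textbf{How the paper gets linearity, and what is missing.} The paper integrates only over $v\in B_R(v)$ and uses $\beta_\delta(f)\le1/\delta$ there:
\begin{equation*}
\iint_{\R^3_{v_*}\times B_R(v)}\beta_\delta(f)f_*(1+|v-v_*|^2)\,dv\,dv_*\le C_{R,\delta}\int f_*(1+|v_*|^2)\,dv_* .
\end{equation*}
You are right that this restriction is not automatic, since $\mathcal{T}\varphi(v)=\int B\,\varphi(v')\,d\sigma$ need not vanish off $\supp\varphi$. Having raised this, you need a far-field bound that uses the compact support of $\varphi$ rather than its H\"older norm.

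For $|v|\ge10R$ the integrand is nonzero only if $v'\in B_R$. Since $\theta\le\pi/2$ gives $|v-v'|\le|v'-v_*|$, this forces
\begin{equation*}
9R\le|v-v_*|\le2(R+|v_*|), \qquad \sin(\theta/2)=\frac{|v-v'|}{|v-v_*|}\ge\frac{9R}{10R+|v_*|}.
\end{equation*}
Now bound $\beta_\delta(f)\le1/\delta$ and change variables $v\mapsto v'$ at fixed $(v_*,\sigma)$. The Jacobian is $\tfrac14\cos^2(\theta/2)\ge\tfrac18$. This gives
\begin{equation*}
\int_{|v|\ge10R}|\mathcal{T}\varphi(v)|\,dv\le C_R\,\|\varphi\|_{L^\infty}\,(1+|v_*|)^{\max(\gamma,0)}\int_{\theta_0(v_*)}^{\pi/2}b(\cos\theta)\sin\theta\,d\theta ,
\end{equation*}
with $\theta_0(v_*)\ge c\,R/(R+|v_*|)$. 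The last integral is $O\big((1+|v_*|/R)^{\nu}\big)$, with a logarithm instead if $\nu=0$. So this part is linear in $f_*$, involving a velocity moment of order at most $2$.

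On $|v|\le10R$, the paper's bounded-domain argument (Lemma~\ref{Lmm-T} plus $\beta_\delta\le1/\delta$, with $B_{10R}$ in place of $B_R$) applies verbatim.
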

\begin{proof}
	
First, by the definition of the dual norm, we have
\begin{align*}
	\| (\mathcal{R}_2) \|_{B^{-\alpha}_{1,1} (B_R(v))}  
= & \sup \left\{ 
\int_{\mathbb{R}^3_v } (\mathcal{R}_2) \varphi \, dv \;\Bigg|\; 
\varphi \in C^{\alpha}_c (B_R(v)), \, 
\|\varphi\|_{ C^{\alpha}_c(B_R(v))} \leq 1  \right\} \\
	\leq & \iint_{ \mathbb{R}^3_{v_*}   \times B_R(v) } \beta_\delta (f) f_* | \mathcal{T} \varphi |  d v  d v _* ,
\end{align*}
 where $\alpha \in (\nu,1)$. In the case where $\gamma \in [-1,1] $, Lemma \ref{Lmm-T} yields
	\begin{align*}   
			\| (\mathcal{R}_2) \|_{B^{-\alpha}_{ 1,1} (B_R(v))}  
				\leq & C_{\nu ,\alpha}  \iint_{ \mathbb{R}^3_{v_*}   \times B_R(v) }  \beta_\delta (f) f_*   |  v-v_*  |^{ \alpha+ \gamma }  d v  d v_* \\  
		\leq & C_{\nu ,\alpha}  \iint_{ \mathbb{R}^3 _{v_*}  \times B_R(v) }  \beta_\delta (f) f_*  ( 1 +  |  v-v_*  |^2 )  d v  d v_* \\  
		\leq & C_{\nu, \alpha , R } \int_{ \R ^3_{v_*}  } f_* ( 1+ |v_*|^2 ) d v_*.
	\end{align*}
 On the other hand, if  $\gamma \in (-3,-1)$, we have
 \begin{align*}   
 	\| (\mathcal{R}_2) \|_{B^{-\alpha}_{ 1,1} (B_R(v))}  
 	\leq & C_{\nu ,\alpha}  \iint_{ \mathbb{R}^3 _{v_*}  \times B_R(v) }  \beta_\delta (f) f_*   |  v-v_*  |^{\alpha+ \gamma  }  d v  d v_* \\  
 	\leq &   C_{\nu ,\alpha} \int_{ \R ^3 _{v_*}   } f_*     dv_* \int_{B_{2R+1}(z)}  |z|^{\gamma +1} d z  +  C_{\nu,\alpha, R} \int_{  \R ^3  _{v_*}  } f_*      dv_*  .
 \end{align*}
In conclusion, $(\mathcal{R}_2 ) \in L^\infty \left( [0,T ] ; L^1 \left( \R ^3_x ; B^{-\alpha}_{ 1,1} (B_R(v))  \right)\right) $  for all  $  \gamma \in (-3,1] $.
\end{proof}
 
 \subsection{The integrability of $(\mathcal{R}_3)$}

\begin{lemma}\label{p-R3}
	Assume that $f$ satisfies 
	\begin{align*}
		\sup_{ t \in [0, T]} \iint_{\mathbb{R}^{3 }_x  \times \mathbb{R}^3_v } f ( t ,x , v) \left( 1+ |v|^2 \right) d x d v < + \infty.
	\end{align*}
If  $-3 < \gamma \leq 1$,	then it follows that $ ( \mathcal{R}_3 ) \in L^1 \left( [0,T ] \times \R ^3_x \times B_R ( v)  \right)  $, for any $ R >0.$
\end{lemma}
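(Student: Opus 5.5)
The plan is to exploit the sign of $(\mathcal{R}_3)$ and recover its integrability from the renormalized equation, in the manner of Alexandre--Villani, combined with the bounds already proved for $(\mathcal{R}_1)$ and $(\mathcal{R}_2)$. Since $\beta_\delta$ is concave, $\Gamma(f,f')\le 0$ and hence $(\mathcal{R}_3)\ge 0$. Therefore it suffices to bound $\int_0^T\iint (\mathcal{R}_3)\,\chi\,dx\,dv$ for a fixed nonnegative cutoff $\chi(v)\in C_c^\infty$ with $\chi\equiv 1$ on $B_R(v)$ and support in $B_{2R}(v)$. Because $\chi\in C_c^\alpha$, it is an admissible test function for all three pieces. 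I would carry this out at the level of the approximate solutions $f^n$ with kernel $B_n$ and factor $(1+\frac1n\int f^n)^{-1}$, and then pass to the limit. The uniform-in-$n$ bound transfers to $f$ by Fatou's lemma (Lemma \ref{fatou}), using a.e. convergence of $f^n$ and $B_n\to B$ (Lemma \ref{L-Bn}). At the level of this section, where $f$ is only assumed to have finite mass and energy, one can alternatively read the statement as an a priori bound derived from the renormalized identity.

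The first step is to multiply the renormalized approximate equation $\partial_t\beta_\delta(f^n)+v\cdot\nabla_x\beta_\delta(f^n)=\beta_\delta'(f^n)\tilde Q_n(f^n,f^n)$ by $\chi(v)$ and integrate over $[0,T]\times\R^3_x\times\R^3_v$. The transport term vanishes after integration in $x$. The time derivative gives $\iint(\beta_\delta(f^n(T))-\beta_\delta(f^n(0)))\chi$, which is bounded by $2\sup_t\|f^n\|_{L^1}$ since $\beta_\delta(s)\le s$. The result is the identity
\begin{equation*}
\int_0^T\!\!\iint N_n^{-1}(\mathcal{R}_3)^n\chi\,dx\,dv=\iint\big[\beta_\delta(f^n)\big]_0^T\chi\,dx\,dv-\int_0^T\!\!\iint N_n^{-1}\big[(\mathcal{R}_1)^n+(\mathcal{R}_2)^n\big]\chi\,dx\,dv,
\end{equation*}
where $N_n=1+\frac1n\int f^n\,dv\ge1$.

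Next I bound the two remaining terms. For $(\mathcal{R}_1)^n$, I use $|f\beta_\delta'(f)-\beta_\delta(f)|\le 2/\delta$ together with the estimate in the proof of Lemma \ref{R1}. The cancellation lemma (Lemma \ref{prop-S}) applies to $B_n$ with $M_n\le M$ and $M_n'\le M'$, so the $L^1(\R^3_x\times B_{2R}(v))$ norm is at most $C_{\nu,R,\delta}\sup_t\iint f(1+|v|^2)$, uniformly in $n$. This works for all $-3<\gamma\le1$, using the local integrability of $|z|^\gamma$. For $(\mathcal{R}_2)^n$, I write $\int(\mathcal{R}_2)\chi\,dv=\iint f_*\beta_\delta(f)\,\mathcal{T}\chi\,dv\,dv_*$ and apply Lemma \ref{Lmm-T}, giving $|\mathcal{T}\chi|\le C\|\chi\|_{C^\alpha}|v-v_*|^{\alpha+\gamma}$.

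This is the main obstacle. Unlike Lemma \ref{R2}, where $v$ was restricted to $B_R(v)$, here $\mathcal{T}\chi(v)$ need not vanish for $v$ outside $\operatorname{supp}\chi$, because $v'$ can land in the support. I would handle this by using $\beta_\delta(f)\le f$ and $|v-v_*|^{\alpha+\gamma}\le C(1+|v|^2+|v_*|^2)$, which is valid since $\alpha+\gamma\le 2$. This yields the bound $C\iint f f_*(1+|v|^2+|v_*|^2)\le C\|f\|_{L^1}\|f\|_{L^1_2}$ in $v$, which is integrable in $x$ only after bounding one factor by $\sup_x$. To avoid that, I would instead use $\beta_\delta(f)\le 1/\delta$ and the fact that $\mathcal{T}\chi(v)=0$ unless $|v'|\le 2R$. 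Combined with $|v-v'|=|v-v_*|\sin(\theta/2)$ and $\theta\le\pi/2$, this forces $|v|\lesssim R+|v_*|$. The $v$-integral over that ball, of size $(R+|v_*|)^3$ times $|v-v_*|^{\alpha+\gamma}$, is polynomial in $|v_*|$ of degree exceeding 2, so as a fallback I would choose a smooth $\chi$ and exploit the decay. For $|v|\gg R$, only angles with $\sin(\theta/2)\gtrsim 1$ contribute, and on those angles the kernel is integrable, so $|\mathcal{T}\chi|\lesssim |v-v_*|^\gamma\mathbf 1_{|v'|\le 2R}$. The change of variables $v\mapsto v'$ at fixed $v_*,\sigma$, which has bounded Jacobian for $\theta\le\pi/2$, then returns the integral to $B_{2R}$ and gives the bound $C_{R,\delta}\int f_*(1+|v_*|^2)\,dv_*$. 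For $-3<\gamma<-1$, the local integrability of $|z|^{\alpha+\gamma}$ (since $\alpha+\gamma>-3$) handles the near-diagonal part as in Lemma \ref{R2}. Collecting these bounds, the $\chi$-weighted integral of $(\mathcal{R}_3)^n$ is bounded uniformly in $n$ by $C_{\delta,R,T}$, and Fatou's lemma gives $(\mathcal{R}_3)\in L^1([0,T]\times\R^3_x\times B_R(v))$.
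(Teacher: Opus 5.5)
Your scheme is the paper's: test the renormalized identity with a nonnegative cutoff, use $(\mathcal{R}_3)\ge 0$, and bound the boundary, $(\mathcal{R}_1)$ and $(\mathcal{R}_2)$ terms. Running it on $f^n$ and passing to the limit with Fatou's lemma is a harmless and slightly cleaner variant of the paper's purely a priori argument; the paper itself uses this Fatou step at the start of Section~\ref{Sec:Cnv-R3}.

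You are right that $\mathcal{T}\chi$ is not supported in $\operatorname{supp}\chi$. The same is true in Lemma~\ref{R2}, which is all the paper's proof of this lemma cites. Its proof restricts the $v$-integral to $B_R(v)$, the support of $\varphi$ rather than of $\mathcal{T}\varphi$, so the contribution of $v\notin\operatorname{supp}\varphi$ is never accounted for in the paper.

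Your repair of that contribution does not work as written. From $|v-v'|=|v-v_*|\sin(\theta/2)\ge |v|-2R$ you only get $\sin(\theta/2)\gtrsim |v|/|v-v_*|$, which is small when $|v_*|\gg|v|$. In that regime grazing collisions, where $b$ is singular, do send $v'$ into $B_{2R}$. So the claim that only angles with $\sin(\theta/2)\gtrsim 1$ contribute is false there. So is the resulting bound $|\mathcal{T}\chi(v)|\lesssim|v-v_*|^{\gamma}$ with an $O(1)$ angular integral.

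The estimate survives once a quantitative angular bound is supplied.
\begin{itemize}
\item Split at $|v|=4R$. For $|v|\le 4R$, use Lemma~\ref{Lmm-T} and $\beta_\delta\le 1/\delta$ as you do.
\item For $|v|>4R$ one has $\chi(v)=0$, and $\chi(v')\neq 0$ forces $|v-v'|\ge 2R$, i.e.\ $\sin(\theta/2)\ge 2R/|v-v_*|$.
\item Change variables $v\mapsto v'$ at fixed $(v_*,\sigma)$. The Jacobian is $\tfrac14\cos^2(\theta/2)\ge\tfrac18$, and $|v'-v_*|\le|v-v_*|\le\sqrt2\,|v'-v_*|$.
\item Use that the angular integral of $b$ over $\{\sin(\theta/2)\ge\epsilon\}$ is $\lesssim\epsilon^{-\nu}$, with a logarithm instead when $\nu=0$.
\end{itemize}
This gives
\begin{align*}
\iiint_{|v|>4R}\beta_\delta(f)\,f_*\,B\,\chi(v')\,dv\,dv_*\,d\sigma
&\le \frac{C}{\delta}\int_{\R^3_{v_*}} f_*\int_{B_{2R}}|v'-v_*|^{\gamma}\Big(1+\frac{|v'-v_*|}{R}\Big)^{\nu}dv'\,dv_*\\
&\le C_{R,\delta}\int_{\R^3_{v_*}} f_*\,(1+|v_*|^2)\,dv_*.
\end{align*}
The last step uses $\gamma+\nu=(1+\gamma)/2\le 1$ for the inverse power law. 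The same bound holds for $f^n$, $B_n$ uniformly in $n$, since $b_n\le b$. The factor $(|v'-v_*|/R)^{\nu}$ is exactly what your argument drops, and the relation between $\gamma$ and $\nu$ in the model is what makes it harmless.

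A further caveat, shared with the paper: for $\gamma\le -1$ one has $\nu=(1-\gamma)/2\ge 1$. Then no $\alpha\in(\nu,1)$ exists and Lemma~\ref{Lmm-T} is vacuous. With your smooth $\chi$, the bounded region would instead have to be handled by the symmetrized second-order Taylor bound of Alexandre--Villani.
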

\begin{proof}
This lemma essentially serves as an a priori estimate, based on which we regard the function $f$ as a renormalized solution of the Boltzmann equation \eqref{Boltz} in the sense of Definition \ref{def-re}. Multiplying both sides of the following equation by a test function $\varphi$,
	\begin{align*}
	\frac{\partial \beta_\delta  ( f ) }{ \partial t } + v \cdot \nabla_x \beta_\delta ( f ) = \beta_\delta  ' ( f ) Q ( f , f ),
\end{align*}
and integrating over $t,x,v$,  we obtain
	\begin{equation*}
	\begin{aligned}
		\iint_{ \mathbb{R}^3 _x\times \mathbb{R}^3 _v} \beta_\delta ( f )( T, \cdot ) \varphi d x d v = &\iint_{ \mathbb{R}^3_x \times \mathbb{R}^3 _v} \beta_\delta ( f ) ( 0, \cdot ) \varphi d x d v \\
		& + \int_{0}^{T} d t \iint_{ \mathbb{R}^3_x \times \mathbb{R}^3 _v} \left(  (\mathcal{R}_1 ) + (\mathcal{R}_2 ) + (\mathcal{R}_3 ) \right) \varphi d x d v.
	\end{aligned}
\end{equation*}
Here, the test function $\varphi $ satisfies
\begin{align*}
	\varphi(v) \geq 0 ,\quad \varphi \equiv 1 \  \text{on} \  B_R(v),\quad \varphi \equiv 0  \  \text{on} \   \mathbb{R}^3 _v  \setminus B_{2R}(v) ,  \quad \|\varphi\|_{C^{\alpha}_c (\R^3_v)} \leq C.
\end{align*} 
Consequently,
	\begin{equation}\label{R3-bdd}
		\begin{aligned}
			\int_{0}^{T} d t \iint_{ \mathbb{R}^3_x \times\mathbb{R}^3_v } (\mathcal{R}_3 ) \varphi d x d v \leq &  \left|  \iint_{ \mathbb{R}^3_x \times \mathbb{R}^3_v }  (\beta_\delta ( f )( T,\cdot ) + \beta_\delta ( f ) ( 0, \cdot )) \varphi d x d v \right|  \\
			& + \left|  \int_{0}^{T} d t \iint_{ \mathbb{R}^3_x \times \mathbb{R}^3_v }  ( (\mathcal{R}_1 ) + (\mathcal{R}_2 )) \varphi dx dv  \right|.
		\end{aligned}
	\end{equation}
Given that $\beta_\delta ( f ) = f / ( 1+ \delta f) \leq C_\delta f$, we have
	\begin{equation*}
	\left|	\iint_{ \mathbb{R}^3 _x \times \mathbb{R}^3_v  } \left( \beta_\delta ( f ) ( T,\cdot ) - \beta_\delta ( f ) ( 0 ,\cdot ) \right) d x d v \right| \leq C_\delta \| f  \|_{L^1 ( \R ^3_x \times \R ^3_v )}.
	\end{equation*}
Lemmas \ref{R1} and \ref{R2}  ensure that the second term on the right-hand side of inequality \eqref{R3-bdd} is finite. 
Moreover, by the non-negativity of  $( \mathcal{R}_3 ) $, we conclude that 
$$ (\mathcal{R}_3 ) \in L^1 \left( [0,T] \times \R ^3 _x \times B_R(v )\right) \,.$$  
This completes the proof of Lemma \ref{p-R3}.
\end{proof}

\section{Strong compactness of approximated solution}\label{Sec:Con}
 Based on the uniform bounds \eqref{FB2} and \eqref{FWC2} of $f^n $ and the Dunford-Pettis theorem (Lemma \ref{theorem-dunford}), it follows directly that  $f^n$ converges weakly to $f$ in $L^1 ( [0,T] \times \R^3 _x \times \R^3_v )$. This section is devoted to upgrading the weak convergence results to strong convergence in $L^1$, specifically showing that $f^n$ converges strongly to $f$ in $L^1 ( [0,T]  \times \R^3 _x \times \R^3_v ) $. We first present several lemmas to establish an inequality which demonstrates that the high-frequency components of $\sqrt{f^n} $ in the velocity variable are controlled by the entropy dissipation.  By combining the weak compactness of $f^n$, the uniform bound \eqref{FB2} and the Vitali Convergence Theorem (Lemma \ref{L-VC}), we prove that $f^n$ converges strongly to $f$ in $L^1([0,T] \times \mathbb{R}^3 _x \times \mathbb{R}^3_v )$.  We note that this method yields strong convergence results only in the hard potential case. For soft potentials, We illustrate this limitation through a concrete counterexample (see Example \ref{soft-f} in Section \ref{sec-cnv-soft}), where we construct a specific distribution function that satisfies finite entropy and moment bounds but results in an infinite collision frequency.
\subsection{A functional inequality}\label{Subsec: inq}
 In this subsection, we establish a functional inequality for the non-cutoff model, demonstrating that the regularity of the distribution function with respect to the velocity variable is controlled by the entropy dissipation. This result is adapted primarily from \cite{ADVW-bound-2000}.

 Let 
 \begin{align}\label{d-en}
 	e^n ( t,x,v)   =  \iint_{  \mathbb{R}^{3}_{v_*} \times \S^{2}} B_n( f^n  f^n_* - {f^{n}} '  f_*^{ n}{'}  ) \log  f^n_*    \, dv_* \, d\sigma ,  
 \end{align}
 and
 \begin{align}\label{d-Zn}
 	Z_n(a)  =2\pi \int_{a}^{\frac{\pi}{2}} b_n (\cos\theta) \sin  \theta \, d\theta,
 \end{align}
 where $b_n$ is  defined in \eqref{bn}.

 We now proceed to prove the following inequality 
 \begin{equation}\label{F-Zn}
 \begin{aligned}
 	 &  	\int_{|\xi |\geq 1 } | \mathcal{F} (\sqrt{ f^n } \chi_R) |^2    Z_n\left( \frac{1}{|\xi | } \right)  d \xi   \\
 	 \leq & \frac{  C_0 C_2 J^2   (2\pi )^3   }{  \|f ^n \|_{L^1( B_R(v )) }  }   \left\{   \int_{  \mathbb{R}^{3}_v } e^n  ( t,x,v)   dv   + \frac{C_1 }{r_0 ^2 }  \|f ^n\|^2_{L^1_2 ( \R^3_v ) }  +\iiint_{ \mathbb{R}^{ 3}_v \times \mathbb{R}^{ 3} _{v_*}\times  \S^2 } B_n f^n ( {f^n_*}' -f^n_*)   d vdv_* d \sigma  \right\}   .
 \end{aligned}
 \end{equation} 
Here, $\mathcal{F}$ denotes the Fourier transform with respect to the velocity variable $v$, and  $\chi_R$ is a smooth cutoff function satisfying $0 \leq \chi_R \leq 1 $ with $\chi_R \equiv 1 $ on $B_R (v)$ and  $\supp(\chi_R) \subset B_{ R+1}(v)$. Furthermore, the term $C_0 C_2  J^2 $ is a constant, and $r_0, R > 0$ are sufficiently small and large constants, respectively. The proof is detailed in the following series of lemmas.

\begin{lemma}\label{lemma-Zn}
Assume that $\|f \|_{L^1 _1 ( \R^3_v  ) } = \int_{ \R ^3 _v } f( v ) ( 1+ |v| ) d v >0$. Then for all $\xi \in \mathbb{R}^3 $ such that if $|\xi| \geq 1$, we have
	\begin{align}\label{Zn}
			Z_n   \left( \frac{1}{|\xi  | } \right)   
	 \leq C_0 \|f\|^{-1}_{L^1(\mathbb{R}^3_v )}  
		\int_{ \S ^2 } b_n \left( \frac{\xi    }{| \xi  | } \cdot \sigma \right) \left( \hat{f } ( 0 ) - |\hat{ f } ( \xi  ^{- } )  | \right) d \sigma  , 
	\end{align}
	where $\xi^{- } = \frac{\xi - |\xi| \sigma}{2}$, $b_n$ is  defined in \eqref{bn}, and the constant $C_0 > 0$ is independent of $n$ and $f$.
\end{lemma}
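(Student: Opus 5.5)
This is the ADVW lemma: for $|\xi|\ge 1$, the quantity $\hat f(0)-|\hat f(\xi^-)|$ is bounded below, for angles $\theta\ge 1/|\xi|$, by a fixed fraction of $\|f\|_{L^1}$. Here $\theta$ is the angle between $\xi/|\xi|$ and $\sigma$.

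\textbf{Step 1: geometry of $\xi^-$.} From $\xi^-=\frac{\xi-|\xi|\sigma}{2}$ we get $|\xi^-|^2=\frac{|\xi|^2}{2}(1-\cos\theta)=|\xi|^2\sin^2(\theta/2)$. Since $\sin(\theta/2)\ge\theta/\pi$ on $[0,\pi/2]$, every $\theta\ge 1/|\xi|$ gives $|\xi^-|\ge 1/\pi$, and on the support of $b_n$ we always have $|\xi^-|\le |\xi|/\sqrt2$.

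\textbf{Step 2: a lower bound on $\hat f(0)-|\hat f(\zeta)|$ for $|\zeta|\ge 1/\pi$.} Write $\hat f(\zeta)=\int f(v)e^{-iv\cdot\zeta}dv$ and pick a phase $\omega$ with $|\hat f(\zeta)|=\int f(v)\cos(v\cdot\zeta+\omega)\,dv$. Then
\[
\hat f(0)-|\hat f(\zeta)|=\int f(v)\bigl(1-\cos(v\cdot\zeta+\omega)\bigr)dv .
\]
The first attempt, using $1-\cos x\ge c\,x^2$ near $0$, only controls small frequencies and cannot be uniform in $|\zeta|$. Instead I use the standard ADVW concentration argument. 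Fix a radius $\rho$ with $\int_{|v|>\rho}f\le \|f\|_{L^1}/4$; by Chebyshev, $\rho\sim \|f\|_{L^1_1}/\|f\|_{L^1}$ works. For large $|\zeta|$, the set where $1-\cos(v\cdot\zeta+\omega)<\epsilon$ is a union of slabs of width $\sim\sqrt\epsilon/|\zeta|$. The difficulty is that an arbitrary $f$ can concentrate exactly on such slabs (for instance a lattice of Dirac-like masses), so there is no uniform lower bound from mass and moments alone.

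This is the step I expect to be the main obstacle, and I see two ways around it. The ADVW route lets $C_0$ depend on $f$ through $\|f\|_{L^1_1}$ and an entropy-type quantity. The alternative is to exploit the $\sigma$-average: as $\sigma$ ranges over the cone $\theta\approx\theta_0$, the direction of $\xi^-$ sweeps a cone. The concentration set then changes with $\sigma$, so averaging $1-\cos$ over a spherical cap of directions gives a positive fraction of $\|f\|_{L^1}$ once $|\xi^-|\rho\gtrsim 1$. For smaller $|\xi^-|$, the $x^2$ lower bound combined with the mass inside $B_\rho$, which keeps the mass from concentrating near the origin, suffices.

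\textbf{Step 3: the angular average.} I prove
\[
\int_{\S^2} b_n\Bigl(\tfrac{\xi}{|\xi|}\cdot\sigma\Bigr)\bigl(\hat f(0)-|\hat f(\xi^-)|\bigr)\,d\sigma\;\ge\; c\,\|f\|_{L^1}\,2\pi\int_{1/|\xi|}^{\pi/2} b_n(\cos\theta)\sin\theta\,d\theta .
\]
To do this, restrict the $\sigma$-integral to $\theta\ge 1/|\xi|$, where $|\xi^-|\ge 1/\pi$ by Step 1. On each shell of fixed $\theta$, apply the cap-averaged lower bound of Step 2, using that $\sigma\mapsto\xi^-/|\xi^-|$ ranges over a circle at fixed $\theta$. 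Since $b_n\le b$ and the truncation only removes $\theta\lesssim 1/n$, the constant $c$ is independent of $n$.

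Dividing by $c\|f\|_{L^1}$ and recalling the definition \eqref{d-Zn} of $Z_n$ then yields \eqref{Zn} with $C_0=1/c$.
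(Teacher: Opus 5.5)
There is a genuine gap, at exactly the step you flagged in Step~2. The lower bound you need for $|\zeta|\ge 1/\pi$, whether pointwise or averaged over the circle swept by $\xi^-/|\xi^-|$, does not exist with a constant independent of $f$. In fact \eqref{Zn} itself cannot hold with such a $C_0$.

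The obstruction is not anisotropic concentration on slabs. It is isotropic concentration on a small ball, which no averaging over directions can see. Take $f_\epsilon(v)=\epsilon^{-3}\chi(v/\epsilon)$ with $\chi$ the standard Gaussian, so that $\|f_\epsilon\|_{L^1}=1$ and $\hat f_\epsilon(\zeta)=e^{-\epsilon^2|\zeta|^2/2}$. For every $\sigma$ this gives $\hat f_\epsilon(0)-|\hat f_\epsilon(\xi^-)|\le \tfrac12\epsilon^2|\xi^-|^2=\tfrac12\epsilon^2|\xi|^2\sin^2(\theta/2)$. Using $b_n\le b$, the right-hand side of \eqref{Zn} is then at most $\tfrac14 C_0\,\epsilon^2|\xi|^2 M(1)$, which tends to $0$ as $\epsilon\to0$. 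The left-hand side $Z_n(1/|\xi|)$ does not depend on $\epsilon$ and is positive once $|\xi|$ and $n$ are large. Note that $\|f_\epsilon\|_{L^1_1}$ stays bounded while $\int f_\epsilon|\log f_\epsilon|\sim 3\log(1/\epsilon)$. So $C_0$ must depend on a non-concentration quantity.

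Your rescue also uses the wrong scale. The quantity $\hat f(0)-|\hat f(\zeta)|$ is invariant under translations of $f$, so the Chebyshev radius $\rho$ only localizes the mass and cannot rule out its concentration. Your small-$|\xi^-|$ argument would need $\int f\,(v\cdot\zeta+\omega)^2\,dv\gtrsim|\zeta|^2\|f\|_{L^1}$, a lower bound on the directional spread of $f$. That fails for the same $f_\epsilon$.

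The paper's own proof breaks at the same point. In Cases 1 and 2 it asserts that $\epsilon$ and $r$ can be chosen so that $C'_f\ge\sin^2\epsilon\,\|f\|_{L^1}$. But making $\|f\|_{L^1_1}/r$ and $\sup_{|A|\le 16\epsilon r^2+16\epsilon r^3/\pi}\int_A f$ small relative to $\|f\|_{L^1}$ forces $r$ and $\epsilon$ to depend on $f$. The resulting constant is therefore not universal, contrary to the accompanying Remark.

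What is provable is your first option, the ADVW version. There \eqref{Zn} holds with $C_0$ independent of $n$, but depending on a lower bound for $\|f\|_{L^1}$ and upper bounds for $\|f\|_{L^1_1}$ and $\int f|\log f|$. You pick $r$ by Chebyshev, then pick $\epsilon$ via $\int_A f\le M|A|+\int f|\log f|/\log M$ for $M>1$. This yields $\hat f(0)-|\hat f(\zeta)|\ge C_f(|\zeta|^2\wedge 1)$.

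With that bound your Steps~1 and~3 go through unchanged. Restricting to $\theta\ge 1/|\xi|$, where $|\xi^-|\ge 1/\pi$, is equivalent to the paper's comparison $Z_n(1/|\xi|)\le\pi^2\int b_n(|\xi^-|^2\wedge1)\,d\sigma$. Independence of $n$ is automatic, since $b_n$ appears on both sides. This $f$-dependent version is also all that can legitimately be used in Lemma~\ref{L-FZn}, where $f^n\chi_{B_j}$ has uniform moment and entropy bounds.
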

\begin{remark}
This lemma is primarily based on Section 6 of \cite{ADVW-bound-2000}. However, the constant $C_0$   on the right-hand side of inequality  \eqref{Zn} originally  depends on $\| f\|_{L^1_1(\R^3_v ) }$ and  $\|f\|_{ L \log L } $. Here, we refine the constant by factoring out the  term $\|f\| _{L^1(\mathbb{R}^3_v )} $,  so that $C_0$ becomes independent of both $n$ and $f$. 
\end{remark}
\begin{proof}
	From the definition of $\xi^{- }$, it follows that
	\begin{align*}
		\left|\xi ^{- } \right| ^2 = \frac{1}{2} \left( |\xi |^2 - |\xi | \xi \cdot \sigma \right) = \frac{|\xi |^2 }{2 } \left( 1- \frac{\xi }{|\xi | } \cdot \sigma \right).
	\end{align*}
By using the half-angle identity $1-\cos \theta = 2 \sin ^2 \left(\theta /2\right)$, we obtain
	\begin{align*}
		\int_{ \S ^{2} }  b_n \left( \frac{\xi   }{| \xi | } \cdot \sigma \right) \left( |\xi ^{- } |^2 \wedge 1  \right) d \sigma = 2\pi  \int_{0}^{\frac{ \pi } { 2 } } b_n ( \cos \theta ) \left[ |\xi |^2 \sin ^ 2 \left( \frac{\theta }{2} \right) \wedge 1  \right]  \sin   \theta d \theta,
	\end{align*}
where we denote $a \wedge b = \min \{ a,b\}$.	Since $\sin x \geq \frac{2}{\pi} x$ for all $x \in \left[ 0, \frac{\pi}{2} \right]$, we have $|\xi|^2 \sin^2 \left( \frac{\theta}{2} \right) \geq |\xi|^2 \theta^2 / \pi^2$. Consequently, the following inequality holds 
	$$\left[ |\xi|^2 \theta^2 \wedge 1 \right] \leq \pi^2 \left[ |\xi|^2 \sin^2 \left( \frac{\theta}{2} \right) \wedge 1 \right] . 
	$$
This implies that for $|\xi| \geq 1$, we have
	\begin{align*}
		Z_n\left( \frac{1}{|\xi  | } \right)  = 2\pi  \int_{ \frac{1}{|\xi | } }^{\frac{\pi}{2}} b_n (\cos\theta) \sin  \theta \, d\theta &\leq  2\pi  \int_{0}^{\frac{ \pi } { 2 } } b_n ( \cos \theta ) \sin   \theta ( |\xi| ^ 2 \theta ^2 \wedge 1 ) d \theta \\
		& \leq \pi ^2 \int_{\S^2 } b_n \left( \frac{\xi   }{| \xi | } \cdot \sigma \right) \left( |\xi ^{- } | ^2 \wedge 1 \right) d \sigma.
	\end{align*}

	Next, we prove that for any $ \zeta  \in \R ^3_\zeta$, the following bound holds 
	$$ \hat{f}(0) - |\hat{f}( \zeta )| \geq C_{f}' \, (|\zeta |^{2} \wedge 1) .$$
	 We first observe that
	\begin{align*}
		\hat{f}(0) - |\hat{f}(\zeta )| = \int_{\mathbb{R}^3_\eta } f (\eta)(1 - \cos( \eta \cdot \zeta  + \alpha)) \, d\eta  
		= 2 \int_{\mathbb{R}^3_\eta } f (\eta) \sin^2 \left( \frac{\eta \cdot \zeta  + \alpha}{2} \right) \, d\eta,
	\end{align*}
	where $\hat{f}(\zeta )= |\hat{f} ( \zeta  )| e^{-i \alpha },$ with $\alpha \in ( -\pi , \pi ]$. Furthermore,   whenever $|\eta \cdot \zeta  + \alpha - 2 p \pi| \geq 2 \epsilon$ for any $p \in \mathbb{Z}$, the inequality $\sin^2 \left( \frac{\eta \cdot \zeta  + \alpha}{2} \right) \geq \sin^2 \epsilon$ holds. Consequently, we obtain
	\begin{align*}
		\hat{f}(0) - |\hat{f}(\zeta )| \geq  2 \sin^2 \epsilon \int_{\{|\eta| \leq r, \forall p \in \mathbb{Z}, |\eta \cdot \zeta  + \alpha - 2p \pi| \geq 2 \epsilon\}} f(\eta) \, d \eta .
	\end{align*}

By considering the decomposition of the space
	\begin{align*}\scalebox{0.95}{$\displaystyle
			\left\{ |\eta| \leq r, \forall p \in \mathbb{Z}, |\eta \cdot \zeta  + \alpha - 2p \pi| \geq 2 \epsilon   \right\}  \cup  \left\{ |\eta| \leq r, \exists p \in \mathbb{Z}, |\eta \cdot \zeta  + \alpha - 2p \pi| < 2 \epsilon   \right\} \cup  \left\{ | \eta | \geq r \right\} = \R ^3_\eta $}
	\end{align*}
	and the elementary tail estimate
	\begin{align*}
		\int_{ | \eta | \geq r  } | f( \eta ) |  d \eta \leq \frac{1}{r} \int_{\R ^3 _\eta } |f ( \eta ) | | \eta | d \eta ,
	\end{align*}
we derive the lower bound
	\begin{align*}
		\int_{\{|\eta| \leq r, \forall p \in \mathbb{Z}, |\eta \cdot \zeta  + \alpha - 2p \pi| \geq 2 \epsilon\}} f(\eta)   d \eta  
		\geq   \|f\|_{L^1(\mathbb{R}^3_\eta)} - \frac{\|f\|_{L^1_1(\mathbb{R}^3_\eta)}}{r} - \int_{ \{|\eta| \leq r, \exists p \in \mathbb{Z}, |\eta \cdot \zeta   +  \alpha  - 2 p \pi  \leq 2 \epsilon \}} f(\eta)   d\eta . 
	\end{align*}
	
We observe that for a fixed $p$, the condition
	\begin{align*}
		\left| \eta\cdot \frac{\zeta  }{|\zeta  | } + \frac{\alpha }{ | \zeta  | } - p \cdot \frac{2 \pi }{ |\zeta  | }  \right| \leq \frac{2 \epsilon }{ |\zeta  | }
	\end{align*}
	defines a thin slab region with width $\frac{4 \epsilon }{ |\zeta  | }$ and cross-sectional area $ 4r^2   $. 
	Next, we examine the range of $p$. Given that $ |\eta \cdot \zeta  + \alpha - 2p \pi| \leq 2 \epsilon $ and $|\eta \cdot \zeta   | \leq |\eta| \cdot  |\zeta  | \leq r |\zeta  |$, it follows that $|2 p \pi - \alpha | \leq 2 \epsilon + r |\zeta  |$. This inequality can be rewritten as 
	\begin{align*}
	  - 2 \epsilon - r |\zeta  |+ \alpha \leq 2 \pi p \leq 2 \epsilon + r |\zeta  | + \alpha,
	\end{align*}
	which implies 
	\begin{align*}
		p \in \left[ \frac{\alpha - 2 \epsilon }{2 \pi } - \frac{r |\zeta  | }{ 2 \pi } , \frac{ r |\zeta  | }{ 2 \pi }  +  \frac{\alpha + 2 \epsilon }{2 \pi } \right] \cap  \mathbb{Z} .
	\end{align*}
	
	For sufficiently small $\epsilon$, we have $ | \alpha \pm 2 \epsilon |  \leq  \pi$. This implies that the number of such integer values $p$ is about $\frac{r |\zeta |}{\pi} + 1$. Therefore, the total volume of the integration region 
	\begin{align*}
	A =	\left\{|\eta| \leq r, \exists p \in \mathbb{Z},  \left| \eta\cdot \frac{\zeta  }{|\zeta  | } + \frac{\alpha }{ | \zeta  | } - p \cdot \frac{2 \pi }{ |\zeta  | }  \right| \leq \frac{2 \epsilon }{ |\zeta  | } \right\}
	\end{align*}
	does not exceed that of region $ \frac{16 \epsilon }{|\zeta  | }   r ^{2} \left(\frac{r |\zeta  | }{\pi} + 1 \right)$. 
	
	Next, we discuss the value of the constant $C_f$ for the two cases where $|\zeta  |\geq 1 $ and $|\zeta  |\leq 1 $, respectively.
	
 \underline{\em  Case 1. $|\zeta  |\geq 1 $.} We note that the volume of the region $A$  satisfies
	\begin{align*}
		|A | \leq  \frac{16 \epsilon }{|\zeta  | }   r ^{2} \left( \frac{r |\zeta  |}{\pi } + 1 \right) \leq 16 \epsilon   r ^2 + \frac{16 \epsilon}{\pi }  r  ^3.
	\end{align*}
Let the constant $C'_f$ be defined as
	\begin{align*}
		C'_f = 2 \sin^2 \epsilon \left\{ \|f\|_{L^1(\mathbb{R}^3_\eta)} - \frac{\|f\|_{L^1_1(\mathbb{R}^3_\eta)}}{r} - \sup_{|A| \leq 16 \epsilon   r ^2 + \frac{16 \epsilon}{\pi }  r  ^3 } \int_A f(\eta) \, d\eta \right\}.
	\end{align*}
By choosing appropriate values of $\epsilon$ and $r$, we can ensure that $C'_f \geq  \sin^2 \epsilon \|f\|_{L^1(\mathbb{R}^3_\eta)}$.
	
 \underline{\em  Case 2. $|\zeta  |\leq 1 $.} Let $\delta = \frac{\epsilon }{|\zeta |}$. Then we have
	\begin{align*}
		\sin ^2 \epsilon = \frac{\sin ^ 2 ( \delta |\zeta |  )}{ \delta ^2 |\zeta |^2 } \delta ^2 |\zeta | ^2 \geq \inf_{|\zeta  | \leq 1 } \left| \frac{\sin ^2 ( \delta |\zeta |  )}{ \delta ^2 |\zeta |^2 } \right| \delta^2 |\zeta |^2.
	\end{align*}
Furthermore, we set
	\begin{align*}
		C'_f = 2 \delta^2 \inf_{|\zeta | \leq 1} \left| \frac{\sin^2 (\delta |\zeta |)}{\delta^2 |\zeta |^2} \right| \times \left\{ \|f\|_{L^1(\mathbb{R}^3_\eta )} - \frac{\|f\|_{L^1_1(\mathbb{R}^3_\eta )}}{r} - \sup_{|A| \leq 16 \delta  r ^2 \left(1+\frac{r}{\pi}\right)} \int_A f(\eta) \, d\eta \right\}.
	\end{align*}
Similarly, we can choose appropriate values of $\delta$ and $r$ such that 
$$C'_f \geq  \delta^2 \inf \limits_{|\zeta | \leq 1} \left| \frac{\sin^2 (\delta |\zeta |)}{\delta^2 |\zeta |^2} \right| \|f\|_{L^1(\mathbb{R}^3_\eta)} \,.$$ 	
Consequently, there exists a positive constant $C_0$, independent of $f$, such that 
\begin{align*}
 |\zeta |^{2} \wedge 1 \leq C_0 \|f\|^{-1}_{L^1(\mathbb{R}^3_\eta)} \left( \hat{f}(0) - |\hat{f}( \zeta )| \right) , \quad \forall \zeta  \in \R ^3_\zeta .
\end{align*}
  In particular, we have $| \xi^{- }  |^{2} \wedge 1 \leq C_0  \|f\|^{-1}_{L^1(\mathbb{R}^3)} \left(   \hat{f}(0) - |\hat{f}( \xi^{- } )|  \right) $. It follows that if $|\xi| \geq 1 $,  
	\begin{align*}
	Z_n   \left( \frac{1}{|\xi  | } \right)  
	&\leq \pi^2 	\int_{ \S ^2 } b_n \left( \frac{\xi    }{| \xi  | } \cdot \sigma \right) \left( |\xi ^{- } |^2 \wedge 1 \right) d \sigma \\
	&\leq C_0 \|f\|^{-1}_{L^1(\mathbb{R}^3_\eta)}  
		\int_{ \S ^2 } b_n \left( \frac{\xi    }{| \xi  | } \cdot \sigma \right) \left( \hat{f } ( 0 ) - |\hat{ f } ( \xi  ^{- } )  | \right) d \sigma  .
	\end{align*}
This completes the proof of Lemma \ref{lemma-Zn}. 
\end{proof}

\begin{lemma}\label{Lemma-F}
	For any functions $ f \in L^1 ( \R ^3 _v), F \in L^2 ( \R ^3_v )$, the following inequality holds:
	\begin{align*}
		& \iiint_{ \mathbb{R}^{3 }_v \times  \mathbb{R}^{3 }_{v_*} \times \S^2 } b(k \cdot \sigma) f_* (F' - F)^2 dv\, dv_*\, d\sigma \\
		\geq & \frac{1}{(2\pi)^3} \int_{\R ^3_\xi } | \hat{F}(\xi ) | ^2  \left\{ \int_{ \S ^2 } b\left( \frac{\xi}{|\xi | } \cdot \sigma \right) \left( \hat{ f } ( 0 ) - |\hat{ f } ( \xi ^- ) | \right) d \sigma  \right\} d \xi,
	\end{align*}
	where
	\begin{align*}
		\xi^+ = \frac{\xi + |\xi| \sigma}{2}, \quad \xi^- = \frac{\xi - |\xi| \sigma}{2}.
	\end{align*} 
\end{lemma}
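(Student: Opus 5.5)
The plan is to follow Section 3 of \cite{ADVW-bound-2000}. The key device is the Bobylev identity for kernels that depend only on the angle. First I reduce to an integrable angular kernel: replace $b$ by $b\,\mathbf{1}_{\theta\geq\epsilon}$, or by $b_n$ from \eqref{bn}. Both sides of the claimed inequality are monotone in $b$ with nonnegative integrands, so the monotone convergence theorem (Lemma \ref{MCT}) recovers the general case. By density I may also assume $f\in L^1_+$ and $F$ in the Schwartz class; the case $f\geq 0$ is the only one used later, in \eqref{F-Zn}.

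With $b$ integrable, I expand
$$(F'-F)^2=(F'^2-F^2)+2F(F-F').$$
\emph{First term.} For fixed $v_*$ and $\sigma$, I change variables $v\mapsto v'$, exactly as in the cancellation lemma (Lemma \ref{prop-S}) but with $\gamma=0$ and no $|z|$-dependence. This gives
$$\iiint b\,f_*(F'^2-F^2)=\int f_*\Big(\int F^2(v)\,S_0(|v-v_*|)\,dv\Big)dv_*,$$
where $S_0=2\pi\int_0^{\pi/2}\sin\theta\, b(\cos\theta)\big[\cos^{-3}(\theta/2)-1\big]d\theta$, up to the exact Jacobian power. The bracket is nonnegative, so this term is $\geq 0$ and can be discarded; if the sign is off, it is at least bounded below by a harmless multiple of $\hat f(0)\|F\|_2^2$.

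\emph{Second term.} Plancherel together with the Bobylev formula $\iint b\, f_*\, F(v)\, G(v')\,dv\,dv_*\,d\sigma$ $=(2\pi)^{-3}\iint b(\tfrac{\xi}{|\xi|}\cdot\sigma)\,\hat f(\xi^-)\,\hat G(\xi^+)\,\overline{\hat F(\xi)}\,d\xi\,d\sigma$ gives
$$2\iiint b\,f_*\,F(F-F')=\frac{2}{(2\pi)^3}\,\mathrm{Re}\iint b\Big(\hat f(0)|\hat F(\xi)|^2-\hat f(\xi^-)\hat F(\xi^+)\overline{\hat F(\xi)}\Big)d\sigma\, d\xi .$$
I then split
$$\hat f(0)|\hat F(\xi)|^2-\hat f(\xi^-)\hat F(\xi^+)\overline{\hat F(\xi)}=\big(\hat f(0)-|\hat f(\xi^-)|\big)|\hat F(\xi)|^2+|\hat f(\xi^-)|\big(|\hat F(\xi)|^2-|\hat F(\xi^+)||\hat F(\xi)|\big)+\text{(phase)}.$$
The first piece yields twice the desired right-hand side. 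For the remainder, I use $|\hat F(\xi^+)||\hat F(\xi)|\le\frac12\big(|\hat F(\xi^+)|^2+|\hat F(\xi)|^2\big)$, which reduces it to
$$\tfrac12\iint b\,|\hat f(\xi^-)|\big(|\hat F(\xi)|^2-|\hat F(\xi^+)|^2\big).$$
I then change variables $\xi\mapsto\xi^+$ in the $|\hat F(\xi^+)|^2$ part. This is the Fourier-side analogue of the cancellation computation, with $|\xi^+|=|\xi|\cos(\theta/2)$. It is controlled in the same way as the first term, now with $|\hat f(\xi^-)|\le\hat f(0)$ and a Jacobian factor of favorable sign. The result is a lower bound by $-\frac{1}{(2\pi)^3}\iint b\,(\hat f(0)-|\hat f(\xi^-)|)|\hat F|^2$ at worst, which leaves coefficient $1$ in front of the desired quantity.

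The main obstacle is this last step: keeping the bookkeeping of the $\xi^+$ change of variables and its Jacobian honest, so that the cross term is absorbed with the correct sign and constant. I would first try the explicit symmetric splitting used in \cite{ADVW-bound-2000} and verify the Jacobian $d\xi^+=\cos^{-?}(\theta/2)\,d\xi$ directly in polar coordinates around $\sigma$. The remaining steps (the truncation limit and the Bobylev identity for integrable $b$) are routine.
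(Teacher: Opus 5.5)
The gap is in your last step, and it comes from discarding the first term. Write $T_1:=\iiint b\,f_*(F'^2-F^2)\,dv\,dv_*\,d\sigma$. Let $\mathrm{Rem}:=\frac{1}{(2\pi)^3}\iint b\,|\hat f(\xi^-)|\big(|\hat F(\xi)|^2-|\hat F(\xi^+)|^2\big)\,d\xi\,d\sigma$; this is your $\tfrac12\iint\cdots$ times the prefactor $\tfrac{2}{(2\pi)^3}$. In Fourier integrals $b$ means $b(\frac{\xi}{|\xi|}\cdot\sigma)$, and $\mathrm{RHS}$ is the right-hand side of the lemma.

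For fixed $\sigma$, the map $\xi\mapsto\xi^+$ has Jacobian $\tfrac14\cos^2(\theta/2)$, and $\xi^+$ makes the angle $\theta/2$ with $\sigma$. Hence, with $S_0=2\pi\int_0^{\pi/2}\sin\theta\,b(\cos\theta)\big[\cos^{-3}(\theta/2)-1\big]d\theta\ge 0$,
\[
\frac{1}{(2\pi)^3}\iint b\,|\hat F(\xi^+)|^2\,d\xi\,d\sigma=\frac{1}{(2\pi)^3}\iint b\,|\hat F(\xi)|^2\,d\xi\,d\sigma+S_0\,\|F\|_{L^2}^2 .
\]
So the Jacobian factor $\cos^{-3}(\theta/2)\ge 1$ has the \emph{unfavorable} sign for $\mathrm{Rem}$. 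Using $|\hat f(\xi^-)|\le\hat f(0)$ you only get
\[
\mathrm{Rem}\ \ge\ -\mathrm{RHS}-S_0\,\hat f(0)\,\|F\|_{L^2}^2 ,
\]
and the last term cannot be absorbed. Take $f=\epsilon^{-3}\rho(\cdot/\epsilon)$ close to a point mass and $\mathrm{supp}\,\hat F\subset B_A$. Then $|\hat f(\xi^-)|\to\hat f(0)$ uniformly on $\{|\xi^-|\le\sqrt2 A\}$, which is all that matters. So $\mathrm{RHS}\to 0$ while $\mathrm{Rem}\to -S_0\hat f(0)\|F\|_{L^2}^2<0$. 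Your asserted bound $\mathrm{Rem}\ge-\mathrm{RHS}$ is therefore false, and with $T_1$ discarded the argument does not close.

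The missing observation is that the term you threw away is exactly this defect. Indeed $T_1=S_0\hat f(0)\|F\|_{L^2}^2=\frac{\hat f(0)}{(2\pi)^3}\iint b\big(|\hat F(\xi^+)|^2-|\hat F(\xi)|^2\big)$, so
\[
T_1+\mathrm{Rem}=\frac{1}{(2\pi)^3}\iint b\,\big(\hat f(0)-|\hat f(\xi^-)|\big)\big(|\hat F(\xi^+)|^2-|\hat F(\xi)|^2\big)\,d\xi\,d\sigma\ \ge\ -\mathrm{RHS},
\]
which is exactly what you need, since $\mathrm{LHS}\ge 2\,\mathrm{RHS}+T_1+\mathrm{Rem}$.

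Once $T_1$ is kept, your computation is the paper's proof written in pieces. $T_1$ plus your second term is the Plancherel identity of \cite{ADVW-bound-2000}, with integrand $\hat f(0)\big(|\hat F(\xi)|^2+|\hat F(\xi^+)|^2\big)-2\,\mathrm{Re}\big(\hat f(\xi^-)\hat F(\xi^+)\overline{\hat F(\xi)}\big)$. The single bound $2\,\mathrm{Re}(\cdots)\le|\hat f(\xi^-)|\big(|\hat F(\xi^+)|^2+|\hat F(\xi)|^2\big)$ then makes the integrand at least $\big(\hat f(0)-|\hat f(\xi^-)|\big)|\hat F(\xi)|^2$ for $f\ge 0$. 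No phase/modulus splitting is needed.

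Your preliminary reductions are sound: truncating $b$, using monotone convergence, and restricting to $f\ge0$. They are in fact more careful than the paper, which uses the identity termwise although each term diverges for non-integrable $b$.

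A minor point: in your Bobylev formula the roles of $F$ and $G$ are swapped. The function evaluated at $v$ should carry $\xi^+$ and the one at $v'$ should carry $\xi$. This is harmless here because you only use it with $F=G$.
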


\begin{proof}
First, we recall the Plancherel-type identity established in Property 1, Section 5 of \cite{ADVW-bound-2000}, which is expressed as
		\begin{equation}\label{Plancherel}
		\begin{aligned}
			&  \iiint_{ \mathbb{R}^{ 3} _v \times \mathbb{R}^{ 3} _{v_*} \times  \S^{2} } b(k \cdot \sigma) f_*\left(F^{\prime}-F\right)^2 d v d v_* d \sigma \\
			= &\frac{1}{(2 \pi)^3} \iint_{\mathbb{R}^3_\xi  \times \S^{2}} b \left( \frac{ \xi }{ | \xi | } \cdot \sigma \right) \left[ \hat{ f } (0 ) | \hat{ F } ( \xi ) |^2 + \hat{ f } ( 0 ) | \hat{ F } \left( \xi ^{ + } \right) |^2 \right.  \\
			& \left.\qquad \qquad  - \hat{ f } \left( \xi ^{ -} \right) \hat{F} \left( \xi ^{+}\right) \overline{ \hat{ F } } ( \xi )- \overline{ \hat{ f } } \left( \xi ^{ - }\right) \overline{\hat{ F } } \left( \xi ^{ + } \right) \hat{ F }( \xi ) \right] d \xi d \sigma.
		\end{aligned}
	\end{equation}
The proof of this identity is omitted here. By observing the inequality
	\begin{align*}
		\hat{f} ( \xi ^- ) \hat{ F } ( \xi ^+ ) \overline{ \hat{ F } ( \xi ) } + \overline{ \hat{ f } ( \xi ^- ) }  \overline{ \hat{ F } ( \xi ^+  ) } \hat{ F } ( \xi ) \leq | \hat{ f } ( \xi ^- ) | \left( | \hat{ F } ( \xi ^+ ) |^2 + |\hat{ F } ( \xi ) | ^2 \right)  ,
	\end{align*}
we deduce that
	\begin{align*}
		&\hat{ f } ( 0 ) |\hat{ F } ( \xi ) |^2 + \hat{ f } ( 0 ) |\hat{ F } ( \xi ^+ ) |^2 - \hat{f}( \xi ^- ) \hat{ F } ( \xi ^+) \overline{ \hat{ F } ( \xi   ) } - \overline{ \hat{ f } ( \xi ^- ) }   \overline{ \hat{ F } ( \xi ^+  ) } \hat{ F } ( \xi  )\\
		\geq & \left( \hat{ f } ( 0 ) - |\hat{ f } ( \xi ^- ) | \right) \left( |\hat{ F } ( \xi ^+ ) |^2 + |\hat{ F } (\xi ) | ^2 \right) \geq \left( \hat{f } ( 0 ) - |\hat{ f } ( \xi ^- ) | \right)|\hat{ F } ( \xi ) |^2.
	\end{align*}
	Together with  \eqref{Plancherel}, one derives 
	\begin{align*}
		&  \iiint_{ \mathbb{R}^{ 3}_v \times \mathbb{R}^{ 3}_{v_*} \times  \S^{2} } b(k \cdot \sigma) f_*\left(F^{\prime}-F\right)^2 d v d v_* d \sigma \\
	= &\frac{1}{(2 \pi)^3} \iint_{\mathbb{R}^3_\xi \times \S^{2}} b \left( \frac{ \xi }{ | \xi | } \cdot \sigma \right) \left[ \hat{ f } (0 ) | \hat{ F } ( \xi ) |^2 + \hat{ f } ( 0 ) | \hat{ F } \left( \xi ^{ + } \right) |^2 \right.  \\
	& \left.\qquad \qquad  - \hat{ f } \left( \xi ^{ -} \right) \hat{F} \left( \xi ^{+}\right) \overline{ \hat{ F } } ( \xi )- \overline{ \hat{ f } } \left( \xi ^{ - }\right) \overline{\hat{ F } } \left( \xi ^{ + } \right) \hat{ F }( \xi ) \right] d \xi d \sigma \\
	\geq & \frac{1}{(2\pi)^3} \int_{\R ^3_\xi } | \hat{F}(\xi ) | ^2  \left\{ \int_{ \S ^2 } b\left( \frac{\xi}{|\xi | } \cdot \sigma \right) \left( \hat{ f } ( 0 ) - |\hat{ f } ( \xi ^- ) | \right) d \sigma  \right\} d \xi.
	\end{align*}
 	Therefore, the proof of Lemma \ref{Lemma-F} is finished. 
\end{proof}

\begin{lemma}\label{Lemma-Phi}
Let $r_0 $ be a small positive constant, and let $R > 0$ be a sufficiently large constant.  For any $v_j \in \mathbb{R}^3$ with $|v_j| < R$, we define the sets
\begin{align}\label{Aj-Bj}
	A_j  = \left\{ v \in \mathbb{R}^3_v : |v-v_j| \leq \frac{r_0}{4}, \ |v_j|<R \right\},  \quad
	B_j  = \left\{ v \in B_R(v) : |v-v_j| \geq r_0  \right\}.
\end{align}
 Then for all $\gamma \in (-3, 1]$, the following inequality holds:
	\begin{equation*}
	\begin{aligned} 
		& \iiint_{\mathbb{R}^{3}_v  \times \mathbb{R}^{3}_{v_*} \times \S^{2}} (b (k  \cdot \sigma )(f\chi_{B_j}) _* \left( (F \chi_{A_j} )' - (F \chi_{A_j} ) \right) ^2   dv   dv_* d\sigma \\
		\leq  & C_2 \iiint_{\mathbb{R}^{3}_v \times \mathbb{R}^{3}_{v_*} \times \S^2 } B (k \cdot \sigma )\left(\sqrt{f' }  - \sqrt{f }\right)^2 \, dv \, dv_* \, d\sigma +  \frac{C_1  C_2 }{r_0 ^2 }  \|f\|_{L^1_2 ( \R^3_v ) } ^2,
	\end{aligned}
	\end{equation*}
	where $F  = \sqrt{f ( v )}$,  $F ' = \sqrt{f (  v ' ) } $, $ 	C_2 = \min       \left\{  \left(\frac{r_0}{2}\right)^\gamma ,  (2\sqrt{2}R ) ^\gamma \right\} ^{-1} $, and $ \chi_{A_j}, \chi_{B_j}$ denote smooth cutoff functions supported on the sets $A_j$ and $B_j$, respectively.
\end{lemma}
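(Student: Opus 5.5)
The plan is a localized version of the argument in Section 5 of \cite{ADVW-bound-2000}. The key point is that when $v\in A_j$ and $v_*\in B_j$, the relative velocity is bounded above and below, so the kinetic factor $|v-v_*|^\gamma$ is comparable to a constant.

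\textbf{Step 1: the kinetic factor.} In the left-hand integrand the factor $(f\chi_{B_j})_*$ forces $v_*\in B_j$. The difference $(F\chi_{A_j})'-(F\chi_{A_j})$ vanishes unless $v\in A_j$ or $v'\in A_j$.

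In the case $v\in A_j$, we have $|v-v_*|\ge r_0-r_0/4\ge r_0/2$. Since $v,v_*\in B_R$ (taking $r_0/4<R$, so $A_j\subset B_{2R}$), we also have $|v-v_*|\le 2\sqrt2 R$.

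In the case $v'\in A_j$, we have $|v'-v_*|\ge 3r_0/4$. Because $\theta\le\pi/2$, $|v'-v_*|=|v-v_*|\cos(\theta/2)\le|v-v_*|$, so the lower bound on $|v-v_*|$ transfers. An upper bound also holds, after possibly enlarging the constant.

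In both cases, for $\gamma\ge0$ or $\gamma<0$ alike, $|v-v_*|^\gamma\ge \min\{(r_0/2)^\gamma,(2\sqrt2R)^\gamma\}=C_2^{-1}$. Hence on the support of the integrand $b(k\cdot\sigma)\le C_2B(v-v_*,\sigma)$.

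\textbf{Step 2: removing the cutoffs.} We write
\[
(F\chi_{A_j})'-F\chi_{A_j}=\chi_{A_j}'(F'-F)+F(\chi_{A_j}'-\chi_{A_j})
\]
and use $(a+b)^2\le 2a^2+2b^2$. Since $\chi_{A_j}\le1$ and $(f\chi_{B_j})_*\le f_*$, the first piece is bounded, after Step 1, by
\[
2C_2\iiint B\,f_*\bigl(\sqrt{f'}-\sqrt f\bigr)^2\,dv\,dv_*\,d\sigma .
\]
This is the first term on the right-hand side (the weight $f_*$ is implicit in the statement's shorthand, as in \cite{ADVW-bound-2000}). The extra factor $2$ is absorbed into the constant.

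\textbf{Step 3: the commutator term.} For the second piece, $\chi_{A_j}$ is smooth at scale $r_0$, so $\|\nabla\chi_{A_j}\|_\infty\lesssim r_0^{-1}$. Therefore
\[
(\chi_{A_j}'-\chi_{A_j})^2\lesssim r_0^{-2}|v'-v|^2\wedge 1=r_0^{-2}|v-v_*|^2\sin^2(\theta/2)\wedge1 .
\]
We keep $b$ here rather than $B$, again replacing $b$ by $C_2B$ via Step 1. The angular integral is $\int b\sin^2(\theta/2)\sin\theta\,d\theta<\infty$, by \eqref{nu} with $\nu<2$. This leaves the bound
\[
\frac{C_2}{r_0^{2}}\iint f_*\,f\,|v-v_*|^{2+\gamma}\,dv\,dv_* .
\]
On the relevant region $|v-v_*|\le 2\sqrt2R$, and $|v-v_*|^{2+\gamma}\le C(1+|v|^2)(1+|v_*|^2)$. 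This gives $\frac{C_1C_2}{r_0^2}\|f\|_{L^1_2}^2$.

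\textbf{Main obstacle.} The delicate point is Step 1 in the case $v\notin A_j$ but $v'\in A_j$. The lower bound on $|v-v_*|$ must be derived from $v'$ via the angular restriction $\theta\le\pi/2$. Alternatively, one can symmetrize using the pre-post collisional change of variables $v\leftrightarrow v'$ at fixed $v_*$, which has bounded Jacobian for $\theta\le\pi/2$. I would try the direct geometric bound $|v'-v_*|\le|v-v_*|$ first.
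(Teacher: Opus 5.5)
Your proof is correct and follows the paper's argument. The separation of $A_j$ and $B_j$, transferred from $v'$ to $v$ via $|v'-v_*|\le|v-v_*|\le\sqrt2|v'-v_*|$, gives $|v-v_*|^\gamma\ge C_2^{-1}$ on the support. The product rule with $(a+b)^2\le 2a^2+2b^2$ and the bound $\|\nabla\chi_{A_j}\|_\infty^2\le C_1r_0^{-2}$ then handle the commutator. Your reading that the first right-hand term carries the weight $f_*$, with a harmless factor $2$, is exactly what the paper's proof yields and what it uses in \eqref{Zn-3}.

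The differences are minor. You split $(F\chi_{A_j})'-F\chi_{A_j}=\chi_{A_j}'(F'-F)+F(\chi_{A_j}'-\chi_{A_j})$, whereas the paper uses $F'(\chi_{A_j}'-\chi_{A_j})+\chi_{A_j}(F'-F)$. As a result your commutator term carries $f$ instead of $f'$, and you avoid the paper's change of variables $v\mapsto v'$. In that term you need not insert $b\le C_2B$ at all. Keeping $b$ gives $r_0^{-2}\iint ff_*|v-v_*|^2\,dv\,dv_*\le 2r_0^{-2}\|f\|_{L^1_2}^2$, up to a constant independent of $r_0$ and $R$. Since $C_2\ge 1$, this yields the stated form directly and avoids the $R$-dependent factor (and, for $\gamma<-2$, the $r_0$-dependent factor) coming from $|v-v_*|^{2+\gamma}$.
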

\begin{remark}
	This lemma is mainly based on Lemma 2 in \cite{ADVW-bound-2000}. For the reader's convenience, we provide the proof here. 
\end{remark}
\begin{proof}
 It is evident that the inequality
	\begin{align*}
		f_* ( F' - F )^2 \geq (f\chi_{B_j}) _*   ( F' -F )^2 \chi_{A_j} ^2  
	\end{align*}
holds. Furthermore, the relation
	\begin{align*}
		(F ' \chi _{A_j} ' - F \chi_{A_j} )^2 = ( F' ( \chi_{A_j} ' - \chi_{A_j} ) + ( F' -F ) \chi_{A_j} )^2 \leq 2( F') ^2 ( \chi_{A_j} ' - \chi_{A_j} ) ^2 + 2 ( F' -F )^2 \chi_{A_j} ^2
	\end{align*}
implies that for any $\gamma \in [0,1]$, we have the lower bound
	\begin{align*}
	  |v-v_*|^\gamma   b( k \cdot \sigma ) f_* ( F' - F )^2 
		\geq & \min \{ 1, |v-v_*|^\gamma ) \} b ( k \cdot \sigma ) (f\chi_{B_j}) _* ( F ' - F )^2 \chi_{A_j} ^2 \\
		\geq & \frac{1}{2} \min \{ 1, |v-v_*|^\gamma \} b ( k \cdot \sigma ) (f\chi_{B_j}) _*   (F ' \chi _{A_j} ' - F \chi_{A_j} )^2 \\
		& - b ( k \cdot \sigma ) (f\chi_{B_j}) _*   ( F') ^2 ( \chi_{A_j} ' - \chi_{A_j} ) ^2,
	\end{align*}
	where we have used the estimate
	\begin{align*}
		( \chi_{A_j} ' - \chi_{A_j} ) ^2 \leq \| \nabla \chi_{A_j} \|^2 _{L^\infty } |v'-v |^2 = \| \nabla \chi_{A_j} \|^2 _{L^\infty } |v - v_* |^2 \sin ^2 \left( \frac{\theta}{ 2 } \right).
	\end{align*}
	By the definition of the set $A_j$, we assume the gradient bound $|\nabla \chi_{A_j}|^2_{L^\infty} \leq C_1 r_0^{-2}$. Consequently, estimating the   term $ ( F') ^2  ( \chi ' _{A_j} - \chi_{A_j} )^2 $  yields 
	\begin{align*}
		& \iiint_{ \mathbb{R}^{3}_v \times \mathbb{R}^{ 3 }_{v_*} \times  \S^{2} } (f\chi_{B_j}) _* b ( k \cdot \sigma ) f ' ( \chi ' _{A_j} - \chi_{A_j} )^2 dv dv_* d \sigma \\
		\leq & \frac{C_1 }{r_0 ^2 } \iint_{ \R ^3_v \times \R ^3_{v_*} } d v dv_* \int_{0}^{ \frac{\pi }{2 } } f ' f_* |v-v_*|^2 b ( \cos \theta ) \sin ^2 \left( \frac{\theta }{ 2 } \right) \sin   \theta d \theta  \\
		= &  \frac{C_1 }{r_0 ^2 }  \iint_{ \R ^3_{{v}'} \times \R ^3_{v_*} } d v' dv_* \int_{0}^{ \frac{\pi }{2 } } f ' f_* |v ' - v _*|^2  \cdot 4 \cos ^{ - 2 } \left( \frac{\theta }{ 2 } \right)   b ( \cos \theta ) \sin ^2 \left( \frac{\theta }{ 2 } \right)   \sin   \theta d \theta \\
		\leq &  \frac{C_1 }{r_0 ^2 } \int_{ \R ^3 _{{v}'} }  f' ( 1+ |v'|^2 ) d v' \int_{ \R ^3_{v_*}  } f_* ( 1+ |v_* | ^2 )  d v_* = \frac{C_1 }{r_0 ^2 } \Vert f \Vert^2_{L^1_2 ( \R^3_v ) }.
	\end{align*}
 We observe that the term $|v - v_*|^\gamma$ vanishes as the distance $|v - v_*|$ tends to 0 for $\gamma \in (0,1]$. To address this singularity, we construct the sets $A_j$ and $B_j$ such that they are initially separated by a distance of $3r_0/4$. Consequently, even after mollification, their supports remain separated by at least $r_0/2$. This separation ensures that if $v_* \in B_j$ and either $v \in A_j$ or $v' \in A_j$, we have the bounds
\begin{align*} 
	r_0/2 <  |v-v_*|  \leq \sqrt{2} |v'-v_* | \leq 2\sqrt{2} R. 
\end{align*}
Let us define the constant $C_2$ by $	C_2 = \left(   \min_{r_0 /2 \leq |z| \leq 2\sqrt{2}R } \{ 1, |z|^\gamma   \} \right)^{-1}$.
Since $r_0 > 0$ is sufficiently small and $R$ is sufficiently large, this constant simplifies to 
$$C_2 = \min \{ (r_0/2)^\gamma, (2\sqrt{2}R)^\gamma \}^{-1}\,. $$ 
Finally, combining these estimates leads to
	\begin{equation*}
		\begin{aligned} 
			& \iiint_{\mathbb{R}^{3}_v \times \mathbb{R}^{3}_{v_*} \times \S^{2}} (b (k  \cdot \sigma )(f\chi_{B_j}) _* \left( (F \chi_{A_j} )' - (F \chi_{A_j} ) \right) ^2   dv   dv_* d\sigma \\
			\leq  & C_2 \iiint_{\mathbb{R}^{3}_v \times \mathbb{R}^{3}_{v_*} \times \S^2 } B (k \cdot \sigma )\left(\sqrt{f' }  - \sqrt{f }\right)^2 \, dv \, dv_* \, d\sigma +  \frac{C_1  C_2}{r_0 ^2 }  \|f\|_{L^1_2 ( \R^3_v ) } ^2    .
		\end{aligned}
	\end{equation*}
	Therefore, the proof of Lemma \ref{Lemma-Phi} is finished.
\end{proof}
Based on the above lemmas, we are now in a position to provide the detailed proof of inequality \eqref{F-Zn}.
\begin{lemma}\label{L-FZn}
Let $\chi_R$ be a smooth cutoff function satisfying $0 \leq \chi_R \leq 1 $ with $\chi_R \equiv 1 $ on $B_R (v)$ and  $\supp(\chi_R) \subset B_{ R+1}(v)$. Then  for all $\gamma \in (-3, 1]$, the following inequality holds:
\begin{equation*} 
	\begin{aligned}
		&  	\int_{|\xi |\geq 1 } | \mathcal{F} (\sqrt{ f^n } \chi_R) |^2    Z_n\left( \frac{1}{|\xi | } \right)  d \xi   \\
		\leq & \frac{  C_0 C_2 J^2   (2\pi )^3   }{  \|f ^n \|_{ L^1( B_R(v ) ) }  }   \left\{   \int_{  \R^{3}_v } e^n  ( t,x,v)   dv   + \frac{C_1 }{r_0 ^2 }  \|f ^n\|^2_{L^1_2 ( \R^3_v ) } +  \iiint_{ \mathbb{R}^{ 3}_v \times \R^{3}_{v_*} \times  \S^2 } B_n f^n ( {f^n_*}' -f^n_*)   d vdv_* d \sigma \right\} .
	\end{aligned}
\end{equation*} 
Here, $\mathcal{F}$ denotes the Fourier transform with respect to the velocity variable $v$,  $C_0 C_2 J^2$ is a positive constant, while  $r_0, R > 0$ are sufficiently small and large constants, respectively. 
\end{lemma}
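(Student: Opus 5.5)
Our plan is to combine Lemmas \ref{lemma-Zn}, \ref{Lemma-F} and \ref{Lemma-Phi} through a localization in velocity, following Section 6 of \cite{ADVW-bound-2000}. All estimates are at fixed $(t,x)$ and carried out with the truncated kernel $B_n$, $b_n$; Lemmas \ref{lemma-Zn}--\ref{Lemma-Phi} hold verbatim with $b$ replaced by $b_n$, because their constants never depend on the angular integrability.

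\textbf{Step 1: localization.} Cover $B_{R+1}(v)$ by finitely many balls $\{|v-v_j|\le r_0/8\}$, with $J=J(R,r_0)$ of them, and take a smooth partition of unity subordinate to the sets $A_j$. Then
\[
\sqrt{f^n}\chi_R=\sum_{j\le J} F\chi_{A_j}\chi_R,
\]
and by Cauchy--Schwarz
\[
|\mathcal F(\sqrt{f^n}\chi_R)|^2\le J\sum_j |\mathcal F(F\chi_{A_j})|^2 .
\]
We absorb $\chi_R$ into the cutoff $\chi_{A_j}$ for simplicity. This accounts for one factor $J$; the second $J$ comes from summing $J$ terms each bounded by the same right-hand side.

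\textbf{Step 2: Fourier lower bound per piece.} For each $j$, apply Lemma \ref{lemma-Zn} with $f$ replaced by $g_j:=f^n\chi_{B_j}$. Since $|\xi^-|^2\wedge 1$ does not depend on $f$, this gives, for $|\xi|\ge1$,
\[
Z_n(1/|\xi|)\le C_0\|g_j\|_{L^1}^{-1}\int_{\S^2} b_n\big(\hat g_j(0)-|\hat g_j(\xi^-)|\big)\,d\sigma .
\]
Multiply by $|\mathcal F(F\chi_{A_j})|^2$, integrate over $|\xi|\ge1$, and apply Lemma \ref{Lemma-F} with $F\chi_{A_j}$ in place of $F$. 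Then apply Lemma \ref{Lemma-Phi} to bound the result by
\[
\frac{C_0C_2(2\pi)^3}{\|g_j\|_{L^1}}\Big\{\iiint B_n(\sqrt{f^{n\prime}}-\sqrt{f^n})^2+\frac{C_1}{r_0^2}\|f^n\|_{L^1_2}^2\Big\}.
\]
To replace $\|g_j\|_{L^1}$ by $\|f^n\|_{L^1(B_R(v))}$, note that $B_j\supset B_R(v)\setminus\{|v-v_j|<r_0\}$. Hence $\|g_j\|_{L^1}\ge\|f^n\|_{L^1(B_R)}-\int_{|v-v_j|<r_0}f^n$. We handle this by choosing $r_0$ small relative to the concentration of $f^n$; the entropy bound gives uniform equi-integrability, so small balls carry at most half the mass. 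Failing that, we use two disjoint complementary regions, as in \cite{ADVW-bound-2000}, so that at least one carries half of $\|f^n\|_{L^1(B_R)}$. We expect this step to be the \textbf{main obstacle}: the constant must remain independent of $n$ and of $f^n$ beyond the displayed factor $\|f^n\|_{L^1(B_R)}^{-1}$, and if necessary we would let $C_2$ or $J$ absorb an absolute factor $2$.

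\textbf{Step 3: entropy dissipation.} It remains to bound $\iiint B_n(\sqrt{f^{n\prime}}-\sqrt{f^n})^2\,f^n_*$, with the weight $f_*$ coming from Lemma \ref{Lemma-Phi}. Use the elementary inequality
\[
(\sqrt{a}-\sqrt{b})^2\le a\log(a/b)-a+b
\]
with $a=f^n_*f^{n\prime}$ and $b=f^n_*f^n$. Integrate against $B_n$, split the log, and use the pre/post-collisional change of variables $(v,v_*)\to(v',v'_*)$. This yields
\[
\iiint B_n f^n_*(\sqrt{f^{n\prime}}-\sqrt{f^n})^2\le\int e^n\,dv+\iiint B_nf^n(f^{n\prime}_*-f^n_*).
\]
Here $\int e^n\,dv$ collects the entropy dissipation in the form of \eqref{d-en}, and the remaining linear term is exactly the $(f'_*-f_*)$ term appearing in the statement. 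That term is finite by the cancellation Lemma \ref{prop-S}. Summing over $j$ gives the claim with constant $C_0C_2J^2(2\pi)^3$.
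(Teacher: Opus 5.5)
Your structure is the paper's: Lemma \ref{lemma-Zn} applied to $f^n\chi_{B_j}$, a partition of unity with Cauchy--Schwarz, then Lemma \ref{Lemma-F}, Lemma \ref{Lemma-Phi}, and an elementary inequality to reach $e^n$. Absorbing $\chi_R$ into the cutoff is actually cleaner than the paper's step $|\mathcal F h_j|\le|\mathcal F(\sqrt{f^n}\chi_{A_j})|$, which does not follow from a pointwise bound.

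\textbf{The gap is in Step 3.} With your choice $a=f^n_*f^{n\prime}$, $b=f^n_*f^n$ the inequality gives
\[
\iiint B_n f^n_*\bigl(\sqrt{f^{n\prime}}-\sqrt{f^n}\bigr)^2\le\iiint B_n f^n_*f^{n\prime}\log\frac{f^{n\prime}}{f^n}+\iiint B_n f^n_*\bigl(f^n-f^{n\prime}\bigr).
\]
\begin{itemize}
\item The linear term, after $(v,v_*,\sigma)\mapsto(v_*,v,-\sigma)$, is $-\iiint B_nf^n(f^{n\prime}_*-f^n_*)$. This has the opposite sign to the term in the statement.
\item The logarithmic term, after splitting and the pre/post change of variables, becomes $\iiint B_nf^nf^{n\prime}_*\log(f^n/f^{n\prime})$. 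This is a mixed product that never reduces to $\iint B_n(f^nf^n_*-f^{n\prime}f^{n\prime}_*)\log f^n_*$.
\end{itemize}
The loss is real, not cosmetic. Writing $f$ for $f^n$, your right-hand side minus $\int e^n\,dv+\iiint B_nf(f'_*-f_*)$ equals $\frac12\iiint B_n(f_*-f'_*)\Phi(f,f')$, where $\Phi(f,f')=(f+f')\log(f'/f)-2(f'-f)$ has the sign of $f'-f$. For a Maxwellian, $f'f'_*=ff_*$ forces $f_*-f'_*$ to have that same sign. So the difference is strictly positive while $\int e^n\,dv=0$, and your bound does not imply the claimed one.

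\textbf{The fix is to reverse the roles:} take $a=f^n_*f^n$ and $b=f^n_*f^{n\prime}$. Then
\[
a\log(a/b)-a+b=f^n_*f^n\log(f^n/f^{n\prime})+f^n_*(f^{n\prime}-f^n).
\]
The pre/post change of variables turns $\iiint B_nf^nf^n_*\log f^{n\prime}$ into $\iiint B_nf^{n\prime}f^{n\prime}_*\log f^n$. So the log term is $\iiint B_n(f^nf^n_*-f^{n\prime}f^{n\prime}_*)\log f^n$, which equals $\int e^n\,dv$ after the swap $v\leftrightarrow v_*$. The same swap turns the linear term into $\iiint B_nf^n(f^{n\prime}_*-f^n_*)$. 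This is exactly the paper's computation: it writes the integrand as $f^n(\sqrt{f^{n\prime}_*}-\sqrt{f^n_*})^2$ and uses $x=f^n_*$, $y=f^{n\prime}_*$.

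\textbf{A smaller point in Step 2.} Do not claim uniform equi-integrability of $f^n(t,x,\cdot)$. The bound \eqref{FWC2} controls $\int f^n|\log f^n|\,dv$ only after integrating in $x$, and the half-mass requirement is relative to $\|f^n\|_{L^1(B_R(v))}$, which may be arbitrarily small. The paper simply fixes $r_0$ for the given $f^n(t,x,\cdot)$ so that every ball of radius $r_0$ centred in $B_R$ carries at most half of that mass. Hence $r_0$, $J$ and $C_2$ depend on $f^n(t,x,\cdot)$, and your ``two complementary regions'' fallback is not needed.
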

\begin{proof}
Applying Lemma \ref{lemma-Zn} to the function  $f ^n \chi_{B_j}$, where  $\chi_{B_j}$ denotes a smooth cutoff function supported on $B_j $ as defined in \eqref{Aj-Bj}, establishes that for  $|\xi| \geq 1$  the function $Z_n$ satisfies the estimate
\begin{align*} 
		Z_n   \left( \frac{1}{|\xi  | } \right)   
	\leq C_0 \|f ^n \chi_{B_j} \|^{-1}_{L^1(\mathbb{R}^3_v)}    \int_{ \S ^{ 2 } } b_n \left( \frac{\xi   }{| \xi | } \cdot \sigma \right) \left( \mathcal{F}  ({f }^n\chi_{B_j})   ( 0 ) - |  \mathcal{F}  ({f }^n\chi_{B_j} )  ( \xi ^{- } )  | \right) d \sigma,
\end{align*}
where we choose an appropriate $r_0$ such that for any $|v_0| < R$, the condition
\begin{equation*}
  \int_{|v-v_0|<r_0} f^n ( v ) d v \leq \frac{1}{2} \int_{ B_R(v) } f^n ( v ) d v 
\end{equation*} 
holds. Combined with the definition of the set $B_j$, this implies the bound $\|f^n \chi_{B_j} \|^{-1}_{L^1(\mathbb{R}^3_v)} \leq 2 \|f^n \|^{-1}_{L^1(B_R(v))}$. Consequently, for $|\xi| \geq 1$, we obtain the inequality   
\begin{align}\label{Zn-1}
	Z_n   \left( \frac{1}{|\xi  | } \right)   
	\leq C_0 \|f ^n \|^{-1}_{L^1( B_R(v)) }    \int_{ \S ^{ 2 } } b_n \left( \frac{\xi   }{| \xi | } \cdot \sigma \right) \left( \mathcal{F}  ({f }^n\chi_{B_j})   ( 0 ) - |  \mathcal{F}  ({f }^n\chi_{B_j} )  ( \xi ^{- } )  | \right) d \sigma.
\end{align}

Based on the definition of the set  $A_j$ in \eqref{Aj-Bj}, the finite covering theorem implies the existence of a finite collection of points $\{v_j\}_{j=1}^J$ such that $B_R \subset \cup_{j}^J A_j$. We then construct a partition of unity $\{\phi_j\}^J_{j=1}$ (for instance, by setting $\phi_j = \chi_{A_j} / \sum^J_{k=1} \chi_{A_k}$) satisfying the conditions  
\begin{align*}
\supp \phi_j   \subset B_R , \quad 0 \leq \phi_j \leq 1 , \quad  \sum^J_{j=1} \phi_j ( v )=1,\   \text{ for }v \in B_R.
\end{align*}
We decompose the function as $\sqrt{ f^n } \chi_R = \sum^J_{j=1} h_j$, where $h_j(v)= \sqrt{ f^n } \chi_R \phi_j$. The linearity of the Fourier transform and the Cauchy–Schwarz inequality imply 
\begin{align*}
 |\mathcal{F}( \sqrt{ f^n } \chi_R) ( \xi )  |^2 =  \left| \sum^J_{j=1}  \mathcal{F} h_j ( \xi )  \right|^2 \leq J \sum^J_{j=1} \left| \mathcal{F} h_j ( \xi )  \right|^2 \leq J \sum^J_{j=1} \left| \mathcal{F} (\sqrt{ f^n }   \chi_{A_j}) ( \xi )  \right|^2 ,
\end{align*}
where we have used the pointwise bound $0 \leq h_j ( v ) \leq \sqrt{ f^n } \chi_R \chi_{A_j} = \sqrt{ f^n } \chi_{A_j}$. Combining this result with Lemma \ref{Lemma-F} yields the estimate 
\begin{equation}\label{Zn-2} 
\begin{aligned}
		&	\int_{ \mathbb{R}^{3}_\xi  } |\mathcal{F}( \sqrt{ f^n } \chi_R) |^2 \int_{ \S ^{2 } } b_n \left( \frac{\xi   }{| \xi | } \cdot \sigma \right) \left(  \mathcal{F}  ({f }^n\chi_{B_j})  ( 0 ) - |  \mathcal{F}  ({f }^n\chi_{B_j})   ( \xi ^{- } )  | \right) d \sigma  d \xi \\
\leq 	&J \sum^J_{j=1}	\int_{ \mathbb{R}^{3}_\xi } \left| \mathcal{F} (\sqrt{ f^n }   \chi_{A_j})  \right|^2 \int_{ \S ^{2 } } b_n \left( \frac{\xi   }{| \xi | } \cdot \sigma \right) \left(  \mathcal{F}  ({f }^n\chi_{B_j})  ( 0 ) - |  \mathcal{F}  ({f }^n\chi_{B_j})   ( \xi ^{- } )  | \right) d \sigma  d \xi \\
	\leq & (2\pi )^3  J \sum^J_{j=1}  \iiint_{ \mathbb{R}^{3}_v \times  \mathbb{R}^{3}_{v_*} \times \S^{2 } } b_n \left( \frac{\xi   }{| \xi | } \cdot \sigma \right)    ({f }^n \chi_{B_j})_*  \left( (\sqrt{ f^n }   \chi_{A_j}){'}     - (\sqrt{ f^n }   \chi_{A_j}) \right)^2 dv \, dv_* \, d\sigma .
\end{aligned} 
\end{equation} 
Furthermore, Lemma \ref{Lemma-Phi} provides the upper bound
\begin{equation}\label{Zn-3}
	\begin{aligned} 
		&\iiint_{ \mathbb{R}^{3}_v \times  \mathbb{R}^{3}_{v_*} \times \S^{2} } b_n \left( \frac{\xi   }{| \xi | } \cdot \sigma \right)  ({f }^n \chi_{B_j})_*  \left( (\sqrt{ f^n }   \chi_{A_j}){'}     - (\sqrt{ f^n }   \chi_{A_j}) \right)^2 dv \, dv_* \, d\sigma  \\
		\leq & C_2  \iiint_{\mathbb{R}^{3}_v \times \mathbb{R}^{3}_{v_*} \times \S^{2}} B_n f^n_* (\sqrt{ f^{ n}{'} }  - \sqrt{f ^{n } })^2 \, dv \, dv_* \, d\sigma + \frac{C_1 C_2}{r_0 ^2 }  \|f ^n\|^2_{L^1_2 ( \R^3_v )}   .
	\end{aligned}
\end{equation}
Combining estimates \eqref{Zn-1}-\eqref{Zn-3}, we deduce that
\begin{align*} 
&	\int_{|\xi |\geq 1 } | \mathcal{F} (\sqrt{ f^n } \chi_R) |^2    Z_n\left( \frac{1}{|\xi | } \right)  d \xi  \\
\leq &  \frac{  C_0  C_2   J^2   (2\pi )^3   }{  \|f ^n \|_{L^1( B_R(v)) }   }    \left\{    \iiint_{\mathbb{R}^{3}_v  \times \mathbb{R}^{3}_{v_*} \times \S^{2}} B_n f^n  \left(\sqrt{f_*^{ n}{'} }  - \sqrt{f ^{n }_* } \right)^2 \, dv \, dv_* \, d\sigma + \frac{C_1  }{r_0 ^2 }  \|f ^n\|^2_{L^1_2 ( \R^3_v )}    \right\}.   
 \end{align*}
Applying the elementary inequality $(\sqrt{x} - \sqrt{y})^2 \leq x \log \frac{x}{y} - x + y$, we bound the first term as
\begin{align*}
	& \iiint_{\mathbb{R}^{3}_v \times \mathbb{R}^{3}_{v_*} \times \S^{2}} B_n f^n  \left(\sqrt{ f_*^{ n}{'}  }  - \sqrt{f ^{n }_* } \right)^2 \, dv \, dv_* \, d\sigma  \\
	\leq & \iiint_{\mathbb{R}^{3}_v \times \mathbb{R}^{3}_{v_*} \times \S^{2}} B_n f^n  f^n_*  \log \frac{f^n_*}{ {f_*^{n}} '  } dv \, dv_* \, d\sigma  
	+  \iiint_{\mathbb{R}^{3}_v \times \mathbb{R}^{3}_{v_*} \times \S^{2}} B_n f^n ( {f_* ^{n}} '   -  f^n_* ) dv \, dv_* \, d\sigma ,
\end{align*}
where we observe that
\begin{align*}
	  \iiint_{\mathbb{R}^{3}_v  \times \mathbb{R}^{3}_{v_*} \times \S^{2}} B_n f^n  f^n_*  \log \frac{f^n_*}{ f_*^{n' }  } dv \, dv_* \, d\sigma   
	=   \iiint_{\mathbb{R}^{3}_v \times \mathbb{R}^{3}_{v_*} \times \S^{2}} B_n( f^n  f^n_* - {f^{n}} '  f_*^{ n}{'}  ) \log  f^n_*   dv \, dv_* \, d\sigma.
\end{align*} 
Consequently, recalling the definition of $e^n$ in \eqref{d-en}, we arrive at the inequality
\begin{align*}
&  	\int_{|\xi |\geq 1 } | \mathcal{F} (\sqrt{ f^n } \chi_R) |^2    Z_n\left( \frac{1}{|\xi | } \right)  d \xi   \\
\leq & \frac{  C_0 C_2 J^2   (2\pi )^3   }{  \|f ^n \|_{L^1( B_R(v)) }   }  \left\{   \int_{  \mathbb{R}^{3}_v } e^n  ( t,x,v) \,  dv   +  \frac{C_1 }{r_0 ^2 }  \|f ^n\|^2_{L^1_2 ( \R^3_v ) } +\iiint_{ \mathbb{R}^{ 3}_v \times \mathbb{R}^{ 3}_{v_*}\times  \S^2 } B_n f^n ( {f^n_*}' -f^n_*)   d vdv_* d \sigma  \right\} .
\end{align*}
	Therefore, the proof of Lemma \ref{L-FZn} is finished. 
\end{proof}
\begin{corollary}
If $ 0 \leq  \gamma \leq 1$, then	
\begin{align*}
	& 	\int_{|\xi |\geq 1 } | \mathcal{F} (\sqrt{ f^n } \chi_R) |^2    Z_n\left( \frac{1}{|\xi | } \right)  d \xi   \\
	\leq &  \frac{  C_0  J^2   (2\pi )^3   }{  \|f ^n \|_{L^1( B_R(v)) } \left( r_0 /2 \right)^\gamma    }  \left\{   \int_{  \mathbb{R}^{3}_v } e^n  ( t,x,v) \,   dv   +   \left( C_{\nu }  + \frac{ C_1 }{ r ^2 _0 } \right)  \|f ^n\|^2_{L^1_2 ( \R^3_v ) }   \right\}.
\end{align*}
Here, $\mathcal{F}$ denotes the Fourier transform with respect to the velocity variable $v$, and $C_0 J^2$ represents a positive constant. Moreover, $r_0$ denotes a sufficiently small constant.
\end{corollary}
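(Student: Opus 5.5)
The plan is to start from Lemma \ref{L-FZn} and make two reductions that hold when $0\le\gamma\le 1$. The first concerns the constant $C_2$. The second is to absorb the cancellation term
\[
\iiint_{\R^3_v\times\R^3_{v_*}\times\S^2} B_n f^n({f^n_*}'-f^n_*)\,dv\,dv_*\,d\sigma
\]
into a multiple of $\|f^n\|^2_{L^1_2(\R^3_v)}$.

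For the constant, $C_2=\min\{(r_0/2)^\gamma,(2\sqrt2R)^\gamma\}^{-1}$. When $\gamma\ge0$ and $r_0/2\le 2\sqrt2 R$ (true since $r_0$ is small and $R$ large), the minimum is $(r_0/2)^\gamma$. Hence $C_2=(r_0/2)^{-\gamma}$, which gives the factor $(r_0/2)^{-\gamma}$ in the denominator of the corollary. When $\gamma=0$ this is simply $1$.

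For the cancellation term, I would use the Cancellation lemma (Lemma \ref{prop-S}) with the kernel $B_n$ in place of $B$. This is legitimate because $B_n=|z|^\gamma b_n$ has the same product structure. Lemma \ref{L-Bn} gives $M_n\le M$ and $M'_n\le M'$, so the lemma's bound yields
\[
|S_n(|z|)|\le C(M_n+|z|M_n')\le C(M+|z|M')\le C_\nu|z|^\gamma
\]
by \eqref{M2}--\eqref{zM2}, with $C_\nu$ independent of $n$. Fixing $v$ and applying the lemma in the $v_*$ variable gives
\[
\iint B_n({f^n_*}'-f^n_*)\,dv_*\,d\sigma=\int f^n_*S_n(|v-v_*|)\,dv_*.
\]
The role of $v$ versus $v_*$ needs care here. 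The integrand $({f^n_*}'-f^n_*)$ is exactly the form $\mathscr S f^n$ in \eqref{Sf}, with $v$ as the external variable, so no symmetrization is needed. Multiplying by $f^n(v)$ and integrating, the cancellation term is bounded by
\[
C_\nu\iint f^n f^n_*|v-v_*|^\gamma\,dv\,dv_*.
\]
Since $0\le\gamma\le1$, we have $|v-v_*|^\gamma\le 1+|v|^2+|v_*|^2\le(1+|v|^2)(1+|v_*|^2)$. The double integral is therefore at most $C_\nu\|f^n\|_{L^1_2}^2$.

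Substituting these two facts into Lemma \ref{L-FZn} and renaming constants gives the stated inequality with $C_0J^2(2\pi)^3/(\|f^n\|_{L^1(B_R)}(r_0/2)^\gamma)$ in front and $(C_\nu+C_1/r_0^2)\|f^n\|^2_{L^1_2}$ inside the braces. The main point to check is that the Cancellation lemma applies uniformly in $n$ to the truncated kernel, in particular that $S_n$ is controlled by $M_n$ and $M_n'$ with the same constant. I expect this to go through because the proof of Proposition 3.1 in \cite{Alex-Vill-2002-CPAM} uses only the change of variables $v_*\to v_*'$ and the structure of $B$. The truncation factor $1-\phi(n\sin(\theta/2))$ depends only on $\theta$, so it passes through unchanged, and the bound $B_n'\le B'$ is already shown in Lemma \ref{L-Bn}.
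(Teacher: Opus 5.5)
Your proposal is correct and follows the paper's proof. You start from Lemma \ref{L-FZn}, apply the Cancellation lemma with $B_n$ together with the $n$-uniform bound $M_n+|z|M_n'\le C_\nu|z|^\gamma$, and bound the cancellation term by $C_\nu\iint f^nf^n_*|v-v_*|^\gamma\,dv\,dv_*\le C_\nu\|f^n\|^2_{L^1_2(\R^3_v)}$. Your explicit check that $C_2=(r_0/2)^{-\gamma}$ when $\gamma\ge0$ makes precise a step the paper leaves implicit.
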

\begin{proof}
First, we obtain the inequality \eqref{F-Zn} in Lemma \ref{L-FZn},  then we only need to handle the term   $B_n f^n ( {f_* ^{n}} ' - f^n_* )$.  Replacing $S$ with $S^n$ in Lemma \ref{prop-S} yields $$  
S^n ( | z | ) \leq C  \left( M_n ( |z| )  + |z | M'_n ( | z | )  \right) .
$$

Moreover, according to the construction of $M_n$ in Lemma \ref{L-Bn}, there exists a constant $C_{\nu }$ independent of $n$ such that $
M_n ( |z| ) + |z| M'_n ( | z | ) \leq C_{\nu } |z|^\gamma $. Then for any $\gamma \in [0,1]$,
 \begin{equation*}
 	\begin{aligned}
 	 	\iiint_{\mathbb{R}^{3}_v \times \mathbb{R}^{3}_{v_*} \times \S^{2}} B_n f^n ( {f_* ^{n}} '   -  f^n_* ) dv \, dv_* \, d\sigma 
 		\leq & C_{\nu }	 \iint_{ \R ^3 _v\times \R ^3_{v_*}  }  f^n  f^n_*   |v-v_*|^\gamma d v d v_* \\ 
 		 \leq & C_{\nu }	    \iint_{ \R ^3_v \times \R ^3_{v_*}  }  f^n  f^n_* ( |v-v_*|^2 +1)  d v d v_* \\
 		\leq & C_{\nu }   \left(\int_{\R ^3 _v} f^n  (1+|v|^2)d v \right)^2    .
 	\end{aligned}
 \end{equation*}
 Substituting the above result into \eqref{F-Zn} shows that the conclusion holds.
\end{proof}
 \begin{remark} 
We remark that the upper bound established above is restricted to the range $\gamma \in [0,1]$. For the case of soft potentials where $-3 < \gamma < 0$, the estimate corresponding to \eqref{F-Zn} takes the form
$$
\frac{ C_0 (2\pi )^3 }{ \|f ^n \|_{L^1( B_R (v )) } (2 \sqrt{2 }R )^\gamma } \left\{ \iint_{\mathbb{R}^{3}_v \times \mathbb{R}^{3}_{v_*} } f^n f^n_* |v-v_*|^\gamma \, dv \, dv_* + \|e^n \|_{L^1( \mathbb{R}^3_v )} + C_1 \|f ^n\|^2_{L^1_2 ( \mathbb{R}^3_v ) } \right\} .
$$
When mass concentration couples with the singularity of the soft potential kernel $|v - v_*|^\gamma$, the collision integral term $\iint_{\mathbb{R}^{3}_v \times \mathbb{R}^{3}_{v_*} } f^n f^n_* |v-v_*|^\gamma \, dv \, dv_* $ may diverge to infinity. We illustrate through Example \ref{soft-f} that this case can indeed occur.
 \end{remark}

 \subsection{Convergence of approximated solution $f^n $}\label{Subsec:Cnv-f}
 In this subsection, we investigate the convergence of the positive solutions $f^n(t,x,v)$ to the approximated equation \eqref{approximation}, relying on the uniform estimates established in Lemma \ref{Lemma 4.17}.
 
 First, by the Averaging velocity lemma (Lemma \ref{theo-6.2.1}), we conclude that for any bounded sequence $\{\varphi^n\} \subset L^\infty( [0,T ] \times \mathbb{R}^3_x \times \mathbb{R}^3_v)$ converging to $\varphi$ almost everywhere, the sequence of integrals $\int_{\mathbb{R}^3_v} \beta_\delta(f^n) \varphi^n \, dv$ converges strongly in $L^1([0,T ] \times \mathbb{R}^3_x)$ to $\int_{\mathbb{R}^3_v} \beta_\delta \varphi \, dv$, where $\beta_\delta$ denotes the weak limit of $\beta_\delta(f^n)$ in $L^1([0,T ] \times \mathbb{R}^3_x \times \mathbb{R}^3_v)$. Consequently, combining the convergence properties of $\beta_\delta(f^n)$ and $f^n$, we deduce that for any $1 \leq p < \infty$, the convergence holds  
 \begin{equation}\label{fn-f}
 \int_{\mathbb{R}^3_v } f^n \varphi^n  dv \rightarrow  \int_{\mathbb{R}^3_v } f \varphi  dv \quad \text{ in } \ L^p(0,T; L^1(\mathbb{R}^3_x)).
 \end{equation}
For detailed proofs, we refer the reader to Section 5 of \cite{LN-BSMP}.

Furthermore, utilizing the uniform bounds \eqref{FB2}-\eqref{FWC2} on $f^n$ and the properties of $L^p$ spaces for $1 < p < \infty$, we infer that for any sufficiently large $R>0$, there exists a function $g \in L^2([0,T] \times B_R(x) \times B_R(v))$ such that the sequence satisfies
 \begin{align}\label{c-sqfn}
 	g^n = \sqrt{ f^n } \rightharpoonup g \quad \text{ in } \ L^2([0,T ] \times B_R (x)\times B_R(v)) .
 \end{align}
 \begin{lemma}\label{v-regular}
 For any $\epsilon>0$, let
\begin{align}\label{W-epsion}
	W_\epsilon = \left\{  (t,x ) \in [0,T] \times B_R(x): \int_{B_R(v)} g^2  d v >\epsilon \right\}.
\end{align}
We thus obtain the velocity regularity estimate, which is given by
\begin{align*}
	\lim_{A \to +\infty} \lim_{n \to \infty} \int_{W_\epsilon} dt \, dx \int_{|\xi| \geq A}|\mathcal{F} \sqrt{f_R^n}|^2   d\xi  = 0,
\end{align*} 
where $f_R^n = f^n \chi_R $. Here $\chi_R (v)$  is a smooth cutoff function supported in $B_{3R}(v) $ and equal to $1$ on $B_{2R}(v) $.
 \end{lemma}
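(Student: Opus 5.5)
The plan is to combine the smoothing estimate of Lemma \ref{L-FZn} (in its hard-potential form, the Corollary following it) with the positive lower bound on the local mass that holds on $W_\epsilon$. The first step is to apply Lemma \ref{L-FZn} with the cutoff $\chi_R$ supported in $B_{3R}(v)$; the argument is unchanged after enlarging $R$. This gives, for a.e. $(t,x)$,
\begin{equation*}
\int_{|\xi|\geq 1} |\mathcal{F}\sqrt{f^n_R}|^2 Z_n\Big(\frac{1}{|\xi|}\Big) d\xi \leq \frac{C}{\|f^n\|_{L^1(B_R(v))}} \Big\{ \int_{\R^3_v} e^n dv + C'\|f^n\|^2_{L^1_2(\R^3_v)} \Big\}.
\end{equation*}
Since $b_n \leq b$ and $\sin\theta\, b(\cos\theta)\sim K\theta^{-1-\nu}$, the cutoff only removes angles with $\sin(\theta/2)\lesssim 1/n$. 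Hence for $|\xi|\leq c n$ we have $Z_n(1/|\xi|) \geq c_K |\xi|^{\nu}$ when $\nu>0$, and $\geq c_K\log|\xi|$ in the hard-sphere limit $\nu=0$. For $|\xi|\geq cn$, $Z_n(1/|\xi|)\geq c_K n^\nu$ or $c_K\log n$. In every case $\inf_{|\xi|\geq A} Z_n(1/|\xi|)\geq \omega(\min(A,n))$ with $\omega(s)\to\infty$. Therefore
\begin{equation*}
\int_{|\xi|\geq A}|\mathcal{F}\sqrt{f^n_R}|^2 d\xi \leq \frac{C}{\omega(\min(A,n))\,\|f^n\|_{L^1(B_R(v))}} \Big\{ \int e^n dv + C'\|f^n\|^2_{L^1_2} \Big\}.
\end{equation*}

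Next I need a lower bound on $\|f^n\|_{L^1(B_R(v))}$ on $W_\epsilon$. Here \eqref{fn-f} is used with $\varphi^n=\varphi=\mathbf{1}_{B_R(v)}$: it gives $\int_{B_R}f^n dv\to\int_{B_R}f\,dv$ in $L^1([0,T]\times\R^3_x)$. I also need $\int_{B_R} f\,dv \geq \int_{B_R} g^2 dv$. By the weak lower semicontinuity of the $L^2$ norm, $\int g^2 \leq \liminf \int f^n$ after integration against nonnegative functions of $(t,x)$, and hence pointwise a.e. by the strong convergence of the averages. Consequently $\int_{B_R}f\,dv>\epsilon$ on $W_\epsilon$. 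By Egorov (Lemma \ref{egorov}), after passing to an a.e.-convergent subsequence, there is a set $U_\delta$ of measure $<\delta$ off which $\int_{B_R}f^n dv\geq \epsilon/2$ for all $n\geq N$.

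Splitting $W_\epsilon = (W_\epsilon\setminus U_\delta)\cup U_\delta$, the estimate on $W_\epsilon\setminus U_\delta$ integrates to at most $\frac{2C}{\epsilon\,\omega(\min(A,n))}\big(\int_0^T\!\!\int e^n + C'T\sup_t\int\|f^n\|^2_{L^1_2}dx\big)$. One concern is that $\|f^n\|^2_{L^1_2}$ is squared in $x$. I would handle this either by the factor $\|f^n\|_{L^1(B_R)}$ in the denominator, bounding $\|f^n\|_{L^1_2}^2/\|f^n\|_{L^1(B_R)}$ by restricting further to $\{\|f^n\|_{L^1_2}\leq L\}$, or by noting that the estimate could be written with one factor $\|f^n\|_{L^1_2}$ times a local quantity. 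I will use the set $U^n_L=\{M^n(x)>L\}$, whose measure is at most $C/L$ by \eqref{FWC2}, exactly as in the introduction's sketch. The entropy dissipation $\int\!\!\int e^n$ is uniformly bounded by \eqref{FWC2}, up to the standard comparison of $e^n$ with $\tilde e^n$ (using the factor $(1+\frac1n\int f^n)^{-1}$ on $U_L^n$, where the mass is at most $L$). Hence this part tends to $0$ as $n\to\infty$ and then $A\to\infty$. On $U_\delta\cup U^n_L$, Plancherel gives $\int_{|\xi|\geq A}|\mathcal{F}\sqrt{f^n_R}|^2\leq (2\pi)^3\int f^n_R dv$. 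The integral of this over a set of small measure is uniformly small by the equi-integrability of $\int f^n dv$ in $(t,x)$, which follows from the $L^1$ convergence \eqref{fn-f}. Letting $\delta\to0$ and $L\to\infty$ concludes the proof. The main obstacle is the middle step: controlling the quadratic moment factor and passing the pointwise mass lower bound from $g$ to $f^n$ uniformly in $n$ off a small set.
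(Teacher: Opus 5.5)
Your proposal follows the paper's proof essentially step for step: the hard-potential form of \eqref{F-Zn}, the lower bound $\int_{B_R(v)} f^n\,dv \geq \epsilon/2$ on $W_\epsilon$ off an Egorov set $U_\delta$, a Chebyshev set that controls the quadratic moment, the divergence of $Z_n(1/A)$, Plancherel plus equi-integrability on the small exceptional sets. For the divergence of $Z_n(1/A)$, the paper uses $Z_\infty$ and Fatou instead of your explicit asymptotics. Your justification of $\int_{B_R} g^2\,dv \leq \int_{B_R} f\,dv$ by testing against nonnegative $\psi(t,x)$, and your comparison of $e^n$ with $\tilde e^n$, are in fact more careful than the paper's.

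The one fix needed is the Chebyshev set. Take it in $(t,x)$, e.g.\ $\{(t,x) : \int_{\R^3_v} f^n(1+|v|^2)\,dv > L\}$ as in the paper's $V^n_L$ (this is your own first alternative), not $\{x : M^n(x) > L\}$. The reason is that \eqref{FB2} bounds $\sup_t \int_x$ but not $\int_x \sup_t$, so $\mathrm{meas} \leq C/L$ is unjustified for the latter. Also, the mass bound you use to compare $e^n$ with $\tilde e^n$ holds on the complement of this set, not on it.
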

 \begin{proof}
On the set $W_\epsilon$, the weak lower semicontinuity of the norm, combined with the $L^1$ convergence of $\int_{\mathbb{R}^3_v} f^n \, dv$ to $\int_{\mathbb{R}^3_v} f \, dv$, implies  
 \begin{align*}
 	\epsilon< \int_{B_R(v)} g^2 d v \leq \liminf \limits_{n\rightarrow \infty } \int_{B_R(v)} (g^n)^2 d v =\liminf \limits_{n\rightarrow \infty } \int_{B_R(v)}  f^n  d v = \int_{B_R(v)} f dv .
 \end{align*}
By Egorov's Theorem (Lemma \ref{egorov}), for any $\delta > 0$, there exists a Borel set $U_\delta$ with measure less than $\delta$ such that the uniform convergence
 \begin{align*}
 	\int_{B_R(v) } f^n \,  d v \rightrightarrows 	\int_{B_R(v) } f \ d v  
 \end{align*}
holds on $W_\epsilon \setminus U_\delta$. Consequently, for sufficiently large $n \in \mathbb{N}_+$, we obtain the lower bound $\int_{B_R(v) } f^n \, dv \geq \epsilon /2$ on the set $W_\epsilon \setminus U_\delta$.

Next, we define the functional $H^n$ and the set $V^n_L$ by 
\begin{align*} 
	H^n (t,x) = \int_{\mathbb{R}^3_v } f^n (1 + |v|^2 + \log f^n(t, x, v) ) \, dv , \quad V^n_L = \{(t, x) : H^n(t, x) > L \} .
\end{align*} 
By Chebyshev’s inequality, we obtain the measure estimate
\begin{align*} 
	\text{meas} ( V^n_L ) \leq \frac{1}{L} \int_{0}^{T} dt \int_{ \mathbb{R}^3_x } H^n ( t, x ) \, dx \leq \frac{C}{L}. 
\end{align*} 
Finally, inequality \eqref{F-Zn}, together with the upper bound on the set $W_\epsilon \setminus V^n_L$ and the lower bound on the set $W_\epsilon \setminus U_\delta$, implies that for any $A \geq 1$ and sufficiently large $n \in \mathbb{N}_+$, we have
 \begin{align*}
 	&	\int_{W_\epsilon \setminus (U_\delta \cup V^n_L)} dt \, dx \int_{|\xi| \geq A} |\mathcal{F} \sqrt{f^n_R}|^2 Z_n(1/A) d\xi \\
 	\leq & 	\int_{W_\epsilon \setminus (U_\delta \cup V^n_L)} dt \, dx \int_{|\xi| \geq A} |\mathcal{F} \sqrt{f^n_R}|^2 Z_n(1/|\xi |) d\xi \\
 	\leq & \frac{1}{\epsilon } \tilde{C} L    \left( C_{\nu }	  + \frac{ C _1 }{ r ^2 _0 } \right)   \int_{0}^{T}  dt  \iint_{ \R^3_x \times \R^3_v  }  f^n ( 1+ |v|^2) dx d v    \\
 	& + \tilde{C } \int_{0}^{T}  dt  \iint_{ \R^3_x \times \R^3_v  }  e^n( t,x,v) d x d v \leq C_{L , \epsilon , \delta}    ,
 \end{align*}
 where $\tilde{C} =  16 C_0 C_2 J^2  \pi ^3 $ is a constant. Taking the limit superior with respect to $n$ leads to the estimate
 \begin{align*}
 	\limsup_{n \rightarrow \infty} \int_{W_\epsilon \setminus (U_\delta \cup V_L^n)} dt \, dx \int_{|\xi| \geq A} |\mathcal{F} \sqrt{f_R^n}|^2 \, d\xi \leq \frac{C_{L, \epsilon, \delta} }{Z_{\infty}(1/A)},
 \end{align*}
 where the function $Z_{\infty}$ satisfies
 \begin{align*}
 	Z_{\infty}(a) = 2\pi \liminf_{n \rightarrow \infty} \int_a^{\frac{\pi}{2}} b_{n}(\cos \theta) \sin \theta \, d\theta
 	\geq 2\pi \int_a^{\frac{\pi}{2}}\liminf_{n \rightarrow \infty} b_{n}(\cos \theta) \sin \theta \, d\theta.
 \end{align*}
 Consequently, we have $\lim_{a \to 0^+ } Z_{\infty} (a) = +\infty$. This implies that for any fixed parameters $L, \epsilon, \delta >0$, the vanishing limit
 \begin{align*}
 	\lim_{A \to \infty} \limsup_{n \rightarrow \infty} \int_{W_\epsilon \setminus (U_\delta \cup V_L^n)} dt \, dx \int_{|\xi| \geq A} |\mathcal{F} \sqrt{f_R^n}|^2 \, d\xi = 0
 \end{align*}
 holds. Since the measure bound $|U_\delta \cup V_L^n| \leq \delta + \frac{C}{L}$ holds and the sequence $f^n$ is uniformly integrable, we deduce that
 \begin{align*}
 	\lim_{\substack{\delta \rightarrow 0 \\ L \rightarrow+\infty}} \sup_{n \in \mathbb{N}} \int_{U_\delta \cup V_L^n} dt \, dx \int_{\mathbb{R}^3_\xi } \left|\mathcal{F} \sqrt{f_R^n}\right|^2 \, d\xi = 0.
 \end{align*}
 Combining these results, we conclude that
 \begin{align*}
 	\lim_{A \to +\infty} \lim_{n \to \infty} \int_{W_\epsilon} dt \, dx \int_{|\xi| \geq A}|\mathcal{F} \sqrt{f_R^n}|^2 \, d\xi = 0.
 \end{align*}
 This completes the proof of Lemma \ref{v-regular}.
 \end{proof}
\begin{lemma}
	Let $\{f^n\}_{n\geq 1}$ be a sequence of approximate solutions satisfying the uniform bounds \eqref{FB2}-\eqref{FWC2}, and let $f$ be its weak $L^1$-limit. Then, $f^n$ converges strongly to $f$ in $L^1( [0,T] \times \R^3_x \times \R^3_v)$.
\end{lemma}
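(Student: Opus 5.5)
The plan is to prove strong convergence in $L^1$ through the Vitali Convergence Theorem (Lemma \ref{L-VC}). The uniform bounds \eqref{FB2}--\eqref{FWC2} give equi-integrability of $\{f^n\}$, via the entropy bound and de la Vallée-Poussin. They also give tightness: the $|x|^2+|v|^2$ moments make $\sup_n\int_{(B_R(x)\times B_R(v))^c} f^n$ small as $R\to\infty$. It therefore suffices to show that $f^n\to f$ in measure, or almost everywhere along a subsequence, on each bounded cylinder $Q_R=[0,T]\times B_R(x)\times B_R(v)$. I will get this from strong $L^2(Q_R)$ convergence of $g^n=\sqrt{f^n}$ to $g$, where $g$ is the weak limit from \eqref{c-sqfn}. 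Indeed, $\|g^n-g\|_{L^2}\to0$ gives $g^n\to g$ a.e. along a subsequence, hence $f^n\to g^2$ a.e. Vitali then gives $f^n\to g^2$ strongly in $L^1$, and uniqueness of weak limits forces $g^2=f$.

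Since $g^n\rightharpoonup g$ weakly in $L^2(Q_R)$, strong convergence is equivalent to $\limsup_n\|g^n\|_{L^2(Q_R)}^2\le\|g\|_{L^2(Q_R)}^2$. Equivalently, $\{g^n\}$ is relatively compact in $L^2(Q_R)$. I split $[0,T]\times B_R(x)$ into $W_\epsilon$ (defined in \eqref{W-epsion}) and its complement.

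\emph{Off $W_\epsilon$.} Here $\int_{B_R(v)} g^2\,dv\le\epsilon$. By \eqref{fn-f} with $\varphi^n=\mathbf 1_{B_R(v)}$, the quantities $\int_{B_R(v)}f^n\,dv$ converge strongly in $L^1_{t,x}$ to $\int_{B_R(v)}f\,dv$. Moreover, $f\ge g^2$ and equality will follow a posteriori. More precisely, I use $\int_{B_R(v)}(g^n)^2\,dv=\int_{B_R(v)} f^n\,dv\to \int_{B_R(v)} f\,dv$. So the contribution of $W_\epsilon^c$ to $\|g^n\|^2$ converges to $\int_{W_\epsilon^c}\int_{B_R(v)} f$. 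To close the argument I will need $\int_{B_R(v)} f\,dv\lesssim\epsilon$ there, or I will avoid the issue by working directly with compactness.

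\emph{On $W_\epsilon$.} The cleaner route is relative compactness of $g^n\mathbf 1_{W_\epsilon}$ in $L^2$ via Lemma \ref{MRFK}. Velocity regularity comes from Lemma \ref{v-regular}: the high frequencies $|\xi|\ge A$ of $\sqrt{f^n_R}$ are uniformly small in $L^2$ on $W_\epsilon$. Combined with Plancherel, this gives equicontinuity of translations in $v$. Regularity in $(t,x)$ comes from averaging. For each fixed $\psi\in C_c^\infty(\R^3_v)$, the velocity averages $\int f^n\psi\,dv$ are compact by \eqref{fn-f}. Controlling high $v$-frequencies reduces the problem to finitely many such averages through a low-frequency projection $\sqrt{f^n_R}*\rho_{1/A}$. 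The standard way to make this rigorous is to test the weak convergence $g^n\rightharpoonup g$ against $g^n$ itself. Write $\|g^n\|^2=\langle g^n, g^n*_v\rho_{1/A}\rangle+\langle g^n,g^n-g^n*_v\rho_{1/A}\rangle$. The second term is small uniformly in $n$ on $W_\epsilon$ by Lemma \ref{v-regular}. The first term is a velocity average of $g^n(v)g^n(w)\rho_{1/A}(v-w)$, which I handle by renormalized averaging (Lemma \ref{theo-6.4.1}) applied to $\sqrt{f^n}$, and then I pass to the limit.

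Finally, on $W_\epsilon^c$ I use that $g=0$ a.e. where $\int g^2=0$, and then let $\epsilon\to0$. On the set $\{g=0\}$, weak convergence and $g^n\ge0$ give $\int g^n\mathbf 1_{\{g=0\}}\to0$, hence $g^n\to0$ in $L^1$ there. With the $L^2$ bound and equi-integrability of $f^n$, this upgrades to $L^2$ convergence to $0$. The remaining layer $\{0<\int g^2\le\epsilon\}$ shrinks to a null set as $\epsilon\to0$.

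The main obstacle is the first term in the $W_\epsilon$ decomposition: passing to the limit in the quadratic quantity $\langle g^n,g^n*_v\rho_{1/A}\rangle$ with only weak convergence of $g^n$. Averaging lemmas apply to $f^n$, or to $\beta(f^n)$ via the renormalized equation, not directly to products. My plan is to write $g^n(v)\rho(v-w)$ as a bounded multiplier against $\sqrt{f^n}$ and use the generalized averaging Lemma \ref{theo-6.4.1} with $E=\R^3_w$. I will invoke the transport equation for $\sqrt{f^n+\eta}$, or $\beta_\delta(f^n)$ with $\delta\to0$, to supply the needed right-hand side in $L^1_{loc}$. The bound $(\mathcal R_3)\in L^1$ from Lemma \ref{p-R3}, adapted to $f^n$ together with Lemmas \ref{R1}--\ref{R2}, shows that the right-hand side is locally bounded in the weak sense required.
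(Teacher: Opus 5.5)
Your reduction is sound and follows the paper's architecture. Vitali plus tightness reduce matters to a bounded cylinder. On $\{g=0\}$ you test the weak convergence $\sqrt{f^n}\rightharpoonup g$ and use equi-integrability. The layer $\{0<\int_{B_R(v)}g^2\,dv\le\epsilon\}$ has measure tending to $0$ and is absorbed by equi-integrability. On $W_\epsilon$ you mollify in $v$ and control the remainder with Lemma~\ref{v-regular}.

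The gap is the step you flag as the main obstacle: strong convergence of $g^n*_v\rho_{1/A}$ on $W_\epsilon\times B_R(v)$, i.e.\ compactness in $(t,x)$ of velocity averages of $\sqrt{f^n}$. None of the tools you propose supplies it.
(i) Lemma~\ref{theo-6.4.1} needs a multiplier that is bounded and converges a.e. The multiplier $g^n(v)\rho(v-w)$ is unbounded, and its convergence is what you are trying to prove, so that use is circular.
(ii) The estimate \eqref{fn-f} concerns averages of $f^n$ (or of $\beta_\delta(f^n)$), not of $\sqrt{f^n}$. Letting $\delta\to0$ only returns averages of $f^n$.
(iii) $\sqrt{f^n+\eta}$ is not an admissible renormalization, since its derivative decays like $s^{-1/2}$, violating \eqref{beta}. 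In \eqref{R123} the coefficient $f\beta'(f)-\beta(f)$ behaves like $-\tfrac12\sqrt f$ for large $f$. So $\int_{B_R(v)}|(\mathcal R_1)|\,dv$ is only bounded by $C_R\big(\int_{\R^3_v}f(1+|v|^2)\,dv\big)^{3/2}$, and similarly for $(\mathcal R_2)$. The bounds \eqref{FB2}--\eqref{FWC2} do not control this in $L^1_{t,x}$. Even for admissible $\beta$, Lemma~\ref{R2} places $(\mathcal R_2)$ only in $L^1_x(B^{-\alpha}_{1,1})$ in $v$, not in $L^1_{\mathrm{loc}}$.

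The mismatch is between regularity, which you have for $\sqrt{f^n}$, and averaging, which you have for $f^n$. The repair is to transfer the regularity rather than the averaging. Write $|f^n(v+h)-f^n(v)|=|\sqrt{f^n}(v+h)-\sqrt{f^n}(v)|\,(\sqrt{f^n}(v+h)+\sqrt{f^n}(v))$ and apply Cauchy--Schwarz in $(t,x,v)$. For $|h|\le R$ this gives $\|\tau_h f^n-f^n\|_{L^1(W_\epsilon\times B_R(v))}\le C\,\|\tau_h\sqrt{f^n_R}-\sqrt{f^n_R}\|_{L^2(W_\epsilon\times\R^3_v)}$. By Plancherel and Lemma~\ref{v-regular}, the right side is small uniformly in large $n$ as $h\to0$. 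Hence $\|f^n-f^n*_v\rho_r\|_{L^1(W_\epsilon\times B_R(v))}$ is uniformly small for small $r$. Next, apply \eqref{fn-f} with the fixed bounded multiplier $w\mapsto\rho_r(e-w)$ for each $e\in B_R(v)$, then dominated convergence in $e$. This gives $f^n*_v\rho_r\to f*_v\rho_r$ in $L^1$. So $f^n\to f$ in $L^1(W_\epsilon\times B_R(v))$ directly, with no quadratic pairing and no averaging of $\sqrt{f^n}$.

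For comparison, the paper also mollifies $\sqrt{f^n}$. It gets convergence of the mollified sequence by applying Lemma~\ref{MRFK} in $v$ alone at a.e.\ fixed $(t,x)$, and asserts $\sqrt{f^n}(t,x,\cdot)\rightharpoonup g(t,x,\cdot)$ for a.e.\ $(t,x)$ by reflexivity. Identifying that pointwise limit with $g(t,x,\cdot)$, rather than some $(t,x)$-dependent subsequential limit, needs exactly the $(t,x)$-information you correctly said must come from averaging. The argument above is a clean way to supply it.
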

\begin{proof}
	To establish the strong convergence of $f^n$, we first recall the weak convergence result
	\begin{align*}
		g^n = \sqrt{ f^n } \rightharpoonup g \quad \text{in } L^2([0,T ] \times B_R (x)\times B_R(v)),
	\end{align*}
	where the limit function satisfies $g \in L^2([0,T ] \times B_R(x) \times B_R(v))$. We then proceed by considering the cases where $g$ is either zero or positive.
	
 \underline{\em  Case 1. For almost every $(t,x,v) \in (0,T) \times B_R (x)\times B_R(v), \ g(t,x,v) = 0 $.} 

The weak convergence of $\sqrt{f^n}$ implies
	\begin{align}\label{lim-fn-0}
		\lim_{n \rightarrow \infty } \int_{0}^{T} dt \iint_{ B_R(x) \times B_R (v) } \sqrt{ f^n } \, dx \, dv = \int_{0}^{T} dt \iint_{ B_R(x) \times B_R (v) } g \, dx \, dv = 0.
	\end{align}
	Moreover, the uniform integrability of $f^n$ (derived from the uniform bounds) ensures that for every $\epsilon > 0$, there exists a constant $M > 0$ such that
	\begin{align*}
		\sup_{n} \int_{0}^{T} dt \iint_{ B_R(x) \times B_R (v) } f^n \mathbf{1}_{ \{f^n \geq M\} } \, dx \, dv < \epsilon /2 .
	\end{align*}
	For this fixed $\epsilon$, the limit \eqref{lim-fn-0} guarantees the existence of $N \in \mathbb{N}_+$ such that for all $n > N$, we have the bound
	\begin{align*}
		\int_{0}^{T} dt \iint_{ B_R(x) \times B_R (v) } \sqrt{f^n } \, dx \, dv < \frac{\epsilon}{2 \sqrt{M} } .
	\end{align*}
	Consequently, for any $\epsilon > 0$ and $n > N$, the decomposition of the integral yields
	\begin{align*}
		\int_{0}^{T} dt \iint_{ B_R(x) \times B_R (v) } f^n \, dx \, dv & \leq \int_{0}^{T} dt \iint_{B_R(x) \times B_R (v)} \left( f^n \mathbf{1}_{ \{f^n \geq M\} } + f^n \mathbf{1}_{ \{f^n \leq M\} } \right) \, dx \, dv \\
		& \leq \sqrt{M} \int_{0}^{T} dt \iint_{B_R(x) \times B_R (v)} \sqrt{f^n } \, dx \, dv + \frac{\epsilon}{2} 
		 < \epsilon.
	\end{align*}
	Therefore, given the non-negativity of $f^n$, we conclude that $f^n$ converges strongly to $0$ in $L^1 ( [0,T] \times B_R(x) \times B_R (v))$. 

 \underline{\em  Case 2. For almost every $(t,x,v) \in (0,T) \times B_R (x)\times B_R(v), \ g(t,x,v) > 0 $.} 
 
 We have obtained a smoothing estimate for the velocity variable in Lemma \ref{v-regular}. Next, assume that $\rho_\delta= \delta^{-3} \rho \left( \frac{\cdot }{ \delta } \right)$ is a mollifier in the velocity variable. By Parseval's identity, one can derive
 \begin{align}\label{F-sqfn}
 	\| \sqrt{f^n_R} - \sqrt{f^n_R} \ast \rho_\delta \|_{L^2 ( B_R (v) ) } = 	\|  \mathcal{F} (\sqrt{f^n_R} ) - \mathcal{F} ( \sqrt{f^n_R} \ast \rho_\delta ) \|_{L^2 ( \R ^3 _v)},
 \end{align}
 where $\mathcal{F} (\sqrt{f^n_R} \ast \rho_\delta) = \mathcal{F} (  \sqrt{f^n_R})\cdot \mathcal{F} ( \rho_\delta )$.
Given that $ \mathcal{F} ( \rho_\delta ( v ) )  = \frac{1}{\delta ^3 } \mathcal{F} \left( \rho \left( \frac{ v  }{\delta } \right) \right) = \hat{ \rho } ( \delta \xi )$ and $\hat{ \rho }  ( 0 )   =1$, we observe that $ \hat{ \rho } (  \delta  \xi  ) \approx  1$ whenever $ | \xi | \ll    \delta   $. Therefore, as $\delta \rightarrow 0_+$, for any fixed $A$, $|1 - \mathcal{F}(\rho_\delta)| \rightarrow 0$ uniformly for $|\xi| \leq A$. Consequently,  
\begin{align*}
\lim_{\delta \to 0} 	\int_{ |\xi| \leq A } | \mathcal{F} (\sqrt{f^n_R} ) |^2 ( 1- \mathcal{F} (\rho_\delta ) )^2 d \xi  =0.
\end{align*} 
From this, we obtain the estimate
 \begin{align*}
 \|  \sqrt{f^n_R} - \sqrt{f^n_R} \ast \rho_\delta \|_{L^2(W_\epsilon \times B_R(v))}  
\leq &   \sup \limits_{n} \int_{W_\epsilon} dt \, dx  \int_{ |\xi| \leq A } | \mathcal{F} (\sqrt{f^n_R} ) |^2 ( 1- \mathcal{F} (\rho_\delta ) )^2 d \xi   \\
&+ \int_{W_\epsilon} dt \, dx   \int_{ |\xi| \geq A } | \mathcal{F} (\sqrt{f^n_R} ) |^2  d\xi .
 \end{align*}  
This implies
 \begin{equation*}
 	\lim_{\delta \to 0}\limsup_{  n \rightarrow \infty } \| \sqrt{f^n_R } - \sqrt{f^n_R } \ast \rho_\delta \|_{L^2(W_\epsilon \times B_R(v))} = 0.
 \end{equation*}
By the truncation method,  it follows that
\begin{align*}
\lim_{\delta \to 0}\limsup_{  n \rightarrow \infty } \| \sqrt{f^n } - \sqrt{f^n } \ast \rho_\delta \|_{L^2(W_\epsilon \times B_R(v))} = 0.
\end{align*}  
 According to \eqref{fn-f}, we have
 \begin{align*}
 	 \int_{\mathbb{R}^3_v } f^n   dv \rightarrow  \int_{\mathbb{R}^3_v} f  dv \quad \text{ in } \ L^1( [0,T  ] \times  \mathbb{R}^3_x ) .
 \end{align*}
 Consequently, for almost every $(t,x) \in [0,T] \times B_R(x)$, the convergence $ \int_{B_R(v)} f^n d v \rightarrow  \int_{B_R(v)} f d v$ holds. Specifically, for almost every $(t,x) \in [0,T] \times B_R(x)$, the bound $\| \sqrt{f^n} ( t,x, \cdot ) \|_{L^2 ( B_R(v)) } \leq C(t,x) $ is satisfied. Due to the reflexivity of the $L^2$ space, for almost all $(t,x) \in [0,T] \times B_R(x)$, we have the weak convergence
 \begin{align*}
 	\sqrt{f^n} ( t,x , \cdot ) \rightharpoonup g( t,x,\cdot ) \quad \text{ in } L^2 ( B_R(v)).
 \end{align*}
 Moreover, for almost all $(t,x) \in [0,T] \times B_R(x)$, the following estimate holds
  \begin{align*}
 	\| \sqrt{f^n} \ast \rho_\delta \|_{L^2 (B_R(v))} \leq \| \rho_\delta \|_{L^1( \R^3_v) )} \cdot \| \sqrt{f^n} \|_{L^2 ( B_R(v))} \leq C(t,x) ,
 \end{align*}
 which implies that for almost all $(t,x) \in [0,T] \times B_R (x)$, the sequence $\sqrt{f^n} \ast \rho_\delta $ is bounded in $L^2 ( B_R (v )) $. Besides, for any $h \in  \R^3$,
 \begin{align*}
 &	\| \tau_h (\sqrt{f^n} \ast \rho_\delta) - \sqrt{f^n} \ast \rho_\delta \|_{L^2 ( [0, T] \times B_R(x) \times B_R (v)) } \\
 \leq & \| \sqrt{f^n} \|_{L^2 ([0, T] \times B_R (x)\times B_R(v) ) } \| \tau_h \rho_\delta  - \rho_\delta \|_{L^1( \R^3_v )} \rightarrow 0 , \quad (h \rightarrow 0 ).
 \end{align*}
 By the M. Riesz-Fr\'{e}chet-Kolmogorov Theorem (Lemma \ref{MRFK}), for almost every $(t, x) \in [0, T] \times B_R(x)$, the sequence ${ \sqrt{f^n} \ast \rho_\delta }$ is relatively compact in $L^2(B_R(v))$. Therefore,
 \begin{align*}
 	\lim_{n \to \infty} \| \sqrt{f^n } \ast \rho_\delta - g \ast \rho_\delta \|_{L^2 ( B_R(v) ) } = 0 , \quad \text{ a.e. } (t,x) \in [0,T] \times B_R(x).
 \end{align*}
 Due to the equi-integrability of $\{f^n\}_{n \geq 1}$, for any $\epsilon>0$, there exists $\eta >0$ such that for  any measurable subset $E \subset W_\epsilon \times B_R(v)$ satisfying $|E|<\eta$, 
 $$
  \int_{E} f^n d t d x d v < \frac{\epsilon}{\| \rho_\delta \|_{L^1( \R^3_v )}  }.
  $$ 
Consequently, we obtain
 $$
 \int_{E} | \sqrt{f^n } \ast \rho_\delta |^2 d t d x d v \leq  \| \rho_\delta \|_{L^1( \R^3_v )}   \int_{E} f^n d t d x d v < \epsilon.
 $$
Furthermore, since
 \begin{align*}
 	\| \sqrt{f^n } \ast \rho_\delta -  g \ast \rho_\delta \|_{L^2 ( W_\epsilon \times B_R(v) ) } \leq \| \rho_\delta \|_{L^1( \R^3_v ) } \left( \|f^n \|_{L^1 ( W_\epsilon \times B_R(v) )} + \| g \|_{ L^2 ( W_\epsilon \times B_R (v) ) } \right) \leq C,
 \end{align*}
 the sequence $\left\{ \sqrt{f^n } \ast \rho_\delta\right\}_{n \geq 1}  $ is bounded.  
 Combining this with the estimate over small sets, we conclude that  $\left\{ \sqrt{f^n } \ast \rho_\delta \right\}_{n \geq 1} $  is equi-integrable in $ L^2 ( W_\epsilon \times B_R (v) ) $. Therefore,  by the Vitali Convergence Theorem (Lemma \ref{L-VC}), we obtain
 \begin{align*}
 	\lim_{n \to \infty}  \| \sqrt{f^n } \ast \rho_\delta -  g \ast \rho_\delta \|_{L^2 ( W_\epsilon \times B_R(v) ) } =0.
 \end{align*}
 We now establish that $\{\sqrt{f^n}\}_{n \geq 1} $ is a Cauchy sequence in $(W_\epsilon \times B_R(v))$. Fix any $\epsilon > 0$.  By the approximation properties derived earlier, we can choose a fixed $\delta > 0$ sufficiently small and find an integer $N_1 \in \mathbb{N}$ such that for all $n > N_1$, 
 \begin{equation*} 
 	\| \sqrt{f^n} - \sqrt{f^n} \ast \rho_\delta \|_{L^2(W_\epsilon \times B_R(v))} < \frac{\epsilon}{3}. 
 \end{equation*}
 For this fixed $\delta$, since the sequence $\{ \sqrt{f^n} \ast \rho_\delta \}_{n \geq 1}$ converges strongly, there exists $N_2 \in \mathbb{N}$ such that for all $m, n > N_2$, 
 \begin{equation*} 
 	\| \sqrt{f^n} \ast \rho_\delta - \sqrt{f^m} \ast \rho_\delta \|_{L^2(W_\epsilon \times B_R(v))} < \frac{\epsilon}{3}. 
 \end{equation*} 
 Let $N = \max \{ N_1, N_2 \}$. Then for any $n, m > N$, 
 \begin{align*} 
 	&\| \sqrt{f^n} - \sqrt{f^m} \|_{L^2(W_\epsilon \times B_R(v))} \\
 	\leq & \| \sqrt{f^n} - \sqrt{f^n} \ast \rho_\delta \|_{L^2(W_\epsilon \times B_R(v))}  
 	  + \| \sqrt{f^n} \ast \rho_\delta - \sqrt{f^m} \ast \rho_\delta \|_{L^2(W_\epsilon \times B_R(v))} \\ 
 	& + \| \sqrt{f^m} \ast \rho_\delta - \sqrt{f^m} \|_{L^2(W_\epsilon \times B_R(v))}   < \epsilon. 
 \end{align*} 
 Hence, $\{\sqrt{f^n}\}_{n \geq 1}$ is a Cauchy sequence in $L^2(W_\epsilon \times B_R(v))$.
 
 Letting $\epsilon \to 0$, the results from \emph{Case 1} and \emph{Case 2} jointly imply that $\sqrt{f^n}$ converges strongly in $L^2([0,T] \times B_R(x) \times B_R(v))$. Consequently, $f^n$ converges in measure to some function $\tilde{f}$ on $[0,T] \times B_R(x) \times B_R(v)$. The weak convergence $f^n \rightharpoonup f$ ensure that $\tilde{f} = f$ almost everywhere. Combining this convergence in measure with the equi-integrability of $\{f^n\}_{n\geq 1 }$ and the Vitali Convergence Theorem (Lemma \ref{L-VC}), we obtain that $f^n$ converges strongly to $f$ in $L^1([0,T] \times B_R(x) \times B_R(v))$.
 
 On the other hand, the uniform bound \eqref{FB2} on $f^n$ establishes tightness at infinity:
 \begin{align*} 
 	\int_0^T dt \iint_{\mathbb{R}^3_x \times \mathbb{R}^3_v} f^n \left( \mathbf{1}_{|x|\geq R } + \mathbf{1}_{|v|\geq R } \right) \, dx \, dv 
   \leq \frac{1}{R^2} \int_0^T dt \iint_{\mathbb{R}^3_x \times \mathbb{R}^3_v} f^n \left(|x|^2 + |v|^2 \right) \, dx \, dv 
   \leq \frac{C}{R^2}. 
 \end{align*}
 Finally, by combining the local strong convergence with the uniform decay at infinity, we conclude that $f^n$ converges strongly to $f$ in $L^1([0,T] \times \mathbb{R}^3_x \times \mathbb{R}^3_v)$.
 \end{proof}

 \section{Convergence of the renormalized collision operator $ ( \mathcal{R}_1 )^n $ and $ ( \mathcal{R}_2 )^n $ }\label{Sec-cnv}
 In Section \ref{Sec-integrable},  we show that the operators $ ( \mathcal{R}_1 )  $, $ ( \mathcal{R}_2 )  $, and $ ( \mathcal{R}_3 )  $ are well defined in the sense of distributions. To further verify that the strong  $L^1$-limit $f$   of the approximating sequence  $f^n$ is indeed a renormalized solution to the Boltzmann equation \eqref{Boltz}, it remains to study the convergence of the renormalized approximate operator $\beta_\delta '(f^n) \tilde{Q}_n (f^n ,f^n)$. This problem reduces to analyzing the convergence, in the distributional sense, of the corresponding terms  $ ( \mathcal{R}_1 )^n  $, $ ( \mathcal{R}_2 )^n   $, and $ ( \mathcal{R}_3 )^n   $. Here, each term $ ( \mathcal{R}_k )^n   $ (for $k =1,2,3$) is obtained from $ ( \mathcal{R}_k )  $ by replacing $f$ with the approximating sequence $f^n$ and the collision kernel $B$ with its approximation $B_n$.  Owing to the length of the argument, we first establish the convergence of $ ( \mathcal{R}_1 )^n   $ and $ ( \mathcal{R}_2 )^n   $ in this section. 

\subsection{Convergence of the renormalized collision operator $ ( \mathcal{R}_1 )^n $}\label{Subsec:Cnv-R1}  
 Given any function $\varphi ( t,x , v) \in L^\infty ( [0,T] \times \R^3_x \times  \R^3_v  )$ with compact  support in  $[0,T] \times B_R (x) \times  B_R (v)$, the integral satisfies 
 \begin{align*} 
 	\int_{ \R ^3_v } ( \mathcal{R}_1 )^n \varphi d v = \iiint_{ \mathbb{R}^{ 3}_v \times \mathbb{R}^{ 3}_{v_*} \times  \S^2 } \left[ f^n \beta_\delta ' ( f^n) - \beta_\delta ( f^n) \right] f^n_* S^n (| v-v_*|) \varphi ( v ) d v dv_* d \sigma . 
 \end{align*}  
 \begin{lemma} 
 	For any function $\varphi ( t,x , v) \in L^\infty ( [0,T] \times \R^3_x \times  \R^3_v  )$ with compact  support in  $[0,T] \times B_R(x) \times  B_R (v)$, we establish the convergence 
 	\begin{align}\label{cnv-R1} 
 		\lim_{n \to \infty}  \int_{0}^{T} dt \iint_{ \mathbb{R}^3_x \times \mathbb{R}^3_v } \left( 1+ \frac{1}{n} \int_{\R^3_v} f^n d v \right)^{-1}  ( \mathcal{R}_1 )^n \varphi d x d v =  \int_{0}^{T} dt \iint_{ \mathbb{R}^3_x \times \mathbb{R}^3_v } ( \mathcal{R}_1 )  \varphi d x d v . 
 	\end{align} 
 \end{lemma}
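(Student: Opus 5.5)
We write the integrand as a product of a factor bounded in $L^\infty$ and converging a.e., times a factor converging strongly in $L^1$; then we pass to the limit by the Product Limit Theorem (Lemma \ref{product limit}). Set
\begin{equation*}
\Phi_\delta(s) = s\beta_\delta'(s) - \beta_\delta(s) = -\frac{\delta s^2}{(1+\delta s)^2},
\end{equation*}
which is bounded by $1/\delta$ and continuous. Then
\begin{equation*}
\left(1+\frac1n\int f^n dv\right)^{-1}(\mathcal{R}_1)^n\varphi = g^n\, (f^n*_v S^n)\,\varphi,
\qquad
g^n = \left(1+\frac1n\int f^n dv\right)^{-1}\Phi_\delta(f^n).
\end{equation*}

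First we handle the factor $g^n$. Since $f^n\to f$ strongly in $L^1$ (Section \ref{Sec:Con}), along a subsequence $f^n\to f$ a.e. Moreover $\frac1n\int f^n dv\to0$ in $L^1_{t,x}$, hence a.e. Therefore $g^n\to\Phi_\delta(f)$ a.e., with $|g^n|\le 1/\delta$. Because the limit is identified uniquely, convergence of the full sequence follows at the end.

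The core step is strong convergence of $f^n*_vS^n$ to $f*_vS$ in $L^1([0,T]\times B_R(x)\times B_R(v))$.

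We first show $S^n\to S$ in $L^1_{loc}(\R^3_z)$. By Lemma \ref{L-Bn}, $B_n\to B$ pointwise and $B_n\le B$. The formula \eqref{S} for $S^n$ then converges pointwise in $z$. As for domination, \eqref{S} writes $S$ as a difference of two terms; each term separately is not integrable in $\theta$. So we need a $\theta$-integrand bound uniform in $n$. We would obtain it by redoing the Cancellation lemma (Lemma \ref{prop-S}) argument with $B_n$: the bracket is controlled by $\sin^2(\theta/2)\,b(\cos\theta)(|z|^\gamma+|z|\,\partial|z|^\gamma)$, using the smoothness in $\lambda$ of $|\lambda z|^\gamma$. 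This gives $|S^n(z)|\le C_\nu|z|^\gamma$ uniformly in $n$, together with dominated convergence in $\theta$. Hence $S^n\to S$ pointwise, and by dominated convergence in $L^1(B_\rho(z))$ for every $\rho$.

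Then we truncate in $v_*$. For $|v_*|>\rho$ with $v\in B_R$, since $0\le\gamma\le1$,
\begin{equation*}
\int_{|v_*|>\rho} f^n_*\,|v-v_*|^\gamma\, dv_* \le \frac{C_R}{\rho}\int f^n(1+|v_*|^2)\,dv_*,
\end{equation*}
so by \eqref{FB2} this tail is uniformly small in $L^1_{t,x,v}$ as $\rho\to\infty$. On $|v_*|\le\rho$ we have $|v-v_*|\le R+\rho$, and we split
\begin{equation*}
f^n*(S^n\mathbf 1)-f*(S\mathbf 1) = (f^n-f)*(S^n\mathbf 1) + f*\bigl((S^n-S)\mathbf 1\bigr).
\end{equation*}
By Young's inequality in $v$, the first term is bounded by $\|f^n-f\|_{L^1}\sup_n\|S^n\|_{L^1(B_{R+\rho})}\to0$. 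The second is bounded by $\|f\|_{L^1}\|S^n-S\|_{L^1(B_{R+\rho})}\to0$. This yields the strong $L^1$ convergence. (Alternatively one can prove relative compactness via Lemma \ref{MRFK}, but the direct Young estimate suffices.)

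Finally, strong $L^1$ convergence implies weak convergence. Applying Lemma \ref{product limit} on the finite measure space $[0,T]\times B_R(x)\times B_R(v)$, with $f^n*S^n$ converging weakly in $L^1$ and $g^n\varphi$ bounded and converging a.e., gives \eqref{cnv-R1}. The identity $\Phi_\delta(f)\,(f*S)=(\mathcal{R}_1)$ follows from Lemma \ref{prop-S}.

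The main obstacle is the uniform-in-$n$ domination of the $\theta$-integrand of $S^n$ needed to get $S^n\to S$ in $L^1_{loc}$. The pointwise bound $|S^n|\le C(M_n+|z|M_n')$ only gives uniform bounds, not convergence. So we must track the cancellation structure inside the integral, where the cutoff factor $1-\phi(n\sin(\theta/2))$ multiplies both terms identically and therefore does not spoil the cancellation.
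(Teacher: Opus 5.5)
Your argument is correct. It shares the paper's skeleton: the bounded, a.e.-convergent factor $(1+\frac1n\int_{\R^3_v} f^n\,dv)^{-1}\Phi_\delta(f^n)$ multiplied by $f^n*_vS^n$, truncation in $v_*$ controlled by \eqref{FB2}, and then Lemma \ref{product limit}. It is leaner than the paper in the following respects.

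\textbf{Compactness step.} The paper first proves relative compactness of $\{f^n*_vS^n\}$ in $L^1([0,T]\times\R^3_x\times B_R(v))$ through Lemma \ref{MRFK}, using translation estimates in $(t,x,v)$ and tightness in $x$. Only afterwards does it identify the limit by a direct truncation estimate. That direct estimate already gives strong convergence, so your Young-inequality route simply removes the compactness step.

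\textbf{Convergence of $S^n$.} The paper only asserts $\int_K S^n\,dz\to\int_K S\,dz$ and bounds the error by $\sup_n\|f^n\|_{L^1}\,\bigl|\int_{B_L}(S^n-S)\,dz\bigr|$. This controls the convolution only because $S^n-S$ happens to have a fixed sign here. Your use of $\|S^n-S\|_{L^1(B_{R+\rho})}$ is the correct quantity. Your subsequence handling of the a.e.\ convergence is also more careful than the paper's.

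\textbf{The step you call the main obstacle.} It is in fact immediate for this kernel. Since $B_n(z,\sigma)=|z|^\gamma b_n(\cos\theta)$, the bracket in the formula for $S^n$ factorizes exactly as $|z|^\gamma b_n(\cos\theta)\bigl(\cos^{-3-\gamma}(\theta/2)-1\bigr)$. Hence $S^n(|z|)=c_n|z|^\gamma$ with $0\le c_n\le c$. Dominated convergence, with domination by $C\sin\theta\, b(\cos\theta)\sin^2(\theta/2)$, gives $c_n\to c$. This yields $S^n\to S$ in $L^1_{\loc}$ at once, and it also explains why the paper's signed bound is valid.

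\textbf{One detail to add.} The tail estimate for the limit $f$ uses $\sup_{t}\iint f(1+|v|^2)\,dx\,dv<\infty$. This follows from \eqref{FB2} and Fatou's lemma along an a.e.\ convergent subsequence.
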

 \begin{proof} 
 Since $\beta_\delta  $ is a bounded function,  the term $f^n \beta_\delta ' ( f^n ) - \beta_\delta ( f^n )  $ converges in weak-star topology of $L^\infty $. Replacing $B $ with $B_n $ in Lemma \ref{prop-S} yields the estimate 
 \begin{align*}  
 	S_n ( |z| ) \leq C   \left( M_n ( |z| ) +|z| M_n' ( |z| ) \right),  
 \end{align*}
where the integral is defined as 
\begin{align*} 
	S_n  (|z|) = 2\pi \int_0^{\frac{\pi}{2}}\sin   \theta \left[ \frac{1}{\cos^3 \left(\frac{\theta}{2}\right)} B_n  \left( \frac{|z|}{\cos \left(\frac{\theta}{2}\right)}, \cos \theta \right) - B_n  (|z|, \cos \theta) \right] d\theta .  
\end{align*} 
Using the definition of the approximate collision kernel  $B_n$ in Lemma \ref{L-Bn} and the expression of the collision kernel $B$ in \eqref{B-gamma}, we find that there exists a constant $C_{\nu } $ such that for any $n\geq 1$ the uniform bound reads 
\begin{align}\label{Sn-gamma} 
	S_n ( |z| ) \leq C  \left( M_n ( |z| ) +|z| M_n' ( |z| ) \right) \leq C_{\nu } | z| ^\gamma, \quad \text{with } \gamma = 1- \frac{4}{s-1}. 
\end{align} 
Consequently, $ S_n ( |z| ) \in L^1_{\loc } ( \R^3_z)$.   Furthermore, applying the Lebesgue's Dominated Convergence Theorem (Lemma \ref{LCT}) yields that for any compact set $K \subset \R^3_z$,
\begin{align*} 
	\lim_{n \to \infty} \int_{ K }  S^n ( |z| ) d z = \int_{ K }  S ( |z| ) d z.
\end{align*} 
 
Now,  for any large enough constant $L$ (without loss of generality, assume that $L\geq R+1$), we consider the following two cases.

 \underline{\em Case 1.  $|v_*| \leq L$. }   Let $R' = R + L$. Then by applying the properties of the convolution, we find that there exists a  constant $ C( R, L )>0$,  such that
 \begin{align*}
 	\|( f^n 1_{|v | \leq L }) \ast_v S_n \|_{L^1 ( [0,T] \times \R ^3_x \times B_R (v) ) } &= \iint_{B_R (v)\times B_L (v_*)} f^n_* S^n ( |v-v_*| ) d v dv_* \\
 	& \leq \| f^n \|_{ L^1 (  [0,T] \times \R ^3_x \times \R ^3_v ) } \| S^n \|_{L^1 ( B_{R'} ) }\leq C( R,L ). 
 \end{align*}
Furthermore, for any $h = (h_t , h_x , h_v) \in \R_+  \times \R^3_x \times \R^3_v $, with $|h|<1$, we have 
 \begin{align*}
 	&	\| \tau_h \left( \left( f^n 1_{|v | \leq L } \right) \ast_v S^n \right) - \left( f^n 1_{|v | \leq L } \right) \ast_v S^n \|_{L^1 ( [0,T] \times \R ^3_x \times B_R (v)) }\\
 	\leq & \| \tau_h f^n - f^n \|_{ L^1 ( [0,T] \times \R ^3  _x \times B_L (v))  } \| S^n \|_{L^1 ( B_{R'(v)})}.
 \end{align*}
Due to the compactness of $f^n$, it follows that the right-hand side converges  to $0$ as $|h|\to 0$.

 \underline{\em  Case 2.  $|v_*| \geq L $.}  This implies that $|v-v_*|\geq L-R$ and $|v-v_*|^2 \leq C_R ( 1+ |v_*|^2)$. Since $0 \leq  \gamma \leq 1$, we conclude that
 \begin{equation}\label{fS}
 \begin{aligned}
 	\| \left( f^n 1_{|v | \geq L  } \right) \ast_v S^n \|_{L^1 ( B_R (v))} &\leq \frac{C_{\nu } }{( L-R  )^{2-\gamma } } \iint_{|v-v_*| \geq L-R  } f^n_* |v-v_*|^2 d v dv_* \\
 	& \leq  \frac{ C_{\nu  , R} }{( L-R  )^{2-\gamma } }  \int_{\R ^3_{v_*} } f^n_*  ( 1+ |v_*|^2 ) dv_* . 
 \end{aligned} 
 \end{equation} 
 Furthermore, for any $h = (h_t , h_x , h_v) \in \R_+  \times \R^3_x \times \R^3_v $, with $|h|<1$, we have
 \begin{equation}\label{tfM-S}
 	\begin{aligned} 
&  \| \tau_h \left( \left( f^n 1_{|v | \geq L } \right) \ast_v S^n \right) - \left( f^n 1_{|v | \geq L } \right) \ast_v S^n \|_{  L^1 (  [0,T] \times \R ^3_x \times B_R (v) )}\\
 \leq &     \frac{2 C_{\nu , R} }{( L-R -1 )^{2-\gamma } }   \int_{0}^{T} d t \iint_{\R ^3_x \times \R^3_{v_*} } f^n_*   ( 1+ |v_*|^2 ) d x d v_* .  
 \end{aligned}
 \end{equation}   
By applying the uniform bound \eqref{FB2} on $f^n$,  we conclude that the right-hand side converges to zero as $L \to +\infty $.

Finally, combining the results of {\em Case 1} and {\em Case 2}, we find that 
 \begin{align*}
 &	\|   f^n  \ast_v S^n \|_{L^1 ([0,T]\times \R^3 _x \times  B_R (v) ) } \\ \leq &  \| \left( f^n 1_{| v | \leq L } \right) \ast_v S^n \|_{L^1  ([0,T]\times \R^3_x \times  B_R (v)) } + \| \left( f^n 1_{|v | \geq L } \right) \ast_v S^n \|_{L^1  ([0,T]\times \R^3_x \times  B_R (v)) } \leq C( R,L ),
 \end{align*}
 and that for any $ h = (h_t , h_x , h_v) \in \R_+  \times \R^3_x \times \R^3_v $, with $|h|<1$, one has 
 \begin{align*}
 	& \| \tau_h  \left( f^n   \ast_v S^n \right) - \left( f^n   \ast_v S^n \right) \|_{  L^1 (  [0,T] \times \R ^3_x \times B_R(v) )} \\ 
 	\leq &  \| \tau_h \left( \left( f^n 1_{|v | \geq L } \right) \ast_v S^n \right) - \left( f^n 1_{|v | \geq L } \right) \ast_v S^n \|_{  L^1 (  [0,T] \times \R ^3 _x\times B_R (v)) } \\
 	& + \| \tau_h \left( \left( f^n 1_{|v | \leq L } \right) \ast_v S^n \right) - \left( f^n 1_{|v | \leq L } \right) \ast_v S^n \|_{  L^1 (  [0,T] \times \R ^3_x \times B_R (v) ) }  
 	\rightarrow   0 \quad ( h \rightarrow 0 , L \rightarrow \infty ).
 \end{align*}
Furthermore, for any $K>0$, the following inequality holds
 \begin{align*}
 \| \left( f^n 1_{|x | \geq K } \right) \ast_v S^n \|_{  L^1 (  [0,T] \times \R ^3_x \times B_R (v))}  
 \leq &  \frac{  C_{\nu  , R} }{K }   \int_{0}^{T} d t \iint_{\R ^3_x \times \R^3 _v } f^n_*   |x |  \cdot  ( 1+ |v_*|  )    d x d v_*  \\
   \leq &    \frac{  C_{\nu  , R} }{K }  \int_{0}^{T} d t \iint_{\R ^3 _x \times \R^3_{v_*}  } f^n_*   ( 1+ |v_*|^2+ |x| ^2 )    d x d v_*  .
 \end{align*} 
 Consequently, we apply the M. Riesz–Fréchet–Kolmogorov Theorem (Lemma \ref{MRFK}) together with the uniform bound \eqref{FB2} on $f^n$   to conclude that $  \left\{ f^n \ast_v S^n\right\}_{n \geq 1 }  $ is relatively compact in $L^1 (  [0,T] \times \R ^3 _x \times B_R (v) ) $. 
 
We now proceed to establish the convergence $ f^n \ast_v S^n \rightarrow f \ast_v S $ in $L^1 (  [0,T] \times \R ^3 _x \times B_R(v) )$. First, we observe that the term associated with the truncated velocity domain satisfies
 \begin{align*}
 	& \int_{0}^{T} dt \iint_{ \mathbb{R}^3 _x \times B_{R} (v) } \left|  \left( f^n 1_{|v | \leq L } \right) \ast_v S^n  - \left( f  1_{|v | \leq L } \right) \ast_v S  \right|  dx d v \\
 	\leq & \| f^n - f \|_{L^1 ( [0,T] \times \R ^3_x  \times \R ^3_v ) } \| S \|_{L^1 ( B_L (z ))} + \sup_{n  } \| f^n \|_{ L^1 ( [0,T] \times \R ^3_x \times  \R ^3 _v) }  \left|  \int_{B_{L}(z)}  ( S^n ( |z| ) - S( |z| ) ) dz   \right| \\
 	\rightarrow & 0 \quad ( n \rightarrow \infty ) .
 \end{align*}
Additionally, for the tail contribution where $|v| > L$ the estimate is given by
 \begin{align*}
 	& \int_{0}^{T} dt \iint_{ \mathbb{R}^3_x \times B_{R}(v) } \left|  \left( f^n 1_{|v | > L } \right) \ast_v S^n  - \left( f  1_{|v | > L } \right) \ast_v S  \right|  dx d v \\
 	\leq &\frac{ C_{\nu  , R} }{( L-R  )^{2-\gamma } }  \sup_{n\geq 1 } \int_{0}^{T} dt \iint_{ \mathbb{R}^3_x \times \mathbb{R}^3_{v_*}  }  \left[ f^n_* ( 1+|v_*|^2 ) + f_* ( 1+|v_*|^2 )   \right] d x d v_* \rightarrow 0 ,
 \end{align*}
as $L \rightarrow \infty$. Consequently, combining these results yields  
 \begin{align*}
 	& \| f^n \ast_v S^n - f \ast_v S \|_{L^1 ( [0,T] \times \R ^3_x \times B_R (v) )} \\
 	\leq & \int_{0}^{T} dt \iint_{ \mathbb{R}^3_x \times B_{R} (v) } \left|  \left( f^n 1_{|v | \leq L } \right) \ast_v S^n  - \left( f  1_{|v | \leq L } \right) \ast_v S  \right|  dx d v \\
 	&+ \sup_{n\geq 1 }  \int_{0}^{T} dt \iint_{ \mathbb{R}^3_x \times B_{R} (v) } \left|  \left( f^n 1_{|v | > L } \right) \ast_v S^n  - \left( f  1_{|v | > L } \right) \ast_v S  \right|  dx d v   \to   0, 
\end{align*} 
as $n \to \infty$ and $L \to \infty$. This establishes the desired strong convergence.

Furthermore, since $ \left( 1+ \frac{1}{n} \int_{\R^3_v } f^n d v \right)^{-1}  (f^n\beta_\delta ' ( f^n ) - \beta_\delta ( f^n )) $ is bounded in $L^\infty ( [0,T] \times \R^3_x \times \R^3_v   )$ and satisfies
 \begin{align*}
 \left( 1+ \frac{1}{n} \int_{\R^3_v } f^n d v \right)^{-1}  (f^n\beta_\delta ' ( f^n ) - \beta_\delta ( f^n )) \rightarrow  f \beta_\delta ' ( f  ) - \beta_\delta ( f  ) \quad \text{ a.e.} \ t,x,v,
 \end{align*}
   we invoke the Product Limit Theorem (Lemma \ref{product limit}) to deduce that $\left( 1+ \frac{1}{n} \int_{\R^3_v } f^n d v \right)^{-1}  ( \mathcal{R}_1 )^n$ converges weakly to $( \mathcal{R}_1 )$. Specifically, for any $\varphi \in L^\infty ( [0,T] \times \R^3_x \times \R^3 _v )$, with compact  support in  $[0,T] \times B_R (x)\times  B_R (v)$,  we have
 \begin{align*}
 	\lim_{n \to \infty}  \int_{0}^{T} dt \iint_{ \mathbb{R}^3_x \times \mathbb{R}^3_v } \left( 1+ \frac{1}{n} \int_{\R^3_v } f^n d v \right)^{-1}  ( \mathcal{R}_1 )^n \varphi d x d v =  \int_{0}^{T} dt \iint_{ \mathbb{R}^3_x \times \mathbb{R}^3_v } ( \mathcal{R}_1 )  \varphi d x d v .
 \end{align*}
 Thus, the convergence \eqref{cnv-R1} holds.
 \end{proof}

\subsection{Convergence of the renormalized collision operator $ ( \mathcal{R}_2 )^n $}\label{Subsec:Cnv-R2}  
Next, we examine the term $(\mathcal{R}_2)^n$. Let $\varphi(t,x,v) \in L^\infty([0,T] \times \mathbb{R}^3_x; C^{\alpha}_c(\mathbb{R}^3_v))$ be a function with compact support in $[0,T] \times B_R(x) \times B_R(v)$. Recalling the definition of $(\mathcal{R}_2)^n$, which is given by 
\begin{equation*}
	\int_{\mathbb{R}^3_v} (\mathcal{R}_2)^n \varphi \, dv = \iint_{\mathbb{R}^3_v \times \mathbb{R}^3_{v_*}} f^n_* \beta_\delta(f^n) \left( \mathcal{T}^n \varphi \right) \, dv \, dv_*,
\end{equation*}
where the operator $\mathcal{T}^n $ is defined as
\begin{equation*}
	\mathcal{T}^n \varphi(t,x,v) = \int_{\mathbb{S}^2} B_n(v - v_*, \sigma) (\varphi' - \varphi) \, d\sigma.
\end{equation*} 
\begin{lemma}
For any function $\varphi ( t,x , v) \in L^\infty([0,T] \times \mathbb{R}^3_x; C^{\alpha}_c(\mathbb{R}^3_v))$ with compact  support in  $[0,T] \times B_R(x) \times  B_R (v)$, and assuming $\nu < \alpha <1$, we  establish the convergence
\begin{align}\label{cnv-R2}
	\lim_{n \to \infty}  \int_{0}^{T} dt \iint_{ \mathbb{R}^3_x \times \mathbb{R}^3_v } \left( 1+ \frac{1}{n} \int_{\R^3_v} f^n d v \right)^{-1}  ( \mathcal{R}_2 )^n \varphi d x d v =  \int_{0}^{T} dt \iint_{ \mathbb{R}^3_x \times \mathbb{R}^3_v } ( \mathcal{R}_2 )  \varphi d x d v .
\end{align}
\end{lemma}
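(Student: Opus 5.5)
We write the left-hand side as
\[
\int_0^T dt\iint_{\R^3_x\times\R^3_v\times\R^3_{v_*}} \Big(1+\tfrac1n\int_{\R^3_v} f^n dv\Big)^{-1} f^n_*\,\beta_\delta(f^n)\,(\mathcal{T}^n\varphi)(t,x,v,v_*)\,dx\,dv\,dv_* ,
\]
and treat it as a pairing of a weakly compact $L^1$ sequence with a strongly convergent kernel. We fix a truncation parameter $L\ge R+1$ and split the $v_*$-integration (equivalently, the $|v-v_*|$ range) into the region $|v_*|\le L$ and the tail $|v_*|>L$. The limit is then handled in $n$ first and in $L$ second.

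\textbf{Tail.} On the tail we use Lemma \ref{Lmm-T} with $B_n\le B$, which gives uniformly in $n$
\[
|\mathcal{T}^n\varphi|\le C_{\nu,\alpha}\|\varphi\|_{C^\alpha_c}|v-v_*|^{\alpha+\gamma}.
\]
Since $v\in B_R$ we have $|v-v_*|\ge L-R$, and $\alpha+\gamma<2$ because $\alpha<1$ and $\gamma\le 1$. Together with $\beta_\delta\le 1/\delta$, the tail contribution is bounded by
\[
C_{\delta,R,\nu,\alpha}\,(L-R)^{\alpha+\gamma-2}\sup_n\int_0^T\iint f^n_*(1+|v_*|^2)\,dx\,dv_*,
\]
and the same bound holds for the limit $f$. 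By \eqref{FB2} this tends to $0$ as $L\to\infty$, uniformly in $n$.

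\textbf{Truncated region.} On the truncated region we work on the finite-measure set $E=[0,T]\times B_R(x)\times B_R(v)\times B_L(v_*)$. We set $F^n=(1+\frac1n\int_{\R^3_v} f^n dv)^{-1}f^n_*\beta_\delta(f^n)$. The strong convergence $f^n\to f$ in $L^1$ from the previous section, together with $\int_{\R^3_v} f^n dv\to\int_{\R^3_v} f dv$ in $L^1_{t,x}$, gives along a subsequence $F^n\to f_*\beta_\delta(f)$ a.e. Moreover $F^n\le \delta^{-1}f^n_*$, and $f^n_*$ is strongly convergent, hence equi-integrable, so Vitali's theorem (Lemma \ref{L-VC}) gives $F^n\to F$ strongly in $L^1(E)$.

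It then suffices to show that $\mathcal{T}^n\varphi\to\mathcal{T}\varphi$ pointwise a.e. on $E$ with a dominating bound, and to pass to the limit in the product.
\begin{itemize}
\item \emph{Pointwise convergence.} For a.e. fixed $(t,x,v,v_*)$, the integrand $B_n(\varphi'-\varphi)$ converges to $B(\varphi'-\varphi)$ by Lemma \ref{L-Bn}. It is dominated by $\|\varphi\|_{C^\alpha}|v-v_*|^{\alpha+\gamma}b(\cos\theta)\sin^\alpha(\theta/2)$, which is integrable on the sphere precisely because $\alpha>\nu$. Dominated convergence therefore yields $\mathcal{T}^n\varphi\to\mathcal{T}\varphi$.
\item \emph{Domination.} The uniform bound $|\mathcal{T}^n\varphi|\le C|v-v_*|^{\alpha+\gamma}\le C_{R,L}$ holds on $E$, since $\alpha+\gamma\ge 0$ for $0\le\gamma\le1$.
\end{itemize}
With $F^n\to F$ strongly in $L^1(E)$ and $\mathcal{T}^n\varphi\to\mathcal{T}\varphi$ a.e. and bounded in $L^\infty(E)$, the product converges weakly in $L^1(E)$ by the Product Limit Theorem (Lemma \ref{product limit}); in fact it converges strongly, by dominated convergence along subsequences. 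Integrating over $E$ gives convergence of the truncated part, and uniqueness of the limit removes the subsequence restriction.

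\textbf{Main obstacle.} The delicate point is the critical case $\gamma=1$. A Lipschitz bound $|\mathcal{T}^n\varphi|\lesssim|v-v_*|^{1+\gamma}$ would reach growth $|v-v_*|^2$, and the tail would no longer decay under the energy bound alone. Replacing it by the Hölder bound of Lemma \ref{Lmm-T} gives growth $|v-v_*|^{\alpha+\gamma}$ with $\alpha+\gamma<2$, and this strict gap is what produces the decaying factor $(L-R)^{\alpha+\gamma-2}$. The same condition $\alpha>\nu$ is what makes the angular dominating function integrable in the pointwise-convergence step. Combining the truncated-region limit with the uniform tail estimate and letting $n\to\infty$ and then $L\to\infty$ proves \eqref{cnv-R2}.
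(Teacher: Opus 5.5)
Your proof is correct to the same standard as the paper's, and it follows essentially the same route:
\begin{itemize}
\item a truncation (you cut at $|v_*|\le L$, the paper at $|v-v_*|\le L$, which is equivalent once $v\in B_R$);
\item a tail estimate, uniform in $n$, from the H\"older bound of Lemma~\ref{Lmm-T}, the gap $2-\alpha-\gamma>0$ and \eqref{FB2};
\item a product-limit argument on the truncated region.
\end{itemize}
Your treatment of the truncated region is tighter than the paper's. You get strong $L^1$ convergence of $F^n$ from $F^n\le\delta^{-1}f^n_*$ and Vitali. You also prove $\mathcal{T}^n\varphi\to\mathcal{T}\varphi$ pointwise by dominated convergence on $\S^2$, with majorant $|v-v_*|^{\alpha+\gamma}b(\cos\theta)\sin^\alpha(\theta/2)$, which is integrable exactly because $\alpha>\nu$. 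That is the a.e.\ convergence Lemma~\ref{product limit} actually requires, and the paper only asserts the weaker convergence in distributions.

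One step you inherit from the paper is not justified: restricting $v$ to $B_R$. The paper does the same in Lemma~\ref{R2} and in \eqref{L-large}. After the pre/post-collisional change of variables the $v$-integral runs over $\R^3$. For $v\notin B_R$ one has $\mathcal{T}^n\varphi(v)=\int_{\S^2}B_n\,\varphi(v')\,d\sigma$, and this need not vanish: for instance, $v\perp v_*$ with $|v_*|>|v|$ gives $v'=0$ for some $\sigma$ with $\theta<\pi/2$.

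On $\{|v_*|\le L\}$ this is harmless. Since $|v'-v|\le|v-v_*|/\sqrt2$, $\mathcal{T}^n\varphi$ vanishes for $|v|>(2+\sqrt2)(R+L)$, so your truncated-region argument runs verbatim on that larger ball.

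In the tail, however, your bound via $\int_{B_R}|v-v_*|^{\alpha+\gamma}\,dv$ must be replaced by a bound on $\int_{\R^3}|\mathcal{T}^n\varphi|\,dv$ that grows slower than $|v_*|^2$.
\begin{itemize}
\item On $B_{2R}$ your pointwise bound suffices.
\item For $|v|\ge 2R$, the condition $\varphi(v')\neq0$ forces $\sin(\theta/2)\ge R/|v-v_*|$. The change of variables $v\mapsto v'$ at fixed $(v_*,\sigma)$, whose Jacobian is $\tfrac14\cos^2(\theta/2)\ge\tfrac18$, then gives a bound $C_R(1+|v_*|)^{\gamma+\nu}$ (up to a logarithm when $\nu=0$), with $\gamma+\nu\le1$.
\end{itemize}
Together with $\beta_\delta\le1/\delta$, this restores the uniform decay of the tail as $L\to\infty$.
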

\begin{proof}  
Substituting $B_n $ for $B$ in Lemma \ref{R2}, we obtain the analogous estimate
 \begin{align}\label{Tn}
 	|\mathcal{T}^n \varphi( t,x, v)| 
 	\leq   C_{\nu, \alpha}  \|\varphi\|_{L^\infty _{t,x}  C{^{\alpha}_c} (v)} |v - v_*|^{\alpha+\gamma },
 \end{align}
where $\nu < \alpha <1$. This implies that $\mathcal{T}^n \varphi$ converges in the sense of distributions. Combining this with the density of $C^\infty _c $ in $L^1$, we conclude that for any $g \in L^1 ([0,T] \times \R^3_x \times \R^3_v)$,  we have
 \begin{align}\label{cnv-tn}
 \lim\limits_{  n \rightarrow \infty } \int_{0}^{T} d t \iint_{\R^3_x \times  \R ^3_v } \mathcal{T}^n \varphi \cdot g \,d x  d v = \int_{0}^{T} d t \iint_{\R^3_x \times  \R ^3 _v}  \mathcal{T} \varphi \cdot g \,d x  d v
 \end{align}
 
Since $M ( |z|) $ is continuous on $\left\{ z \in \R ^3:      |z| \leq L   \right\}$, it follows that whenever $   |v-v_*| \leq L$, the term $ \mathcal{T}^n \varphi $ forms a bounded sequence of functions in $L^\infty ( [0,T] \times \R^3_x \times  B_R (v) ) $. Moreover, since $f^n$ converges strongly to $f$  in $L^1( [0,T] \times \R^3 _x \times \R^3_v    )$, and  $\beta_\delta \in C^2 $ is bounded, we deduce that the sequence $  \left( 1+ \frac{1}{n} \int_{\R^3_v} f^n d v \right)^{-1} f^n_* \beta_\delta ( f^n  ) $ is weakly relatively compact in $L^1 ( [0,T] \times \R^3_x \times \R^3_{v_*} \times B_R(v)  ) $. On the other hand, the strong convergence of $f^n$ in $L^1( [0,T] \times \R^3_x \times \R^3 _v   )$ implies  
\begin{align*}
 \left( 1+ \frac{1}{n} \int_{\R^3_v} f^n d v \right)^{-1} f^n_* \beta_\delta ( f^n  ) \to f _* \beta_\delta ( f   )  \quad \text{a.e.}\ t,x,v,v_*.
\end{align*}
This pointwise convergence identifies the weak limit. Specifically, $ \left( 1+ \frac{1}{n} \int_{\R^3_v } f^n d v \right)^{-1}  f^n_* \beta_\delta ( f^n  ) $ converges weakly to $f _* \beta_\delta ( f   ) $ in $L^1 ( [0,T] \times \R^3_x \times \R^3_{v_*} \times B_R(v)  ) $. Therefore, by the Product Limit Theorem (Lemma \ref{product limit}), one obtains  
  \begin{align}\label{cnv-fnb}
 	\lim_{n \to \infty} \int_{0}^{T} dt \iiint_{ \mathbb{R}^3_x \times \mathbb{R}^3_v \times \mathbb{R}^3_{v_*} } \left(  \frac{f^n_* \beta_\delta  ( f^n )   }{1+ \frac{1}{n} \int_{\R^3} f^n d v}  - f _* \beta_\delta ( f  )   \right) ( \mathcal{T}^n \varphi ) \mathbf{1}_{\{   |v-v_*| \leq L \} }d x d v d v_* = 0 .
 \end{align}
 In view of the convergence result \eqref{cnv-tn} and the fact that $f _* \beta_\delta ( f  ) \in L^1 ( [0,T] \times \mathbb{R}^3_x \times \mathbb{R}^3 _{v_*}\times B_R(v) )$, we have
 \begin{equation}\label{cnv-tnt}
 	\lim_{n \to \infty} \int_{0}^{T} dt \iiint_{ \mathbb{R}^3_x \times \mathbb{R}^3_v \times \mathbb{R}^3_{v_*} }  f _* \beta_\delta ( f  )  ( ( \mathcal{T}^n \varphi ) - ( \mathcal{T}  \varphi )  ) \mathbf{1}_{\{   |v-v_*| \leq L \} }d x d v d v_* = 0 .
 \end{equation}
 Combining the convergences \eqref{cnv-fnb} and \eqref{cnv-tnt} yields
 \begin{equation}
	\lim_{n \to \infty}\int_{0}^{T} dt \iiint_{\mathbb{R}^3_x \times \mathbb{R}^3_v \times \mathbb{R}^3_{v_*}  } \left\{ \frac{f^n_* \beta_\delta  ( f^n )   }{1+ \frac{1}{n} \int_{\R^3_v} f^n d v}  ( \mathcal{T}^n \varphi )    -  f _* \beta_\delta ( f  )  ( \mathcal{T} \varphi ) \right\}  \mathbf{1}_{\{   |v-v_*| \leq L \} } d x d v d v_*=0. 
 \end{equation}
 Using the estimates \eqref{T-M} and \eqref{Tn}, we obtain
 \begin{equation}\label{L-large} 
 \begin{aligned}
 	& \int_{0}^{T} dt \iiint_{ \mathbb{R}^3_x \times \mathbb{R}^3_v \times \mathbb{R}^3_{v_*} }\left\{ \left( 1+ \frac{1}{n} \int_{\R^3_v} f^n d v \right)^{-1}  f^n_* \beta_\delta  ( f^n ) (\mathcal{T}^n\varphi ) +  f _* \beta_\delta  ( f  ) (\mathcal{T} \varphi )  \right\} \mathbf{1}_{ |v-v_*| \geq L  } d x d v dv_*  \\
 	\leq & C_{\nu , \alpha  } \iiint_{ \mathbb{R}^3_x \times \mathbb{R}^3_v \times \mathbb{R}^3_{v_*} }  \left(f^n_*  \beta_\delta  ( f^n ) + f _* \beta_\delta  ( f  ) \right) |v-v_*|^{\alpha + \gamma } \mathbf{1}_{ |v-v_*| \geq L  } d x d v dv_*  \\
 	\leq & \frac{C_{\nu , \alpha, R}}{ L^{2- \alpha- \gamma } }  \sup_{n }  \int_{0}^{T} dt \iint_{   \mathbb{R}^3_x \times \mathbb{R}^3_{v_*} } (f^n_* + f_* ) ( 1+ |v_* |^2 ) d x dv_*  .
 \end{aligned} 
 \end{equation}
Since $0\leq \gamma \leq 1$ and $\nu < \alpha <1 $, the condition $2-\alpha -\gamma >0 $ always holds.  Therefore, combining this with the uniform bound for $f^n$ in \eqref{FB2}, the right-hand side of the above inequality vanishes as  $L \rightarrow \infty$.
 Therefore, we have
\begin{align*} 
	& \left| \int_{0}^{T} dt \iiint_{ \mathbb{R}^3_x \times \mathbb{R}^3_v \times \mathbb{R}^3_{v_*} }   \left\{ \frac{f^n_* \beta_\delta  ( f^n )   }{1+ \frac{1}{n} \int_{\R^3_v} f^n d v}  ( \mathcal{T}^n \varphi )    -  f _* \beta_\delta ( f  )  ( \mathcal{T} \varphi )  \right\}  d x d v d v_*  \right| \\
	\leq & \sup_{n }  \int_{0}^{T} dt \iiint_{ \mathbb{R}^3_x \times \mathbb{R}^3_v \times \mathbb{R}^3_{v_*} }  \left\{ \frac{f^n_* \beta_\delta  ( f^n )   }{1+ \frac{1}{n} \int_{\R^3_v} f^n d v}  ( \mathcal{T} ^n \varphi ) +  f _* \beta_\delta ( f  )  ( \mathcal{T} \varphi )    \right\} \mathbf{1}_{ \{  |v-v_*| > L    \}}  d x d v d v_* \\ 
	& + \int_{0}^{T} dt \iiint_{\mathbb{R}^3_x \times \mathbb{R}^3_v \times \mathbb{R}^3_{v_*} } \left\{ \frac{f^n_* \beta_\delta  ( f^n )   }{1+ \frac{1}{n} \int_{\R^3_v} f^n d v}  ( \mathcal{T} ^n \varphi )    -  f _* \beta_\delta ( f  )  ( \mathcal{T} \varphi ) \right\}  \mathbf{1}_{\{ |v-v_*| \leq L \} } d x d v d v_* 	\rightarrow  0  
\end{align*} 
as $n \rightarrow \infty$  and $ L \rightarrow + \infty$. Namely,  we obtain
 \begin{align*} 
 	\lim_{n \to \infty} \int_{0}^{T} dt \iint_{  \mathbb{R}^3_x \times \mathbb{R}^3 _v}  \left( 1+ \frac{1}{n} \int_{\R^3_v} f^n d v \right)^{-1}  ( \mathcal{R}_2 )^n \varphi d x d v =\int_{0}^{T} dt \iint_{  \mathbb{R}^3_x \times \mathbb{R}^3_v }    ( \mathcal{R}_2 )  \varphi d x d v .
 \end{align*}
  Thus, the convergence \eqref{cnv-R2} holds.
\end{proof} 
\begin{remark}
	For test functions $\varphi \in L^\infty \left( [0,T] \times \mathbb{R}^3_x ; W^{1,\infty}_0(\mathbb{R}^3_v)\right)$, proceeding in a manner similar to \eqref{fS}, the upper bound in inequality \eqref{L-large} becomes  
	\begin{equation*} 
		\frac{C_{\nu , R}}{ L^{1- \gamma } } \sup_{n } \int_{0}^{T} dt \iint_{ \mathbb{R}^3_x \times \mathbb{R}^3_{v_*} } (f^n_* + f_* ) ( 1+ |v_* |^2 ) \mathbf{1}_{ \{ | v_*| > L \}} d x dv_* . 
	\end{equation*}
Although uniform bounds  on the second moment are available in  \eqref{FB2}, the lack of estimates for moments higher than the second order prevents us from determining the decay rate of the tail integral in the upper bound. Consequently, the convergence of this term as $L \to \infty$ cannot be guaranteed in the critical regime $\gamma = 1$.
\end{remark}
\section{Convergence of the renormalized collision operator $ ( \mathcal{R}_3 )^n $}\label{Sec:Cnv-R3}  
 In Section \ref{Sec-cnv}, we prove the convergence of the renormalized approximate operators $ ( \mathcal{R}_1 )^n  $ and $ ( \mathcal{R}_2 )^n  $.  Therefore, the convergence of $ ( \mathcal{R}_3 )^n  $ remains the only unresolved issue. Establishing this convergence will complete the proof that the strong $L^1$-limit $f$   of the approximating sequence  $f^n$ is  a renormalized solution of the Boltzmann equation \eqref{Boltz}.
We first recall the definition of $ ( \mathcal{R}_3 ) $, which is given by
\begin{align*}
	(\mathcal{R}_3) = -\iint_{\mathbb{R}^3 _{v_*}\times \S^{2}}  B f'_* \Gamma (f, f')d v_*   d \sigma  = -\iint_{\mathbb{R}^3 _{v_*} \times \S^{2}}  B f'_* \left\{ \beta _\delta ( f' ) - \beta _\delta ( f ) - \beta _\delta ' ( f ) ( f' - f ) \right\}  d v_* d \sigma .
\end{align*} 
Since $\beta_\delta ( s ) =  s/ (1+\delta s )$ with $ \delta>0  $, it follows that  
\begin{align*}
	(\mathcal{R}_3) =  \iint_{\mathbb{R}^3  _{v_*}\times \S^{2}}  B f'_* \frac{ \delta (f'-f )^2   }{( 1+ \delta f ) ^2 ( 1+\delta  f')} d v_* d \sigma .
\end{align*}
For simplicity, we assume that $\delta = 1$ in this subsection. 

Since $f^n$ converges strongly to $f$ in $L^1([0,T] \times \mathbb{R}^3_x \times \mathbb{R}^3_v)$, Fatou's lemma  (Lemma \ref{fatou}) implies that
\begin{align*}
	\int_0^T dt \iint_{\mathbb{R}^3_x \times \mathbb{R}^3 _v } (\mathcal{R}_3) \, dx  dv \leq \liminf_{n \rightarrow \infty} \int_0^T dt \iint_{\mathbb{R}^3 _x \times \mathbb{R}^3_v }\left( 1+ \frac{1}{n} \int_{\R^3_v} f^n d v \right)^{-1}  (\mathcal{R}_3)^n \, dx   dv.
\end{align*}
This corresponds exactly to the convergence result for $(\mathcal{R}_3)^n$ established by Alexandre and Villani \cite{Alex-Vill-2002-CPAM}. This result also implies the existence of a non-negative defect measure. To eliminate the defect measures, we first establish the crucial  average time-velocity estimate for $(\mathcal{R}_3)^n$. 

\begin{lemma}[Average time-velocity estimate for $(\mathcal{R}_3)^n$]\label{Lmm-ATV} Let $0 \leq \gamma \leq 1$. Then, for almost every $x \in \mathbb{R}^3_x$, the term $(\mathcal{R}_3)^n$ satisfies the estimate 
	\begin{align}\label{R3n-tv}
		\int_{0}^{T} dt \int_{   \R ^3_v } ( \mathcal{R}_3)^n d v 
		\leq & C_{\nu ,R} \sup_{ t \in [0, T]} \int_{  {\R}^3_v } f^n dv  \int_{   \R ^3_{v_*}  } f^n ( 1+ |v_*|^2) dv_* + \frac{1}{2} \int_{0}^{T} dt \int_{\mathbb{R}^{ 3} _v } e^n dv  
	\end{align}
 where $C_{\nu,R} > 0$ is independent of $x$, and $e^n$ is defined in \eqref{d-en}.
\end{lemma}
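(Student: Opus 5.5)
The plan is to bound $(\mathcal{R}_3)^n$ pointwise by a quantity that is essentially the entropy dissipation, plus a cross term that the cancellation lemma controls; integrating in $v$ and then in $t$ will give \eqref{R3n-tv}. With $\delta=1$,
\[
(\mathcal{R}_3)^n=\iint_{\R^3_{v_*}\times\S^2}B_n\,{f_*^n}'\,\frac{({f^n}'-f^n)^2}{(1+f^n)^2(1+{f^n}')}\,dv_*\,d\sigma .
\]
I first write $({f^n}'-f^n)^2=(\sqrt{{f^n}'}-\sqrt{f^n})^2(\sqrt{{f^n}'}+\sqrt{f^n})^2$. Then I use the elementary bound
\[
\frac{(\sqrt{a'}+\sqrt a)^2}{(1+a)^2(1+a')}\le \frac{2(a+a')}{(1+a)^2(1+a')}\le 4 .
\]
Together with ${f^n_*}'\le(\sqrt{{f_*^n}'}+\sqrt{f_*^n})^2$, this gives
\[
(\mathcal{R}_3)^n\le C\iint B_n\bigl(\sqrt{{f^n}'}-\sqrt{f^n}\bigr)^2\bigl(\sqrt{{f_*^n}'}+\sqrt{f_*^n}\bigr)^2\,dv_*\,d\sigma .
\]
I will track the constant more carefully to reach the factor $\tfrac12$ of the intro. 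If I cannot, I will absorb it by writing the final bound with a generic constant in front of $\int e^n$.

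The key step is the lower bound \eqref{sq-e1} for the entropy dissipation. Set $a=\sqrt{f^n}$ and $b=\sqrt{f^n_*}$. Then
\[
a'b'-ab=\tfrac12\bigl[(a'-a)(b'+b)+(a'+a)(b'-b)\bigr].
\]
Squaring, integrating against $B_n\,dv\,dv_*\,d\sigma$, and using the symmetry $v\leftrightarrow v_*$, the two squared terms coincide. The mixed term is $\tfrac12\int B_n({a'}^2-a^2)({b'}^2-b^2)=\tfrac12\int B_n({f^n}'-f^n)({f_*^n}'-f_*^n)$. This yields
\[
\iiint B_n\bigl(\sqrt{{f^n}'{f_*^n}'}-\sqrt{f^nf^n_*}\bigr)^2
=\tfrac12\iiint B_n(a'-a)^2(b'+b)^2+\tfrac12\iiint B_n({f^n}'-f^n)({f_*^n}'-f_*^n).
\]
The left side is bounded by $\tfrac14\iiint B_n({f^n}'{f_*^n}'-f^nf^n_*)\log\frac{{f^n}'{f_*^n}'}{f^nf^n_*}$, via $(\sqrt x-\sqrt y)^2\le\tfrac14(x-y)\log\frac xy$. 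By the usual pre/post-collisional and $v\leftrightarrow v_*$ symmetrizations, this equals $\int_{\R^3_v}e^n\,dv$ with $e^n$ as in \eqref{d-en}. The whole computation is done pointwise in $(t,x)$, where $f^n$ is smooth and bounded below, so every integral is finite (Lemma \ref{Lemma 4.14}).

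It then remains to bound the cross term $-\iiint B_n({f^n}'-f^n)({f_*^n}'-f_*^n)$ from above. Expanding it and changing variables $(v,v_*,\sigma)\to(v',v_*',\sigma)$ in the terms containing ${f^n}'$, it reduces to a multiple of $\iiint B_n f^n({f_*^n}'-f_*^n)=\int f^n(\mathscr S_n f^n)\,dv$. By Lemma \ref{prop-S} with $B_n$ in place of $B$, and by \eqref{Sn-gamma}, $|S_n(|z|)|\le C_\nu|z|^\gamma$. Since $0\le\gamma\le1$, $|z|^\gamma\le1+|z|^2\le C(1+|v|^2)(1+|v_*|^2)$, so the cross term is at most $C_\nu\int f^n\,dv\int f^n(1+|v_*|^2)\,dv_*$, uniformly in $n$. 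Finally I integrate over $t\in[0,T]$ and take the supremum in $t$ of the product of moments. The factor $T$ is absorbed into $C_{\nu,R}$; since $\varphi$ is supported in $B_R$, $C_{\nu,R}$ may also depend on $R$.

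I expect the main obstacle to be the algebra of the square identity. One has to check the sign of the mixed term and verify that the symmetrizations of the cross term legitimately produce $\mathscr S_n$ with the correct sign. This is where hard potentials enter: for $\gamma<0$ this term cannot be controlled by mass and energy.
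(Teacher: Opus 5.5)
Your argument follows the paper's proof step by step. You factor $({f^n}'-f^n)^2$ and bound the rational weight. You expand $(\sqrt{{f^n}'{f_*^n}'}-\sqrt{f^nf^n_*})^2$ and compare it with $\int e^n\,dv$ via \eqref{sq-e}, then drop a nonnegative piece. You turn the cross term into $-2\iint f^nf^n_*S_n(|v-v_*|)\,dv\,dv_*$ and finish with \eqref{Sn-gamma} for $\gamma\in[0,1]$. Your bookkeeping is tighter than the paper's in two places:
\begin{itemize}
\item you keep ${f_*^n}'$, whereas the paper writes $f^n_*$ in \eqref{fs}; this is harmless, since both are at most $(\sqrt{{f_*^n}'}+\sqrt{f^n_*})^2$;
\item you use the $v\leftrightarrow v_*$ symmetry to see that the two squared terms in \eqref{sqrt-fn} have equal integrals.
\end{itemize}

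Do not try to recover the factor $\tfrac12$: this route cannot give it, and the paper reaches it only through two slips. Write
\[
X=\iiint B_n\bigl(\sqrt{{f^n}'}-\sqrt{f^n}\bigr)^2\bigl(\sqrt{{f_*^n}'}+\sqrt{f^n_*}\bigr)^2,\qquad Y=\iiint B_n({f^n}'-f^n)({f_*^n}'-f^n_*).
\]

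\textbf{First slip: the weight in \eqref{fs}.} By Cauchy--Schwarz, $(\sqrt{a'}+\sqrt a)^2\le(1+a')(1+a)$, so the weight is at most $(1+a)^{-1}\le 1$. This bound is sharp, as one sees by letting $a\to0$, $a'\to\infty$. The first inequality in \eqref{fs}, with factor $\tfrac12$, is false: for $f^n=0$, ${f^n}'=2$ the left side is $\tfrac43 f^n_*$ and the right side is $f^n_*$. The paper's own justification there actually yields the factor $2$.

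\textbf{Second slip: \eqref{sq-e1}.} Your symmetrized identity gives $\tfrac12(X+Y)\le\int e^n\,dv$, that is, $X+Y\le 2\int e^n\,dv$, not $X+Y\le\int e^n\,dv$ as written in \eqref{sq-e1}. Since \eqref{sq-e} is sharp near equilibrium, this factor cannot be improved at that step.

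What the argument honestly proves is
\[
\int_0^T\!dt\int_{\R^3_v}(\mathcal R_3)^n\,dv\le 2\int_0^T\!dt\int_{\R^3_v}e^n\,dv+2\int_0^T\!dt\iint_{\R^3_v\times\R^3_{v_*}}f^nf^n_*\,S_n(|v-v_*|)\,dv\,dv_*.
\]
This is exactly your fallback, with an explicit constant. The value $\tfrac12$ is never used quantitatively later in the paper.

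One small correction. The bound $|z|^\gamma\le C(1+|v|^2)(1+|v_*|^2)$ produces the square of $\int f^n(1+|v|^2)\,dv$. To get the product $\int f^n\,dv\int f^n(1+|v_*|^2)\,dv_*$ appearing in \eqref{R3n-tv}, use $|z|^\gamma\le 1+2|v|^2+2|v_*|^2$ together with the symmetry in $(v,v_*)$.
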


\begin{proof} 
	First, the basic inequality $(x-y) \log \frac{x}{y} \geq 4(\sqrt{x} - \sqrt{y})^2$ shows that for almost all $(t,x) \in [0,T] \times B_R(x)$, we have
	\begin{equation}\label{sq-e}
		\begin{aligned}
			& \iiint_{ \mathbb{R}^{ 3} _v \times \mathbb{R}^{ 3}_{v_* } \times  \S^{2} } B_n ( v-v_*, \sigma ) \left( \sqrt{ {f ^n}' {f_*^n}'  } - \sqrt{ {f^n } {f^n_*}   } \right) ^2 d v dv_* d \sigma \\
			\leq & \frac{1}{4}   \iiint_{ \mathbb{R}^{ 3} _v \times \mathbb{R}^{ 3} _{v_* } \times  \S^{2} } B_n ( v-v_*, \sigma ) (  {f^n}' {f_*^n}'   -   {f^n } {f^n_*}  ) \log \frac{  {f ^n}' {f_*^n}'   }{{f^n } {f^n_*} } dv dv_* d \sigma = \int_{\mathbb{R}^{ 3}_v  } e^n dv.
		\end{aligned}
	\end{equation}
	Expanding the squared term on the left-hand side of the above inequality   yields 
	\begin{equation}\label{sqrt-fn} 
		\begin{aligned}
			\left( \sqrt{ {f ^n}' {f_*^n}'  } - \sqrt{ {f^n } {f^n_*}   } \right) ^2  = &\frac{1}{2} ( {f ^n}' -f^n   ) ( {f_*^n}'  - f^n_*  )  +  \frac{1}{4}    \left(\sqrt{{f ^n}' }  + \sqrt{f^n} \right) ^2  \left( \sqrt{ {f_*^n}'  } -\sqrt{ f^n_*}   \right) ^2        \\
			& + \frac{1}{4}    \left(\sqrt{ {f ^n}' }  - \sqrt{f^n} \right) ^2 \left( \sqrt{ {f_*^n}' } + \sqrt{ f^n_*}  \right) ^2 .
		\end{aligned}
	\end{equation}
	Combining this with \eqref{sq-e}, we derive that
	\begin{equation}\label{sq-e1}
		\begin{aligned}
			& \iiint_{ \mathbb{R}^{ 3}_v  \times \mathbb{R}^{ 3} _{v_* } \times  \S^{2} } B_n   \left(\sqrt{ {f ^n}' }  - \sqrt{f^n} \right) ^2 \left( \sqrt{ {f_*^n}' } + \sqrt{ f^n_*}  \right) ^2  d v dv_* d \sigma \\
			& +  \iiint_{ \mathbb{R}^{ 3}_v  \times \mathbb{R}^{ 3}_{v_* } \times  \S^{2} } B_n   ( {f ^n}' -f^n   ) ( {f_*^n}'  - f^n_*  )    d v dv_* d \sigma \leq \int_{\mathbb{R}^{ 3} _v } e^n dv,
		\end{aligned}
	\end{equation}
	where we have discarded the non-negative term $ \left(\sqrt{{f ^n}' }  + \sqrt{f^n} \right) ^2  \left( \sqrt{ {f_*^n}'  } -\sqrt{ f^n_*}   \right) ^2   $.
	
	For the second term on the left-hand side of the inequality \eqref{sq-e1}, by performing the change of variables $(v,v_*) \to ( v',v'_*)$ we obtain
	\begin{equation*}
		\begin{aligned}
			\iiint_{ \mathbb{R}^{ 3}_v  \times \mathbb{R}^{ 3}_{v_* } \times  \S^{2} } B_n   ( {f ^n}' -f^n   ) ( {f_*^n}'  - f^n_*  )    d v dv_* d \sigma  
			=  &-2 \iiint_{ \mathbb{R}^{ 3}_v  \times \mathbb{R}^{ 3}_{v_* } \times  \S^{2} } B_n f^n  ( {f_*^n}'  - f^n_*  )  d v dv_* d \sigma \\
			= &-2 \iint_{ \mathbb{R}^{ 3}_v  \times \mathbb{R}^{ 3}  _{v_* } } f^n  f^n_* S^n ( |v-v_*| ) dvdv_* .
		\end{aligned}
	\end{equation*}
	Next we address the first term on the left-hand side of inequality \eqref{sq-e1}. We recall the elementary inequalities
	\begin{align*}
		\left( \sqrt{ {f ^n}' } + \sqrt{ f^n }  \right) ^2   \leq 2 ( f^n + {f ^n}' ) \leq 2 ( 1+ {f ^n}'  ) ( 1+ f ^n )^2,
	\end{align*}
	together with the identity
	\begin{align*}
		({f ^n}'-f^n )^2  =  \left( \sqrt{ {f ^n}' } + \sqrt{ f^n }  \right) ^2  \left( \sqrt{ {f ^n}' } - \sqrt{ f^n }  \right) ^2.
	\end{align*}
	Combining these relations,  we obtain
	\begin{align}\label{fs}
		f^n_* \frac{  ({f ^n}'-f^n )^2   }{( 1+   f^n ) ^2 ( 1+  {f ^n}')} \leq &   \frac{1}{2}   f^n_*(\sqrt{ {f ^n}' }  - \sqrt{f^n} ) ^2 \leq   \frac{1}{2}  \left(\sqrt{ {f ^n}' }  - \sqrt{f^n} \right) ^2 \left( \sqrt{ {f_*^n}' } + \sqrt{ f^n_*}  \right) ^2.
	\end{align} 
	Thus, combining \eqref{Sn-gamma} with the condition $\gamma \in [0,1]$, we derive that
	\begin{equation}\label{R3n-1}
		\begin{aligned}
			\int_{0}^{T} dt \int_{   \R ^3 _v } ( \mathcal{R}_3)^n d v = & \int_0^T d t \iiint_{ \mathbb{R}^3 _v \times \mathbb{R}^3 _{v_* } \times \S^{2}} B_n f^n_* \frac{   ({f ^n}'-f^n )^2   }{( 1+   f^n ) ^2 ( 1+  {f ^n}')} d v dv_* d \sigma \\ 
			\leq  &\frac{1}{2} \int_{0}^{T} dt \iiint_{ \mathbb{R}^{ 3} _v \times \mathbb{R}^{ 3} _{v_* } \times  \S^{2} } B_n    \left(\sqrt{ {f ^n}' }  - \sqrt{f^n} \right) ^2 \left( \sqrt{ {f_*^n}' } + \sqrt{ f^n_*}  \right) ^2   d v dv_* d \sigma \\
			\leq &   \int_{0}^{T} dt \iint_{ \mathbb{R}^{ 3}_v  \times \mathbb{R}^{ 3}  _{v_* } } f^n  f^n_* S^n ( |v-v_*| ) dvdv_* + \frac{1}{2} \int_{0}^{T} dt \int_{\mathbb{R}^{ 3}_v  } e^n dv\\
			\leq &C _\nu  \int_{0}^{T} dt \iint_{ \mathbb{R}^{ 3}_v  \times \mathbb{R}^{ 3}  _{v_* } } f^n  f^n_*    |v-v_*|^\gamma dvdv_* + \frac{1}{2} \int_{0}^{T} dt \int_{\mathbb{R}^{ 3}_v  } e^n dv\\
			\leq & C_{\nu ,R} \sup_{ t \in [0, T]} \int_{  {\R}^3_v } f^n dv  \int_{   \R ^3_{v_*}  } f^n ( 1+ |v_*|^2) dv_* + \frac{1}{2} \int_{0}^{T} dt \int_{\mathbb{R}^{ 3} _v } e^n dv.
		\end{aligned}
	\end{equation} 
	Therefore, the average time-velocity estimate for $(\mathcal{R}_3)^n$  is established.
\end{proof}

\begin{remark}\label{Rmk-6.2}
	For hard potentials where $0 \leq \gamma \leq 1$, the inequality \eqref{R3n-1} holds, thereby yielding the desired average time-velocity bound. However, when $-3 < \gamma < 0$, the above argument only provides the weaker bound 
	$$
	C _\nu \int_{0}^{T}   dt \iint_{\R^3_v \times \R^3_{v_*}} f^n f^n_* |v-v_*|^\gamma \, dv \, dv_* + \frac{1}{2} \int_{0}^{T} \! dt \int_{\R^3_v} e^n \, dv .
	$$
	As shown in Example \ref{soft-f}, the integral $\int_{0}^{T}   dt \iint_{\R^3_v \times \R^3_{v_*}} f^n f^n_* |v-v_*|^\gamma \, dv \, dv_*$ may diverge due to the singularity of the collision kernel at small relative velocities. Consequently, the average time-velocity estimate generally fails in the case of soft potentials. Therefore, the approach developed for the hard potential model in	this paper does not apply to the soft potential model.
\end{remark}

Having established the corresponding bounds, we now investigate the equi-integrability of $(\mathcal{R}_3)^n$. Given that the function $\beta_\delta ( s ) $ satisfies the renormalization condition \eqref{beta}  and $f^n$ is the unique solution to the approximate equation \eqref{approximation} constructed in Section \ref{Sec-GEAE}, it follows that $ \beta_\delta ( f^n) $  satisfies 
\begin{equation}\label{eq6.75}
	\frac{\partial }{\partial t} \beta_\delta ( f^n) + v \cdot \nabla_x \beta_\delta ( f^n)  = \beta_\delta ' ( f^n ) \tilde{Q}_n (f^n,f^n),
\end{equation}
where  
\begin{align*}
	\beta_\delta ' ( f^n ) \tilde{Q}_n (f^n,f^n)  &= \left( 1+ \frac{1}{n} \int_{\R^3_v} f^n d v \right)^{-1}   \beta_\delta ' ( f^n ) Q _n (f^n,f^n)\\  
	&= \left( 1+ \frac{1}{n} \int_{\R^3_v } f^n d v \right)^{-1}  \left\{   (\mathcal{R}_1)^n + (\mathcal{R}_2)^n +(\mathcal{R}_3)^n \right\}.
\end{align*}
\begin{lemma}\label{equi-R3}
	For any function $\varphi ( t,x , v) \in L^\infty \left( [0,T] \times \R^3_x ; C^{\alpha}_c (\R^3_v )\right)$ with compact  support in  $[0,T] \times B_R(x) \times  B_R (v)$, the sequence $	\int_{0}^{T} d t \int_{ \R^3_v } \left( 1+ \frac{1}{n} \int_{\R^3_v } f^n d v \right)^{-1} ( \mathcal{R}_3)^n   \varphi d v$ is equi-integrable on $B_R (x)$. 
\end{lemma}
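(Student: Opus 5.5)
We will read the statement as saying that the functions
\begin{equation*}
\Psi^n(x) := \int_0^T dt \int_{\R^3_v} \Big(1+\tfrac1n\int_{\R^3_v} f^n dv\Big)^{-1} (\mathcal{R}_3)^n \varphi \, dv
\end{equation*}
form an equi-integrable family in $L^1(B_R(x))$. Since $(\mathcal{R}_3)^n \ge 0$, it suffices to treat $\varphi\ge 0$. We may then replace $\varphi$ by $\|\varphi\|_{L^\infty}\tilde\varphi(v)$, where $\tilde\varphi\in C^\alpha_c$ is a fixed nonnegative cutoff equal to $1$ on $B_R(v)$. Equi-integrability of the dominating family gives it for $\Psi^n$.

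The plan is to integrate the renormalized approximate equation \eqref{eq6.75} against the test function $\psi(t,x,v)=\mathbf{1}_E(x)\tilde\varphi(v)$, for an arbitrary measurable set $E\subset B_R(x)$. This gives
\begin{equation*}
\int_E \Psi^n\,dx \le \Big|\iint_{E\times\R^3_v}(\beta_\delta(f^n)(T)-\beta_\delta(f^n)(0))\tilde\varphi\Big| + \Big|\int_0^T\!\!\iint_{E\times \R^3_v} N_n^{-1}\big((\mathcal{R}_1)^n+(\mathcal{R}_2)^n\big)\tilde\varphi\Big| + \Big|\int_0^T\!\!\iint_{E\times\R^3_v}\beta_\delta(f^n)\, v\cdot\nabla_x \psi\Big|,
\end{equation*}
where $N_n=1+\frac1n\int f^n dv$.

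The transport term is the delicate point, since $\mathbf{1}_E$ is not differentiable. We therefore proceed as in Lemma \ref{p-R3}, working with the $x$-pointwise identity obtained by integrating in $v$ only. Integrating \eqref{eq6.75} against $\tilde\varphi$ in $v$ and over $t\in[0,T]$, at fixed $x$, gives
\begin{equation*}
\Psi^n(x) = \Big[\int \beta_\delta(f^n)\tilde\varphi\,dv\Big]_0^T + \int_0^T\!\!\nabla_x\cdot\int v\beta_\delta(f^n)\tilde\varphi\,dv\,dt - \int_0^T\!\!\int N_n^{-1}\big((\mathcal{R}_1)^n+(\mathcal{R}_2)^n\big)\tilde\varphi .
\end{equation*}
The divergence term is the obstacle, and we expect it to be the main difficulty. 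To avoid it, we would use the estimate of Lemma \ref{Lmm-ATV} instead. With $\delta=1$ and from \eqref{fs}, for $v\in B_R(v)$,
\begin{equation*}
\Psi^n(x) \le \|\varphi\|_\infty \Big( C_{\nu,R}\sup_t \int f^n dv \int f^n(1+|v_*|^2)dv_* + \tfrac12\int_0^T\!\!\int e^n dv\Big).
\end{equation*}
The right-hand side is a bound in terms of macroscopic quantities. So the remaining task is to show these macroscopic quantities are equi-integrable in $x$.

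For the entropy-dissipation part, we would use $e^n$ in the form $\frac14\iint B_n(\dots)\log(\dots)$, i.e.\ $\int e^n dv=\tilde e^n$-type dissipation up to the factor $N_n$. Its integral is uniformly bounded by \eqref{FWC2}, but uniform bounds alone do not give equi-integrability. Here we would combine the weak formulation with the bounds on $(\mathcal{R}_1)^n,(\mathcal{R}_2)^n$. By Section \ref{Sec-cnv}, $N_n^{-1}(\mathcal{R}_1)^n$ converges weakly in $L^1$, and $\int N_n^{-1}(\mathcal{R}_2)^n\tilde\varphi\,dv$ is controlled by $\int f^n_*\beta_\delta(f^n)|v-v_*|^{\alpha+\gamma}$, which converges strongly by strong $L^1$ convergence of $f^n$ and tail control via \eqref{FB2}. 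Hence the $x$-marginals of these two terms are equi-integrable by Dunford--Pettis (Lemma \ref{theorem-dunford}). The boundary-in-time terms are bounded by $\int f^n(t)dv$, and these are equi-integrable since $f^n(t)\to f(t)$ strongly; here we use \eqref{fn-f} and strong convergence.

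To handle the transport term, we replace $\mathbf{1}_E$ by a smooth cutoff. For $\chi_\eta\in C_c^\infty$ approximating $\mathbf{1}_E$ in $L^1$, the transport contribution is at most $C\|\nabla\chi_\eta\|_\infty\int_0^T\iint \beta_\delta(f^n)|v|\tilde\varphi$. This is bounded, and moreover converges by the averaging lemma (Lemma \ref{theo-6.2.1}) applied to $\beta_\delta(f^n)v\tilde\varphi$. Thus $\int \chi_\eta\Psi^n\,dx$ converges for each fixed $\eta$. Since $\Psi^n\ge0$, convergence of $\int\chi\Psi^n$ for all smooth $\chi$, together with the uniform $L^1$ bound, gives weak-$\ast$ convergence of $\Psi^n\,dx$ as measures. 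We would then upgrade this to weak $L^1$ convergence, which is equivalent to equi-integrability, by showing that no mass concentrates. To do this we use the dominating bound above on the sets $(N^n_L)^c$, where the right-hand side is at most a constant multiple of $L$. On $N^n_L$, whose measure is at most $C/L$, we use the equi-integrability of the $\mathcal{R}_1,\mathcal{R}_2$ and boundary parts, transferred through the identity localized by $\mathbf{1}_{N_L^n}$. This last transfer is where we expect the main obstacle to lie.
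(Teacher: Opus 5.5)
Your outline correctly identifies the real difficulty: once the $v$-integrated identity is localized in $x$, a transport term appears. But the outline does not get past that term, so there is a genuine gap. On $(N^n_L)^c$, Lemma \ref{Lmm-ATV} gives $\Psi^n\le C(L^2+L)$ (not $CL$, because the moment term is quadratic). Hence $\{\Psi^n>C(L^2+L)\}\subset N^n_L$. The statement you still need is $\lim_{L\to\infty}\sup_n\int_{N^n_L}\Psi^n\,dx=0$. This already implies the lemma, since $\int_E\Psi^n\,dx\le\int_{N^n_L}\Psi^n\,dx+C(L^2+L)\,\text{meas}(E)$, and with $\text{meas}(N^n_L)\le C/L$ it is also implied by the lemma. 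So it is a restatement of the lemma, not a reduction of it.

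Nothing you have assembled controls $\Psi^n$ on $N^n_L$:
\begin{itemize}
\item By \eqref{FWC2}, $\int_0^T\!\int e^n\,dv$ is only bounded in $L^1_x$, with no equi-integrability in $x$. The quadratic moment term in Lemma \ref{Lmm-ATV} has no $L^1_x$ bound from \eqref{FB2} at all.
\item Weak-$\ast$ convergence of $\Psi^n\,dx$ obtained from smooth cutoffs $\chi_\eta$ is compatible with concentration. Upgrading it to weak $L^1$ convergence is once more a restatement of equi-integrability.
\item The step you defer is transferring equi-integrability of the boundary, $(\mathcal{R}_1)^n$ and $(\mathcal{R}_2)^n$ parts through the identity localized by $\mathbf{1}_{N^n_L}$. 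This reintroduces exactly the flux $\int_0^T\!\iint\beta_\delta(f^n)\,v\cdot\nabla_x\mathbf{1}_{N^n_L}\,\tilde\varphi$. Here $N^n_L$ is an $n$-dependent measurable set with no perimeter control, so its small measure gives no smallness of this term.
\end{itemize}
What is missing is a mechanism that makes $\nabla_x\cdot\int_0^T\!\int v\,\beta_\delta(f^n)\tilde\varphi\,dv\,dt$ small on sets of small measure, or a localization that avoids it altogether. Your outline does not provide one.

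For comparison, the paper takes the direct route you considered first. It integrates \eqref{eq6.75} against $\varphi$ and writes \eqref{rs-n} with only the time-boundary terms and the collision terms. From this it deduces \eqref{R3n-bdd} and then restricts every integral to $E\times\R^3_v$ with $\text{meas}(E)$ small. It uses Dunford--Pettis for the $(\mathcal{R}_1)^n$ and $(\mathcal{R}_2)^n$ terms and equi-integrability of $\beta_\delta(f^n)$ for the boundary terms. However, in writing \eqref{rs-n} for a $(t,x)$-dependent test function, and then in effect for $\varphi\mathbf{1}_E$, it omits the term $\int_0^T\!\iint\beta_\delta(f^n)(\partial_t\varphi+v\cdot\nabla_x\varphi)$. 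That is precisely the flux you flagged. So your diagnosis is sound, and the paper's written argument does not supply the missing ingredient either. Still, your proposal as it stands does not prove the lemma.
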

\begin{proof}	 
	Following a procedure similar to that in Lemma \ref{p-R3}, we multiply both sides of the equation \eqref{eq6.75} by a  function  $\varphi ( t,x , v) \in L^\infty \left( [0,T] \times \R^3_x ; C^{\alpha}_c (\R^3_v )\right)$  with compact  support in  $[0,T] \times B_R (x)\times  B_R(v)$, and integrate it over $(t,x,v) \in ([ 0,T ] \times \R^3_x \times \R^3_v  )$. This yields  
	\begin{equation}\label{rs-n}
		\begin{aligned}
			&\iint_{ \mathbb{R}^3_x \times \mathbb{R}^3 _v} \beta_\delta ( f^n )( T,\cdot ) \varphi d x d v - \iint_{ \mathbb{R}^3 _x\times \mathbb{R}^3_v } \beta_\delta ( f^n ) ( 0, \cdot ) \varphi d x d v \\
			= 	& \int_{0}^{T} d t \iint_{ \mathbb{R}^3 _x\times \mathbb{R}^3 _v} \left( 1+ \frac{1}{n} \int_{\R^3_v } f^n d v \right)^{-1}  \left(  (\mathcal{R}_1 )^n + (\mathcal{R}_2 )^n + (\mathcal{R}_3 )^n \right) \varphi d x d v,
		\end{aligned}
	\end{equation}
	which implies that
	\begin{equation}\label{R3n-bdd}
		\begin{aligned}
			&\int_{0}^{T} d t \iint_{ \mathbb{R}^3_x \times \mathbb{R}^3 _v } \left( 1+ \frac{1}{n} \int_{\R^3_v } f^n d v \right)^{-1}  (\mathcal{R}_3 )^n \varphi d x d v \\ \leq &  \left|  \iint_{ \mathbb{R}^3_x \times \mathbb{R}^3_v  }  (\beta_\delta ( f^n )( T,\cdot ) + \beta_\delta ( f^n ) ( 0, \cdot )) \varphi d x d v \right|  \\
			& + \left|  \int_{0}^{T} d t \iint_{ \mathbb{R}^3 _x\times \mathbb{R}^3 _v }\left( 1+ \frac{1}{n} \int_{\R^3_v } f^n d v \right)^{-1}   ( (\mathcal{R}_1 )^n + (\mathcal{R}_2 )^n ) \varphi dx dv  \right|.
		\end{aligned}
	\end{equation}
	
	Recall that Sections \ref{Subsec:Cnv-R1} and \ref{Subsec:Cnv-R2} established the weak convergence of the integrals $ \int_{\mathbb{R}^3_v}\big( 1+ \frac{1}{n} \int_{\mathbb{R}^3_v} f^n \, dv \big)^{-1}  (\mathcal{R}_1)^n \varphi \, dv$ and  $ \int_{  \R^3_v  } \left( 1+ \frac{1}{n} \int_{\R^3_v } f^n d v \right)^{-1}( \mathcal{R} _2 )^n \varphi d v  $, respectively. Applying the Dunford-Pettis theorem (Lemma \ref{theorem-dunford})  confirms that these sequences are equi-integrable on $[0,T] \times \R^3$. Furthermore, the bound $\beta_\delta (t) \leq t$ and the weak convergence of $ f^n $ imply that  $\{\beta_\delta (f^n)\}_{n\geq 1}$ is also equi-integrable. Thus  for every $\epsilon >0 $ there exists $\delta >0 $ such that for any measurable set $E  \subset   B_R (x)$ with $\mu (E ) < \delta $, we have
	\begin{equation*}
		\begin{aligned}
			& \int_{0}^{T} d t \iint_{E \times \R^3_v }  \left( 1+ \frac{1}{n} \int_{\R^3_v} f^n d v \right)^{-1}   (\mathcal{R}_3 )^n \varphi  d x d v \\ \leq &  \int_{0}^{T} d t   \iint_{ E \times \R^3_v  } \left| (\beta_\delta ( f^n )( T,\cdot ) + \beta_\delta ( f^n ) ( 0, \cdot )) \varphi \right| d x d v   \\
			& + \int_{0}^{T} d t   \iint_{E \times \R^3 _v }  \left| \left( 1+ \frac{1}{n} \int_{\R^3_v } f^n d v \right)^{-1}   ( (\mathcal{R}_1 )^n + (\mathcal{R}_2 )^n ) \varphi \right| dx dv< \epsilon .
		\end{aligned}
	\end{equation*}
Thus, the sequence $	\int_{0}^{T} d t \int_{ \R^3_v } \left( 1+ \frac{1}{n} \int_{\R^3_v } f^n d v \right)^{-1} ( \mathcal{R}_3)^n   \varphi d v$ is equi-integrable on $B_R (x)$. 
\end{proof}
 Next, to address the singularity of the collision kernel at $ \theta = 0 $, we investigate the convergence of $ (\mathcal{R}_3)^n$ by decomposing the spherical integral into two regions: one near the singularity and one away from it.  Specifically, we define the truncated kernels as
\begin{equation}\label{Bep}
\begin{aligned}
 B_{n}^k (|z|,\cos \theta ) = B(|z|,\cos \theta )  \mathbf{1}_{ \frac{1}{k} \leq \theta \leq \frac{\pi}{2} },  &\quad B_{ k} (|z|,\cos \theta ) =   B ^k (|z|,\cos \theta ) \mathbf{1}_{ \frac{1}{k} \leq \theta \leq \frac{\pi}{2} },  \\
 B_{n,k} (|z|,\cos \theta ) = B_{n} (|z|,\cos \theta )  \mathbf{1}_{ 0 \leq \theta \leq \frac{1}{k}}, & \quad B_{ k} (|z|,\cos \theta ) =B   (|z|,\cos \theta ) \mathbf{1}_{ 0 \leq \theta \leq \frac{1}{k}}.
\end{aligned}
\end{equation}
Let $(\mathcal{R}_3) ^{n, k}$  and $(\mathcal{R}_3) ^{n  }_k$ denote the renormalized collision operators corresponding to the kernels $ B_{n}^k $ and $B_{n,k} $, respectively. The operators $(\mathcal{R}_3) ^{ k}$ and $(\mathcal{R}_3)  _k$ are defined analogously. Furthermore, we set
 \begin{equation}\label{ML}
 \begin{aligned} 
 	M^n (  x) = \sup_{ t \in [0, T]} \int_{\R^3 _v} f^n ( 1+ |v|^2 )d v+ \int_{0}^{T} dt \int_{   \R ^3_v  } e^n d v , \quad &	 N^n_L = \left\{  x \in B_R(x ) : M^n (  x) >L \right\}, 	 \\
M  (  x) = \sup_{ t \in [0, T]} \int_{\R^3 _v} f  ( 1+ |v|^2 )d v+ \int_{0}^{T} dt \int_{   \R ^3_v  } e  d v,  \quad & N  _L = \left\{  x \in B_R(x ) : M  (  x) >L \right\}.
 	 \end{aligned}
 \end{equation}
 Then by Chebyshev's inequality, we have
 \begin{align*}
 	\text{meas}(N^n_L )  \leq& \frac{1}{L} \sup_{ t \in [0, T]} \iint_{ \R ^3 _x\times \R ^3 _v } f^n ( 1+ |v|^2)  d x d v  +\int_{0}^{T} dt \iint_{   \R ^3_x  \times \R^3 _v } e^n d x  d v \leq \frac{C}{L} ,\\
 		\text{meas}(N _L )  \leq& \frac{1}{L} \sup_{ t \in [0, T]} \iint_{ \R ^3 _x\times \R ^3 _v } f  ( 1+ |v|^2)  d x d v  +\int_{0}^{T} dt \iint_{   \R ^3_x  \times \R^3 _v } e  d x  d v \leq \frac{C}{L}.
 \end{align*} 
 
 With these truncations in place, we first establish the convergence of the non-singular part in the following lemma. 
 
\begin{lemma}\label{L-R3nc}
There exists a sufficiently large constant $L>0$ such that
\begin{align}\label{R3k}
\lim\limits_{n\to \infty} \int_0^T dt \iint_{\mathbb{R}^3_x \times \mathbb{R}^3_v }  \left(1 + \frac{1}{n} \int_{\mathbb{R}^3_v} f^n \, dv \right)^{-1}  (\mathcal{R}_3) ^{n, k} \varphi \mathbf{1}_{ (N_L^n)^c}  dx dv = \int_0^T dt \iint_{\mathbb{R}^3_x \times \mathbb{R}^3_v}  (\mathcal{R}_3) ^k \varphi \mathbf{1}_{(N_L)^c} \, dx dv.
\end{align}
\end{lemma}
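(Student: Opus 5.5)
The plan is to exploit the fact that, once the grazing region $\theta<1/k$ is removed, the kernel $B^k_n$ is a cutoff kernel bounded uniformly in $n$. Then $(\mathcal{R}_3)^{n,k}$ can be handled pointwise by dominated convergence, with the restriction to $(N^n_L)^c$ supplying a uniform bound in $x$.

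First, the pointwise structure. With $\delta=1$ we have $(\mathcal{R}_3)^{n,k} = \iint B^k_n f^{n\prime}_* \frac{(f^{n\prime}-f^n)^2}{(1+f^n)^2(1+f^{n\prime})}\,dv_*d\sigma$. Since $\frac{(a-b)^2}{(1+b)^2(1+a)}\le a+b$, we get
\[
0\le (\mathcal{R}_3)^{n,k}\le C\iint B^k_n f^{n\prime}_*(f^n+f^{n\prime})\,dv_*d\sigma .
\]
Because $\int_{1/k}^{\pi/2} b\sin\theta\,d\theta=C_k<\infty$ and $|v-v_*|^\gamma\le 1+|v-v_*|^2$, the pre-post collisional change of variables, whose Jacobian is bounded on the range $\theta\le\pi/2$, gives
\[
\int_{B_R(v)}(\mathcal{R}_3)^{n,k}\,dv\le C_{k,R}\Big(\int f^n(1+|v|^2)\,dv\Big)^2 .
\]
On $(N^n_L)^c$ this is at most $C_{k,R}L^2$ for every $t$. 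The same bound holds for $(\mathcal{R}_3)^k$ on $(N_L)^c$.

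Second, identify the limit. Along a subsequence, $f^n\to f$ a.e. in $(t,x,v)$ by the strong $L^1$ convergence of Section \ref{Sec:Con}. Also $B^k_n\to B^k$, and $\left(1+\frac1n\int f^n\,dv\right)^{-1}\to 1$ a.e. For a.e. $(t,x)$ the integrand in $(v,v_*,\sigma)$ converges a.e. I expect to apply the generalized averaged velocity lemma (Lemma \ref{theo-6.4.1}) in $v_*$, together with the uniform $L^2_v$-moment bound, to conclude
\[
\int(\mathcal{R}_3)^{n,k}\varphi\,dv\to\int(\mathcal{R}_3)^k\varphi\,dv
\]
in $L^1_{t,x}$ on compact sets. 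The point is that the map $(g,h)\mapsto\iint B^k g_*'\,\Gamma(h,h')$ is continuous in the strong $L^1_2$ topology once the kernel is cut off and $\beta_1$ is bounded and Lipschitz. For each fixed $(t,x)$ the $v_*$-integral $\int B^k f^{n\prime}_*\,dv_*$ converges strongly, since $f^n(t,x,\cdot)\to f(t,x,\cdot)$ in $L^1_v$ for a.e. $(t,x)$ along a subsequence. Multiplying by the bounded factor $\frac{(f^{n\prime}-f^n)^2}{(1+f^n)^2(1+f^{n\prime})}\le 1+f^n$, I then use Vitali's theorem (Lemma \ref{L-VC}). Equi-integrability in $v$ comes from the entropy bound, and tails are controlled by the second moment.

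Third, handle the indicator sets. Because $M^n(x)\to M(x)$ a.e. is not obvious (the $e^n$ part only has lower semicontinuity), I would choose $L$ outside the at most countable set of atoms of the limiting distribution of $M$. This is the reason the statement says ``there exists a sufficiently large $L$''. Then I would show $\mathbf 1_{(N^n_L)^c}\to\mathbf 1_{(N_L)^c}$ in measure, using the bounds $\mathrm{meas}(N^n_L)+\mathrm{meas}(N_L)\le C/L$. Combining with the uniform bound $C_{k,R}L^2$ and the equi-integrability of Lemma \ref{equi-R3}, dominated convergence in $(t,x)\in[0,T]\times B_R(x)$ yields \eqref{R3k}. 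The symmetric difference of the two sets contributes at most its measure times $C_{k,R}L^2$, which must tend to zero.

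The main obstacle is this third step. Convergence of the sets $N^n_L$ to $N_L$ is delicate because $\int e^n$ converges only in a liminf sense. If the approach above fails, I would fall back on equi-integrability to make the $N$-sets asymptotically negligible. One could let $L\to\infty$ jointly, or replace $M^n$ in the definition by its strongly convergent mass–energy part, for which a.e. convergence follows from \eqref{fn-f}.
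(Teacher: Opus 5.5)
\textbf{The gap is in your third step.} Choosing $L$ off the atoms of the law of $M$ gives $\mathbf 1_{(N^n_L)^c}\to\mathbf 1_{(N_L)^c}$ in measure only if $M^n\to M$ in measure on $B_R(x)$. Neither part of $M^n$ is known to converge.
\begin{itemize}
\item You already see this for $\int_0^T\!\int e^n\,dv$.
\item Your repair for the mass--energy part also fails. \eqref{fn-f} holds only for \emph{bounded} test functions, and $|v|^2$ is unbounded. \eqref{FB2} gives tightness of $f^n$ but not uniform integrability of $|v|^2 f^n$, and the paper has no moment bounds beyond order two. So energy can escape to large velocities, and $\int f^n|v|^2\,dv$ need not converge to $\int f|v|^2\,dv$.
\item The $\sup_{t\in[0,T]}$ in $M^n$ does not commute with $L^p_t$-convergence in any case.
\item Redefining $N^n_L$ through the mass--energy part alone would change the statement.
\end{itemize}
Hence $\mathrm{meas}(N^n_L\triangle N_L)\to 0$ is unproved. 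Your closing estimate cannot replace it. With only $\mathrm{meas}(N^n_L)+\mathrm{meas}(N_L)\le C/L$, the symmetric difference contributes up to $T\cdot(C/L)\cdot C_{k,R}L^2$, which grows with $L$.

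\textbf{What closes the argument is your fallback, which is exactly the paper's proof.} Split $B_R(x)$ into $(N^n_L)^c\cap(N_L)^c$, $(N^n_L)^c\cap N_L$ and $(N_L)^c\cap N^n_L$.
\begin{itemize}
\item On the first set, dominated convergence applies. Here your Step~1 bound is genuinely better than the paper's: it gives a dominating constant $C_{k,R}L^2$ pointwise in $(t,x)$ for both $(\mathcal R_3)^{n,k}$ and $(\mathcal R_3)^k$. The paper only invokes the time-integrated bound of Lemma \ref{Lmm-ATV}.
\item On the other two sets, you should not use the size of the integrand but $n$-uniform smallness on small sets in $x$. For the first term, use $(\mathcal R_3)^{n,k}\le(\mathcal R_3)^n$ together with the equi-integrability of Lemma \ref{equi-R3}. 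For the fixed function $(\mathcal R_3)^k$, use absolute continuity of the integral. Both contributions are then $<\delta/3$ once $L=L(\delta)$ is large.
\end{itemize}
The outcome is: for every $\delta>0$ there are $L$ and $N$ such that the difference is $<\delta$ for $n>N$. This is all the paper's proof establishes, and all Lemma \ref{l-cnvR3} uses, since it then lets $L\to\infty$. The exact limit at a single fixed $L$ that you were aiming for is not obtained by either argument.

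\textbf{Two smaller points in Step~2.}
\begin{itemize}
\item The factor $(f^{n\prime}-f^n)^2/((1+f^n)^2(1+f^{n\prime}))$ is not bounded; it is only $\le f^n+f^{n\prime}$, as in your Step~1. So the Vitali/generalized dominated convergence argument must be run in $(v,v_*,\sigma)$ with dominating function $B^k_n f^{n\prime}_*(f^n+f^{n\prime})$.
\item Strong $L^1_2$ convergence is neither available nor needed. $L^1$ convergence plus the uniform second-moment bound along those $n$ with $x\in(N^n_L)^c$ suffices, because $\gamma\le 1<2$.
\end{itemize}
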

\begin{proof}
The definition of $(\mathcal{R}_3) ^{n, k}$ implies that for any fixed $t,x,v$, the operator satisfies the pointwise estimate
 \begin{equation*}
 \begin{aligned}  
 (\mathcal{R}_3) ^{n, k}  = -  \iint_{\mathbb{R}^3_{v_*} \times \S^2} B_n^k ( v-v_*, \sigma )f_*^n \Gamma(f^n, {f ^n}') \, dv_* d\sigma  
  \leq    C(f^n + {f^n}') \int_{\mathbb{R}^3_{v_*}} f_*^n \, dv_*.
 \end{aligned}
 \end{equation*} 
Combining this estimate with the convergence $f^n \to f$ a.e. in $(t,x,v)$ and $B^k_n \to B^k$ a.e. in $(v_*,\sigma)$, we obtain the pointwise convergence 
 \begin{equation*}
 (\mathcal{R}_3) ^{n, k} \to (\mathcal{R}_3) ^{  k}\quad \text{a.e. } (t, x, v) \in [0, T] \times (N_L^n \cup N_L)^c   \times B_R(v).
 \end{equation*}
Furthermore, Lemma \ref{Lmm-ATV} provides the uniform bound
 \begin{equation*}
 \int_0^T dt \int_{  {\R}^3_v }  \left(1 + \frac{1}{n} \int_{ {\R}^3} f^n \, dv \right)^{-1}  (\mathcal{R}_3) ^{n, k} \varphi \mathbf{1}_{ (N_L^n)^c}   dv \leq C_L.
 \end{equation*} 
 Consequently, by Lebesgue's Dominated Convergence Theorem (Lemma \ref{LCT}), for any $\delta > 0$   there exists $N \in \mathbb{N}_+$ such that for all $n > N$, the following inequality holds
 \begin{equation*}
 	\left| \int_0^T dt \iint_{\R^3_x \times \R^3_v} \left[ \left(1 + \frac{1}{n} \int_{\R^3_v} f^n \, dv \right)^{-1} (\mathcal{R}_3)^{n, k} - (\mathcal{R}_3)^k \right] \varphi \mathbf{1}_{(N_L^n \cup N_L)^c} \, dx dv \right| < \frac{\delta}{3}.
 \end{equation*} 
 Given that $\text{meas}(N_L) \le \frac{C}{L}$, and invoking the  equi-integrability of the term $\int_0^T dt \int_{\mathbb{R}^3_v} \big(1 + \frac{1}{n} \int_{ {\R}^3} f^n \, dv \big)^{-1}  (\mathcal{R}_3) ^{n } \varphi   dv$ on $B_R(x)$, we can choose $L > 0$ sufficiently large such that 
 \begin{align*}
 &\sup_{n \ge 1} 	\int_0^T dt \iint_{\mathbb{R}^3_x \times \mathbb{R}^3_v}  \left(1 + \frac{1}{n} \int_{\mathbb{R}^3_v} f^n \, dv \right)^{-1} (\mathcal{R}_3)^{n, k} \varphi \mathbf{1}_{(N_L^n)^c \cap N_L} \, dx dv \\
 	\leq & \sup_{n \ge 1}  \int_0^T dt \iint_{\mathbb{R}^3_x \times \mathbb{R}^3_v} \left(1 + \frac{1}{n} \int_{\mathbb{R}^3_v} f^n \, dv \right)^{-1} (\mathcal{R}_3)^{n } \varphi \mathbf{1}_{N_L} \, dx dv < \frac{\delta}{3}.
 \end{align*}
 Similarly, we have  
 \begin{equation*}
\sup_{n \ge 1}  \int_0^T dt \iint_{\mathbb{R}^3_x \times \mathbb{R}^3_v}  (\mathcal{R}_3)^k \varphi \mathbf{1}_{(N_L)^c \cap N_L^n} \, dx dv < \frac{\delta}{3}.
 \end{equation*}
 
 Therefore, for any $\delta > 0$, there exists $N \in \mathbb{N}_+$ and $L > 0$ such that for $n > N$, we have
 \begin{align*}
 	&\Bigg| \int_0^T dt \iint_{\mathbb{R}^3_x \times \mathbb{R}^3_v}  \left(1 + \frac{1}{n} \int_{\mathbb{R}^3_v} f^n \, dv \right)^{-1} (\mathcal{R}_3)^{n, k} \varphi \mathbf{1}_{(N_L^n)^c} \, dx dv - \int_0^T dt \iint_{\mathbb{R}^3_x \times \mathbb{R}^3_v} (\mathcal{R}_3)^k \varphi \mathbf{1}_{(N_L)^c} \, dx dv \Bigg| \\
 	\le & 	\left| \int_0^T dt \iint_{\R^3_x \times \R^3_v} \left[ \left(1 + \frac{1}{n} \int_{\R^3_v} f^n \, dv \right)^{-1} (\mathcal{R}_3)^{n, k} - (\mathcal{R}_3)^k \right] \varphi \mathbf{1}_{(N_L^n)^c \cap (N_L)^c} \, dx dv \right| \\
 	& +\sup_{n \ge 1}  \int_0^T dt \iint_{\mathbb{R}^3_x \times \mathbb{R}^3_v}  \left(1 + \frac{1}{n} \int_{\mathbb{R}^3_v} f^n \, dv \right)^{-1} (\mathcal{R}_3)^{n, k} \varphi \mathbf{1}_{(N_L^n)^c \cap N_L} \, dx dv \\
 	& + \sup_{n \ge 1}  \int_0^T dt \iint_{\mathbb{R}^3_x \times \mathbb{R}^3_v}  (\mathcal{R}_3)^k \varphi \mathbf{1}_{(N_L)^c \cap N_L^n} \, dx dv 
 	<    \delta.
 \end{align*}
This completes the proof  of Lemma \ref{L-R3nc}. 
\end{proof}
Having established the convergence of the non-singular part in Lemma \ref{L-R3nc}, we now turn to the singular part. We begin by presenting an estimate that plays a crucial role in proving the uniform convergence of the singular integrals.

\begin{lemma}[Uniform smallness estimate for small angular deflections $\theta$]\label{LBep}
Let $0 \leq \gamma \leq 1$, $\nu \in [0, 1/2]$ defined in \eqref{nu}, and $R > 0$. For the collision kernel $B_k$ defined in \eqref{Bep}, the following estimate holds
\begin{equation}\label{Hnu}
	\iint_{B_R(v)\times \S ^2} B_k ( v-v_*, \sigma )  \left( g(v') - g(v) \right)^2   \, d\sigma dv \le  \frac{C}{k} (1+|v_*|)^{1+\gamma+\nu} \|g\|^2_{{H^{\nu/2}} {(B_R (v) )}}
\end{equation}
for all $k \geq 1$.
\end{lemma}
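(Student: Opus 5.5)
The plan is to follow the strategy sketched in the introduction: parametrize the sphere by the displacement $h = v'-v$ and compare the angular integral with the Gagliardo seminorm of $H^{\nu/2}$.

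\textbf{Step 1: decomposition of the displacement.} Fix $v_*$ and $v\in B_R(v)$, and write $u=v-v_*$. Decompose
\[
h=h_1+h_2,\qquad h_1=-u\sin^2(\tfrac{\theta}{2}),\qquad h_2=|u|\sin(\tfrac{\theta}{2})\cos(\tfrac{\theta}{2})\,\omega,\qquad \omega\perp u .
\]
On the support of $B_k$ we have $0\le\theta\le 1/k$. There $|h|=|u|\sin(\theta/2)\le |u|/(2k)$, and $|h_2|\approx |u|\theta/2$. Parametrize $\sigma$ by $h_2$, which ranges over the disc in the plane $u^\perp$. The change of variables $d\sigma = \frac{4}{|u|^2}\bigl(1-\frac{4|h_2|^2}{|u|^2}\bigr)^{-1/2}dh_2$ has a Jacobian factor bounded by a constant for $\theta\le\pi/2$. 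Combined with $b(\cos\theta)\sim K\theta^{-2-\nu}$ and $\theta\sim 2|h_2|/|u|$, this gives
\[
B_k(u,\sigma)\,d\sigma \lesssim |u|^{\gamma}\,\frac{|u|^{2+\nu}}{|h_2|^{2+\nu}}\,\frac{dh_2}{|u|^2}\,\mathbf 1_{|h_2|\lesssim |u|/k}
= \frac{|u|^{\gamma+\nu}}{|h_2|^{2+\nu}}\,\mathbf 1_{|h_2|\lesssim |u|/k}\,dh_2 .
\]

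\textbf{Step 2: from the two-dimensional to the three-dimensional kernel (main obstacle).} The difficulty is that $h_2$ lives in a two-dimensional plane, whereas the Gagliardo seminorm integrates over $h\in\R^3$ with weight $|h|^{-3-\nu}$. Two facts must be reconciled:
\begin{itemize}
\item the map $h_2\mapsto h$ is a graph over $u^\perp$, since $h_1$ has size $|h_2|^2/|u|$;
\item the estimate should still integrate $v$ over $B_R(v)$.
\end{itemize}
I would first try the standard trick of averaging the squared difference over an intermediate point. For a ball $D$ of radius $\sim|h|$ around $v+h/2$,
\[
(g(v+h)-g(v))^2\le \frac{2}{|D|}\int_D\Bigl[(g(v+h)-g(w))^2+(g(w)-g(v))^2\Bigr]dw .
\]
After this, integrate in $v$ and $h_2$. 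The extra averaging over $w$ supplies the missing dimension, so the integrand becomes $|z|^{-3-\nu}(g(v+z)-g(v))^2$ with $|z|\lesssim |h|\lesssim |u|/k$. This is how I expect to reach the bound stated in the introduction,
\[
\int_{B_R}\!\int_{\S^2}B_k\,(g'-g)^2\lesssim |u|_{\max}^{\gamma+\nu}\int_{B_R}dv\int_{|z|\le \sqrt2 r_k}\frac{(g(v+z)-g(v))^2}{|z|^{3+\nu}}dz .
\]
The remaining factor $|u|^{1}/k$ comes from the radial thickness of the region. For $v\in B_R$ we have $|u|\le R+|v_*|$, which produces the weight $(1+|v_*|)^{1+\gamma+\nu}$.

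Alternatively, one could try Fourier variables through the Plancherel identity \eqref{Plancherel} with $f_*=\delta_{v_*}$. There the integrand is $\int b_k\,|e^{i\xi^-\cdot\,\cdot}-1|^2\,|\hat g|^2$, and it is bounded by $\int_0^{1/k}\theta^{-1-\nu}\min(|\xi|^2|u|^2\theta^2,\,4)\,d\theta\lesssim |u|^\nu|\xi|^\nu$. However, this gives smallness only after restricting to the high frequencies, so I would keep it as a backup.

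\textbf{Step 3: extracting the factor $1/k$ and localizing.} Finally, handle the localization to $B_R$ by extending $g$ with a cutoff, or by restricting to the $z$ with $v+z\in B_R$. Then apply the equivalence
\[
\|f\|_{H^{s}}^2\approx \|f\|_{L^2}^2+\iint \frac{(f(v')-f(v))^2}{|v'-v|^{3+2s}}\,\mathbf 1_{|v-v'|\le1}
\]
from \cite{GS-classical-2009}, with $s=\nu/2$.

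The $1/k$ gain in \eqref{Hnu} must come from the restriction $\theta\le 1/k$. Concretely, I would write the angular weight as $\theta^{-1-\nu}=\theta\cdot\theta^{-2-\nu}$ and bound the extra factor $\theta\le 1/k$, trading it against the $(1+|v_*|)$ factor in the weight. This is the step whose bookkeeping I am least sure of, together with the dimension-raising argument in Step 2.
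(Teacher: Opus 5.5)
Your Steps 1 and 2 are essentially sound. (The Jacobian $1/\cos\theta$ is bounded because $\theta\le 1/k\le 1$ on the support of $B_k$, not for all $\theta\le\pi/2$.) The averaging trick is also the right way to pass from the surface integral in $h_2$ to a Gagliardo integral.

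Carried out carefully, though, it turns the two-dimensional kernel into exactly the three-dimensional one and leaves nothing over. We have $|D|^{-1}\sim|h|^{-3}$, and for fixed $z=w-v$ the admissible $h_2$ satisfy $|h_2|\gtrsim|z|$, so $\int_{|h_2|\gtrsim|z|}|h_2|^{-2-\nu}|h|^{-3}\,dh_2\sim|z|^{-3-\nu}$. There is no ``radial thickness'' factor $|u|/k$. What Steps 1--2 actually give is
\[
\iint_{B_R\times\S^2}B_k\,(g'-g)^2\,d\sigma\,dv\lesssim (R+|v_*|)^{\gamma+\nu}\iint_{|z|\lesssim (R+|v_*|)/k}\frac{(g(v+z)-g(v))^2}{|z|^{3+\nu}}\,dz\,dv ,
\]
with $v$ and $v+z$ in a ball of radius $\sim R+(R+|v_*|)/k$. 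Since $v'$ leaves $B_R$ when $|v_*|$ is large, $\|g\|_{H^{\nu/2}(B_R)}$ alone cannot control the left side, and cutting $g$ off changes the left side.

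The genuine gap is Step 3. Writing $\theta^{-1-\nu}=\theta\cdot\theta^{-2-\nu}$ and using $\theta\le 1/k$ leaves the kernel $\theta^{-2-\nu}$. That kernel has order $1+\nu$ and is controlled by $\|g\|^2_{H^{(1+\nu)/2}}$, not by $\|g\|^2_{H^{\nu/2}}$.

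No bookkeeping can repair this, because \eqref{Hnu} with $C$ independent of $k$ is false. Your own Fourier heuristic shows why. For $\nu>0$, $\int_0^{1/k}\theta^{-1-\nu}\min(|u|^2|\xi|^2\theta^2,4)\,d\theta$ is comparable to $(|u||\xi|)^\nu$, uniformly in $k$, once $|u||\xi|\ge 4k$. So the truncation gives smallness only at low frequencies $|\xi|\lesssim k/|u|$, not at high ones as you wrote.

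Concretely, take $v_*=0$ and $g(v)=\chi(v-v_0)\cos(Ne\cdot v)$, with $|v_0|=R/2$, $e\perp v_0$, $\chi$ supported in a small ball, and $N\gg k$. For $\theta\in(C/(NR),1/k)$ the displacement $|h|\approx|u|\theta/2\gg 1/N$ decorrelates $g(v')$ from $g(v)$ after averaging in the azimuth. Hence the left side of \eqref{Hnu} is
\[
\gtrsim\|g\|^2_{L^2}\int_{C/(NR)}^{1/k}\theta^{-1-\nu}\,d\theta\sim N^\nu\|g\|^2_{L^2}\sim\|g\|^2_{H^{\nu/2}},
\]
with constants independent of $k$. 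For $\nu=0$ it is even $\gtrsim\log(NR/(Ck))\,\|g\|^2_{L^2}$.

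You will not find the missing idea in the paper's proof, which gets its $1/k$ through the same illegitimate gain. After the same change of variables it bounds $|h|^{3+\nu}/|h_2|^{2+\nu}\le C|u|/k$. It then replaces the surface integral $\int_{|h_2|<r_k}\frac{(g(v+h)-g(v))^2}{|h|^{3+\nu}}\,dh_2$ by the volume integral $\int_{|h|<\sqrt{2}r_k}\frac{(g(v+h)-g(v))^2}{|h|^{3+\nu}}\,dh$ with the same kernel. Your averaging argument shows that this replacement costs a factor $|h|^{-1}$, which cancels the gain. After integrating in $v$, the surface integral behaves like an $\dot H^{(1+\nu)/2}$ seminorm in the directions transverse to $u$.

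What is provable along your lines:
\begin{itemize}
\item the displayed bound, without smallness in $k$;
\item smallness with a stronger norm, e.g.\ $\frac{C}{k}(R+|v_*|)^{1+\gamma+\nu}\|g\|^2_{H^{(1+\nu)/2}}$, which is where the weight in \eqref{Hnu} naturally belongs;
\item more generally, $Ck^{\nu-2s}(R+|v_*|)^{\gamma+2s}\|g\|^2_{H^s}$ for $\nu/2<s<1$.
\end{itemize}
The last two hold with the norms taken on the enlarged ball. With only $H^{\nu/2}$ control, dominated convergence shows the left side tends to $0$ as $k\to\infty$ for each fixed $g$, but not uniformly on bounded subsets of $H^{\nu/2}$.
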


\begin{proof}  
 We fix $( v ,v_*) \in  \R^3_v \times \R^3_{v_*}$ and denote the relative velocity by $u = v - v_*$. The scattering angle $\theta \in (0, 1/k ]$ is defined by the relation $  u \cdot \sigma =  \cos\theta |u| $.
 Recalling the post-collisional velocity formula $	v' = \frac{v+v_*}{2} + \frac{|v-v_*|}{2}\sigma $, it follows immediately that the velocity change $h:= v'-v$ can be expressed as
 \begin{equation*}
  h = \frac{v_*-v}{2} + \frac{|v-v_*|}{2}\sigma = \frac{|u|}{2}\left( \sigma - \frac{u}{|u|} \right) .
 \end{equation*}
 From this representation, we obtain $ |h| = |u|\sin\left(\frac{\theta}{ 2} \right) $. We now turn to a more detailed decomposition of $h$. To this end, we decompose the unit vector   $\sigma$ into its components parallel and orthogonal to $u$, namely,
 \begin{equation*}
 	\sigma = \cos\theta \frac{u}{|u|} + \sin\theta \,  \omega,
 \end{equation*}
where $\omega \in \S^2$ satisfies $\omega \perp u$.  Substituting this decomposition into the expression for $h$,  we obtain 
 \begin{equation*}
 	h =   \frac{|u|}{2}\left( (\cos\theta - 1)\frac{u}{|u|} + \sin\theta \, \omega \right).
 \end{equation*}
 Accordingly, it is natural to decompose   $h$ into a component parallel $u$ to and a component perpendicular to  $u$:
 $$
 h=h_1+h_2,
 $$
 where
$$
	h_1  = \frac{|u|}{2}(\cos\theta - 1)\frac{u}{|u|} = -u    \sin^2\left(\frac{\theta}{2}\right)     , 
$$ 
and
$$
	h_2  = \frac{|u|}{2}\sin\theta \, \omega = |u|\sin\left(\frac{\theta}{2}\right)\cos \left(\frac{\theta}{2}\right)  \, \omega.
$$ 
 In particular, this decomposition implies that  $|h_2|  =\frac{1}{2} |u|  \sin  \theta $. 
 Moreover, since  $\theta \in [0, 1 /k ]$, we can use the inequality $\sin \theta \leq \theta$ to conclude that 
 $|h_2| \leq  \frac{1}{2k}|u| $. Next, we introduce the radial variable   $ r := |h_2|$. In the plane orthogonal to $u$, the corresponding measure element takes the form $dh_2 = r \, dr \, d\phi$, where $\phi$ denotes the azimuthal angle. At the same time, the above relations allow us to rewrite the components of $\sigma $ in terms of $h_1$ and $h_2$ as 
 \begin{equation*}
 	\sin\theta \, \omega = \frac{2}{|u|} h_2, \quad \cos\theta \frac{u}{|u|} = \frac{2}{|u|}h_1 + \frac{u}{|u|}.
 \end{equation*}

 We now proceed to the change of variables from the spherical measure $d\sigma$  to the perpendicular velocity variable  $h_2$. Recalling that 
 \begin{align*}
 d\sigma = \sin \theta \, d \theta \, d \phi  \quad \text{ and } r = \frac{|u|}{2}\sin\theta,
 \end{align*}
 we deduce that
  $$
 dr = \frac{|u|}{2}\cos\theta \, d\theta ,\quad d\theta = \frac{2}{|u|\cos\theta} dr.
 $$
 Furthermore, using the identities $ \sin\theta = \frac{2r}{|u|}, $ and $\cos\theta = \sqrt{1 - \frac{4r^2}{|u|^2}}$, 
 we can  rewrite the surface measure $d\sigma$ in terms of $r$, and obtain
 \begin{align*}
  d\sigma = \sin \theta \, d \theta \, d \phi = \frac{4}{|u|^2} \frac{ r}{\sqrt{1 - \frac{4r^2}{|u|^2}}  } d r d \phi .
 \end{align*}
Finally, observing that $d h_2 = r dr d\phi $, we conclude that the Jacobian of the transformation is given by  
 \begin{equation*}
 	d\sigma = \frac{4}{|u|^2} \frac{1}{\sqrt{1 - \frac{4}{|u|^2}r^2 }} \, dh_2.
 \end{equation*}
 
 Under the singular kernel assumption
 \begin{align*}
  b(\cos\theta) \sim K \theta^{-2-\nu} , \quad \text{as }   \theta \to 0^+ ,
 \end{align*}
we note that, for grazing collisions,   $|h_2| \approx \frac{1}{2}|u|\theta$.
  Consequently, $as  \theta \to 0^+$, the kernel admits the asymptotic representation
 \begin{equation*}
 	b(\cos\theta) \sim K \theta^{-2-\nu} \sim \left( \frac{|h_2|}{|u|} \right)^{-2-\nu} .
 \end{equation*}
 Let $r_k := \frac{1}{2}|u|\sin\frac{1 }{k}$. We now estimate the collision integral associated with $B_k$.
 Using the above asymptotic behavior together with the change of variables established previously, we obtain  
 \begin{align*}
 	& 	\int_{  \S ^2} B_k ( v-v_*, \sigma )  \left( g(v') - g(v) \right)^2   \, d\sigma \\
 	\leq &C_{K,\nu} \int_{|h_2| < r_k} |u|^\gamma \frac{(g(v+h) - g(v))^2}{|h_2|^{2+\nu}} \frac{|u|^{2+\nu}}{|u|^2} \frac{1}{\sqrt{1 - \frac{4r^2}{|u|^2}}} \, dh_2 \\
 	\leq& C'_{K,\nu} \int_{|h_2| < r_k} |u|^{\gamma+\nu} \frac{(g(v+h) - g(v))^2}{|h|^{3+\nu}} \cdot \frac{|h|^{3+\nu}}{|h_2|^{2+\nu}} \, dh_2  .
 \end{align*} 
 Here, we observe the following geometric bound for the ratio of the norms
 \begin{equation*}
 	\frac{|h|^{3+\nu}}{|h_2|^{2+\nu}} = \frac{\left( |u|\sin\frac{\theta}{2} \right)^{3+\nu}}{\left( |u|\sin\frac{\theta}{2} \cos\frac{\theta}{2} \right)^{2+\nu}} \leq C |u| \sin\frac{\theta}{2} \leq C |u|\frac{1 }{k},
 \end{equation*}
where we have used the restriction $\theta \in (0, 1/k] $. Substituting this bound into the previous estimate yields
 \begin{align*}
 	&\int_{  \S ^2} B_k ( v-v_*, \sigma )  \left( g(v') - g(v) \right)^2   \, d\sigma  \\
 	\leq& \frac{1 }{k} C''_{K,\nu} |u|^{1+\gamma+\nu} \int_{|h_2| < r_k} \frac{(g(v+h) - g(v))^2}{|h|^{3+\nu}} \, dh_2   \\
 	\leq &\frac{1 }{k} \tilde{C}_{K,\nu} |u|^{1+\gamma+\nu} \int_{|h| < \sqrt{2}r_k} \frac{(g(v+h) - g(v))^2}{|h|^{3+\nu}} \, dh.
 \end{align*}
Given that  $|v|\leq R $, we have the estimate $|u|^{1+\gamma+\nu} \leq C_R ( 1+ |v_*|^{1+\gamma + \nu })$. Consequently,
 \begin{align*}
 &\iint_{B_R(v) \times   \S ^2} B_k ( v-v_*, \sigma )  \left( g(v') - g(v) \right)^2   \, d\sigma dv \\
 \leq & \frac{1 }{k} C_{K,\nu,R} \left( 1+ |v_*|^{1+\gamma + \nu } \right) \int_{ B_R(v) } dv\int_{ |h| \leq \sqrt{2}r_k  }\frac{(g(v+h) - g(v))^2}{|h|^{3+\nu}} \, dh .
 \end{align*}
Finally, recalling from Section 7.3 of \cite{GS-classical-2009} that the Sobolev norm admits the equivalent characterization  
\begin{align*}
\|f\|_{H^{s}(\mathbb{R}^3_v)}^2 \approx \|f\|_{L^2(\mathbb{R}^3_v)}^2 + \iint_{\mathbb{R}^3_v \times \mathbb{R}^3_{v'}} \frac{(f(v') - f(v))^2}{|v' - v |^{3+2s }} \mathbf{1}_{|v - v'| \le 1} \,  dvdv',
\end{align*}
we conclude that
 \begin{align*}
   \iint_{B_R(v) \times   \S ^2} B_k ( v-v_*, \sigma )  \left( g(v') - g(v) \right)^2   \, d\sigma dv  
  \leq   \frac{1 }{k} C_{K,\nu,R} \left( 1+ |v_*|^{1+\gamma + \nu } \right) \|g\|_{H^{\nu/2}(B_R (v))}^2 .
 \end{align*}
 This completes the proof  of Lemma \ref{LBep}. 
\end{proof}
Finally, combining the convergence of the non-singular part with the estimates for the singular part, we obtain the convergence of $( \mathcal{R}_3 )^n$.
 
\begin{lemma}\label{l-cnvR3}
	For any function $\varphi ( t,x , v) \in L^\infty \left( [0,T] \times \R^3_x ; C^{\alpha}_c (\R ^3_v )\right)$ with compact  support in  $[0,T] \times B_R(x) \times  B_R (v)$, we have
	\begin{align}\label{cnv-R3}
		\lim_{n \to \infty}  \int_{0}^{T} dt \iint_{ \mathbb{R}^3_x \times \mathbb{R}^3_v } \left( 1+ \frac{1}{n} \int_{\R^3_v} f^n d v \right)^{-1}  ( \mathcal{R}_3 )^n \varphi d x d v =  \int_{0}^{T} dt \iint_{ \mathbb{R}^3_x \times \mathbb{R}^3_v } ( \mathcal{R}_3 )  \varphi d x d v .
	\end{align}
\end{lemma}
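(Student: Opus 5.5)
The plan is to split the kernel at angle $1/k$ as in \eqref{Bep}, writing $(\mathcal{R}_3)^n=(\mathcal{R}_3)^{n,k}+(\mathcal{R}_3)^n_k$ and $(\mathcal{R}_3)=(\mathcal{R}_3)^k+(\mathcal{R}_3)_k$. We then argue by a three-epsilon scheme: let $n\to\infty$ for fixed $k$ and $L$, then send $L\to\infty$, then $k\to\infty$. Throughout, set $N(f^n)=1+\frac1n\int_{\R^3_v} f^n\,dv$.

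\textbf{Non-singular part.} For fixed $k$, Lemma \ref{L-R3nc} gives convergence of the $(\mathcal{R}_3)^{n,k}$-part on $(N^n_L)^c$ (respectively $(N_L)^c$). The complementary pieces on $N^n_L$ and $N_L$ are small uniformly in $n$. Indeed, their measure is at most $C/L$, so this follows from the equi-integrability of $\int_0^T dt\int_{\R^3_v} N(f^n)^{-1}(\mathcal{R}_3)^n\varphi\,dv$ on $B_R(x)$ (Lemma \ref{equi-R3}). For the limit term $(\mathcal{R}_3)^k\le(\mathcal{R}_3)\in L^1$ (Lemma \ref{p-R3}), it follows from absolute continuity of the integral. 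Since $0\le(\mathcal{R}_3)^{n,k}\le(\mathcal{R}_3)^n$, it is enough to control the full terms here.

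\textbf{Singular part.} We must show
\[
\lim_{k\to\infty}\sup_n\int_0^T dt\iint_{\R^3_x\times\R^3_v}\Big(N(f^n)^{-1}(\mathcal{R}_3)^n_k+(\mathcal{R}_3)_k\Big)\varphi\,dx\,dv=0 .
\]
For the limit term this is immediate from dominated convergence, because $(\mathcal{R}_3)_k\downarrow 0$ pointwise and is dominated by $(\mathcal{R}_3)\in L^1$. The same argument handles each of finitely many $n\le N$.

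For large $n$, we first use \eqref{fs} to bound
\[
(\mathcal{R}_3)^n_k\le \tfrac12\int_{\R^3_{v_*}} f^n_*\int_{\S^2} B_{n,k}\,\big(\sqrt{{f^n}'}-\sqrt{f^n}\big)^2\,d\sigma\,dv_* .
\]
We then apply Lemma \ref{LBep} with $g=\sqrt{f^n}$, and $\chi_R$-localized if needed. Since $B_{n,k}\le B_k$, this gives, for a.e. $(t,x)$,
\[
\int_{B_R(v)}(\mathcal{R}_3)^n_k\,dv\le \frac Ck\int_{\R^3_{v_*}} f^n_*(1+|v_*|)^{1+\gamma+\nu}\,dv_*\;\|\sqrt{f^n}\|^2_{H^{\nu/2}(B_R(v))}.
\]
Because $1+\gamma+\nu\le 5/2$, the first factor is not bounded by the energy alone. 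To handle this, split $v_*$ at $|v_*|\le\Lambda$, where the factor is at most $C\Lambda^{1/2}\int f^n(1+|v_*|^2)\,dv_*$. For $|v_*|>\Lambda$, use instead the crude bound $(\mathcal{R}_3)\lesssim f^n_*(f^n+{f^n}')$ together with the cancellation structure, or more simply the averaged estimate \eqref{R3n-tv} restricted to large $|v_*|$. This tail point is one I expect to need care; if it fails, one absorbs it via $\Lambda^{-(1-\gamma-\nu)/2}$-type decay using $|v-v_*|\approx|v_*|$.

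\textbf{Bounding the $H^{\nu/2}$ norm.} The norm is bounded by the Corollary after Lemma \ref{L-FZn}. Since $Z_n(1/|\xi|)\gtrsim|\xi|^\nu$ uniformly for large $n$, we get
\[
\|\sqrt{f^n}\|^2_{H^{\nu/2}}\lesssim \|f^n\|_{L^1}+\frac{1}{\|f^n\|_{L^1(B_R(v))}}\Big(\int_{\R^3_v} e^n\,dv+C\|f^n\|^2_{L^1_2}\Big).
\]
The dangerous factor is $\|f^n\|_{L^1(B_R(v))}^{-1}$. Following the decomposition of Lemma \ref{v-regular}, we split $[0,T]\times B_R(x)$ into three regions.
\begin{enumerate}
\item On $W_\epsilon\setminus(U_\delta\cup U^n_L)$, Egorov gives $\int_{B_R(v)} f^n\,dv\ge\epsilon/2$, and $M^n\le L$. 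The integral is therefore $\le C_{\epsilon,L,\Lambda}/k$, which vanishes as $k\to\infty$ uniformly in $n>N$.
\item On $U_\delta\cup U^n_L$, which has small measure, we use equi-integrability (Lemma \ref{equi-R3}) since $(\mathcal{R}_3)^n_k\le(\mathcal{R}_3)^n$.
\item On the complement of $W_\epsilon$, the key point is that the bound should not blow up. There the mass $\int_{B_R(v)} f^n\,dv\le2\epsilon$, and we avoid Sobolev norms altogether. We use instead \eqref{fs} in the form $(\mathcal{R}_3)^n\lesssim(f^n+{f^n}')f^n_*$-type control combined with \eqref{R3n-tv}, where the $f^n f^n_*|v-v_*|^\gamma$ term carries a factor $\int_{B_R(v)} f^n\le 2\epsilon$ after localization. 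This yields a bound $C(\epsilon+\text{small})$ uniformly in $n$ and $k$.
\end{enumerate}
Letting $k\to\infty$, then $\delta\to0$ and $L\to\infty$, then $\epsilon\to0$ gives the required limit.

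\textbf{Conclusion and main obstacle.} Combining the two parts: the difference between the left and right sides of \eqref{cnv-R3} is bounded by the non-singular discrepancy, which tends to $0$ as $n\to\infty$, plus the $N_L$ pieces, which are $O(\text{small in }L)$, plus the singular tails, which are $O(\text{small in }k)$ uniformly in $n$. This proves \eqref{cnv-R3}. I expect the main obstacle to be the third region above. Controlling the near-grazing part where the local mass degenerates requires getting a genuinely small factor from $\int_{B_R} f^n$ in the averaged estimate, rather than merely a bounded one.
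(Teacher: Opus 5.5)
Your plan follows the paper's proof step for step: the angular split \eqref{Bep}; Lemma \ref{L-R3nc} plus equi-integrability (Lemma \ref{equi-R3}) on $N^n_L$ for the non-grazing part; and, for $(\mathcal{R}_3)^n_k$, the bound \eqref{fs}, Lemma \ref{LBep}, the control from \eqref{F-Zn}, and the three regions $W_\epsilon\setminus(U_\delta\cup U^n_L)$, $U_\delta\cup U^n_L$, $([0,T]\times B_R(x))\setminus W_\epsilon$. Your $\Lambda$-split in $v_*$ is unnecessary. Since $\gamma=1-\frac{4}{s-1}$ and $\nu=\frac{2}{s-1}$, you have $1+\gamma+\nu=2-\frac{2}{s-1}\le 2$, so the weight in \eqref{Hnu} is paid for by the energy. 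Your fallback would have failed anyway: a bound $(\mathcal{R}_3)^n_k\lesssim(f^n+{f^n}')\int f^n_*\,dv_*$ costs $\int_{\S^2}B_{n,k}\,d\sigma$, which blows up as $n\to\infty$.

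The genuine gap is the one you flag, on $([0,T]\times B_R(x))\setminus W_\epsilon$. Lemma \ref{Lmm-ATV} gives only $C\sup_t\int_{\R^3_v}f^n\,dv\int_{\R^3_{v_*}}f^n_*(1+|v_*|^2)\,dv_*+\frac12\int_0^T\!\int_{\R^3_v}e^n\,dv$. Two things go wrong:
\begin{itemize}
\item The mass factor is the total mass, not $\int_{B_R(v)}f^n\,dv$, because the cancellation term $\iint f^nf^n_*S^n$ is not localized by $\varphi$.
\item The dissipation term carries no factor $\epsilon$ and is merely bounded in $L^1_{t,x}$. Meanwhile $W_\epsilon^c$ only shrinks to $\{\int_{B_R(v)}f\,dv=0\}$, which can have positive measure.
\end{itemize}
So your ``$C(\epsilon+\text{small})$'' is unsupported. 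The paper makes exactly this step, asserting $2\epsilon C+\frac12\int_0^T\!\iint e^n\,dx\,dv\to0$ as $\epsilon\to0$. That cannot hold, since the second term is independent of $\epsilon$, so the paper does not supply the missing idea.

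The first region is not secured either, because the $1/k$ gain in \eqref{Hnu} cannot hold with only $\|g\|_{H^{\nu/2}}$ on the right. Take $v_*=0$ and $g=\chi(v)\cos(\xi\cdot v)$ with $|\xi|\gg k$. For $\theta\in(C/|\xi|,1/k)$ the phase $\xi\cdot(v'-v)$ oscillates fully, so after averaging in $v$ the left side of \eqref{Hnu} is $\gtrsim\int_{C/|\xi|}^{1/k}\theta^{-1-\nu}\,d\theta\gtrsim|\xi|^{\nu}$ (and $\gtrsim\log(|\xi|/k)$ when $\nu=0$). The right side is only $\lesssim k^{-1}|\xi|^{\nu}$. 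In the proof of Lemma \ref{LBep}, the factor $1/k$ comes from replacing a two-dimensional $dh_2$-integral by a three-dimensional $dh$-integral. That step is not legitimate and hides the loss of half a derivative. Similarly, $Z_n(1/|\xi|)$ saturates once $|\xi|\gtrsim n$, so your claim $Z_n(1/|\xi|)\gtrsim|\xi|^\nu$ uniformly in $n$ fails, and \eqref{F-Zn} controls only a truncated $H^{\nu/2}$ norm. Hence the entropy dissipation can show that the grazing contribution $(\mathcal{R}_3)^n_k$ is bounded, but not that it becomes small as $k\to\infty$ uniformly in $n$. That would require uniform smallness of the high-frequency part of $\sqrt{f^n}$ in the $H^{\nu/2}$ scale, together with equi-integrability of the dissipation in $(t,x)$. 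This is precisely the absence of concentration in grazing collisions that the Alexandre--Villani defect measure accounts for. As it stands, neither your argument nor the paper's establishes \eqref{cnv-R3}.
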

\begin{proof} 
Analogous to the estimate in \eqref{R3n-1}, we bound $( \mathcal{R}_3 )_k^n$ by
	 \begin{align*}
		\int_{\mathbb{R}^3_v}  ( \mathcal{R}_3 )_k^n  \varphi \, dv 
		 \leq \frac{1}{2} \iiint_{\mathbb{R}^3_v \times \mathbb{R}^3_{v_*} \times \S^2} B_{n, k} f_*^n \left( \sqrt{f^{n \prime}} - \sqrt{f^n} \right)^2 \varphi \, dv \, dv_* \, d\sigma  .
	\end{align*} 
Recalling the parameters $\gamma$  and $\nu$  defined in  \eqref{B-gamma} and \eqref{nu},  we find that
 \begin{align*}
 	1+ \gamma + \nu = 1 + 1 - \frac{4}{s-1} + \frac{2}{s-1} = 2 -  \frac{2}{s-1} \leq 2.
 \end{align*}
 Combining this inequality with Lemma \ref{LBep} leads to 
 \begin{align*}
 	\int_{\mathbb{R}^3_v}  ( \mathcal{R}_3 )_k^n  \varphi \, dv 
 	&\leq \frac{1}{2}\int_{ \mathbb{R}^3_{v_*}}f_*^n dv_* \iint_{ B_R(v)   \times \S^2} B_{n, k}  \left( \sqrt{f^{n \prime}} - \sqrt{f^n} \right)^2 \varphi \, dv   d\sigma \\ 
 	&\leq \frac{1 }{k}   C_{k, v, R} \int_{\mathbb{R}^3_{v_*}} f_*^n (1 + |v_*|^{1+\gamma + \nu }) \, dv_* \cdot \| \sqrt{f^n} \|^2_{H^{\nu/2}(B_R (v))} \\
 		&\leq \frac{1 }{k}   C_{k, v, R} \int_{\mathbb{R}^3_{v_*}} f_*^n (1 + |v_*|^{2}) \, dv_* \cdot \| \sqrt{f^n} \|^2_{H^{\nu/2}(B_R (v))} .
 \end{align*}
Moreover, as shown in \cite{ADVW-bound-2000}, estimate \eqref{F-Zn} provides a fractional Sobolev control in the velocity variable, which implies that 
\begin{align*}
\| \sqrt{f^n} \|^2_{H^{\nu/2}(B_R (v))}	\leq  \frac{  C_0  J^2   (2\pi )^3   }{  \|f ^n \|_{L^1( B_R(v)) } \left( r_0 /2 \right)^\gamma    }  \left\{   \int_{  \mathbb{R}^{3}_v } e^n  ( t,x,v) \,   dv   +   \left( C_{\nu }  + \frac{ C_1 }{ r ^2 _0 } \right)  \|f ^n\|^2_{L^1_2 ( \R^3_v ) }   \right\}.
\end{align*}

Recalling the definition in \eqref{W-epsion}, the set $W_\epsilon$ is given by  
\begin{align*} 
	W_\epsilon = \left\{ (t,x ) \in [0,T] \times B_R(x): \int_{B_R(v)} g^2  d v >\epsilon \right\},
\end{align*}
where $g(t,x,v)$ denotes the weak limit of  $\sqrt{f^n}$ in $L^2 ([0,T] \times B_R(x) \times B_R(x))$. Due to the strong convergence of $f^n$, we identify $g=\sqrt{f}$ almost everywhere in $[0,T] \times B_R(x) \times B_R(x)$. Furthermore, by Lemma \ref{v-regular}, there exists a Borel set $U_\delta$ with measure less than $\delta$ such that for sufficiently large $n \in \mathbb{N}_+$, the  lower bound $\int_{B_R(v) } f^n \, dv \geq \epsilon /2$ holds on  $W_\epsilon \setminus U_\delta$.  Accordingly, we can find an integer   $N \in \mathbb{N}$ such that for all $n>N$, we have
$ 
\|f ^n \|_{L^1( B_R(v))} \geq \epsilon/2 $   on  $ W_\epsilon  \setminus  U_\delta $  and $  \|f ^n \|_{L^1( B_R(v))} \leq 2\epsilon \ $  a.e. on $ ([0,T] \times B_R(x)) \setminus W_\epsilon$. 
  Hence, for any fixed $L>0$ and any $n > N$, the integral over the set $W_\epsilon  \setminus (U_\delta \cup U_L^n)$ satisfies
 \begin{align*} 
 &  \int_0^T dt 	\iint_{\R^3_x \times \mathbb{R}^3_v} ( \mathcal{R}_3 )_k^n  \varphi \mathbf{1}_{W_\epsilon  \setminus (U_\delta \cup U_L^n)} \, dx dv  \\
 \leq & \frac{1 }{ k}   C_{k, v, R}  \int_0^T dt \int_{   B_R(x)} \bigg(   \int_{\mathbb{R}^3_{v_*}} f_*^n (1 + |v_*|^{2}) \, dv_* \cdot \| \sqrt{f^n} \|^2_{H^{\nu/2}(B_R (v))}  \bigg) \mathbf{1}_{W_\epsilon  \setminus (U_\delta \cup U_L^n)}  \, dx \\
  \leq& \frac{L }{  k}   C_{k, v, R} \int_0^T dt  \int_{B_R(x)}  \| \sqrt{f^n} \|^2_{H^{\nu/2}(B_R (v))}\mathbf{1}_{W_\epsilon  \setminus (U_\delta \cup U_L^n)} dx \\
  \leq &  \frac{L }{\epsilon  k} \tilde{C} \bigg( \int_0^T dt 	\iint_{\R^3_x \times \mathbb{R}^3_v} e^n dxdv + \bigg( C_{\nu }  + \frac{ C_1 }{ r ^2 _0 } \bigg) \iint_{\R^3_x \times \mathbb{R}^3_v}  f ^n (1+|v|^2) dxdv  \bigg) 
   \to 0 \quad ( k \to \infty ).
 \end{align*}
Here $\tilde{C}=  C_{k, v, R}  C_0 J^2 (2\pi)^3 (r_0/2)^{-\gamma}$. 
The second inequality follows from the definition of the set $ (U^n_L)^c$, while the last step is derived from the uniform lower bound of $\int_{B_R(v) } f^n \, dv$ on $W_\epsilon \setminus U_\delta$ combined with the estimate \eqref{F-Zn}.

 Furthermore, invoking the measure bound  $\text{meas}(U_\delta \cup U_L^n )\leq \delta+\frac{C}{L } $ and the equi-integrability of  $\int_0^T dt \int_{\mathbb{R}^3_v} \big(1 + \frac{1}{n} \int_{ {\R}^3} f^n \, dv \big)^{-1}  (\mathcal{R}_3) ^{n } \varphi   dv$ on $B_R(x)$ estiblished in Lemma \ref{equi-R3}, we obtain that
  	 \begin{align*} 
  		&\sup_{n>N} \int_0^T dt 	\iint_{\R^3_x \times \mathbb{R}^3_v}   \bigg(1 + \frac{1}{n} \int_{ {\R}^3_v} f^n \, dv \bigg)^{-1}   ( \mathcal{R}_3 )_k^n  \varphi \mathbf{1}_{  (U_\delta \cup U_L^n)} \, dx dv \\  
  		\leq&  \sup_{n>N} \int_0^T dt 	\iint_{\R^3_x \times \mathbb{R}^3_v}    \bigg(1 + \frac{1}{n} \int_{ {\R}^3_v} f^n \, dv \bigg)^{-1}  ( \mathcal{R}_3 ) ^n  \varphi \mathbf{1}_{  (U_\delta \cup U_L^n)} \, dx dv \to 0 
  	\end{align*}
 as $\delta\to 0 $ and $L \rightarrow \infty$. Therefore, we conclude that
\begin{equation*} 
\lim_{k \to \infty} \sup_{n>N} \int_0^T dt 	\iint_{\R^3_x \times \mathbb{R}^3_v} \bigg(1 + \frac{1}{n} \int_{ {\R}^3_v} f^n \, dv \bigg)^{-1} ( \mathcal{R}_3 )_k^n  \varphi \mathbf{1}_{W_\epsilon } \, dx dv = 0. 
\end{equation*} 
Furthermore, invoking Lemma \ref{Lmm-ATV} yields the estimate 
  \begin{align*}
 & \sup_{n>N} \int_0^T dt 	\iint_{\R^3_x \times \mathbb{R}^3_v} ( \mathcal{R}_3 )_k^n  \varphi \mathbf{1}_{([0,T] \times B_R(x)) \setminus W_\epsilon} \, dx dv \\ 
 \leq &C_{\nu ,R}  \sup_{n>N} \sup_{ t \in [0, T]} \iint_{\R^3_x \times   \R ^3_{v_*}  } f^n_* ( 1+ |v_*|^2) dxdv_* \int_{ B_R(v) } f^n \mathbf{1}_{([0,T] \times B_R(x)) \setminus W_\epsilon}  dv   + \frac{1}{2} \int_{0}^{T} dt \int_{\mathbb{R}^{ 3} _v } e^n dv  \\
 	\leq &2 \epsilon C_{\nu ,R} \sup_{ t \in [0, T]}   \iint_{\R^3_x \times   \R ^3_{v_*}  } f^n_* ( 1+ |v_*|^2) dxdv_* + \frac{1}{2} \int_{0}^{T} dt \iint_{\R^3_x \times\mathbb{R}^{ 3} _v } e^n dx dv   \to 0 \quad ( \epsilon \to 0).
  \end{align*} 
Here, the last inequality follows from the bound $  \|f ^n \|_{L^1( B_R(v))} \leq 2\epsilon \ $, which is valid a.e. on $ ([0,T] \times B_R(x)) \setminus W_\epsilon$.  Combining these convergence results, we obtain
 \begin{align}\label{k-1}
 	\lim_{k \to \infty} \sup_{n>N} \int_0^T dt 	\iint_{\R^3_x \times \mathbb{R}^3_v} \bigg(1 + \frac{1}{n} \int_{ {\R}^3_v} f^n \, dv \bigg)^{-1} ( \mathcal{R}_3 )_k^n  \varphi   \, dx dv = 0.
 \end{align}
  On the other hand, for any fixed $n$ with $1\leq n \leq N$ and an arbitrary $\delta>0$, there exists a threshold $k_n>0$ such that for all $k \geq k_n$ 
  \begin{align*}
  	\int_0^T dt 	\iint_{\R^3_x \times \mathbb{R}^3_v} \bigg(1 + \frac{1}{n} \int_{ {\R}^3_v} f^n \, dv \bigg)^{-1} ( \mathcal{R}_3 )^n_k   \varphi   \, dx dv < \delta.
  \end{align*}
  By selecting a uniform threshold $\tilde{k}= \max \{ k_1,\cdots, k_N\}$, we ensure that the estimate holds simultaneously for all $1 \leq n \leq N$ whenever $k>\tilde{k}$. Consequently, given any $\delta>0$ and any $k>\tilde{k}$, we have 
  \begin{align*} 
    \int_0^T dt \iint_{\mathbb{R}^3_x \times \mathbb{R}^3_v} \bigg(1 + \frac{1}{n} \int_{ {\R}^3_v} f^n \, dv \bigg)^{-1} (\mathcal{R}_3)^n_k   \varphi  dx dv   < \delta, \quad \forall   1 \leq n \leq N. 
  \end{align*}
 This establishes the desired convergence
  \begin{align}\label{k-2}
  	\lim_{k \to \infty} \int_0^T dt \iint_{\mathbb{R}^3_x \times \mathbb{R}^3_v} \bigg(\bigg(1 + \frac{1}{n} \int_{ {\R}^3_v} f^n \, dv \bigg)^{-1}(\mathcal{R}_3)^n_k +( \mathcal{R}_3 )_k\bigg)    \varphi   dx dv = 0, \quad \forall   1 \leq n \leq N. 
  	\end{align} 
  	Thus, combining the convergence results \eqref{k-1} and \eqref{k-2}, for sufficiently large $L>0$, we establish the convergence 
 \begin{align}\label{R3-0}
  	\lim_{k \to \infty } \sup_{n} \int_0^T dt 	\iint_{\R^3_x \times \mathbb{R}^3_v}  \bigg( \bigg(1 + \frac{1}{n} \int_{ {\R}^3_v} f^n \, dv \bigg)^{-1}( \mathcal{R}_3 )_k^n   + ( \mathcal{R}_3 )_k \bigg)\varphi dxdv =0.
 \end{align}

 Furthermore, the measure bound $\text{meas}(N_L^n) \leq C/L$, combined with the equi-integrability of the sequence $	\int_{0}^{T} d t \int_{ \R^3_v } \big( 1+ \frac{1}{n} \int_{\R^3_v } f^n d v \big)^{-1} ( \mathcal{R}_3)^n   \varphi d v$ on $B_R(x)$, implies that
\begin{equation}\label{R3-2}
 \begin{aligned}
 & \sup_{n \ge 1} \int_0^T dt \iint_{\mathbb{R}^3 _x\times \mathbb{R}^3_v}  \bigg( 1+ \frac{1}{n} \int_{\R^3_v } f^n d v \bigg)^{-1}  ( \mathcal{R}_3) ^{n,k} \varphi\, \mathbf{1}_{N_L^n} \, dx \, dv \\
  \le &  \sup_{n \ge 1} \int_0^T dt \iint_{\mathbb{R}^3 _x\times \mathbb{R}^3_v} \bigg( 1+ \frac{1}{n} \int_{\R^3_v } f^n d v \bigg)^{-1} ( \mathcal{R}_3) ^n \varphi \, \mathbf{1}_{N_L^n} \, dx \, dv 
   \to 0 \quad (  L \to \infty).
 \end{aligned}
\end{equation}
Similarly, we have
\begin{align}\label{R3-3}
\lim_{L \to \infty} \int_0^T dt \iint_{\mathbb{R}^3 _x\times \mathbb{R}^3_v}   ( \mathcal{R}_3)^k \varphi \, \mathbf{1}_{N_L } \, dx \, dv =0.
\end{align}
By Lemma \ref{L-R3nc}, the convergence for the angular cutoff case satisfies
  \begin{align*}
  	\lim\limits_{n\to \infty} \int_0^T dt \iint_{\mathbb{R}^3_x \times \mathbb{R}^3_v }  \bigg(1 + \frac{1}{n} \int_{\mathbb{R}^3_v} f^n \, dv \bigg)^{-1}  (\mathcal{R}_3) ^{n, k} \varphi \mathbf{1}_{ (N_L^n)^c}  dx dv  \\
  = \int_0^T dt \iint_{\mathbb{R}^3_x \times \mathbb{R}^3_v}  (\mathcal{R}_3) ^k \varphi \mathbf{1}_{(N_L)^c} \, dx dv.
  \end{align*}
Combining this result with the convergences established in \eqref{R3-0}-\eqref{R3-3}, we obtain 
 \begin{align*}
&\bigg| \int_{0}^{T} dt \iint_{ \mathbb{R}^3_x \times \mathbb{R}^3_v } \bigg( 1+ \frac{1}{n} \int_{\R^3_v} f^n d v \bigg)^{-1}  ( \mathcal{R}_3 )^n \varphi d x d v - \int_{0}^{T} dt \iint_{ \mathbb{R}^3_x \times \mathbb{R}^3_v } ( \mathcal{R}_3 )  \varphi d x d v \bigg| \\
\leq & \bigg|  \int_0^T dt \iint_{\mathbb{R}^3_x \times \mathbb{R}^3_v }  \bigg(1 + \frac{1}{n} \int_{\mathbb{R}^3_v} f^n \, dv \bigg)^{-1}  (\mathcal{R}_3) ^{n, k} \varphi \mathbf{1}_{ (N_L^n)^c}  dx dv - \int_0^T dt \iint_{\mathbb{R}^3_x \times \mathbb{R}^3_v}  (\mathcal{R}_3) ^k \varphi \mathbf{1}_{(N_L)^c} \, dx dv  \bigg|  \\ 
&+ \sup_{n \ge 1} \int_0^T dt \iint_{\mathbb{R}^3_x \times \mathbb{R}^3_v} \bigg(  \bigg( 1+ \frac{1}{n} \int_{\R^3_v } f^n d v \bigg)^{-1}     ( \mathcal{R}_3) ^{n,k}  \mathbf{1}_{N_L^n}    +  ( \mathcal{R}_3 )^k   \mathbf{1}_{N_L} \bigg) \varphi  \, dx \, dv \\
& + \sup_{n \ge 1} \int_0^T dt 	\iint_{\R^3_x \times \mathbb{R}^3_v} \bigg(\bigg(1 + \frac{1}{n} \int_{ {\R}^3_v} f^n \, dv \bigg)^{-1} ( \mathcal{R}_3 )_k^n     + ( \mathcal{R}_3 )_k \bigg)\varphi  \, dx dv  \to 0  
 \end{align*} 
as $n \to \infty , k \to \infty$ and $L \to \infty$. This completes the proof of Lemma  \ref{l-cnvR3}.
\end{proof}

 \section{Existence and entropy inequality}\label{Sec-ee}
 This section is devoted to completing the proof of Theorem \ref{MainThm}. By Lemma \ref{Lemma 4.14}, the approximating equation \eqref{approximation} admits a unique distributional solution $f^n$.    Moreover, we have shown that the sequence $\{f^n\}_{n\ge1}$ converges strongly in $L^1([0,T] \times \R^3_x \times \R^3_v)$ to a limit function $f$.    Accordingly, the proof of Theorem \ref{MainThm} is divided into two parts. In the first part, we show that the limit function $f$ is a renormalized solution of the Boltzmann equation \eqref{Boltz}. We then verify that   $f \in C( \R _+ ; w\text{-}L^1 \left( \R ^3_x ; B^{-\alpha}_{1,1}( \R^3_v  ) \right)   )\cap L^\infty ( \R_+; L^1 ( 1+|x|^2 +|v|^2 ) d x d v ) $, and that $f$ satisfies the a priori bound \eqref{bdd-f}.   In the second part, we prove that $f$ satisfies the entropy inequality associated with the Boltzmann equation \eqref{Boltz}. 
 
\subsection{Existence and continuity of  time : Part $\mathrm{I}$ of Theorem}\label{subsec-exist} 
 Since $f^n$ is the unique distributional solution to the approximating equation \eqref{approximation}, Definition \ref{def-re} of renormalized solutions implies 
 \begin{equation*}
 	\frac{\partial }{\partial t} \beta_\delta ( f^n) + v \cdot \nabla_x \beta_\delta ( f^n)  = \beta_\delta ' ( f^n ) \tilde{Q}_n (f^n,f^n),
 \end{equation*}
 where  
 \begin{align*}
 	\beta_\delta ' ( f^n ) \tilde{Q}_n (f^n,f^n)  &= \bigg( 1+ \frac{1}{n} \int_{\R^3_v} f^n d v \bigg)^{-1}   \beta_\delta ' ( f^n ) Q _n (f^n,f^n)\\  
 	&= \bigg( 1+ \frac{1}{n} \int_{\R^3_v } f^n d v \bigg)^{-1}  \left\{   (\mathcal{R}_1)^n + (\mathcal{R}_2)^n +(\mathcal{R}_3)^n \right\}.
 \end{align*}
 Multiplying both sides of the above equation by a test function $\varphi ( t,x , v) \in L^\infty ( [0,T] \times \R^3_x ; C^{\alpha }_c (\R^3_v ))$  with compact  support in  $[0,T] \times B_R (x)\times  B_R(v)$  and integrating over $(t,x,v) \in ([ 0,T ] \times \R^3_x \times \R^3_v  )$, we obtain 
 	\begin{equation*}
 		\begin{aligned}
 			&\iint_{ \mathbb{R}^3_x \times \mathbb{R}^3 _v} \beta_\delta ( f^n )( T,\cdot ) \varphi d x d v - \iint_{ \mathbb{R}^3 _x\times \mathbb{R}^3_v } \beta_\delta ( f^n ) ( 0, \cdot ) \varphi d x d v \\
 			= 	& \int_{0}^{T} d t \iint_{ \mathbb{R}^3 _x\times \mathbb{R}^3 _v} \bigg( 1+ \frac{1}{n} \int_{\R^3_v } f^n d v \bigg)^{-1}  \left(  (\mathcal{R}_1 )^n + (\mathcal{R}_2 )^n + (\mathcal{R}_3 )^n \right) \varphi d x d v.
 		\end{aligned}
 	\end{equation*}
Taking the limit as $n \to \infty $ on both sides, and noting that
 $f^n $ converges strongly to $f$ and $\beta_\delta \in C^2$, we have
 \begin{equation*}
 	\frac{\partial \beta_\delta ( f^n) }{ \partial t} \to  \frac{\partial \beta_\delta ( f ) }{ \partial t} \quad \text{in } \  \mathscr{D}'((0,\infty)\times \R^3_x \times \R^3_v).
 \end{equation*} 
 By combining this with the convergence properties established in \eqref{cnv-R1}, \eqref{cnv-R2} and \eqref{cnv-R3}, we obtain
 \begin{equation}\label{rs}
 	\begin{aligned} 
 		\iint_{ \R ^3_x \times \R ^3_v }( \beta_\delta ( f  ) ( T,\cdot ) -  \beta_\delta ( f  ) ( 0,\cdot ) )\varphi   d x d v =  & \int_{0}^{T} dt \iint_{ \R ^3 _x\times \R ^3_v } \left\{ (\mathcal{R}_1)  + (\mathcal{R}_2)  +(\mathcal{R}_3)  \right\} \varphi d x d v \\
 		 =& \int_{0}^{T} dt \iint_{ \R ^3_x \times \R ^3_v }  \beta_\delta ' ( f  ) Q  (f ,f ) \varphi  d x d v.
 	\end{aligned}
 \end{equation}
 This demonstrates that $f$ is actually a renormalized solution of \eqref{Boltz}. Furthermore, we find that for any $\Phi \in L^\infty ( \R^3_x ; C^{\alpha }_c (\R^3_v ))$ and any $0 \leq s< t < \infty  $, the following equality holds
  \begin{equation*} 
 	\begin{aligned} 
 		\iint_{ \R ^3_x \times \R ^3 _v}( \beta_\delta ( f  ) ( t,\cdot ) -  \beta_\delta ( f  ) ( s,\cdot ) )\Phi   d x d v  
 		=& \int_{s}^{t} d\tau  \iint_{ \R ^3_x \times \R ^3 _v}  \beta_\delta ' ( f  ) Q  (f ,f ) \Phi  d x d v.
 	\end{aligned}
 \end{equation*}
Invoking the absolute continuity of the integral, we deduce that $$\beta_\delta ( f  )  \in C \left( \R _+  ;\  w\text{-} L^1  \left( \R ^3_x ; B^{-\alpha}_{1,1}( \R^3_v  ) \right)  \right)_+    . $$
Additionally, observing that $\beta_\delta ( s) = s / (1+ \delta s) $ satisfies
 \begin{align*}
 	0 \leq   s - \beta_\delta ( s) \leq \epsilon_K (\delta ) s + 1_{s \geq K } s  \quad \text{ on } \ [0,\infty ),
 \end{align*} 		 
 where $\epsilon_K ( \delta ) \to 0 $ as $ \delta \to 0_+$, we apply the absolute continuity of the integral again to show that for any $ t \in [ 0, \infty )  $, 
 \begin{align*}
 	\| f ( t,\cdot ) - \beta_\delta ( f )( t, \cdot ) \| _{L^1 ( \R^3_x \times \R^3_v ) } \leq \epsilon_K ( \delta ) \|f  \|_{ L^1 ( \R^3 _x\times \R^3_v )}  + \iint_{ \R ^3_x \times \R ^3 _v} f 1_{ f  \geq K } d x d v \to 0 
 \end{align*}
 as $\delta \to 0_+$ and $ K \to \infty $.  Consequently, we conclude that $f \in C \left(\R_+; w \text{-}  L^1(\mathbb{R}^3_x; B^{-\alpha}_{1,1}(\mathbb{R}^3_v))  \right)_+  $.

Next, we establish the bound \eqref{bdd-f}, which is given by
 \begin{equation*}  
 	\iint_{ \R ^3_x \times \R ^3 _v} f ( t, \cdot )( 1 + |x|^2 + |v|^2 + | \log f |) d x d v \leq C_T.
 \end{equation*}
 First, for any $R>0$ and $ t \in  [0,T]  $, we observe that
 \begin{equation*}
 	\varphi_R \triangleq \left(1+|x|^2 +|v|^2 \right) \mathbf{1}_{|x|\leq R } \mathbf{1}_{|v|\leq R}  \in L^\infty (\R^3 _x \times \R^3 _v  ).
 \end{equation*}
 Recall that $f^n (t) $ converges weakly  to $ f (t)$ in $L^1 (\R^3_x \times \R^3_v  )$ for all $t \in [0, T]$. Then, Lemma \ref{Lemma 4.17} implies  that there exists a constant $C_T > 0$, independent of $R$, such that
 \begin{equation*}
 	\begin{aligned}
 		\iint_{ \R^3 _x \times \R^3_v  } f  (t,x,v ) \varphi_R dxdv
 		=& \lim \limits_{n\rightarrow \infty } \iint_{ \R^3_x  \times \R^3 _v  } f^n  (t,x,v   ) \varphi_R dxdv \\
 		\leq & \sup \limits_{n \geq 1} \iint_{ \R^3_x \times \R^3 _v } f^n  (1+|x|^2 +|v|^2  ) dxdv \leq C_T \,.
 	\end{aligned}
 \end{equation*}
 Taking the limit as $R \to + \infty$, we obtain by the Monotone Convergence Theorem (Lemma \ref{MCT}) that
 \begin{equation} \label{eq6.83+1}
 	\sup_{t \in [0, T]} \iint_{ \R^3_x \times \R^3_v } f  (1+|x|^2 +|v|^2 ) dxdv\leq C_T.
 \end{equation}
 
 Moreover, Lemmas \ref{Lemma 4.6} and \ref{Lemma 4.17} imply that for $t \in [0, T]$, the $H$-functional satisfies
 	\begin{equation*} 
 		H (f) \leq \liminf \limits_{n\rightarrow \infty } H (f^n) \leq C_T'.
 \end{equation*}  
Using the fact that $t\log \frac{1}{t}\leq C_0 \sqrt{t}$ for $t\in (0,1)$ with some $C_0\geq 0$, we infer that
\begin{equation*} 
		\begin{aligned}
			-  \iint_{0<f  \leq 1} f  \log f   dxdv = & -  \iint_{0<f  \leq e^{ - |x|^2 - |v|^2 }} f  \log f   dxdv -   \iint_{e^{ - |x|^2 - |v|^2 }<f  \leq 1} f  \log f   dxdv \\
			\leq &   C_0 \iint_{\R^3 _x\times \R^3_v } e^{ - \frac{|x|^2 + |v|^2}{2} } d x d v +   \iint_{\R^3_x \times \R^3_v } f  (|x|^2 +|v|^2) dxdv \,.
	\end{aligned} 
\end{equation*}
 Consequently, we have
 	\begin{equation}\label{bdd-flogf}
 	\begin{aligned}
 		\iint_{ \R^3 _x \times \R^3_v } f  |\log f  | dxdv 
 		=&\iint_{ \R^3 _x\times \R^3_v } f \log f   dxdv -2 \iint_{0<f  \leq 1} f  \log f   dxdv \leq C_T.
 	\end{aligned}
 \end{equation} 
By combining \eqref{eq6.83+1} and \eqref{bdd-flogf}, we conclude that the bound \eqref{bdd-f} holds.

 \subsection{Entropy inequality: Part $\mathrm{II}$ of Theorem}
 
 In this section, we focus on proving Part $\mathrm{II}$ of Theorem \ref{MainThm}. Specifically, we show that the renormalized solution $f$ of \eqref{Boltz} satisfies the entropy inequality \eqref{entropy-th} given by
 \begin{equation}\label{entropy}
 	H( f ) ( t ) + \frac{1}{4} \int_{0}^{t} ds \iiiint_{ \R ^3_x \times \R ^3 _v \times \R ^3_{v_* }\times \S^2 }  B    
 	( { f }' {f'_* }  - f  f _* ) \log \left( \frac{ {f }'  {f'_* }  }{f  f _*} \right) d x d v d v_* d \sigma \leq H ( f_0 ).
 \end{equation} 
As stated in \eqref{entro} of Lemma \ref{Lemma 4.17}, the approximate distributional solution $f^n$ to the approximating problem \eqref{approximation} admits the entropy identity. Consequently,   for all $t \geq 0 $, we have
 \begin{equation*}
 	H(f^n_{0})   = H(f^n)(t) 
 +\int_{0}^{t} d \tau \iint_{ \R^3 _x \times \R^3 _v} \tilde{e} ^n(\tau,x, v ) dxdv,
 \end{equation*}
 where $\tilde{e} ^n$ and $ H(f^n)$ are defined in \eqref{e na} and \eqref{Hf}, respectively. Lemma \ref{Lemma 4.8} implies that $ \lim_{n \to + \infty} H(f^n_{0}) = H (f_0) $. In order to prove the entropy inequality \eqref{entropy-th} (or \eqref{entropy}), it suffices to show that for almost all $(t,x)\in \R_+ \times \R^3_x$, we have
 \begin{equation}\label{ena} 
 		\liminf_{n\rightarrow \infty} \int_{\R^3_v } \tilde{e} ^n dv  \geq \int_{ \R^3_v } e  dv .
 \end{equation}
 
 For $R \in ( 1, \infty )$, we define 
 \begin{equation*}
 	\begin{aligned}
 		&E_R  =B_R (v)\times B_R (v_*)\times \mathbb{S}^2, \quad d \Theta  = d v  dv _* d\sigma ,  \quad 
          N(f) = 1+   \int_{\R^3_v} f  d v\\
 		& L^R ( f )  = \iint_{B_R  (v_*)\times \S^2 } f_* B^R ( v-v_* , \sigma ) d v_* d \sigma ,\quad 
 			  L^R_n ( f )  = \iint_{B_R  (v_*)\times \S^2 } f_* B^R_n ( v-v_* , \sigma ) d v_* d \sigma ,
 	\end{aligned}
 \end{equation*}
 where 
 \begin{equation*}
 	\begin{aligned}
  	B^R (|z| , \cos \theta ) = \left(B (|z| , \cos \theta )1_{\frac{1}{R} \leq \theta \leq \frac{\pi}{2}  } \right) \wedge R ,  \quad
  	B^R_{ n } (|z| , \cos \theta ) = (B_n (|z| , \cos \theta )1_{\frac{1}{R} \leq \theta \leq \frac{\pi}{2} })\wedge R   .
 	\end{aligned}
 \end{equation*}
 Since $ \int_{ \S ^{   2 } } B^R_n (v-v_*, \sigma ) d \sigma$ forms a bounded sequence in $ L^\infty ( ( 0,R) \times B_R(x) \times B_R(v_*) ; L^1 ( B_R(v)) )$, we deduce the strong convergence of $L^R_n (f^n) $ by applying the generalized averaged velocity lemma (Lemma \ref{theo-6.4.1}). For further details, we refer to Section 4 of Reference \cite{Diperna-Lions}. More precisely, we have
 \begin{equation}\label{cnv-Ln}
 	L^R_n(f^n) \to L^R(f) \quad \text{ in } L^1 (( 0,R)\times B_R(x) \times B_R (v)).
 \end{equation}  
 
 \begin{lemma}\label{lemma-rwc}
 For almost all $(t,x) \in (0,R) \times   B_R (x)$, the sequences $  \{   {f ^n}' {f_{*}^n}'  B^R_n \}_{n  \geq 1 }$ and $  \{f^n f^n_* B^R_n\}_{n  \geq 1 }$ are relatively weakly compact in $L^1(E_R,d \Theta  )$.
 \end{lemma}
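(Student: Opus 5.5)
We will verify the two conditions of the Dunford--Pettis theorem (Lemma \ref{theorem-dunford}) on the finite measure space $(E_R,d\Theta)$, for fixed $(t,x)$ outside a null set. Since $E_R$ is bounded, the tightness condition (2) is automatic. The whole proof therefore reduces to equi-integrability plus a uniform $L^1$ bound.

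The first step is to reduce to the sequence $\{f^n f^n_* B^R_n\}_{n\ge1}$. On $E_R$ the truncated kernel satisfies $0\le B^R_n\le R$, and $B^R_n$ depends only on $|v-v_*|$ and $\cos\theta$. For the primed sequence we use the pre-post collisional change of variables $(v,v_*,\sigma)\mapsto(v',v'_*,\sigma)$, which has unit Jacobian and preserves $|v-v_*|$ and $\theta$. Since $|v'|^2+|v'_*|^2=|v|^2+|v_*|^2$, this map sends $E_R$ into $B_{\sqrt2 R}(v)\times B_{\sqrt2R}(v_*)\times\S^2$. Hence, for any measurable $A\subset E_R$,
\begin{equation*}
\int_A {f^n}'{f^n_*}'B^R_n\,d\Theta\le R\int_{\tilde A} f^n f^n_*\,d\Theta ,
\end{equation*}
where $\tilde A$ is the image of $A$, with $|\tilde A|=|A|$. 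It therefore suffices to prove that $\{f^nf^n_*\}$ is bounded and equi-integrable in $L^1(E_{\sqrt2R})$ for a.e. $(t,x)$.

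The second step is to obtain pointwise-in-$(t,x)$ control of $f^n(t,x,\cdot)$. From the strong convergence $f^n\to f$ in $L^1([0,T]\times\R^3_x\times\R^3_v)$, after extracting a subsequence, $f^n(t,x,\cdot)\to f(t,x,\cdot)$ in $L^1(\R^3_v)$ for a.e. $(t,x)$. Here the subsequence comes from Fubini: $\|f^n-f\|_{L^1_v}\to0$ in $L^1_{t,x}$, and we pass to an a.e.-convergent subsequence. A strongly convergent sequence in $L^1(B_{\sqrt2R}(v))$ is bounded and equi-integrable. Thus, for such $(t,x)$, for every $\eta>0$ there is $\delta>0$ with $\sup_n\int_{E}f^n\,dv<\eta$ whenever $|E|<\delta$.

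The third step, which is the main obstacle, is to pass from equi-integrability of $f^n$ in $v$ to equi-integrability of the tensor product $f^n(v)f^n(v_*)$ on the six-dimensional set. A set $A\subset B\times B$ of small measure need not have small sections, so we do not try to argue through sections. Instead we use the splitting $f^n=f^n\mathbf{1}_{f^n\le K}+f^n\mathbf{1}_{f^n>K}$ in each variable. The bounded-bounded part satisfies
\begin{equation*}
\int_A f^n\mathbf{1}_{f^n\le K}\,(f^n\mathbf{1}_{f^n\le K})_*\,d\Theta\le 4\pi K^2|A| .
\end{equation*}
Each mixed term containing a factor $f^n\mathbf{1}_{f^n>K}$ is bounded by
\begin{equation*}
4\pi\,\sup_m\|f^m\|_{L^1(B)}\;\sup_m\int f^m\mathbf{1}_{f^m>K}\,dv .
\end{equation*}
The last supremum tends to $0$ as $K\to\infty$, which follows from the uniform $L^1$ bound together with equi-integrability of $\{f^n(t,x,\cdot)\}$. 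Choosing $K$ first and then $|A|$ small gives equi-integrability of $\{f^nf^n_*\}$.

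The uniform bound follows directly: $\int_{E_R}f^nf^n_*B^R_n\,d\Theta\le 4\pi R\,\|f^n(t,x,\cdot)\|_{L^1}^2$, which is bounded in $n$. Dunford--Pettis then yields relative weak compactness of both sequences.

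If one prefers to avoid extracting a subsequence, the same splitting argument can instead be run using the entropy bound \eqref{FWC2}. Chebyshev's inequality gives $\int f^n|\log f^n|\,dv\le L$ outside sets $N^n_L$ of small $(t,x)$-measure, and de la Vallée-Poussin then gives equi-integrability. However, the a.e. $(t,x)$ statement is most cleanly obtained via the subsequence argument above.
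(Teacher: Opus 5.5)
Your argument is correct, apart from a misstated change of variables noted at the end, and it takes a different route from the paper's.

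\textbf{Comparison with the paper.} The paper follows DiPerna--Lions:
\begin{itemize}
\item It fixes an increasing $\Psi$ with $\Psi(t)\to\infty$ and $\Psi(t)/\log t\to 0$.
\item It uses the entropy bound and the averaging convergence \eqref{fn-f} to show that $\int_{B_R(v)} f^n\Psi(f^n)\,dv$ is compact in $L^1((0,R)\times B_R(x))$.
\item Along a subsequence, it extracts dominating functions $\bar K,\bar N\in L^1_{t,x}$.
\item It concludes from $\int_A f^nf^n_*B^R_n\,d\Theta\le RM^2\Theta(A)+\Psi(M)^{-1}\bar K\bar N$.
\end{itemize}
You instead use the stronger fact, already proved in Section~\ref{Sec:Con}, that $f^n\to f$ strongly in $L^1$. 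By Fubini, a fixed subsequence then has $f^n(t,x,\cdot)\to f(t,x,\cdot)$ in $L^1_v$ for a.e.\ $(t,x)$. The uniform integrability of these sections passes to the tensor product through your truncation at level $K$.

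Your route is shorter and avoids both the superlinear moment and the ``compact implies dominated'' step. The paper's route needs only entropy and velocity averaging, so it does not rely on the strong compactness of Section~\ref{Sec:Con}. It also produces explicit dominating functions that are reused in the proof of \eqref{cnv-fn'*}. Your fixed-$(t,x)$ bound $RK^2\Theta(A)+C\sup_m\int f^m\mathbf{1}_{f^m>K}\,dv$ would serve equally well there.

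Your treatment of the primed sequence, which sends $E_R$ into $E_{\sqrt2R}$, is more careful than the paper's. The paper only says ``analogous arguments'', and in the proof of Lemma~\ref{lemma-sc} it tacitly treats $E_R$ as invariant under the collision map. Both proofs give the statement only along a subsequence. That suffices, because the entropy inequality can be derived along a fixed subsequence.

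\textbf{Two corrections.}
\begin{itemize}
\item The unit-Jacobian involution is $(v,v_*,\sigma)\mapsto(v',v'_*,k)$ with $k=(v-v_*)/|v-v_*|$. The map you wrote, with $\sigma$ held fixed, is degenerate, since $v'-v'_*=|v-v_*|\sigma$ then lies on a ray. The correct map preserves $|v-v_*|$ and $k\cdot\sigma$. So $B^R_n$ is invariant, and your inequality with $|\tilde A|=|A|$ holds as stated.
\item Your closing alternative does not avoid extracting a subsequence. The Chebyshev exceptional sets depend on $n$, and their union need not be small. So at a fixed $(t,x)$ you get no bound on $\int f^n|\log f^n|\,dv$ that is uniform in $n$. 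Some step such as your a.e.\ subsequence or the paper's domination claim is unavoidable.
\end{itemize}
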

 \begin{proof}  
 We first take an increasing function $\Psi \in C(\R_+)$ satisfying
 \begin{align*}
 \Psi (t) \rightarrow +\infty \, \text{as } t \rightarrow + \infty ,\quad \Psi ( t ) ( \log t )^{-1} \rightarrow 0  \, \text{as } t \rightarrow + \infty .
 \end{align*}
 This implies that for any $\epsilon>0$, there exists a sufficiently large constant $M$ such that $\Psi (t) \leq \epsilon     \log   t $ for all $t\geq M$. Accordingly, we observe that
 \begin{align*}
 	&\left\| \int_{ B_R(v)  } f^n  \Psi (   f^n ) dv - \int_{ B_R(v)  } f   \Psi ( f  ) dv \right\|_{L^1((0,R) \times B_R(x) )} \\
 	\leq & \left\|\int_{ B_R(v)  } \left( f^n \Psi (  f^n  )\mathbf{1}_{ f^n \leq M  }  - f  \Psi ( f  )\mathbf{1}_{ f \leq M  } \right) dv\right\|_{L^1((0,R) \times B_R(x) )} \\
 	& + \epsilon \sup_{n \geq 1} \left\| \int_{ B_R(v) } \left(f^n  |\log    f^n |   + f | \log  f |   \right) dv \right\|_{L^1((0,R) \times B_R(x) ) }.
 \end{align*}  
 By combining the boundedness of the continuous function $\Psi(t)$   on $[0, M]$ with the bound of $f^n  \log  f^n $  in $L^\infty(\R_+; L^1(\mathbb{R}^3_x \times \mathbb{R}^3 _v ))$ and the convergence in \eqref{fn-f}, we conclude that the inequality above tends to $0$ as $n \to \infty$. This implies that the sequence $\int_{ B_R(v)  } f^n \Psi( f^n ) dv $ is compact in $L^1((0,R) \times B_R(x))$. Thus, we  deduce that 
 \begin{equation*}
 	\begin{aligned}
 		N(f^n)^{-1}	\int_{E_R}   f ^n f_{ *}^n    &(\Psi (   f ^n)+\Psi(  f_{ *}^n)) B^R_{  n} d \Theta  \\
 		N(f^n)^{-1}	\int_{E_R}   {f^n}' {f_{  *}^n}'  &(\Psi (   {f ^n}' )+\Psi (   {f_{ *}^n}' ) ) B^R_{  n} d \Theta  
 	\end{aligned}
 \end{equation*}
 are compact in $L^1((0,R) \times B_R(x))$.

 We claim that if a sequence $f^n(x) \geq 0$ is compact in $L^1(\Omega)$, where $\Omega$ is a bounded set, then there exists a function $g(x) \in L^1(\Omega)$ such that $0 \leq f^n(x) \leq g(x)$ a.e. in $\Omega$ (up to a subsequence). Given the compactness of $f^n$  in $L^1(\Omega)$, there exists a function $f \in L^1(\Omega)$ such that $\lim_{n \to \infty} \|f^n - f\|_{L^1(\Omega)} = 0$. We extract a subsequence such that $\|f^{n+1} - f^n\|_{L^1(\Omega)} \leq 2^{-n}$ for all $n \in \mathbb{N}_+$, and define $g(x)$ by
 \begin{equation*}
 	g(x) = f^1(x) + \sum_{n=1}^{\infty} |f^{n+1}(x) - f^n(x)|.
 \end{equation*}
By the Monotone Convergence Theorem (Lemma \ref{MCT}), $g$ belongs to $L^1(\Omega)$ because
 \begin{equation*}
 	\int_{\Omega} g(x) dx \leq \|f^1\|_{L^1(\Omega )} + \sum_{n=1}^{\infty} 2^{-n} = \|f^1\|_{L^1( \Omega)} + 1 < \infty.
 \end{equation*}
 Using the telescoping sum representation $f^m(x) = f^1(x) + \sum_{n=1}^{m-1} (f^{n+1}(x) - f^n(x))$ and the triangle inequality, we obtain $0 \leq f^n(x) \leq g(x)$ for almost every $x \in \Omega$. This establishes the claim. Consequently, there exist non-negative functions $\bar{K}, \bar{N} \in L^1((0,R) \times B_R (x))$, independent of $n$, satisfying
 \begin{equation*}
 	\begin{aligned}
 		N(f^n)^{-1}	\int_{E_R}   f ^n f_{ *}^n    &(\Psi (   f ^n)+\Psi(  f_{ *}^n)) B^R_{  n} d \Theta   \leq  \bar{K}    , \\
 		N(f^n)^{-1}	\int_{E_R}   {f^n}' {f_{  *}^n}'  &(\Psi (   {f ^n}' )+\Psi (   {f_{ *}^n}' ) ) B^R_{  n} d \Theta   \leq  \bar{K},   \\
 		N ( f^n) &\leq \bar{N}.
 	\end{aligned}
 \end{equation*}
 In addition, for all $M>0$ and any Borel subset $A\subset E_R$, it follows that for almost all $(t,x) \in (0, R) \times B_R$,
 \begin{equation*}
 	\begin{aligned}
 		\int_{A}   f^n  f^n_{ * } B^R_{  n} d \Theta  
 		\leq& R M^2 \Theta  (A) + \int_{E_R}    f^n  f^n_{ * } (\mathbf{1}_{   f ^n\geq M}+ \mathbf{1}_{   f_{  *}^n \geq M}) B^R_{  n} d \Theta \\
 		\leq & R M^2 \Theta  (A)+\Psi(M)^{-1}\bar{K} \bar{N}.
 	\end{aligned}
 \end{equation*}
This implies that the sequence $\{f^n f^n_* B^R_n\}_{n  \geq 1 }$ is  equi-integrable, and thus, by  Dunford-Pettis theorem (Lemma \ref{theorem-dunford}), it is relatively weakly compact in $L^1(E_R, d\Theta)$ for almost all $(t, x) \in (0, R) \times B_R$. 
  By employing analogous arguments, we conclude that   $  \{   {f ^n}'  {f_*^n}'B^R_{ n} \}_{n  \geq 1 }$ is also relatively weakly compact in $L^1(E_R,d \Theta  )$ for almost all $(t,x) \in (0,R) \times B_R(x)$. Finally, the proof of Lemma \ref{lemma-rwc} is finished.
  \end{proof}
 \begin{lemma}\label{lemma-sc}
 	For any $\varphi \in L^\infty (E_R)$, we establish
 	\begin{equation}\label{P con}
 	\int_{E_R}   N(f^n)^{-1}   f^n  f^n_{ * }      B^R_n   \varphi d \Theta   \to  \int_{E_R} N(f)^{-1}  f   f _{ * }       B^R     \varphi d \Theta  \quad \text{strongly in } L^1((0,R)\times B_R(x)),
 	\end{equation}
 	and
 	\begin{equation}\label{til P con}
 		\int_{E_R}  N(f^n)^{-1} {f^n}' {f_{  *}^n}'   B^R_n   \varphi   d\Theta  \rightarrow \int_{E_R}   N(f )^{-1} {f }' f_{*}'   B^R    \varphi  d\Theta   \quad \text{strongly in } L^1((0,R)\times B_R(x)).
 	\end{equation} 
 	\end{lemma}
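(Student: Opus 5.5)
The approach is to reduce both statements to the averaged velocity lemma, by moving the $v_*$ integral (and the $v$ integral in the post-collisional case) inside the test function. The only quantity that is not strongly compact in $(t,x)$ is the pointwise product of two velocity values. Everything else is controlled by \eqref{fn-f}, by \eqref{cnv-Ln}, and by the relative weak compactness from Lemma \ref{lemma-rwc}.

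\textbf{The prefactor.} First remove the factor $N(f^n)^{-1}$. By \eqref{fn-f}, $\int_{\R^3_v} f^n\,dv \to \int_{\R^3_v} f\,dv$ in $L^1((0,R)\times B_R(x))$. Since $0<N(\cdot)^{-1}\le 1$, we get $N(f^n)^{-1}\to N(f)^{-1}$ a.e. (along a subsequence) and boundedly. Once the remaining integrals are shown to converge strongly in $L^1$, the product converges by dominated convergence combined with Vitali, using the domination $\bar K\bar N$ from Lemma \ref{lemma-rwc}.

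\textbf{Proof of \eqref{P con}.} I first take $\varphi$ of product form $\varphi=\varphi_1(v)\varphi_2(v_*)\varphi_3(\sigma)$ and then pass to general $\varphi\in L^\infty(E_R)$ by density. Density in $L^1$ suffices here because the equi-integrability from Lemma \ref{lemma-rwc} makes the error uniform in $n$.

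For product $\varphi$, write
\[
\int_{E_R} f^n f^n_* B^R_n\varphi\,d\Theta=\int_{B_R(v)} f^n(v)\,\varphi_1(v)\,\Phi^n(t,x,v)\,dv,
\qquad
\Phi^n=\iint_{B_R(v_*)\times\S^2} f^n_*\,B^R_n\,\varphi_2\varphi_3\,dv_*\,d\sigma .
\]
Exactly as in \eqref{cnv-Ln}, the generalized averaged velocity lemma (Lemma \ref{theo-6.4.1}) gives $\Phi^n\to\Phi$ in $L^1((0,R)\times B_R(x)\times B_R(v))$. This uses that $B^R_n\le R$ and $B^R_n\to B^R$ a.e.

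For the outer integral, I use the strong convergence $f^n\to f$ in $L^1$ (Section \ref{Sec:Con}). Passing to a subsequence, $f^n\to f$ a.e. and $\Phi^n\to \Phi$ a.e. The family $|\Phi^n|\le R\int f^n_*$ is dominated in $(t,x)$ after the $L^1$-compactness extraction trick from the proof of Lemma \ref{lemma-rwc}. The product $f^n\Phi^n$ is then controlled by cutting $f^n$ at level $M$, using the equi-integrability of $f^n$ for the part $f^n>M$ and Vitali's theorem (Lemma \ref{L-VC}) for the part $f^n\le M$. Integrating in $v$ yields strong $L^1((0,R)\times B_R(x))$ convergence.

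\textbf{Proof of \eqref{til P con}.} I use the pre-/post-collisional change of variables $(v,v_*,\sigma)\mapsto(v',v'_*,k)$. It has unit Jacobian, and $B$ is invariant under it. This converts the integral into $\int f^n f^n_* B^R_n\,\tilde\varphi\,d\Theta$ with the transformed test function $\tilde\varphi(v,v_*,\sigma)=\varphi(v',v'_*,\cdot)$, restricted to the image of $E_R$. The image lies in $B_{\sqrt2 R}\times B_{\sqrt2 R}\times\S^2$ by energy conservation, so I apply the first step on this enlarged ball with indicator-truncated $\tilde\varphi$. The caveat is that $\tilde\varphi$ is no longer of product form, so I approximate it in $L^1(d\Theta)$ by finite sums of products. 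The error is again uniform in $n$ by Lemma \ref{lemma-rwc}.

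\textbf{Main obstacle.} The hardest step is this density/approximation argument, namely making the $L^1(E_R)$-approximation error uniform in $n$ and in $(t,x)$. The plan is to control it by $\Theta(A)$-smallness via the equi-integrability bound $RM^2\Theta(A)+\Psi(M)^{-1}\bar K\bar N$ from the proof of Lemma \ref{lemma-rwc}, integrated over $(t,x)$.
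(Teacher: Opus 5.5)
\paragraph{The gap: removing the normalization.} Your first step strips off $N(f^n)^{-1}$ and then aims to prove that $\int_{E_R} f^n f^n_* B^R_n\varphi\,d\Theta$ converges strongly in $L^1((0,R)\times B_R(x))$. That convergence is not available. The unnormalized bilinear quantities are only bounded by $4\pi R\|\varphi\|_{L^\infty}\bigl(\int_{B_R(v)} f^n\,dv\bigr)^2$. The a priori bounds (mass, moments, entropy, entropy dissipation) give no $L^2_{t,x}$ control of $\int f^n\,dv$. So neither these integrals nor their limit $\int_{E_R} ff_*B^R\varphi\,d\Theta$ are known to be integrable in $(t,x)$; this is exactly why the factor $N(f)^{-1}$ is there.

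The failure appears concretely when you treat $f^n\Phi^n$ on $\{f^n>M\}$. Since $|\Phi^n|\lesssim\int f^n_*\,dv_*$ is unbounded in $(t,x)$, equi-integrability of $f^n$ does not make $\int_{\{f^n>M\}} f^n|\Phi^n|$ small. A dominating function $G(t,x)\in L^1$ for $\int f^n_*\,dv_*$ does not help either, because $f^nG$ need not be integrable. Your density step has the same defect. The bound $RM^2\Theta(A)+\Psi(M)^{-1}\bar K\bar N$ from Lemma~\ref{lemma-rwc} holds pointwise in $(t,x)$, and $\bar K\bar N$, a product of two $L^1$ functions, need not be integrable over $(t,x)$.

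\paragraph{The fix.} Keep the normalization attached to the inner integral, as the paper does with $N(f^n)^{-1}L^R_n(f^n)$. Since $N(f^n)^{-1}\int f^n_*\,dv_*\le 1$, you get the uniform bound $N(f^n)^{-1}|\Phi^n|\le 4\pi R\|\varphi\|_{L^\infty}$. Hence the integrand $N(f^n)^{-1}f^n\varphi_1\Phi^n$ is dominated by $Cf^n$. Combine this with a.e. convergence of $N(f^n)^{-1}\Phi^n$ and with $f^n\to f$ in $L^1$. The paper uses Egorov's theorem here; generalized dominated convergence or Vitali works in your setting. This gives \eqref{P con}.

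\paragraph{The product structure is unnecessary.} For general $\varphi\in L^\infty(E_R)$, define
\[
\Phi^n(t,x,v)=\iint_{B_R(v_*)\times\S^2} f^n_*B^R_n\varphi(v,v_*,\sigma)\,dv_*\,d\sigma,
\]
treating $v$ as a parameter. Two facts hold verbatim:
\begin{itemize}
\item the $L^1$ convergence $\Phi^n\to\Phi$, from $f^n\to f$ in $L^1$ and $B^R_n\to B^R$ boundedly;
\item the bound $N(f^n)^{-1}|\Phi^n|\le 4\pi R\|\varphi\|_{L^\infty}$ above.
\end{itemize}
So the density argument you identified as the main obstacle disappears.

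\paragraph{The post-collisional term.} For \eqref{til P con}, your change of variables $(v,v_*,\sigma)\mapsto(v',v'_*,k)$ is correct, and so is your observation that the relevant region lies in $B_{\sqrt2R}\times B_{\sqrt2R}\times\S^2$. Apply the normalized argument on $B_{\sqrt2R}\times B_{\sqrt2R}\times\S^2$ to the test function $\tilde\varphi=\varphi(v',v'_*,k)\mathbf{1}_{B_R}(v')\mathbf{1}_{B_R}(v'_*)$, and the proof concludes.
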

 \begin{proof}  
Combining \eqref{cnv-Ln} with the strong convergence of $f^n$, we obtain
 \begin{align*}
 	N(f^n)^{-1}  	L^R_n(f^n) \to 	N(f)^{-1} L^R (f ) \quad \text{a.e.} \ t,x \in ( 0,R) \times B_R(x).
 \end{align*}
 Furthermore, by invoking Egorov's Theorem (Lemma \ref{egorov}), for any $\epsilon>0$, there exists a Borel $E \subset \Omega_R  = ( 0,R ) \times B_R (x )\times B_R(v) $ such that $|E^c \cap \Omega_R  |< \epsilon $ and $	N(f^n )^{-1}  	L^R_n(f^n)$ converges uniformly to $	N(f) ^{-1} L^R (f )$ on $E$. Additionally, we observe the bound
 \begin{align*}
 	0 \leq 	N(f^n)^{-1}  	L^R_n(f^n) \leq \|B^R_n \|_{L^\infty (B_R(v) ; L^1 ( \S^2) ) }.
 \end{align*}
 With these preliminaries, we proceed to prove the convergence \eqref{P con}. 
 We first observe the following decomposition
 \begin{align*}
   \left\{f^n L^R_n (f^n ) N(f^n)^{-1}    -    f  L^R (f  )N(f )^{-1}  \right\}  \varphi   
 	= &     \left\{  L^R_n (f^n ) N(f^n)^{-1} -   L^R (f  ) N(f )^{-1}   \right\} f^n  \varphi \\
 	& +  \left(     f^n -  f \right) L^R (f  ) N(f )^{-1}  \varphi .
 \end{align*}
 This yields 
  \begin{equation*}
 	\begin{aligned}
 		&\int_{0}^{R}dt \int_{B_R(x)} dx \left|\int_{E_R}  \left\{f^n L^R_n (f^n ) N(f^n)^{-1}    -    f  L^R (f  )N(f )^{-1}  \right\}  \varphi      d\Theta  \right| \\
 		\leq & \sup_{n \geq 1} \Vert f^n \Vert_{L^1 ((0, R) \times B_R (x)\times B_R (v))} \sup \limits_{E} \left| L^R_{n }(f^n) N (f^n)^{-1} - L^R  (f) N (f)^{-1} \right| \\
 		+ & \left( \sup_{n \geq 1} \| B^R_n \|_{L^\infty (B_R (v); L^1 ( \S^2) ) } + \|B^R   \|_{L^\infty (B_R(v) ; L^1 ( \S^2) ) } \right) \left( \sup_{n \geq 1} \int_{E^c} f^n  dtdxdv   +\int_{E^c} f  dtdxdv   \right) \\
 		+& \int_{0}^{R}dt \int_{B_R(x)} dx |\int_{B_R (v) } f^n  \tilde{\varphi} dv   -\int_{B_R (v) } f  \tilde{\varphi} dv   |,
 	\end{aligned}
 \end{equation*}
 where $\tilde{\varphi}= \mathbf{1}_E L^R (f) N (f)^{-1} \varphi \in L^\infty ( [0,T] \times B_R (x) \times B_R (v))$. 
Since 
$$\sup_{n \geq 1} \Vert f^n \Vert_{L^1 ((0, R) \times B_R (x)\times B_R (v) )} \leq C_R$$ 
by Lemma \ref{Lemma 4.17}, we conclude that the first quantity on the right-hand side of the above inequality converges to 0 as $n \to + \infty$. Moreover, the convergence in \eqref{fn-f} shows that the last term on the right-hand side of the above inequality converges to 0 as $n \to + \infty$. Furthermore, we have $\sup_{n \geq 1} \| B^R_n \|_{L^\infty (B_R(v) ; L^1 ( \S^2) ) } + \|B^R  \|_{L^\infty (B_R(v) ; L^1 ( \S^2) ) }\leq 2 R$ and $f^n $ converges weakly to $f $ in $L^1 ( (0, R) \times B_R (x)\times B_R (v)  )$, which implies that the second term on the right-hand side of the above inequality vanishes as $\epsilon \to 0_+$. Thus, the assertion \eqref{P con} holds.
 
To establish the convergence \eqref{til P con}, we first observe the identity
 \begin{equation*}{\small
 		\begin{aligned}
 			\int_{E_R}  N(f^n)^{-1} {f^n}' {f_{  *}^n}'   B^R_n   \varphi   d\Theta 
 			=&   \iiint_{ B_R (v)  \times B_R (v_*)  \times \mathbb{S}^2} N(f^n)^{-1} f^n  f^n_{ *} B^R_{ n} 
 			  \varphi'   d\sigma dv  dv_*, \\
 			\int_{E_R}  N(f )^{-1} {f }' {f_{*}'}   B^R    \varphi   d\Theta 
 		=&   \iiint_{B_R (v)  \times B_R (v_*)  \times \mathbb{S}^2} N(f )^{-1} f   f _{ *} B^R 
 		\varphi'   d\sigma dv  dv_*, 
 	\end{aligned}}
 \end{equation*}
 where $\varphi'=\varphi (v')$. 
By employing arguments analogous to those used for \eqref{P con}, we deduce that the convergence \eqref{til P con} holds. This completes the proof of Lemma \ref{lemma-sc}.
\end{proof}
 \begin{lemma} 
 For almost all $(t,x)\in (0,R)\times B_R$, we obtain
 \begin{equation}\label{cnv-fn'*}
 \begin{aligned} 
 	\left( 1+ \frac{1}{n} \int_{\R^3_v} f^n d v \right)^{-1} f^n  f^n_{ * }    B^R_{  n}  \rightarrow {f  }   {f_* }  B^R,
 	\left( 1+ \frac{1}{n} \int_{\R^3_v} f^n d v \right)^{-1} {f ^n}'  {f_*^n}'  B^R_n   \rightarrow \ f'   f'_*   B^R 
 	\end{aligned}
 \end{equation} 
 weakly in $L^1 (E_R, d \Theta).$
\end{lemma}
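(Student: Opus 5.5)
The plan is to combine the relative weak compactness of Lemma \ref{lemma-rwc} with the convergence of tested integrals in Lemma \ref{lemma-sc}, and to identify the weak limit through a countable dense family of test functions. I will fix $(t,x)$ outside a null set on which three things hold: Lemma \ref{lemma-rwc} applies, $N(f^n)\to N(f)$, and the strong $L^1((0,R)\times B_R(x))$ convergences \eqref{P con}--\eqref{til P con} hold pointwise along a subsequence for every $\varphi$ in a countable family $\mathcal{D}$.

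Next I record two reductions. Since $\int_{\R^3_v} f^n\,dv$ is bounded for a.e. $(t,x)$ by \eqref{fn-f}, we have $(1+\frac1n\int f^n dv)^{-1}\to 1$. Moreover, $N(f^n)^{-1}\to N(f)^{-1}$ with the limit finite and nonzero. Consequently, \eqref{P con} implies that for each fixed $\varphi$, along a subsequence, for a.e. $(t,x)$,
$$\int_{E_R} f^nf^n_*B^R_n\varphi\,d\Theta\to\int_{E_R} ff_*B^R\varphi\,d\Theta,$$
after multiplying by $N(f^n)\to N(f)$. The same holds for the primed product by \eqref{til P con}.

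For $\mathcal{D}$ I take a countable set, for instance indicator functions of finite unions of rational boxes in $E_R$, whose linear span is dense in $L^1(E_R)$ and weak-$*$ dense enough to separate $L^1$ functions. A diagonal extraction gives a single subsequence and a single null set that work for all $\varphi\in\mathcal{D}$. This diagonal step is exactly the purpose of the ``Borel subset $A$'' approximation remark in the sketch: it identifies tested limits on all Borel sets via $\mathbf 1_{A_k}\to\mathbf 1_A$.

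Now fix such a $(t,x)$. By Lemma \ref{lemma-rwc}, every subsequence of $\{f^nf^n_*B^R_n\}$ has a further subsequence converging weakly in $L^1(E_R)$ to some $h$. For indicators $\mathbf 1_A$ with $A$ in the generating algebra, $\int_A h=\int_A ff_*B^R$. Equi-integrability, which is uniform by Lemma \ref{lemma-rwc}, and the approximation $\mathbf 1_{A_k}\to\mathbf 1_A$ in $L^1$ then extend this identity to all Borel $A$. Hence $h=ff_*B^R$ a.e., so the whole sequence converges weakly. The same argument applies to $(1+\frac1n\int f^n)^{-1}{f^n}'{f^n_*}'B^R_n$, with limit $f'f'_*B^R$, via \eqref{til P con}.

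The main obstacle is the passage from $L^1_{t,x}$ convergence for each fixed $\varphi$ to a.e. $(t,x)$ convergence for all $\varphi$ simultaneously. This forces the separable, countable-family argument and a possibly $(t,x)$-independent subsequence. I will handle it by diagonalization over $\mathcal{D}$. Independence of the subsequence is harmless for the later liminf inequality \eqref{ena}, because one can first pass to a subsequence realizing the liminf of $\int\tilde e^n\,dv$ in $L^1_{t,x}$.
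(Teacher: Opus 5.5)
Your proposal is correct and is essentially the paper's argument. You strip off the scalar factors $N(f^n)^{-1}$ and $(1+\frac1n\int_{\R^3_v} f^n\,dv)^{-1}$, and you use the strong $L^1_{t,x}$ convergences of Lemma~\ref{lemma-sc} with indicator test functions to get a.e.\ convergence of the tested integrals along a subsequence. You then pass to all Borel sets, and hence all $\varphi\in L^\infty(E_R)$, via the uniform equi-integrability from Lemma~\ref{lemma-rwc}. The explicit countable generating algebra with diagonal extraction, and the sub-subsequence/uniqueness formulation, only make precise the countability and subsequence bookkeeping that the paper's ``$\mathbf{1}_{A_k}\to\mathbf{1}_A$'' step leaves implicit.
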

\begin{proof} 
To simplify the notation, we define
  \begin{equation*}
  	\begin{aligned}
  		P^n  = N  (f^n)^{-1} f^n  f^n_{ * } B^R_{  n} \,, \quad &	P  = N  (f )^{-1}{f  }   {f_* }  B^R  \, ,\\
  	 \tilde{P}^n  = N  (f^n)^{-1} {f ^n}'  {f_*^n}' B^R_n \,, \quad& \tilde{P} =N  (f )^{-1}  f'   f'_*   B^R \,.
  	\end{aligned}
  \end{equation*} 
The strong convergence of $	\int_{E_R}   P^n \varphi d \Theta$ implies the existence of a subsequence converging pointwise a.e. in $(t,x)$ to $\int_{E_R}  P \varphi d \Theta$. However, it is important to note that this subsequence may depend on the choice of $\varphi$.  
Indeed, for any Borel subset $A$ of $E_R$,  there exists a sequence of measurable subsets $\{ A_k\}_{k=1}^\infty $ such that $\mathbf{1}_{A_k} \to \mathbf{1}_A $ in $L^1( E_R)$. By choosing $\varphi = \mathbf{1}_{A_k}$ and invoking the strong convergence of $	\int_{E_R}  P^n \mathbf{1}_{A_k} d\Theta$, we can extract a subsequence (still denoted by $	\int_{E_R}   P^n \mathbf{1}_{A_k} d\Theta$) such that
\begin{equation*}
 	\int_{E_R}   P^n \mathbf{1}_{A_k} d\Theta  \to  	\int_{E_R}   P  \mathbf{1}_{A_k} d\Theta  \quad \text{a.e.} \ (t,x) \in (0,R) \times B_R.
\end{equation*} 
Thus, for any Borel subset $A$ of $E_R$ and any constant $M>0$,
\begin{align*}
& \left| \int_{E_R} P^n ( \mathbf{1}_{A_k}  -  \mathbf{1}_{A }   ) d\Theta \right| \\
\leq & M^2 \int_{E_R} |\mathbf{1}_{A_k}  -  \mathbf{1}_{A }| d\Theta +  \int_{E_R}    f^n  f^n_{ * } (\mathbf{1}_{   f ^n\geq M}+ \mathbf{1}_{   f_{  *}^n \geq M}) B^R_{  n} d \Theta \\
\leq &  M^2 \int_{E_R} |\mathbf{1}_{A_k}  -  \mathbf{1}_{A }| d\Theta + \Psi(M)^{-1}\bar{K} \bar{N}  \to 0 \quad ( k \to \infty , \  M \to \infty ).
\end{align*}
Therefore,
\begin{align*}
&\left| 	\int_{E_R}   P^n \mathbf{1}_{A } d\Theta - 	\int_{E_R}   P  \mathbf{1}_{A } d\Theta \right| \\
\leq & \left| \int_{E_R} P^n ( \mathbf{1}_{A_k}  -  \mathbf{1}_{A }   ) d\Theta \right| + \left| 	\int_{E_R} ( P^n -P ) \mathbf{1}_{A_k}  d\Theta \right| +  \left| \int_{E_R} P  ( \mathbf{1}_{A_k}  -  \mathbf{1}_{A }   ) d\Theta \right| \\
\to & 0 \quad ( k \to \infty , \  n \to \infty ).
\end{align*} 
Since the above holds for any measurable subset $A$, it follows that almost every $(t,x) \in ( 0,R) \times B_R$ and for all $\varphi \in L^\infty ( E_R)$, 
 \begin{align*}
   N  (f^n)^{-1}  f^n  f^n_{ * } B^R_{  n}  \rightarrow N  (f )^{-1} {f  }   {f_* }  B^R, \quad   N  (f^n)^{-1} {f ^n}'  {f_*^n}' B^R_n \rightarrow   N  (f  )^{-1} f'   f'_*   B^R
 \end{align*} 
weakly in $L^1 ( E_R ,d \Theta)$.  Moreover, using the fact that  $N(f^n) ( t,x)$ and $\left( 1+ \frac{1}{n} \int_{\R^3_v } f^n d v \right)^{-1}$ converge a.e. to $N(f)$ and $1$ respectively (see \eqref{fn-f}), it follows  that \eqref{cnv-fn'*} holds.
\end{proof}

\begin{lemma}\label{lemma-e}
The convergence \eqref{ena} holds, i.e.,
\begin{align*}
	\liminf_{n\rightarrow \infty} \int_{\R^3 _v } \tilde{e} ^n dv  \geq \int_{ \R^3_v } e  dv .
\end{align*}
\end{lemma}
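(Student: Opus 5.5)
The idea is to combine the weak convergences \eqref{cnv-fn'*} with the joint convexity and lower semicontinuity of $j(a,b)=(a-b)\log\frac{a}{b}$, after first truncating to $E_R$ and to the kernel $B^R$. Here $e$ denotes the limiting dissipation density, that is, the integrand of \eqref{entropy-th} integrated in $(v_*,\sigma)$.

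\textbf{Step 1: reduction to the truncated problem.} Fix a.e. $(t,x)\in(0,R)\times B_R(x)$. Since $j\ge 0$, $B_n\ge B^R_n$ and $(1+\frac1n\int f^n)^{-1}\le 1$, I drop the complement of $E_R$ and obtain
\begin{equation*}
\int_{\R^3_v}\tilde e^n\,dv\;\ge\;\frac14\int_{E_R} j\big(c_n{f^n}'{f^n_*}'B^R_n,\;c_nf^nf^n_*B^R_n\big)\,d\Theta ,
\end{equation*}
where $c_n=(1+\frac1n\int f^n dv)^{-1}$. This uses the $1$-homogeneity $j(\lambda a,\lambda b)=\lambda j(a,b)$ for $\lambda\ge0$, which lets me move the factors $c_n$ and $B^R_n$ inside $j$.

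\textbf{Step 2: lower semicontinuity on $E_R$.} By \eqref{cnv-fn'*}, the pair $(c_n{f^n}'{f^n_*}'B^R_n,\;c_nf^nf^n_*B^R_n)$ converges weakly in $L^1(E_R,d\Theta)^2$ to $(f'f'_*B^R,\;ff_*B^R)$. The map $(a,b)\mapsto j(a,b)$ is convex, nonnegative and lower semicontinuous on $\R^2$, with the value $+\infty$ off the open quadrant $\{a,b>0\}$. Hence the functional $(A,B)\mapsto\int_{E_R}j(A,B)\,d\Theta$ is convex and strongly lower semicontinuous on $L^1\times L^1$; strong lsc follows from Fatou's lemma applied along a.e.-convergent subsequences. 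By Mazur's lemma it is therefore weakly lower semicontinuous. One subtlety is the value of $j$ where $a$ or $b$ vanishes. I handle it by using the convention $j(0,0)=0$, which is consistent with $j(\lambda a,\lambda b)=\lambda j(a,b)$ as $\lambda\to0$, so that the lsc convex extension is used on $[0,\infty)^2$. With this, $\liminf_n\int_{\R^3_v}\tilde e^n\,dv\ge\frac14\int_{E_R}j(f'f'_*,ff_*)B^R\,d\Theta$, again by homogeneity.

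\textbf{Step 3: letting $R\to\infty$.} As $R\uparrow\infty$, the truncated kernel $B^R\uparrow B$ pointwise and $E_R\uparrow\R^3\times\R^3\times\S^2$. Since $j\ge0$, the monotone convergence theorem (Lemma \ref{MCT}) gives $\int_{\R^3_v}e\,dv$ as the limit of the right-hand sides. Running $R$ over the integers and discarding a countable union of null sets of $(t,x)$ gives \eqref{ena} for a.e. $(t,x)$.

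\textbf{Main obstacle.} The weak convergence in \eqref{cnv-fn'*} is obtained for a.e. $(t,x)$, but possibly only along subsequences that depend on $(t,x)$. To make the $\liminf$ rigorous I would argue by contradiction. Pick a subsequence realizing $\liminf_n\int\tilde e^n\,dv$ at the given point, and apply the lemma along that subsequence; this is legitimate because every subsequence has a further subsequence converging to the same identified limit. The concluding inequality then holds along the extremal subsequence, which is what \eqref{ena} requires.
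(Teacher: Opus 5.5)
Your Steps 1--3 are correct and follow the paper's architecture. You restrict to $E_R$ and the truncated kernel $B^R_n$, and use the homogeneity of $j$ to write the truncated dissipation as $\frac14\int_{E_R} j(F^n,G^n)\,d\Theta$. You then pass to the limit using \eqref{cnv-fn'*} and weak lower semicontinuity, and finally let $R\to\infty$.

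The difference lies in how weak lower semicontinuity is obtained. The paper does it by hand. It writes the supporting-plane inequality for $j$ at $(a,b)=(\epsilon+F\wedge M,\epsilon+G\wedge M)$, where $\nabla j$ is bounded, and passes to the weak limit against these $L^\infty$ coefficients. It then uses the signs of $\partial_a j$ and $\partial_b j$ to replace $F,G$ by $F\wedge M$, $G\wedge M$, and lets $\epsilon\to0$, $M\to\infty$ via Fatou's lemma. You instead use the general fact that a convex, strongly lower semicontinuous functional on $L^1\times L^1$ is weakly lower semicontinuous (Mazur), with strong lsc coming from Fatou along a.e.\ convergent subsequences. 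Your route is shorter and uses only convexity, positivity and lsc of $j$. The paper's route is elementary and tests the weak convergence only against explicit bounded functions.

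Two of your choices improve on the paper:
\begin{enumerate}
\item The convention $j(0,0)=0$ is actually needed. With the paper's convention ($j=+\infty$ whenever $a$ or $b\le 0$), $J(F^n,G^n)=+\infty$, because $B^R_n$ vanishes on $\{\theta<1/R\}$. So $J(F^n,G^n)$ would not equal the truncated dissipation.
\item Monotone convergence, rather than the dominated convergence written in the paper, is the right tool for $R\to\infty$, since $B^R$ and $E_R$ increase with $R$.
\end{enumerate}

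Your last paragraph, however, is not a valid argument. If \eqref{cnv-fn'*} is taken as stated (full sequence, a.e.\ $(t,x)$), the paragraph is simply unnecessary. Suppose instead you only have what the paper's proof of \eqref{cnv-fn'*} really delivers, since it passes through a.e.\ limits of strongly convergent sequences: for every subsequence there is a further subsequence along which the weak convergence holds for a.e.\ $(t,x)$. The paper glosses over this point too. Then you cannot choose a subsequence at a given point $(t,x)$ and afterwards invoke this a.e.\ statement. The exceptional null set depends on the subsequence, and a subsequence chosen at $(t,x)$ can place $(t,x)$ in its own exceptional set.

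The typewriter sequence $g_n\to 0$ in $L^1(0,1)$ shows the reasoning fails. Every subsequence of it has a further subsequence tending to $0$ a.e., yet $\liminf_n(-g_n(x))=-1$ for every $x$. Your argument would ``prove'' $\liminf_n(-g_n)\ge 0$. So subsequence information of this kind does not yield \eqref{ena} along the full sequence.

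What it does yield, and what suffices for \eqref{entropy-th}, is the following:
\begin{enumerate}
\item Fix a subsequence realizing the single number $\liminf_n\int_0^t\iint\tilde e^n\,dx\,dv\,d\tau$; this has no $(t,x)$-dependence.
\item From it, extract a further subsequence along which \eqref{cnv-fn'*} holds for a.e.\ $(t,x)$. Do this diagonally over integers $R$ and over a countable algebra of sets dense in the Borel sets of $E_R$.
\item Run your Steps 1--3 pointwise along that subsequence.
\item Conclude with Fatou's lemma in $(t,x)$.
\end{enumerate}
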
 
\begin{proof} 
We first define the function
\begin{equation*}
	j(a,b)=\left\{\begin{array}{rl}
		(a-b) \log \frac{a}{b}, & \quad \text{for } a, b>0 \,, \\
		+\infty, & \quad \text{for } a \text { or } b \leq 0 \,,
	\end{array}\right.
\end{equation*}
whose  Hessian matrix 
\begin{equation*}
	\begin{aligned}
		\tfrac{1}{ab} 
		\begin{pmatrix}
			b+\frac{b^2}{a}  &  -(a+b)\\
			-(a+b)   &  a+\frac{a^2}{b} 
		\end{pmatrix}, 
		\quad a,b>0
	\end{aligned}
\end{equation*}
is non-negative. This implies that $j(a,b)$ is convex function. Next, we further consider the integral 
\begin{equation}
	J(F,G)=\int_{E_R}j(F,G)d \Theta , \quad F,\, G \in L^1(E_R,d \Theta ).
\end{equation}
The convexity of $j (a, b)$ implies that
\begin{equation*}
	\begin{aligned}
		j(F^n,G^n)\geq j(a,b)+ \bigg(\log \frac{a}{b}+\frac{a-b}{b} \bigg)(F^n-a)+ \bigg(\log \frac{b}{a}+\frac{b-a}{b}\bigg)(G^n-b) 
	\end{aligned}
\end{equation*}
for $F^n,G^n,a,b>0$. We assume that $F^n$ and $G^n$  converge weakly to $F \geq 0$ and $G\geq 0$ respectively in $L^1(E_R,d \Theta )$. For any $\epsilon>0$ and $M > 0$, we set $a=\epsilon +F\wedge M$ and $ b=\epsilon +G\wedge M$. It follows from the convexity of $j (a, b)$ that
\begin{equation*}
	\begin{aligned}
		\liminf \limits_{n\rightarrow \infty} J(F^n,G^n) 
		&\geq \int_{E_R} j(a,b)+(\log \tfrac{a}{b}+\tfrac{a-b}{b})(F-a)+(\log \tfrac{b}{a}+\tfrac{b-a}{b})(G-b) d\Theta \\
		&\geq \int_{E_R} j(a,b)+(\log \tfrac{a}{b}+\tfrac{a-b}{b})(F\wedge M-a)+(\log \tfrac{b}{a}+\tfrac{b-a}{b})(G\wedge M-b) d\Theta \\
		&= \int_{E_R} j(a,b) d \Theta  -\epsilon \int_{E_R} (a-b)(\tfrac{1}{a}-\tfrac{1}{b})d\Theta  \geq \int_{E_R} j(a,b) d\Theta .
	\end{aligned}
\end{equation*}
Hence, for any $\epsilon>0$ and $M > 0$,
\begin{equation*}
	\liminf \limits_{n\rightarrow \infty} \int_{E_R}j(F^n,G^n)d\Theta  \geq \int_{E_R}j(\epsilon +F\wedge M ,\epsilon +G\wedge M)d\Theta .
\end{equation*}
By letting $\epsilon \to 0_+$ and $M \to +\infty$, we conclude from Fatou's lemma (Lemma \ref{fatou}) that
\begin{equation}
	\liminf \limits_{n\rightarrow \infty} \int_{E_R}j(F^n,G^n)d \Theta  \geq \int_{E_R}j(F ,G) d \Theta .
\end{equation}
We now denote by $e^R$ the functions obtained from $e$ by replacing the collision kernel $B$ with $B_R$, and similarly for   $\tilde{e}^{n,R}$. By taking 
$$
F^n=  \bigg(1+ \frac{1}{n} \int_{\R^3_v } f^n d v \bigg)^{-1} {f ^n}'  {f_*^n}' B^R_n ,\quad  G^n= \bigg(1+ \frac{1}{n} \int_{\R^3_v } f^n d v \bigg)^{-1}  f^n  f^n_{ * }     B^R_{  n}  ,
$$ 
one has 
\begin{equation}\label{eR}
 	\liminf_{n\rightarrow \infty} \int_{\R^3_v } \tilde{e} ^{n,R} dv  \geq \int_{ \R^3 _v } e^R  dv . \quad \text{a.e.} \enspace (t, x) \in (0,R) \times B_R.
\end{equation}
Given that $  \tilde{e}^{n, R}  \leq \tilde{e}^n $, we deduce that
\begin{align*}
 \liminf_{  n \rightarrow \infty } \int_{   \R ^3 _v} \tilde{e}^n d v \geq 	\liminf_{n\rightarrow \infty} \int_{\R^3 _v} \tilde{e} ^{n,R} dv  \geq \int_{   \R ^3 _v} e^R d v.
\end{align*}
By taking the limit as $R \to + \infty $ and applying the Lebesgue Dominated Convergence Theorem (Lemma \ref{LCT}), we conclude that the convergence  \eqref{ena} holds.
\end{proof}
The entropy inequality \eqref{entropy-th}  follows immediately from the above lemma. Finally, the proof of Theorem \ref{MainThm} is therefore finished.

\section{On soft potential model $-3 < \gamma < 0$}\label{sec-cnv-soft}

 For the non-cutoff Boltzmann equation, the angular singularity of the collision kernel prevents us from establishing an Arkeryd-type inequality analogous to the one in \cite{Arkeryd-inequality}. Consequently, our analysis relies primarily on a priori estimates derived from entropy dissipation. For hard potentials, the bound established by \eqref{F-Zn} is sufficient to ensure the strong convergence of $f^n$ in $L^1([0,T] \times \mathbb{R}^3_x \times \mathbb{R}^3_v)$. However, for soft potentials, the upper bound in \eqref{F-Zn} takes the following form 
 $$
   \frac{  C_0   (2\pi )^3   }{  \|f ^n \|_{L^1( B_R (v )) } \left(2 \sqrt{2 }R \right)^\gamma  }  \bigg\{ \iint_{\mathbb{R}^{3}_v  \times \mathbb{R}^{3}_{v_*} }   f^n f^n_* |v-v_*|^\gamma   \, dv \, dv_*  + \|e^n \|_{L^1( \R^3_v )} +   C_1  \|f ^n\|^2_{L^1_2 ( \R^3_v ) }  \bigg\}  .
 $$

 Even if we assume that $f$ satisfies the fundamental conservation laws and entropy bounds, specifically $f \in L^\infty (\mathbb{R}_+; L^1_2(\mathbb{R}^3_x \times \mathbb{R}^3_v) \cap L \log L)$, this function space, despite its control over global moments and entropy, still permits a certain degree of local concentration. When such concentration combines with the singularity of the soft potential kernel $|v - v_*|^\gamma$ (where $-3 < \gamma < 0$), the collision integral term may diverge to infinity. In other words, for soft potentials, finite entropy is insufficient to guarantee the finiteness of the collision integral $\iint_{\mathbb{R}^{3}_v  \times \mathbb{R}^{3}_{v_*} }   f^n f^n_* |v-v_*|^\gamma   \, dv \, dv_*$. We will demonstrate this fact by constructing a counterexample.
 \begin{example}\label{soft-f}
 Let the positive function be defined as 
\begin{align*}
 f(v)=\begin{cases}
	|v|^{-q}, & |v|\leq 1 \\
	 e^{-|v|+1}, & |v|>1
\end{cases},
\end{align*} 
where $\frac{\gamma +6}{2}< q < 3$ with $\gamma \in (-3,0)$. Then the function $f(v)$ satisfies
\begin{align*}
\int_{\mathbb{R}^{3}_v}f(v)(1+|v|^{2}+|\log f|  )dv < \infty,
\end{align*}
 but
 \begin{align*}  
 	\iint_{\mathbb{R}^{3}_v \times \mathbb{R}^{3}_{v_*} }  f(v)f(v_{*})|v-v_{*}|^{\gamma}  dv dv_{*} = +\infty.
 \end{align*} 
  \end{example}
  
\begin{proof}

The proof will be divided into three steps.

{\bf Step 1:} {\em  Finite mass and energy.} First, by utilizing the condition $q<3$, we can demonstrate that the distribution function $f$ possesses finite mass and energy. To be specific, 
\begin{align*} 
	\int_{\mathbb{R}^{3}_v}f(v)dv 
	&= \int_{|v|\le1}|v|^{-q}dv + \int_{|v|>1} 
	e^{-|v|+1}dv \\
	&= 4\pi \int_{0}^{1}r^{2-q}dr + 4\pi  e \int_{1}^{\infty}r^{2}  e^{-r}dr \\
	&= \frac{4\pi}{3-q} + 20\pi < \infty  ,
\end{align*}
and
\begin{align*} 
	\int_{\mathbb{R}^{3}_v }|v|^{2}f(v) dv 
	&= \int_{|v|\leq 1}|v|^{2-q}dv + \int_{|v|>1}|v|^{2}  e^{-|v|+1}dv \\
	&= \frac{4\pi}{5-q} + 260\pi  < \infty .
\end{align*}
{\bf Step 2:} {\em  Finite entropy.} Subsequently, we show that the entropy is finite. 
\begin{align*} 
	\int_{\mathbb{R}^{3} _v }f|\log f|dv &= \int_{ |v|\leq 1} |v|^{-q} \left|\log |v|^{-q} \right| d v + \int_{ | v | > 1 } e^{ - | v | + 1} \left| - | v | +  1 \right| d v  \\
	&= q \int_{ | v |\leq 1 }	| v | ^{ - q } \log|v|^{-1} d v  + \int_{|v|>1}(|v|-1)e^{-|v|+1}dv \\
	&= \frac{4\pi q}{(3-q)^{2}} + 44\pi < \infty .
\end{align*}
{\bf Step 3:} {\em  Divergence of collision integral.} Finally, we verify that the following integral is divergent. 
 \begin{align*}
 I = \iint_{\mathbb{R}^{3}_v \times \mathbb{R}^{3}_{v_*}} f(v)f(v_{*})|v-v_{*}|^{\gamma} dv dv_{*}.
 \end{align*}

Since the integrand is non-negative, we can establish a lower bound for $I$ by restricting the domain of integration to a subset $\Omega$ where the singularity is most pronounced. We focus on the region near the origin where $|v| \leq 1$ and $|v_{*}| \leq 1$.
  \begin{align*}
I \geq \iint_{ | v | \leq 1, |v_{*}|\leq 1 } |v|^{-q}|v_{*}|^{-q}|v-v_{*}|^{\gamma} dv dv_{*}.
  \end{align*}
 Let $\epsilon \in (0,1)$ be sufficiently small. We define the region $\Omega$ as 
\begin{align*}
 \Omega = \left\{ (v,v_{*}) \in B(0,1) \times  B(0,1) \;\middle|\;
	 |v| \in (0,\epsilon),  \
	 |v_{*}| \in \left(\frac{|v|}{2}, |v|\right),  \
	    \theta    < \frac{\pi}{6}   \right\} ,
\end{align*}
where $\theta$ is the angle between $v$ and $v_{*}$.
Thus,
$$
I \geq \iint_{\Omega} |v|^{-q} |v_{*}|^{-q} |v-v_{*}|^{\gamma} dv_{*} dv.
$$

Next, we analyze the term $|v-v_{*}|^{\gamma}$ within $\Omega$. Using spherical coordinates  $v=r\omega$, $v_{*}=r_{*}\omega_{*}$, the Law of Cosines yields
$$
|v-v_{*}|^2 = r^2 + r_{*}^2 - 2rr_{*}\cos\theta.
$$
Furthermore, in the region $\Omega$, we have $\frac{r}{2} < r_{*} < r$ and $\cos\theta > \cos\frac{\pi}{6} = \frac{\sqrt{3}}{2}$. In this region, we can derive an upper bound for the relative velocity:
\begin{align*}
|v-v_{*}|^2  = (r-r_{*})^2 + 2rr_{*}(1-\cos\theta)  
 \leq \left(r - \frac{r}{2}\right)^2 + 2r^2 \left(1 - \frac{\sqrt{3}}{2}\right).
\end{align*} 
Specifically, there exists a constant $C_0$ such that $ |v-v_{*}| \leq C_0 r.$ Since the exponent $\gamma$ is negative, we have
$$
|v-v_{*}|^\gamma \geq  C_0^{\gamma} r^{\gamma} .
 $$
 Substituting the estimate into the integral $I$,  we obtain
\begin{align*}
I \geq C_0^{\gamma} \int_{0}^{\epsilon} \int_{\mathbb{S}^2} r^{2-q+\gamma} \left( \int_{r/2}^{r} \int_{\Omega_{\omega_*}} r_{*}^{2-q} d\omega_{*} dr_{*} \right) d\omega dr.
\end{align*}
Here, the angular integration over the spherical cap defined by $\theta < \frac{\pi}{6}$ yields a strictly positive constant, which we denote by $C_{1}$.
The inner radial integral with respect to $r_{*}$ is evaluated as
 $$
 \int_{r/2}^{r} r_{*}^{2-q} dr_{*} = \left[ \frac{r_{*}^{3-q}}{3-q} \right]_{r/2}^{r} = \frac{1}{3-q}\left(1 - \frac{1}{2^{3-q}}\right) r^{3-q}.
 $$
Let $C' = C_{0}^{\gamma} C_{1} (4\pi)^2 \frac{1}{3-q} (1 - 2^{-(3-q)})$. Since $q < 3$, it follows that $C'$ is a positive finite constant.
Finally, substituting these results back into the expression, we find that the integral $I$ satisfies
 \begin{align*}
 	I  \geq C' \int_{0}^{\epsilon} r^{2-q+\gamma} \cdot r^{3-q} dr  = C' \int_{0}^{\epsilon} r^{5-2q+\gamma} dr.
 \end{align*}
Recall that the integral $\int_{0}^{\epsilon} r^{k} \, dr$ diverges if and only if $k \leq -1$. In the present case, the exponent is   $5 - 2q + \gamma$. Under the condition $q > \frac{\gamma+6}{2}$, we have $5 - 2q + \gamma < -1$. Consequently, the integral $I$ diverges.
\end{proof}


\section*{Acknowledgments}
This work was supported by National Key Research and Development Program of China under the grant 2023YFA1010300, the National Natural Science Foundation of China under contract No. 12201220, the Guangdong Basic and Applied Basic Research Foundation under contract No. 2024A1515012358, and the Fundamental Research Funds for the Central Universities under contract No. 531118011008.

\bigskip
%

\end{document}